\numberwithin{equation}{section}
\numberwithin{algorithm}{section}
\newtheorem{theorem}{Theorem}[section]
\newtheorem{remark}{Remark}[section]
\newtheorem{lemma}{Lemma}[section]
\newtheorem{example}{Example}
\def\cT {\mathcal T}
\def\bold {\boldsymbol}
\def\Om {\Omega}
\def\J {\mathcal{J}}
\def\F {\mathcal F}
\def\to{\rightarrow}
\def\eps {\varepsilon}
\def\Atilde {\widetilde{\mathcal A}}
\newcommand{\dx}{\,{\rm d}x}
\newcommand{\dd}{\,{\rm d}}
\begin{document}
\title{Adaptive Reconstruction for Electrical Impedance Tomography with a Piecewise Constant Conductivity}
\author{
\and Bangti Jin\footnote{Department of Computer Science, University College London, Gower Street, London WC1E 6BT, UK (b.jin@ucl.ac.uk, bangti.jin@gmail.com)}
\and Yifeng Xu\footnote{Department of Mathematics and Scientific Computing Key Laboratory of Shanghai Universities, Shanghai Normal University, Shanghai 200234, China. (yfxu@shnu.edu.cn, mayfxu@gmail.com)}
}

\date{}

\maketitle
\begin{abstract}
In this work we propose and analyze a numerical method for electrical impedance tomography of recovering a piecewise
constant conductivity from boundary voltage measurements. It is based on standard Tikhonov regularization with
a Modica-Mortola penalty functional and adaptive mesh refinement using suitable \textit{a posteriori}
error estimators of residual type that involve the state, adjoint and variational inequality in the necessary
optimality condition and a separate marking strategy. We prove the convergence of the adaptive algorithm in the
following sense: the sequence of discrete solutions contains a subsequence convergent to a solution of the
continuous necessary optimality system. Several numerical examples are presented
to illustrate the convergence behavior of the algorithm.

\noindent\textbf{Keywords:} electrical impedance tomography, piecewise constant conductivity, Modica-Mortola functional,
\textit{a posteriori} error estimator, adaptive finite element method, convergence analysis
\end{abstract}


\section{Introduction}\label{sect:intro}

Electrical impedance tomography (EIT) aims at recovering the electrical conductivity distribution of an object from
voltage measurements on the boundary. It has attracted much interest in medical imaging, geophysical prospecting,
nondestructive evaluation and pneumatic oil pipeline conveying etc. A large number of reconstruction algorithms have
been proposed; see, e.g., \cite{LechleiterRieder:2006,LechleiterHyvonen:2008,HintermullerLaurain2008,
KnudsenLassasMueller:2009,JinMaass:2010,JinKhanMaass:2012,HarrachUllrich:2013,ChowItoZou:2014,GehreJin:2014,MalonedosSantosHolder:2014,
LiuKolehmainen:2015,DunlopStuart:2015,AlbertiAmmari:2016,Klibanov:2017, KlibanovLiZhang:2019,ZhouHarrachSeo:2018,HyvonenMustonen:2018,XiaoLiuZhao:2018,
TanLvDong:2019} for a rather incomplete list. One prominent idea underpinning many imaging algorithms is regularization,
especially Tikhonov regularization \cite{ItoJin:2014}. In practice, they are customarily implemented using the
continuous piecewise linear finite element method (FEM) on quasi-uniform meshes, due to its flexibility
in handling spatially variable coefficients and general domain geometry. The convergence analysis of finite element
approximations was carried out in \cite{GehreJinLu:2014,Rondi:2016,Hinze:2018}.

In several practical applications, the physical process is accurately described by the complete electrode model (CEM)
\cite{ChengIsaacsonNewellGisser:1989,SomersaloCheneyIsaacson:1992}. It employs nonstandard boundary conditions to capture
characteristics of the experiment. In particular, around the electrode edges, the boundary condition changes
from the Neumann to Robin type, which, according to classical elliptic regularity theory \cite{Grisvard:1985},
induces weak solution singularity around the electrode edges; see, e.g., \cite{Pidcock:1995} for an early study. Further,
the low-regularity of the sought-for conductivity distribution, especially that enforced by a nonsmooth penalty, e.g.,
total variation, can also induce weak interior singularities of the solution. Thus, a (quasi-)uniform triangulation
of the domain can be inefficient for resolving these singularities, and the discretization errors around electrode
edges and internal interfaces can potentially compromise the reconstruction accuracy. These observations motivate the
use of an adaptive strategy to achieve the desired accuracy in order to enhance the overall computational efficiency.

For direct problems, the mathematical theory of AFEM, including \textit{a posteriori} error estimation, convergence
and computational complexity, has advanced greatly \cite{AinsworthOden:2000,NSV:2009,Ver:2013,
CFPP:2014}. A common adaptive FEM (AFEM) consists of the following successive loops:
\begin{equation}\label{afem_loop}
  \mbox{SOLVE}\rightarrow\mbox{ESTIMATE}\rightarrow\mbox{MARK}\rightarrow\mbox{REFINE}.
\end{equation}
The module \texttt{ESTIMATE} employs the given problem data and computed solutions to provide computable
quantities on the local errors, and distinguishes different adaptive algorithms.

In this work, we develop an adaptive EIT reconstruction algorithm with a piecewise constant conductivity. In practice, the
piecewise constant nature is commonly enforced by a total variation penalty. However, it is challenging for AFEM
treatment (see e.g., \cite{Bartels:2015} for image denoising). Thus, we take an indirect approach based on a
Modica-Mortola type functional:
\[
    \mathcal{F}_\varepsilon(\sigma) = \eps\int_\Om |\nabla \sigma|^2 \dx + \frac{1}{\eps}\int_{\Om} W(\sigma) \dx,
\]
where the constant $\eps>0$ is small and $W(s):\mathbb{R}\to\mathbb{R}$ is the double-well potential, i.e.,
\begin{equation}\label{eqn:double-well}
  W(s) = (s-c_0)^2(s-c_1)^2,
\end{equation}
with $c_0,c_1>0$ being two known values that the conductivity $\sigma$ can take. The functional $\mathcal{F}_{\varepsilon}$
$\Gamma$-converges to the total variation semi-norm \cite{Modica:1987,Modica:1977,Alberti:2000}. The corresponding regularized least-squares
formulation reads
\begin{equation}\label{eqn:tikh-MM-0}
        \inf_{\sigma\in \widetilde{\mathcal{A}}} \left\{\J_{\varepsilon}(\sigma) = \tfrac{1}{2} \|U(\sigma)-U^\delta\|^2 + \tfrac{\widetilde{\alpha}}{2}\mathcal{F}_\varepsilon(\sigma)\right\},
\end{equation}
where $\tilde \alpha>0$ is a regularization parameter; see Section \ref{sect:ps} for further details. In this
work, we propose \textit{a posteriori} error estimators and an adaptive reconstruction algorithm of the form \eqref{afem_loop}
for \eqref{eqn:tikh-MM-0} based on a separate marking using three error indicators in the module \texttt{MARK};
see Algorithm \ref{alg_afem_eit}. Further, we give a convergence analysis of the algorithm, in the
sense that the sequence of state, adjoint and conductivity generated by the adaptive algorithm contains a
convergent subsequence to a solution of the necessary optimality condition. The technical proof consists of
two steps: Step 1 shows the subsequential convergence to a solution of a limiting problem, and Step 2 proves
that the solution of the limiting problem satisfies the necessary optimality condition. The main technical
challenges in the convergence analysis include the nonlinearity of the forward model, the nonconvexity of
the double well potential and properly treating the variational inequality. The latter two are overcome by pointwise
convergence of discrete minimizers and Lebesgue's dominated convergence theorem, and AFEM analysis
techniques for elliptic obstacle problems, respectively. The adaptive algorithm and its convergence analysis
are the main contributions of this work.

Last, we situate this work in the existing literature. In recent years, several
adaptive techniques, including AFEM, have been applied to the numerical resolution of inverse problems.
In a series of works \cite{BeilinaClason:2006,BeilinaKlibanov:2010a,BeilinaKlibanov:2010b,BeilinaKlibanovKokurin:2010},
Beilina et al studied the AFEM in a dual weighted residual framework for parameter identification. Feng et al
\cite{FengYanLiu:2008} proposed a residual-based estimator for the state, adjoint and control by assuming convexity of
the cost functional and high regularity on the control. Li et al \cite{LiXieZou:2011} derived \textit{a posteriori}
error estimators for recovering the flux and proved their reliability; see \cite{XuZou:2015a} for
a plain convergence analysis. Clason et al \cite{ClasonKaltenbacher:2016} studied functional \textit{a posteriori} estimators
for convex regularized formulations. Recently, Jin et al \cite{JinXuZou:2016} proposed a first AFEM for Tikhonov
functional for EIT with an $H^1(\Omega)$ penalty, and also provided a convergence analysis. This work
extends the approach in \cite{JinXuZou:2016} to the case of a piecewise constant conductivity. There are a
number of major differences between this work and \cite{JinXuZou:2016}. First, the $H^1(\Omega)$ penalty in \cite{JinXuZou:2016}
facilitates deriving the \textit{a posteriori} estimator on the conductivity $\sigma$, by completing the squares
and suitable approximation argument, which is not directly available for the Mordica-Mortola functional $\mathcal{F}_\varepsilon$.
Second, we develop a sharper error indicator associated with the crucial variational inequality
than that in \cite{JinXuZou:2016}, by a novel constraint preserving interpolation operator \cite{ChenNochetto:2000};
see the proof of Theorem \ref{thm:gat_mc}, which represents the main technical novelty of this work. Third,
Algorithm \ref{alg_afem_eit} employs a separate marking for the estimators, instead of a collective marking in
\cite{JinXuZou:2016}, which automatically takes care of different scalings of the estimators.

The rest of this paper is organized as follows. In Section \ref{sect:ps}, we introduce the complete electrode model,
and the regularized least-squares formulation. In Section \ref{sect:alg}, we give the AFEM algorithm. In Section
\ref{sec:numer}, we present extensive numerical results to illustrate its convergence and efficiency. In Section \ref{sect:conv},
we present the lengthy technical convergence analysis. Throughout, $\langle\cdot,\cdot\rangle$ and $(\cdot,\cdot)$ denote the
inner product on the Euclidean space and $(L^2(\Omega))^d$, respectively, by $\|\cdot\|$ the Euclidean norm, and
occasionally abuse $\langle\cdot,\cdot\rangle$ for the duality pairing between the Hilbert space $\mathbb{H}$ and its dual
space. The superscript $\rm t$ denotes the transpose of a vector. The notation $c$ denotes a generic constant, which
may differ at each occurrence, but it is always independent of the mesh size and other quantities of interest.

\section{Regularized approach}\label{sect:ps}

This part describes the regularized approach for recovering piecewise constant conductivities.

\subsection{Complete electrode model (CEM)}

Let $\Omega$ be an open bounded domain in $\mathbb{R}^{d}$ $(d=2,3)$ with a polyhedral boundary $\partial\Omega$. We denote the
set of electrodes by $\{e_l\}_{l=1}^L$, which are line segments/planar surfaces on $\partial\Omega$ and satisfy
$\bar{e}_i\cap\bar{e}_k=\emptyset$ if $i\neq k$. The applied current on electrode $e_l$ is denoted by
$I_l$, and the vector $I=(I_1,\ldots,I_L)^\mathrm{t}\in\mathbb{R}^L$ satisfies $\sum_{l=1}^LI_l=0$, i.e., $I\in \mathbb{R}_\diamond^L
:=\{V\in \mathbb{R}^L: \sum_{l=1}^LV_l=0\}$. The electrode
voltage $U=(U_1,\ldots,U_L)^\mathrm{t}$ is normalized, i.e., $U\in\mathbb{R}_\diamond^L$. Then the CEM reads: given the
conductivity $\sigma$, positive contact impedances $\{z_l\}_{l=1}^L$ and input current $I\in\mathbb{R}_\diamond^L$, find
$(u,U)\in H^1(\Omega)\otimes\mathbb{R}_\diamond^L$ such that \cite{ChengIsaacsonNewellGisser:1989,SomersaloCheneyIsaacson:1992}
\begin{equation}\label{eqn:cem}
\left\{\begin{aligned}
\begin{array}{ll}
-\nabla\cdot(\sigma\nabla u)=0 & \mbox{ in }\Omega,\\[1ex]
u+z_l\sigma\frac{\partial u}{\partial n}=U_l& \mbox{ on } e_l, l=1,2,\ldots,L,\\[1ex]
\int_{e_l}\sigma\frac{\partial u}{\partial n}\mathrm{d}s =I_l& \mbox{ for } l=1,2,\ldots, L,\\ [1ex]
\sigma\frac{\partial u}{\partial n}=0&\mbox{ on } \partial\Omega\backslash\cup_{l=1}^Le_l.
\end{array}
\end{aligned}\right.
\end{equation}
The inverse problem is to recover the conductivity $\sigma$ from a noisy version $U^\delta$ of the electrode
voltage $U(\sigma^\dagger)$ (for the exact conductivity $\sigma^\dag$) corresponding to one or multiple input currents.

Below the conductivity $\sigma$ is assumed to be piecewise constant, i.e., in the admissible set
\[
    \mathcal{A}:=\{\sigma\in L^\infty(\Om):~\sigma=c_0+(c_1-c_0)\chi_{\Om_1}\},
\]
where the constants $c_1>c_0>0$ are known, $\overline{\Om}_1\subset\Om$ is an unknown open set with a
Lipschitz boundary and $\chi_{\Om_1}$ denotes its characteristic function. We denote by
$\mathbb{H}$ the space $H^1(\Omega)\otimes \mathbb{R}_\diamond^L$ with its norm given by
\begin{equation*}
  \|(u,U)\|_{\mathbb{H}}^2 = \|u\|_{H^1(\Omega)}^2 + \|U\|^2.
\end{equation*}

A convenient equivalent norm on the space $\mathbb{H}$ is given below.
\begin{lemma}\label{lem:normequiv}
On the space $\mathbb{H}$, the norm $\|\cdot\|_\mathbb{H}$ is equivalent to the norm
$\|\cdot\|_{\mathbb{H},*}$ defined by
\begin{equation*}
  \|(u,U)\|_{\mathbb{H},*}^2 = \|\nabla u\|_{L^2(\Omega)}^2 + \sum_{l=1}^L\|u-U_l\|_{L^2(e_l)}^2.
\end{equation*}
\end{lemma}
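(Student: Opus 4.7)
The plan is to prove the two inequalities giving the equivalence separately. One direction is a direct estimate, while the reverse direction rests on a compactness/contradiction argument that crucially exploits the mean-zero condition $U\in\mathbb{R}_\diamond^L$.

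First, I would establish $\|(u,U)\|_{\mathbb{H},*}\le C\|(u,U)\|_{\mathbb{H}}$. The term $\|\nabla u\|_{L^2(\Omega)}$ is trivially dominated by $\|u\|_{H^1(\Omega)}$, and for the electrode contribution I would split $\|u-U_l\|_{L^2(e_l)}\le \|u\|_{L^2(e_l)}+|e_l|^{1/2}|U_l|$ and invoke the standard trace inequality $\|u\|_{L^2(\partial\Omega)}\le c\|u\|_{H^1(\Omega)}$, finishing with Cauchy--Schwarz on the finite sum over $l$.

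The reverse inequality $\|(u,U)\|_{\mathbb{H}}\le C\|(u,U)\|_{\mathbb{H},*}$ is the substantive part, and I would argue by contradiction. Suppose no such $C$ exists; then there is a sequence $(u_n,U_n)\in\mathbb{H}$ with $\|(u_n,U_n)\|_{\mathbb{H}}=1$ and $\|(u_n,U_n)\|_{\mathbb{H},*}\to 0$. Since the sequence is bounded in the Hilbert space $\mathbb{H}$, after passing to a subsequence I may assume $u_n\rightharpoonup u^*$ weakly in $H^1(\Omega)$ and (by Rellich--Kondrachov) strongly in $L^2(\Omega)$, $u_n\to u^*$ strongly in $L^2(\partial\Omega)$ by compactness of the trace operator, and $U_n\to U^*$ in $\mathbb{R}^L$ (finite-dimensional). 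From $\|\nabla u_n\|_{L^2(\Omega)}\to 0$ and weak lower semicontinuity, $\nabla u^*=0$, so $u^*\equiv c$ is constant. From $\|u_n-U_{n,l}\|_{L^2(e_l)}\to 0$ and the trace convergence I conclude $U_l^*=c$ for each $l$. The mean-zero constraint forces $\sum_{l=1}^LU_l^*=Lc=0$, hence $c=0$ and therefore $u^*\equiv 0$, $U^*=0$.

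Finally, I would upgrade weak convergence to strong in $H^1(\Omega)$: since $\|\nabla u_n\|_{L^2(\Omega)}\to 0$ and $u_n\to 0$ strongly in $L^2(\Omega)$, one has $\|u_n\|_{H^1(\Omega)}\to 0$, together with $\|U_n\|\to 0$, contradicting $\|(u_n,U_n)\|_{\mathbb{H}}=1$. The only subtle point is correctly combining the compact trace with the mean-zero condition to pin down the limiting constant, which is the step that genuinely uses the CEM-specific structure of $\mathbb{H}$; once this is in hand the rest is routine.
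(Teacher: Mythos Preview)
Your argument is correct and is essentially the standard proof of this result (originally due to Somersalo--Cheney--Isaacson). Note, however, that the paper does not actually supply a proof of Lemma~\ref{lem:normequiv}; it simply states the norm equivalence as a known fact and moves on to the weak formulation. So there is no ``paper's own proof'' to compare against---your compactness/contradiction argument is precisely what one finds in the CEM literature, and the only implicit assumption you rely on (connectedness of $\Omega$, needed to pass from $\nabla u^*=0$ to $u^*\equiv c$) is standard in this setting.
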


The weak formulation of the model \eqref{eqn:cem} reads \cite{SomersaloCheneyIsaacson:1992}: find $(u,U)\in \mathbb{H}$ such that
\begin{equation}\label{eqn:cemweakform}
  a(\sigma,(u,U),(v,V)) = \langle I,V\rangle \quad \forall (v,V)\in \mathbb{H},
\end{equation}
where the trilinear form $a(\sigma,(u,U),(v,V))$ on $\mathcal{A}\times\mathbb{H}\times\mathbb{H}$ is defined by
\begin{equation*}
   a(\sigma,(u,U),(v,V)) = (\sigma \nabla u ,\nabla v ) +\sum_{l=1}^Lz_l^{-1}(u-U_l,v-V_l)_{L^2(e_l)},
\end{equation*}
where $(\cdot,\cdot)_{L^2(e_l)}$ denotes the $L^2(e_l)$ inner product. For any $\sigma\in\mathcal{A}$,
$\{z_l\}_{l=1}^L$ and $I\in \Sigma_\diamond^L$, the existence and uniqueness of a solution
$(u,U)\in\mathbb{H}$ to \eqref{eqn:cemweakform} follows from Lemma \ref{lem:normequiv} and
Lax-Milgram theorem.

\subsection{Regularized reconstruction}
For numerical reconstruction with a piecewise constant conductivity, the total variation (TV) penalty is popular. The
conductivity $\sigma$ is assumed to be in the space $\mathrm{BV}(\Om)$ of bounded
variation \cite{AttouchButtazzoMichaille:2006,Evans:2015}, i.e.,
\[
    \mathrm{ BV} (\Om) = \left\{ v \in L^1(\Om): |v|_{\mathrm{TV}(\Om)}<\infty \right\},
\]
equipped with the norm $\|v\|_{\mathrm{BV}(\Om)}=\|v\|_{L^1(\Om)}+|v|_{\mathrm{TV}(\Om)}$, where
\[
    |v|_{\mathrm{TV}(\Om)}:=\sup\left\{\int_\Om v \nabla\cdot\bold{\phi}\dx:~\bold{\phi}\in (C_c^1(\Om))^d,~\|\bold{\phi}(x)\|\leq 1\right\}.
\]
Below we discuss only one dataset, since the case of multiple datasets is similar. Then Tikhonov regularization
leads to the following minimization problem:
\begin{equation}\label{eqn:tikh}
   \min_{\sigma\in\mathcal{A}}\left\{\J(\sigma) = \tfrac{1}{2} \|U(\sigma)-U^\delta\|^2 + \alpha|\sigma|_{{\rm TV}(\Om)}\right\},
\end{equation}
The scalar $\alpha>0$ is known as a regularization parameter. It has at least one minimizer \cite{Rondi:2008,GehreJinLu:2014}.

Since $\sigma$ is piecewise constant, by Lebesgue decomposition theorem \cite{AttouchButtazzoMichaille:2006}, the TV term $|\sigma|_{\rm TV(\Omega)}$ in \eqref{eqn:tikh} reduces to
$\int_{S_{\sigma}} |[\sigma]| \dd \mathcal{H}^{d-1} $, where $S_\sigma$ is the jump set, $[\sigma]=\sigma^+-\sigma^-$ denotes the jump
across $S_\sigma$ and $\mathcal{H}^{d-1}$ refers to the $(d-1)$-dimensional Hausdorff measure. The numerical approximation of \eqref{eqn:tikh}
requires simultaneously treating two sets of different Hausdorff dimensions (i.e., $\Om$ and $S_\sigma$), which is
very challenging. Thus, we replace the TV term $|\sigma|_{\rm TV(\Omega)}$ in \eqref{eqn:tikh} by a Modica--Mortola type functional \cite{Modica:1977}
\[
    \mathcal{F}_\varepsilon(z):=
    \left\{\begin{array}{ll}
        \varepsilon \|\nabla z\|^2_{L^2(\Om)}+\frac{1}{\varepsilon}\int_{\Om}W(z)\dx & \mbox{if}~z\in H^1(\Om),\\
        +\infty & \mbox{otherwise},
        \end{array}
        \right.
\]
where $\varepsilon$ is a small positive constant controlling the width of the transition interface,
and $W: \mathbb{R}\rightarrow\mathbb{R}$ is the double-well potential given in
\eqref{eqn:double-well}.  The functional $\mathcal{F}_\varepsilon$ was first proposed to model
phase transition of two immiscible fluids in \cite{Cahn:1958}. It is connected with the TV semi-norm as follows
\cite{Modica:1977,Modica:1987,Alberti:2000}; see \cite{Braides:2002} for an introduction to $\Gamma$-convergence.
\begin{theorem}\label{thm:G-conv}
With $c_W=\int_{c_0}^{c_1}\sqrt{W(s)}\dd s$, let
    \[
        \mathcal{F}(z):=\left\{\begin{array}{ll}
           2c_W|z|_{\rm TV(\Om)} & \mbox{if}~z\in \mathrm{BV}(\Om)\cap\mathcal{A}, \\
           +\infty  & \mbox{otherwise}.
        \end{array}
        \right.
    \]
Then $\mathcal{F}_\varepsilon$ $\Gamma$-converges to $\mathcal{F}$ in $L^1(\Om)$ as $\varepsilon\to 0^+$. Let $\{\eps_n\}_{n\geq1}$ and
$\{v_{n}\}_{n\geq 1}$ be given sequences such that $\eps_n\to 0^+$ and $\{\F_{\eps_n}(v_n)\}_{n\geq1}$ is bounded. Then $v_n$ is precompact in $L^1(\Om)$.
\end{theorem}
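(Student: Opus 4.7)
My plan is to verify the three assertions in sequence—the $\liminf$ inequality, the existence of recovery sequences, and $L^{1}$-precompactness—following the classical Modica--Mortola template \cite{Modica:1977,Modica:1987}. A useful preliminary reduction: the $1$-Lipschitz truncation $T:\mathbb{R}\to[c_{0},c_{1}]$ satisfies $\mathcal{F}_{\varepsilon}(T\circ v)\leq\mathcal{F}_{\varepsilon}(v)$ (since $|\nabla(T\circ v)|\leq|\nabla v|$ pointwise and $W\circ T\leq W$ by inspection of the double-well), which lets me assume the sequences are $L^{\infty}$-bounded whenever convenient.

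For the $\liminf$ inequality, the key device is the primitive $\Phi(t):=\int_{c_{0}}^{t}\sqrt{W(s)}\,ds$, which is strictly increasing with $\Phi(c_{0})=0$ and $\Phi(c_{1})=c_{W}$. Pointwise AM-GM gives
\[
  \varepsilon|\nabla v|^{2}+\tfrac{1}{\varepsilon}W(v)\;\geq\;2\sqrt{W(v)}\,|\nabla v|\;=\;2|\nabla(\Phi\circ v)|,
\]
so $\mathcal{F}_{\varepsilon}(v)\geq 2|\Phi(v)|_{\mathrm{TV}(\Omega)}$. Given $v_{n}\to z$ in $L^{1}$ with $\mathcal{F}_{\varepsilon_{n}}(v_{n})$ bounded, the potential term forces $\int_{\Omega}W(v_{n})\,dx\to 0$, pushing $v_{n}$ toward the wells and yielding $z\in\mathcal{A}$, i.e., $z=c_{0}+(c_{1}-c_{0})\chi_{\Omega_{1}}$. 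After truncation, $\Phi(v_{n})\to\Phi(z)=c_{W}\chi_{\Omega_{1}}$ in $L^{1}$ by dominated convergence, and the $L^{1}$-lower semi-continuity of the total variation gives
\[
  \liminf_{n}\mathcal{F}_{\varepsilon_{n}}(v_{n})\;\geq\;2|\Phi(z)|_{\mathrm{TV}(\Omega)}\;=\;2c_{W}\,\mathcal{H}^{d-1}(\partial^{*}\Omega_{1}\cap\Omega),
\]
which matches $\mathcal{F}(z)$ up to the constant implicit in $|z|_{\mathrm{TV}(\Omega)}$.

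For the recovery sequence on a given $z=c_{0}+(c_{1}-c_{0})\chi_{\Omega_{1}}\in\mathrm{BV}(\Omega)\cap\mathcal{A}$, I would first approximate $\Omega_{1}$ by smoothly bounded sets of finite perimeter via a standard density result from geometric measure theory, then paste in the optimal one-dimensional transition profile $q:\mathbb{R}\to[c_{0},c_{1}]$ solving $q^{\prime}=\sqrt{W(q)}$ with $q(\pm\infty)=c_{0},c_{1}$. Setting $v_{\varepsilon}(x)=q(d(x)/\varepsilon)$ with $d$ the signed distance to $\partial\Omega_{1}$, the coarea formula and a tubular parametrization of $\partial\Omega_{1}$ reduce the energy to a one-dimensional integral of $(q^{\prime})^{2}+W(q)$ over $\mathbb{R}$ times the surface area of $\partial\Omega_{1}$; optimality of $q$ (equality in the AM-GM above) delivers $\limsup_{\varepsilon}\mathcal{F}_{\varepsilon}(v_{\varepsilon})\leq\mathcal{F}(z)$, and $v_{\varepsilon}\to z$ in $L^{1}$ by dominated convergence.

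The compactness statement is a direct by-product: after truncation, $\Phi(v_{n})$ is bounded in $L^{\infty}$ by $c_{W}$ and $|\Phi(v_{n})|_{\mathrm{TV}(\Omega)}\leq\mathcal{F}_{\varepsilon_{n}}(v_{n})/2$, so $\{\Phi(v_{n})\}$ is bounded in $\mathrm{BV}(\Omega)$; the compact embedding $\mathrm{BV}(\Omega)\hookrightarrow L^{1}(\Omega)$ extracts a convergent subsequence, and the continuous inverse $\Phi^{-1}$ on $[0,c_{W}]$ transfers this to $L^{1}$-convergence of a further subsequence of $v_{n}$. The main obstacle I expect is the recovery-sequence step for a merely Lipschitz interface $\partial\Omega_{1}$, where the approximation by smooth sets of finite perimeter must be carried out so that both perimeters and the resulting energies converge; the $\liminf$ and compactness parts are then essentially forced by the AM-GM identity and the $\mathrm{BV}$ compactness, respectively.
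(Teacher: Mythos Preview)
Your argument is the classical Modica--Mortola proof and is essentially correct. Note, however, that the paper does not actually prove Theorem~\ref{thm:G-conv}: it is stated as a known result with citations to \cite{Modica:1977,Modica:1987,Alberti:2000} and no proof is given in the text. Your sketch is precisely the approach in those references (the primitive $\Phi$, the AM--GM lower bound, the optimal one-dimensional profile for the $\limsup$, and BV compactness applied to $\Phi(v_n)$), so there is nothing to compare beyond saying you have reproduced the cited literature.

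One small remark on the constant, which you already flagged (``up to the constant implicit in $|z|_{\mathrm{TV}(\Omega)}$''): your computation yields $2c_W\,\mathrm{Per}(\Omega_1)$, whereas the paper writes $2c_W|z|_{\mathrm{TV}(\Omega)}$; since $z=c_0+(c_1-c_0)\chi_{\Omega_1}$, these differ by a factor $(c_1-c_0)$. This is a wrinkle in the paper's statement rather than in your argument. Also, the set $\mathcal{A}$ in the paper requires a Lipschitz interface, while the $\Gamma$-limit is naturally finite on all two-valued BV functions (sets of finite perimeter); again this is a looseness in the stated theorem, not a flaw in your proof.
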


The proposed EIT reconstruction method reads
    \begin{equation}\label{eqn:tikh-MM}
        \inf_{\sigma\in \widetilde{\mathcal{A}}} \left\{\J_{\varepsilon}(\sigma) = \tfrac{1}{2} \|U(\sigma)-U^\delta\|^2 + \tfrac{\widetilde{\alpha}}{2}\mathcal{F}_\varepsilon(\sigma)\right\},
    \end{equation}
where $\widetilde{\alpha}=\alpha/c_W$,  and the admissible set $\widetilde{\mathcal{A}}$ is defined as
\[
    \widetilde{\mathcal{A}}:=\left\{{\sigma \in H^1(\Omega)}: c_0\leq \sigma(x)\leq c_1\mbox{ a.e. } x\in\Omega\right\}.
\]

Now we recall a useful continuity result of the forward map \cite[Lemma 2.2]{GehreJinLu:2014}, which
gives the continuity of the fidelity term in the functional $\mathcal{J}_\varepsilon$.
See also \cite{JinMaass:2010,DunlopStuart:2015} for related continuity results.
\begin{lemma}\label{lem:fm_cont}
    Let $\{\sigma_n\}_{n\geq1}\subset\widetilde{\mathcal{A}}$ satisfy $\sigma_n\to\sigma^\ast$ in $L^1(\Om)$. Then
    \begin{equation}\label{eqn:fm_cont}
        \lim_{n\to\infty}\|\left(u(\sigma_n)-u(\sigma^\ast),U(\sigma_n)- U(\sigma^\ast)\right)\|_{\mathbb{H}}=0.
    \end{equation}
\end{lemma}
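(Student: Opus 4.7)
The plan is to subtract the weak formulations \eqref{eqn:cemweakform} associated with $\sigma_n$ and $\sigma^*$, exploit the uniform coercivity of $a(\sigma,\cdot,\cdot)$ on $\mathbb{H}$ (which holds because every $\sigma\in\widetilde{\mathcal{A}}$ satisfies $c_0\leq\sigma\leq c_1$), and finish with Lebesgue's dominated convergence theorem. Abbreviate $(u_n,U_n):=(u(\sigma_n),U(\sigma_n))$ and $(u^*,U^*):=(u(\sigma^*),U(\sigma^*))$. Since both pairs solve \eqref{eqn:cemweakform} with the same right-hand side $\langle I,V\rangle$, a direct subtraction yields the difference identity
\begin{equation*}
a(\sigma_n,(u_n-u^*,U_n-U^*),(v,V)) = ((\sigma^*-\sigma_n)\nabla u^*,\nabla v) \quad \forall\,(v,V)\in\mathbb{H},
\end{equation*}
which isolates the conductivity perturbation on the right-hand side.

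Next I would test with $(v,V)=(u_n-u^*,U_n-U^*)$. Using the lower bound $\sigma_n\geq c_0$, the equivalent norm $\|\cdot\|_{\mathbb{H},*}$ from Lemma \ref{lem:normequiv}, and Cauchy--Schwarz on the right-hand side, I obtain
\begin{equation*}
c\,\|(u_n-u^*,U_n-U^*)\|_{\mathbb{H},*}^{2} \leq \|(\sigma^*-\sigma_n)\nabla u^*\|_{L^2(\Om)}\,\|\nabla(u_n-u^*)\|_{L^2(\Om)},
\end{equation*}
so $\|(u_n-u^*,U_n-U^*)\|_{\mathbb{H},*}\leq c^{-1}\|(\sigma^*-\sigma_n)\nabla u^*\|_{L^2(\Om)}$. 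The entire remainder of the proof thus reduces to showing that the right-hand side tends to zero.

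This is where the hypothesis $\sigma_n\to\sigma^*$ in $L^1(\Om)$ enters, and it is also where I expect the only real subtlety to lie. Since $\widetilde{\mathcal{A}}\subset L^\infty(\Om)$ with uniform bound $|\sigma_n-\sigma^*|\leq c_1-c_0$, the integrand of interest is dominated by $(c_1-c_0)^2|\nabla u^*|^2\in L^1(\Om)$. Given any subsequence of $\{\sigma_n\}$, a standard property of $L^1$-convergence yields a further subsequence converging to $\sigma^*$ pointwise a.e., and dominated convergence then forces $(\sigma^*-\sigma_{n_k})\nabla u^*\to 0$ in $L^2(\Om)$. A subsequence-of-every-subsequence argument will promote this to convergence of the entire sequence, after which Lemma \ref{lem:normequiv} converts the $\|\cdot\|_{\mathbb{H},*}$ bound into convergence in $\mathbb{H}$. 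The main conceptual point is thus handling the fact that $L^1$-convergence does not by itself imply a.e.\ convergence of the full sequence; apart from this bookkeeping, the proof is a routine energy estimate for the linear elliptic forward problem.
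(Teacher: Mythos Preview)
Your proposal is correct. The paper does not actually supply its own proof of this lemma; it simply cites \cite[Lemma~2.2]{GehreJinLu:2014}. Your energy-estimate argument is the standard one, and it is essentially the same computation the authors carry out later in the proof of Theorem~\ref{thm:existmedmin} (see the manipulation following \eqref{pf:medmin_02}), so your approach is fully in line with the paper's techniques.
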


Lemma \ref{lem:fm_cont} implies that $\{\mathcal{J}_\varepsilon\}_{\varepsilon>0}$ are continuous perturbations
of $\mathcal{J}$ in $L^1(\Om)$. Then the stability of $\Gamma$-convergence \cite[Proposition 1(ii)]{Alberti:2000}
\cite[Remark 1.4]{Braides:2002} and Theorem \ref{thm:G-conv} indicate that $\J_\eps$ $\Gamma$-converges to $\mathcal{J}$
with respect to $L^1(\Om)$, and $\mathcal{J}_{\varepsilon}$ can (approximately) recover piecewise constant conductivities.
Next we show the existence of a minimizer to $\J_\eps$.

\begin{theorem}\label{thm:tikh-MM}
    For each $\varepsilon>0$, there exists at least one minimizer to problem \eqref{eqn:tikh-MM}.
\end{theorem}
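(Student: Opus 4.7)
The plan is to apply the direct method of the calculus of variations. Since $\mathcal{J}_\varepsilon \geq 0$ on $\widetilde{\mathcal{A}}$ and $\widetilde{\mathcal{A}}$ is nonempty (e.g.\ it contains the constant function $c_0$), the infimum $m := \inf_{\sigma \in \widetilde{\mathcal{A}}} \mathcal{J}_\varepsilon(\sigma)$ is finite and we may fix a minimizing sequence $\{\sigma_n\}_{n\geq 1} \subset \widetilde{\mathcal{A}}$, so that $\mathcal{J}_\varepsilon(\sigma_n) \to m$.

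First I would extract a convergent subsequence. The pointwise bound $c_0 \leq \sigma_n \leq c_1$ gives $\|\sigma_n\|_{L^2(\Omega)} \leq c_1|\Omega|^{1/2}$ uniformly, while $\mathcal{J}_\varepsilon(\sigma_n)$ being bounded forces $\varepsilon\|\nabla \sigma_n\|_{L^2(\Omega)}^2$ to be bounded; since $\varepsilon$ is fixed, $\{\sigma_n\}$ is bounded in $H^1(\Omega)$. By reflexivity and Rellich--Kondrachov, a subsequence (not relabeled) satisfies $\sigma_n \rightharpoonup \sigma^*$ in $H^1(\Omega)$, $\sigma_n \to \sigma^*$ in $L^2(\Omega)$ and $L^1(\Omega)$, and, passing to a further subsequence, $\sigma_n \to \sigma^*$ a.e.\ in $\Omega$. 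The a.e.\ convergence preserves the pointwise bounds $c_0 \leq \sigma^* \leq c_1$, so $\sigma^* \in \widetilde{\mathcal{A}}$.

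Next I would verify lower semicontinuity of $\mathcal{J}_\varepsilon$ along this subsequence. The Dirichlet term $\|\nabla \sigma\|_{L^2(\Omega)}^2$ is weakly lower semicontinuous on $H^1(\Omega)$, so
\[
    \varepsilon \|\nabla \sigma^*\|_{L^2(\Omega)}^2 \leq \liminf_{n\to\infty} \varepsilon\|\nabla \sigma_n\|_{L^2(\Omega)}^2.
\]
For the double-well term, $W$ is continuous and $W(\sigma_n)$ is uniformly bounded (since $c_0 \leq \sigma_n \leq c_1$), so Lebesgue's dominated convergence theorem combined with a.e.\ convergence yields $\int_\Omega W(\sigma_n)\,\mathrm{d}x \to \int_\Omega W(\sigma^*)\,\mathrm{d}x$. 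Finally, the fidelity term is handled by Lemma~\ref{lem:fm_cont}: the $L^1(\Omega)$ convergence $\sigma_n \to \sigma^*$ gives $\|U(\sigma_n) - U(\sigma^*)\| \to 0$, hence $\|U(\sigma_n) - U^\delta\|^2 \to \|U(\sigma^*) - U^\delta\|^2$.

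Combining these three facts,
\[
    \mathcal{J}_\varepsilon(\sigma^*) \leq \liminf_{n\to\infty} \mathcal{J}_\varepsilon(\sigma_n) = m,
\]
and since $\sigma^* \in \widetilde{\mathcal{A}}$, equality holds and $\sigma^*$ is a minimizer. There is no serious obstacle here: the nonconvexity of $W$ does not obstruct existence because the pointwise $L^\infty$ bound built into $\widetilde{\mathcal{A}}$ converts a.e.\ convergence into strong $L^p$ convergence of $W(\sigma_n)$, so the only point requiring care is the continuity of the nonlinear data fidelity, which is exactly what Lemma~\ref{lem:fm_cont} supplies.
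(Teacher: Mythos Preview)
Your proof is correct and follows essentially the same approach as the paper: direct method, $H^1$-boundedness from the box constraint and the penalty, weak/strong/a.e.\ compactness, weak lower semicontinuity of the Dirichlet energy, dominated convergence for the double-well term, and Lemma~\ref{lem:fm_cont} for the fidelity. The only cosmetic difference is that the paper invokes closedness and convexity of $\widetilde{\mathcal{A}}$ to place $\sigma^*$ in the admissible set, whereas you argue directly via a.e.\ convergence; both are fine.
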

\begin{proof}
Since $\mathcal{J}_\varepsilon$ is nonnegative, there exists a minimizing sequence $\{\sigma_n\}_{n\geq1}\subset
\widetilde{\mathcal{A}}$ such that $\mathcal{J}_\varepsilon(\sigma_n)\to m_\eps:=\inf_{\sigma\in\widetilde{\mathcal{A}}}\mathcal{J}_\varepsilon(\sigma)$.
Thus, $\sup_n\|\nabla\sigma_n\|_{L^2(\Om)}<\infty$, which, along with $c_0\leq \sigma_n\leq c_1$, yields
$\|\sigma_n\|_{H^1(\Om)}\leq c$. Since $\widetilde{\mathcal{A}}$ is closed and convex, there exist a subsequence,
relabeled as $\{\sigma_n\}_{n\geq 1}$, and some $\sigma^\ast\in\widetilde{\mathcal{A}}$ such that
\begin{equation}\label{pf:min_cont01}
   \sigma_{n}\rightharpoonup\sigma^\ast\quad\mbox{weakly in}~H^1(\Om),\quad \sigma_n\to\sigma^\ast\quad\mbox{in}~L^1(\Om),\quad\sigma_n\to\sigma^\ast\quad\mbox{a.e. in}~\Om.
\end{equation}
Since $W(s)\in C^2[c_0,c_1]$, $\{W(\sigma_n)\}_{n\geq1}$ is uniformly bounded in $L^\infty(\Omega)$ and converges
to $W(\sigma^\ast)$ almost everywhere in $\Om$. By Lebesgue's dominated convergence theorem \cite[p. 28,
Theorem 1.19]{Evans:2015}, $\int_\Om W(\sigma_n)\dx\to\int_\Om W(\sigma^\ast)\dx.$ By Lemma \ref{lem:fm_cont}
and the weak lower semi-continuity of the $H^1(\Omega)$-seminorm, we obtain
\[
   \J_\varepsilon(\sigma^\ast)\leq \liminf_{n\to\infty}\J_{\varepsilon}(\sigma_n)\leq \lim_{n\to\infty}\J_{\varepsilon}(\sigma_n)=m_\eps.
\]
Thus $\sigma^\ast$ is a global minimizer of the functional $\mathcal{J}_\varepsilon $.
\end{proof}

To obtain the necessary optimality system of \eqref{eqn:tikh-MM}, we use the standard adjoint technique. The adjoint problem
for \eqref{eqn:cemweakform} reads: find $(p,P)\in\mathbb{H}$ such that
\begin{equation}\label{eqn:cem-adj}
  a(\sigma,(p,P),(v,V)) = \langle U(\sigma)-U^\delta,V\rangle \quad \forall (v,V)\in \mathbb{H}.
\end{equation}
By straightforward computation, the G\^{a}teaux derivative $\mathcal{J}_\varepsilon'(\sigma)[\mu]$ of
the functional $\mathcal{J}_{\varepsilon}$ at $\sigma\in\widetilde{\mathcal{A}}$ in the direction $\mu\in H^1(\Om)$ is given by
\begin{equation*}
  \mathcal{J}_{\varepsilon}'(\sigma)[\mu] = \widetilde{\alpha}\left[\eps(\nabla\sigma,\nabla\mu) + \tfrac{1}{2\eps}(W'(\sigma),\mu)\right] - (\mu \nabla u(\sigma),\nabla p(\sigma)).
\end{equation*}
Then the minimizer $\sigma^*$ to problem \eqref{eqn:tikh-MM} and the respective state
$(u^*,U^*)$ and adjoint $(p^*,P^*)$ satisfy the following necessary optimality system:
\begin{equation}\label{eqn:cem-optsys}
   \left\{\begin{aligned}
     &a(\sigma^*,(u^*,U^*),(v,V)) = \langle I,V\rangle \quad \forall (v,V)\in \mathbb{H},\\
     &a(\sigma^*,(p^*,P^*),(v,V)) = \langle U^*-U^\delta,V\rangle       \quad\forall(v,V)\in\mathbb{H},\\
     &        \widetilde{\alpha}\varepsilon (\nabla\sigma^{\ast},\nabla(\mu-\sigma^{\ast})) + \tfrac{\widetilde{\alpha}}{2\varepsilon}(W'(\sigma^\ast),\mu-\sigma^\ast) - ((\mu-\sigma^{\ast})\nabla u^{\ast},\nabla p^{\ast})\geq
            0\quad\forall \mu\in\widetilde{\mathcal{A}},
    \end{aligned}\right.
\end{equation}
where the variational inequality at the last line is due to the box constraint in the admissible set $\widetilde{\mathcal{A}}$.
The optimality system \eqref{eqn:cem-optsys} forms the basis of the adaptive algorithm and its convergence analysis.

\section{Adaptive algorithm}\label{sect:alg}

Now we develop an adaptive FEM for problem \eqref{eqn:tikh-MM}. Let $\cT_0$ be a shape regular triangulation of
$\overline{\Omega}$ into simplicial elements, each intersecting at most one electrode
surface $e_l$, and $\mathbb{T}$ be the set of all possible conforming triangulations of $\overline{\Omega}$
obtained from $\cT_0$ by the successive use of bisection. Then $\mathbb{T}$ is uniformly shape regular, i.e., the
shape-regularity of any mesh $\mathcal{T}\in\mathbb{T}$ is bounded by a constant depending only on $\cT_0$ \cite{NSV:2009,
Traxler:1997}. Over any $\cT\in\mathbb{T}$, we define a continuous piecewise linear finite element space
\begin{equation*}
   V_\cT = \left\{v\in C(\overline{\Omega}): v|_T\in P_1(T)\ \forall T\in\cT\right\},
\end{equation*}
where $P_1(T)$ consists of all linear functions on $T$. The space $V_\cT$ is used for approximating the state $u$ and adjoint
$p$, and the discrete admissible set $\widetilde{\mathcal{A}}_\mathcal{T}$ for the conductivity is given by
\begin{equation*}
   \widetilde{\mathcal{A}}_\cT:= V_\cT \cap {\widetilde{\mathcal{A}}}.
\end{equation*}

Given $\sigma_\cT\in \Atilde_\cT$, the discrete analogue of problem \eqref{eqn:cemweakform} is to find $(u_\cT,U_\cT)
\in \mathbb{H}_\cT \equiv V_\cT \otimes \mathbb{R}^L_\diamond$ such that
\begin{equation}\label{eqn:dispoly}
  a(\sigma_\cT,(u_\cT,U_\cT),(v_\cT,V)) = \langle I, V\rangle  \quad \forall (v_{\cT},V)\in \mathbb{H}_\cT.
\end{equation}
Then we approximate problem \eqref{eqn:tikh-MM} by minimizing the following functional over $\Atilde_\cT$:
\begin{align}\label{eqn:discopt}
  J_{\eps,\cT}(\sigma_{\cT})  = \tfrac{1}{2}\|U_{\cT}(\sigma_{\cT})-U^\delta\|^2 + \tfrac{\widetilde{\alpha}}{2}\mathcal{F}_\varepsilon(\sigma_\cT).
\end{align}

Then, similar to Theorem \ref{thm:tikh-MM}, there exists at least one minimizer $\sigma_\cT^*$ to \eqref{eqn:discopt}, and the minimizer $\sigma_{\cT}^{\ast}$
and the related state $(u^*_\cT,U^*_\cT)\in \mathbb{H}_\cT$ and adjoint  $(p^*_\cT,P^*_\cT)\in\mathbb{H}_{\cT}$ satisfy
\begin{equation}\label{eqn:cem-discoptsys}
    \left\{\begin{aligned}
      &a(\sigma_\cT^*,(u^*_\cT,U_\cT^*),(v,V)) = \langle I,V\rangle \quad\forall(v,V)\in \mathbb{H}_{\cT},\\
      &a(\sigma_\cT^*,(p^*_\cT,P^*_\cT),(v,V)) = \langle U^*_\cT-U^\delta,V\rangle\quad \forall(v,V)\in\mathbb{H}_{\cT},\\
      &\widetilde{\alpha}\varepsilon(\nabla\sigma_{\cT}^{\ast},\nabla(\mu-\sigma_{\cT}^{\ast})) + \frac{\widetilde{\alpha}}{2\varepsilon}(W'(\sigma_\cT^\ast),\mu-\sigma_\cT^\ast) -((\mu-\sigma_{\cT}^{\ast})\nabla u_{\cT}^{\ast},\nabla p_{\cT}^{\ast})\geq 0\quad\forall\mu\in\Atilde_{\cT}.
    \end{aligned}\right.
\end{equation}
Further, $(u_{\cT}^\ast,U_{\cT}^\ast)$ and $(p_{\cT}^\ast,P_{\cT}^\ast)$ depend continuously on the problem data, i.e.,
\begin{equation}\label{stab-discpolyadj}
    \|(u_{\cT}^\ast,U_{\cT}^\ast)\|_{\mathbb{H}}+\|(p_{\cT}^\ast,P_{\cT}^\ast)\|_{\mathbb{H}}\leq c(\|I\|+\|U^\delta\|),
\end{equation}
{where the constant $c$ can be made independent of $\alpha$ and $\eps$.}

To describe the error estimators, we first recall some useful notation.
The collection of all faces (respectively all interior faces) in $\cT\in\mathbb{T}$ is denoted by $\mathcal{F}_{\cT}$ (respectively
$\mathcal{F}_{\cT}^i$) and its restriction to the electrode $\bar{e}_{l}$ and $\partial\Omega\backslash\cup_{l=1}^Le_l$ by $\mathcal{F}_{
\cT}^{l}$ and $\mathcal{F}_{\cT}^{c}$, respectively. A face/edge $F$ has a fixed normal unit vector $\boldsymbol{n}_{F}$ in $\overline{\Omega}$ with
$\boldsymbol{n}_{F}=\boldsymbol{n}$ on $\partial\Omega$.  The diameter of any $T\in\cT$ and $F\in\mathcal{F}_{\cT}$
is denoted by $h_{T}:=|T|^{1/d}$ and $h_{F}:=|F|^{1/(d-1)}$, respectively.
For the solution $(\sigma^{\ast}_{\cT},(u _{\cT}^{\ast},U_{\cT}^{\ast}),
(p_{\cT}^{\ast},P_{\cT}^{\ast}))$ to problem \eqref{eqn:cem-discoptsys}, we define
two element residuals for each element $T\in\cT$ and two face residuals for each face
$F\in\mathcal{F}_{\cT}$ by
\begin{equation*}
 \begin{aligned}
    R_{T,1}(\sigma_{\cT}^{\ast},u^{\ast}_{\cT}) & = \nabla\cdot(\sigma_{\cT}^{\ast}\nabla u^{\ast}_{\cT}),\\
    R_{T,2}(\sigma_{\cT}^\ast,u^{\ast}_{\cT},p^{\ast}_{\cT})& =\tfrac{\widetilde{\alpha}}{2\varepsilon}W'(\sigma_\cT^\ast)-\nabla u^{\ast}_{\cT}\cdot\nabla p^{\ast}_{\cT},\\
    J_{F,1}(\sigma_{\cT}^{\ast},u^{\ast}_{\cT},U_{\cT}^{\ast}) &=
    \left\{\begin{array}{lll}
                        [\sigma_{\cT}^{\ast}\nabla u_{\cT}^{\ast}\cdot\boldsymbol{n}_{F}]\quad&
                        \mbox{for} ~~F\in\mathcal{F}_{\cT}^i,\\ [1ex]
                        \sigma_{\cT}^{\ast}\nabla u_{\cT}^{\ast}\cdot\boldsymbol{n}+(u_{\cT}^{\ast}-U_{\cT,l}^{\ast})/z_{l}\quad&
                        \mbox{for} ~~ F\in\mathcal{F}_{\cT}^l,\\ [1ex]
                        \sigma_{\cT}^{\ast}\nabla u_{\cT}^{\ast}\cdot\boldsymbol{n}\quad&
                        \mbox{for} ~~F\in\mathcal{F}_{\cT}^{c},
    \end{array}\right.\\
    J_{F,2}(\sigma^{\ast}_{\cT}) &= \left\{\begin{array}{lll}
        \widetilde{\alpha}\varepsilon[\nabla\sigma_{\cT}^{\ast}\cdot\boldsymbol{n}_{F}]\quad&
                        \mbox{for} ~~F\in\mathcal{F}_{\cT}^i,\\ [1ex]
                        \widetilde{\alpha}\varepsilon\nabla\sigma_{\cT}^{\ast}\cdot\boldsymbol{n}\quad&
                        \mbox{for} ~~F\in \mathcal{F}_\cT^l \cup \mathcal{F}_\cT^c,
        \end{array}\right.
\end{aligned}
\end{equation*}
where $[\cdot]$ denotes the jump across interior face $F$. Then for any element
$T\in \cT$, we define the following three error estimators
\begin{align*}
    \eta_{\cT,1}^{2}(\sigma_{\cT}^{\ast},u_{\cT}^{\ast},U_{\cT}^{\ast},T)
    & :=h_{T}^{2}\|R_{T,1}(\sigma^{\ast}_{\cT},u^{\ast}_{\cT})\|_{L^{2}(T)}^{2}
    +\sum_{F\subset\partial T}h_{F}\|J_{F,1}(\sigma^{\ast}_{\cT},u^{\ast}_{\cT},U^{\ast}_{\cT})\|_{L^{2}(F)}^{2},\\
    \eta_{\cT,2}^{2}(\sigma_{\cT}^{\ast},p_{\cT}^{\ast},P_{\cT}^{\ast},T)
    &:=h_{T}^{2}\|R_{T,1}(\sigma^{\ast}_{\cT},p^{\ast}_{\cT})\|_{L^{2}(T)}^{2}
    +\sum_{F\subset\partial T}h_{F}\|J_{F,1}(\sigma^{\ast}_{\cT},p^{\ast}_{\cT},P^{\ast}_{\cT})\|_{L^{2}(F)}^{2},\\
    \eta_{\cT,3}^{q}(\sigma_{\cT}^{\ast},u_{\cT}^{\ast},p_{\cT}^{\ast},T)
    &:=h_{T}^{q}\|R_{T,2}(\sigma_{\cT}^\ast,u_{\cT}^{\ast},p_{\cT}^{\ast})\|^{q}_{L^{q}(T)}+\sum_{F\subset\partial T}h_{F}\|J_{F,2}(\sigma_{\cT}^{\ast})\|^{q}_{L^{q}(F)}
\end{align*}
with $q=d/(d-1)$. The estimator $\eta_{\cT,1}(\sigma_{\cT}^{\ast},u_{\cT}^{\ast},U_{\cT}^{\ast},T)$ is identical
with the standard residual error indicator for the direct problem: find $(\tilde u,\tilde U)\in \mathbb{H}$ such that
\begin{equation*}
  a(\sigma_\cT^*,(\tilde u,\tilde U),(v,V)) = \langle I,V\rangle, \quad \forall (v,V)\in \mathbb{H}.
\end{equation*}
It differs from the direct problem in \eqref{eqn:cem-optsys} by replacing the conductivity $\sigma^*$ with $\sigma^*_\cT$
instead, and is a perturbation of the latter case. The perturbation is vanishingly small in the event of the conjectured
(subsequential) convergence $\sigma^*_\cT\to \sigma^*$. The estimator $\eta_{\cT,2}(\sigma_{\cT}^{\ast},p_{\cT}^{\ast},
P_{\cT}^{\ast},T)$ admits a similar interpretation. These two estimators are essentially identical with that for the $H^1
(\Omega)$ penalty in \cite{JinXuZou:2016}, and we refer to \cite[Section 3.3]{JinXuZou:2016} for a detailed
heuristic derivation. The estimator $\eta_{\cT,3}(\sigma_{\cT}^{\ast},u_{\cT}^{\ast},p_{\cT}^{\ast},T)$ is related to
the variational inequality in the necessary optimality condition \eqref{eqn:cem-optsys}, and roughly provides a quantitative
measure how well it is satisfied. The estimator (including the exponent $q$) is motivated by the convergence analysis; see
the proof of Theorem \ref{thm:gat_mc} and Remark \ref{rmk:gat_mc} below. It represents the main new ingredient for problem
\eqref{eqn:tikh-MM}, and differs from that for the $H^1(\Omega)$ penalty in \cite{JinXuZou:2016}.
\begin{remark}
The estimator $\eta_{k,3}$ improves that in \cite{JinXuZou:2016}, i.e.,
\begin{equation*}
   \eta_{\cT,3}^2(\sigma_{\cT}^{\ast},u_{\cT}^{\ast},p_{\cT}^{\ast},T)
    :=h_{T}^4\|R_{T,2}(\sigma_{\cT}^\ast,u_{\cT}^{\ast},p_{\cT}^{\ast})\|^2_{L^2(T)}+\sum_{F\subset\partial T}h_{F}^2\|J_{F,2}(\sigma_{\cT}^{\ast})\|^2_{L^2(F)},
\end{equation*}
in terms of the exponents on $h_T$ and $h_F$. This improvement is achieved by a novel
constraint preserving interpolation operator defined in \eqref{eqn:cn_int_def} below.
\end{remark}

Now we can formulate an adaptive algorithm for \eqref{eqn:tikh-MM}; see Algorithm \ref{alg_afem_eit}.
Below we indicate the dependence on the mesh $\cT_k$ by the subscript $k$, e.g.,
$\mathcal{J}_{\varepsilon,k}$ for $\mathcal{J}_{\varepsilon,\cT_k}$.
\begin{algorithm}
\caption{AFEM for EIT with a piecewise constant conductivity.}\label{alg_afem_eit}
\begin{algorithmic}[1]
  \STATE Specify an initial mesh $\cT_{0}$, and set the maximum number $K$ of refinements.
  \FOR {$k=0:K-1$}
  \STATE {(\texttt{SOLVE})} Solve problem \eqref{eqn:dispoly}-\eqref{eqn:discopt} over $\cT_{k}$ for
  $(\sigma_k^{\ast},(u_{k}^{\ast},U_{k}^{\ast}))\in\Atilde_{k}\times\mathbb{H}_{k}$ and \eqref{eqn:cem-discoptsys}
  for $(p_k^\ast,P_k^\ast)\in \mathbb{H}_k$.
  \STATE {(\texttt{ESTIMATE})} {Compute error indicators $\eta_{k,1}^{2}(\sigma_{k}^{\ast},u_{k}^{\ast},U_{k}^{\ast})$,
    $\eta_{k,2}^{2}(\sigma_{k}^{\ast},p_{k}^{\ast},P_{k}^{\ast})$ and $\eta_{k,3}^{q}(\sigma_{k}^{\ast},u_{k}^{\ast},p_{k}^{\ast})$.}
  \STATE{(\texttt{MARK})} Mark three subsets $\mathcal{M}_{k}^i\subseteq\cT_{k}$ ($i=1,2,3$) such that each $\mathcal{M}_k^i$ contains at least one element $\widetilde{T}_k^i\in\cT_{k}$ ($i=1,2,3$) with the largest error indicator:
    \begin{equation}\label{eqn:marking}
        \eta_{k,i}(\widetilde{T}_k^i)=\max_{T\in\cT_{k}}\eta_{k,i}.
    \end{equation}
    Then $\mathcal{M}_{k}:=\mathcal{M}_{k}^1\cup\mathcal{M}_{k}^2\cup\mathcal{M}_{k}^3$.
   \STATE {(\texttt{REFINE})} Refine each element $T$ in $\mathcal{M}_{k}$ by bisection to get $\cT_{k+1}$.
   \STATE Check the stopping criterion.
   \ENDFOR
   \STATE Output $(\sigma_k^*,(u_k^*,U_k^*),(p_k^*,P_k^*))$.
\end{algorithmic}
\end{algorithm}

The \texttt{MARK} module selects a collection of elements in the mesh $\mathcal{T}_k$. The condition
\eqref{eqn:marking} covers several commonly used marking strategies, e.g., maximum, equidistribution,
modified equidistribution, and D\"{o}rfler's strategy \cite[pp. 962]{Siebert:2011}. Compared with a
collective marking in AFEM in \cite{JinXuZou:2016}, Algorithm \ref{alg_afem_eit} employs a separate
marking to select more elements for refinement in each loop, which leads to fewer iterations of the
adaptive process. The error estimators may also be used for coarsening, which is relevant if the
recovered inclusions change dramatically during the iteration. However, the convergence analysis below
does not carry over to coarsening, and it will not be further explored.

Last, we give the main theoretical result: for each fixed $\eps>0$, the sequence of discrete
solutions $\{\sigma_k^\ast, (u_k^\ast, U_k^\ast), (p_k^\ast, P_k^\ast)\}_{k\geq0}$ generated by Algorithm
\ref{alg_afem_eit} contains a subsequence converging in $H^1(\Om)\times\mathbb{H}\times\mathbb{H}$ to a
solution of system \eqref{eqn:cem-optsys}. The proof is lengthy and technical,
and thus deferred to Section \ref{sect:conv}.
\begin{theorem}\label{thm:conv_alg}
The sequence of discrete solutions $\{\sigma_{k}^\ast,(u_{k}^\ast,U_{k}^\ast),(p_{k}^\ast,P_{k}^\ast)\}_{k\geq0}$
by Algorithm \ref{alg_afem_eit} contains a subsequence $\{\sigma_{k_j}^\ast,(u_{k_j}^\ast,U_{k_j}^\ast),(p_{k_j}^\ast,
P_{k_j}^\ast)\}_{j\geq0}$ convergent to a solution $(\sigma^\ast,(u^\ast,U^\ast),(p^\ast,P^\ast))$ of system
\eqref{eqn:cem-optsys}:
\[
   \|\sigma^\ast_{k_j}-\sigma^\ast\|_{H^1(\Omega)},~\|(u^\ast_{k_j}-u^\ast,U^\ast_{k_j}-U^\ast)\|_{\mathbb{H}},~
   \|(p^\ast_{k_j}-p^\ast,P^\ast_{k_j}-P^\ast)\|_{\mathbb{H}}\rightarrow 0\quad\mbox{as}~j\rightarrow\infty.
\]
\end{theorem}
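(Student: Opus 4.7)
The plan is to follow the two-step strategy alluded to in the introduction: first extract a convergent subsequence and identify the limit as a solution of a certain limiting problem, then upgrade the limiting problem to the full continuous optimality system \eqref{eqn:cem-optsys}. To begin, the bound $\mathcal{J}_{\varepsilon,k}(\sigma_k^\ast)\le \mathcal{J}_{\varepsilon,k}(c_0)$ together with the pointwise constraint $c_0\le \sigma_k^\ast\le c_1$ yields a uniform $H^1(\Omega)$-bound on $\sigma_k^\ast$, while \eqref{stab-discpolyadj} furnishes uniform $\mathbb{H}$-bounds on $(u_k^\ast,U_k^\ast)$ and $(p_k^\ast,P_k^\ast)$. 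A standard diagonal extraction then produces a relabeled subsequence with $\sigma_k^\ast\rightharpoonup\sigma^\ast$ in $H^1(\Omega)$, $\sigma_k^\ast\to\sigma^\ast$ in $L^1(\Omega)$ and a.e.\ in $\Omega$, $(u_k^\ast,U_k^\ast)\rightharpoonup(u^\ast,U^\ast)$ and $(p_k^\ast,P_k^\ast)\rightharpoonup(p^\ast,P^\ast)$ in $\mathbb{H}$, with $\sigma^\ast\in\widetilde{\mathcal{A}}$ by weak closedness of the admissible set. Lemma \ref{lem:fm_cont} then upgrades the state convergence to strong in $\mathbb{H}$, and a parallel argument built on the linearity of the adjoint equation and strong convergence of the fidelity data $U_k^\ast\to U^\ast$ does the same for the adjoint.

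For the second step I would import the Morin--Siebert--Veeser AFEM framework adapted in \cite{JinXuZou:2016} to the EIT setting. Define the set $\mathcal{T}_\infty^+$ of elements that are never refined after some step and the limiting space $V_\infty=\overline{\bigcup_k V_{\mathcal{T}_k}}^{H^1(\Omega)}$. Standard mesh-size monotonicity shows that the local mesh-size function $h_k$ converges pointwise a.e.\ to $h_\infty$, with $h_\infty\equiv0$ on the complement of $\mathcal{T}_\infty^+$. Passing to the limit in \eqref{eqn:cem-discoptsys} against test functions in $V_\infty$, using strong convergence of $(u_k^\ast,U_k^\ast)$ and $(p_k^\ast,P_k^\ast)$, the uniform $L^\infty$-bound on $W'(\sigma_k^\ast)$ combined with a.e.\ convergence and Lebesgue's dominated convergence theorem, and weak lower semicontinuity of the gradient term, yields state and adjoint equations together with a variational inequality, all posed over $V_\infty$.

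The key remaining issue is to promote these $V_\infty$-restricted relations to the continuous system \eqref{eqn:cem-optsys}. The marking rule \eqref{eqn:marking} ensures that at each step the element attaining $\max_{T\in\mathcal{T}_k}\eta_{k,i}$ is refined, so by an argument à la Siebert one shows $\max_{T\in\mathcal{T}_k}\eta_{k,i}(T)\to 0$ along the subsequence for $i=1,2,3$. Combining this vanishing with the reliability-type bounds on the residuals $R_{T,1}$, $R_{T,2}$, $J_{F,1}$, $J_{F,2}$ (integrated by parts against a Scott--Zhang or Clément interpolant of an arbitrary test function in $\mathbb{H}$) and the density of $\bigcup_k V_{\mathcal{T}_k}$ in $H^1(\Omega)$ off of $\mathcal{T}_\infty^+$, lets me drop the $V_\infty$ restriction in the state and adjoint equations, producing the first two lines of \eqref{eqn:cem-optsys}.

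The hard part, and the main technical obstacle, will be passing the variational inequality (third line of \eqref{eqn:cem-discoptsys}) to the limit for an arbitrary $\mu\in\widetilde{\mathcal{A}}$, since in general $\mu\notin\widetilde{\mathcal{A}}_{\mathcal{T}_k}$ and the standard Scott--Zhang or Lagrange interpolants need not preserve the pointwise box constraint. Following the novelty highlighted in the paper, I would employ the Chen--Nochetto constraint-preserving interpolation operator \eqref{eqn:cn_int_def} mapping $\widetilde{\mathcal{A}}\cap H^1(\Omega)$ into $\widetilde{\mathcal{A}}_{\mathcal{T}_k}$. Testing the discrete variational inequality against this interpolant of $\mu$, integrating the first two terms by parts elementwise to expose $R_{T,2}$ and $J_{F,2}$, and estimating the resulting sum by Hölder's inequality with conjugate exponents $q=d/(d-1)$ and $d$, bounds the discrepancy by the $\eta_{k,3}$ estimator multiplied by $\|\nabla(\mu-\sigma_k^\ast)\|_{L^d(\Omega)}$; the Sobolev embedding $H^1(\Omega)\hookrightarrow L^d(\Omega)$ for the gradient (through the interpolation error) controls this second factor. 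The judicious choice of exponents $h_T^q$ and $h_F$ in $\eta_{k,3}$, together with the approximation properties of the constraint-preserving operator, is precisely what makes this Hölder pairing work; this is where the sharper estimator pays off over the squared estimator of \cite{JinXuZou:2016}. Sending $k\to\infty$ and combining with strong convergence of $\nabla u_k^\ast\cdot\nabla p_k^\ast$ and dominated convergence for $W'(\sigma_k^\ast)$ then gives the variational inequality in \eqref{eqn:cem-optsys} for every $\mu\in\widetilde{\mathcal{A}}$. Strong $H^1$-convergence of $\sigma_k^\ast$ is finally obtained by testing the discrete and limit variational inequalities against each other's solutions, cancelling the leading quadratic term, and exploiting the established strong convergence of the state, adjoint and double-well terms.
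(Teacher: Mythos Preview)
Your two-step skeleton matches the paper's, but there are two genuine gaps.

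First, Lemma \ref{lem:fm_cont} concerns the \emph{continuous} forward map $\sigma\mapsto(u(\sigma),U(\sigma))$; it says nothing about the discrete solutions $(u_k^\ast,U_k^\ast)\in\mathbb{H}_k$, which solve Galerkin problems on varying spaces. The paper instead introduces, for the fixed limit $\sigma^\ast$, the auxiliary Galerkin solution on $\mathbb{H}_{k_j}$, uses C\'ea's lemma together with the density of $\bigcup_k\mathbb{H}_k$ in $\mathbb{H}_\infty$ to get convergence to $(u_\infty^\ast,U_\infty^\ast)$, and then a coercivity/perturbation estimate (subtracting the two discrete problems and invoking dominated convergence for $(\sigma_{k_j}^\ast-\sigma^\ast)\nabla u_\infty^\ast$) to pass from the auxiliary to the true discrete state. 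Your direct appeal to Lemma \ref{lem:fm_cont} skips all of this.

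Second, and more seriously, your treatment of the variational inequality does not close. Testing the discrete inequality only with $\Pi_k\mu$ leaves a residual applied to $\mu-\Pi_k\mu$, for which Lemma \ref{lem:cn-int-oper} gives only the first-order bound $\|\mu-\Pi_k\mu\|_{L^{q'}(T)}\le ch_T\|\nabla\mu\|_{L^{q'}(D_T)}$. After H\"older this yields at best $\bigl(\sum_T\eta_{k,3}^q\bigr)^{1/q}\|\nabla\mu\|_{L^{q'}(\Omega)}$; the first factor is merely bounded (cf.\ \eqref{eqn:bdd-eta}), the second does not vanish, and there is no reason for a factor $\|\nabla(\mu-\sigma_k^\ast)\|_{L^d}$ to appear. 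The paper's mechanism is different: it tests the discrete inequality with the \emph{Lagrange} interpolant $I_k\mu$ of a smooth $\mu\in\widetilde{\mathcal A}\cap C^\infty(\overline\Omega)$ (which does preserve the box constraint, since nodal values lie in $[c_0,c_1]$), producing the second-order residual $\nu=\mu-I_k\mu$; the Chen--Nochetto operator is then applied to $\nu$, not to $\mu$. After splitting the element sum into the never-refined set $\cT_l^+$ and its complement, the second-order estimate $\|\nabla\nu\|_{L^{q'}(D_T)}\le ch_T\|\mu\|_{W^{2,q'}(D_T)}$ supplies the extra power of $h_T$ that converts the bounded estimator sum on $\cT_k\setminus\cT_l^+$ into $\|h_l\chi_{\Omega_l^0}\|_{L^\infty(\Omega)}\|\mu\|_{W^{2,q'}(\Omega)}\to0$, while on $\cT_l^+$ the marking condition and Lemma \ref{lem:estmarked} force $\max_T\eta_{k,3}\to0$. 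A density argument in $H^1(\Omega)$ then removes the smoothness restriction on $\mu$. Without the second-order Lagrange step the contribution from eventually-refined elements cannot be made to vanish.

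A related ordering issue: the paper obtains strong $H^1(\Omega)$-convergence of $\sigma_k^\ast$ \emph{before} the variational-inequality limit (Theorem \ref{thm:conv_medmin}), by showing $\mathcal{J}_{\varepsilon,k_j}(\sigma_{k_j}^\ast)\to\mathcal{J}_{\varepsilon,\infty}(\sigma_\infty^\ast)$ via the minimising property and hence $\|\nabla\sigma_{k_j}^\ast\|_{L^2(\Omega)}\to\|\nabla\sigma_\infty^\ast\|_{L^2(\Omega)}$. This strong convergence is then \emph{used} in Lemma \ref{lem:estmarked} to control $\|\nabla\sigma_{k_j}^\ast\|_{L^q(D_{\widetilde T_j^3})}$ on shrinking patches and thereby establish $\max_T\eta_{k,3}\to0$. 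Your plan to obtain strong $H^1$-convergence only at the very end, by testing the two variational inequalities against each other, would have to be reordered (first get the $\widetilde{\mathcal A}_\infty$-inequality by weak lower semicontinuity, then extract strong $H^1$-convergence, and only then run the estimator argument).
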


\section{Numerical experiments and discussions}\label{sec:numer}

Now we present numerical results to illustrate Algorithm \ref{alg_afem_eit} on a square domain
$\Omega=(-1,1)^2$. There are sixteen electrodes $\{e_l\}_{l=1}^L$ (with $L=16$) evenly distributed
along $\partial\Omega$, each of length $1/4$. The contact impedances $\{z_l\}_{l=1}^L$ are all set
to unit. We take ten sinusoidal input currents, and for each voltage $U(\sigma^\dag)\in\mathbb{R}^L_\diamond$, generate the noisy
data $U^\delta$ by
\begin{equation}\label{eqn:noisydata}
  U^\delta_l = U_l(\sigma^\dag) + \epsilon \max_l|U_l(\sigma^\dag)|\xi_l,\ \ l=1,\ldots, L,
\end{equation}
where $\epsilon$ is the (relative) noise level, and $\{\xi_l\}_{l=1}^L$ follow the standard normal
distribution. Note that $\epsilon=\text{1e-2}$ refers to a relatively high noise level for EIT.
The exact data $U(\sigma^\dag)$ is computed using a much finer uniform mesh, to avoid
the most obvious form of ``inverse crime''.

In the experiments, we fix $K$ (the number of refinements) at $15$, $q$ (exponent in
$\eta_{k,3}^q$) at $2$, and $\varepsilon$ (the functional $\mathcal{F}_\varepsilon$)
at $\text{1e-2}$. The marking strategy \eqref{eqn:marking} in the module \texttt{MARK}
selects a minimal refinement set $\mathcal{M}_k{:=\cup_{i=1}^3\mathcal{M}_{k}^i}\subseteq\cT_k$ such that
\begin{equation*}
   \begin{aligned}
   \eta_{k,1}^2(\sigma_k^{\ast},u_k^{\ast},U_k^{\ast},\mathcal{M}_{k}^1)
        &\geq\theta
        \eta_{k,1}^2(\sigma_k^{\ast},u_k^{\ast},U_k^{\ast}),\quad
        \eta_{k,2}^2(\sigma_k^{\ast},p_k^{\ast},P_k^{\ast},\mathcal{M}_{k}^2)\geq\theta
        \eta_{k,2}^2(\sigma_k^{\ast},p_k^{\ast},P_k^{\ast}),\\
    &\eta_{k,3}^2(\sigma_k^{\ast},u_k^{\ast},p_k^{\ast},\mathcal{M}_{k}^3)\geq\theta
        \eta_{k,3}^2(\sigma_k^{\ast},u_k^{\ast},p_k^{\ast}),
        \end{aligned}
\end{equation*}
with a threshold $\theta=0.7$. The refinement is performed with one popular refinement strategy, i.e.,
newest vertex bisection \cite{Mitchell:1989}. Specifically, it connects the midpoint $x_T$, as a newest
vertex, of a reference edge $F$ of an element $T\in\cT_k$ to the opposite node of $F$, and employs
two edges opposite to the midpoint $x_T$ as reference edges of the two newly created triangles in
$\cT_{k+1}$. Problem \eqref{eqn:dispoly}-\eqref{eqn:discopt} is solved by a Newton type method; see
Appendix \ref{app:newton} for the detail. The conductivity on $\cT_0$ is initialized to $\sigma_0=c_0$,
and then for $k=1,2,\ldots$, $\sigma_{k-1}^*$ (defined on $\cT_{k-1}$) is interpolated to $\cT_k$ to warm
start the optimization. The regularization parameter $\widetilde\alpha$ in \eqref{eqn:tikh-MM} is determined
in a trial-and-error manner. All computations are performed using \texttt{MATLAB} 2018a on a personal
laptop with 8.00 GB RAM and 2.5 GHz CPU.

\begin{figure}[hbt!]
  \centering\setlength{\tabcolsep}{0em}
  \begin{tabular}{ccccc}
      \includegraphics[trim = .5cm 0cm 1cm 0cm, clip=true,width=0.2\textwidth]{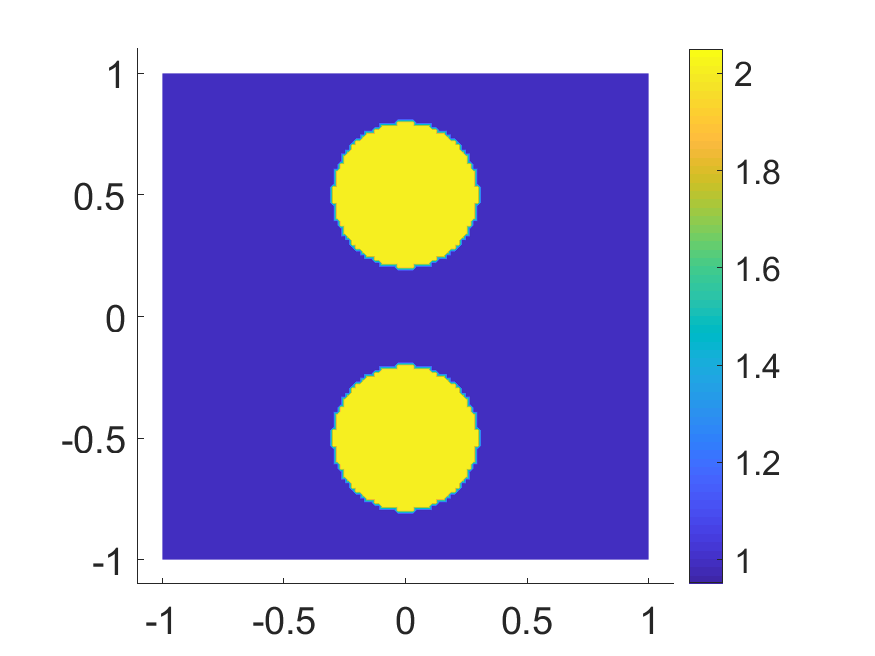}
    & \includegraphics[trim = .5cm 0cm 1cm 0cm, clip=true,width=.2\textwidth]{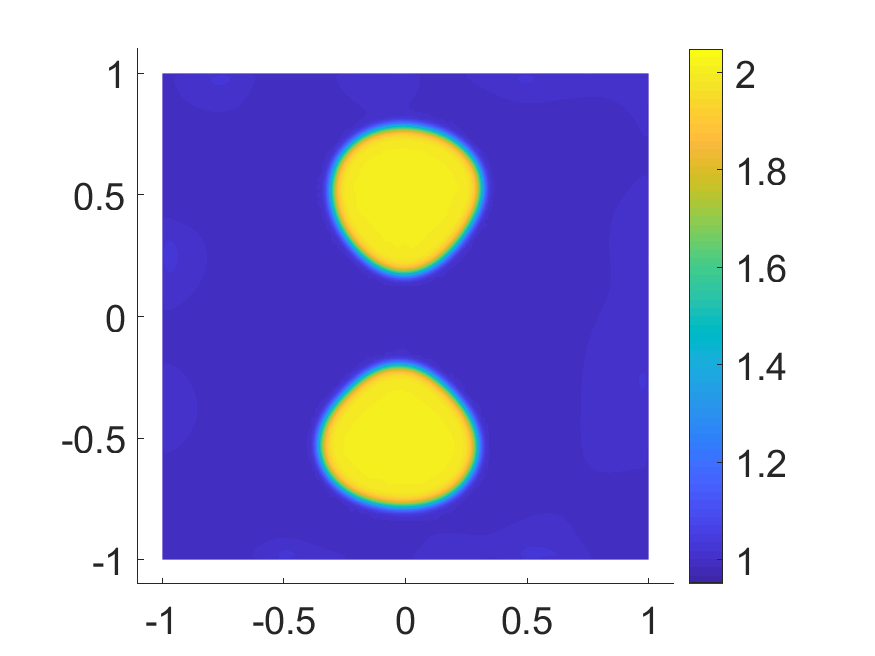}
    & \includegraphics[trim = .5cm 0cm 1cm 0cm, clip=true,width=.2\textwidth]{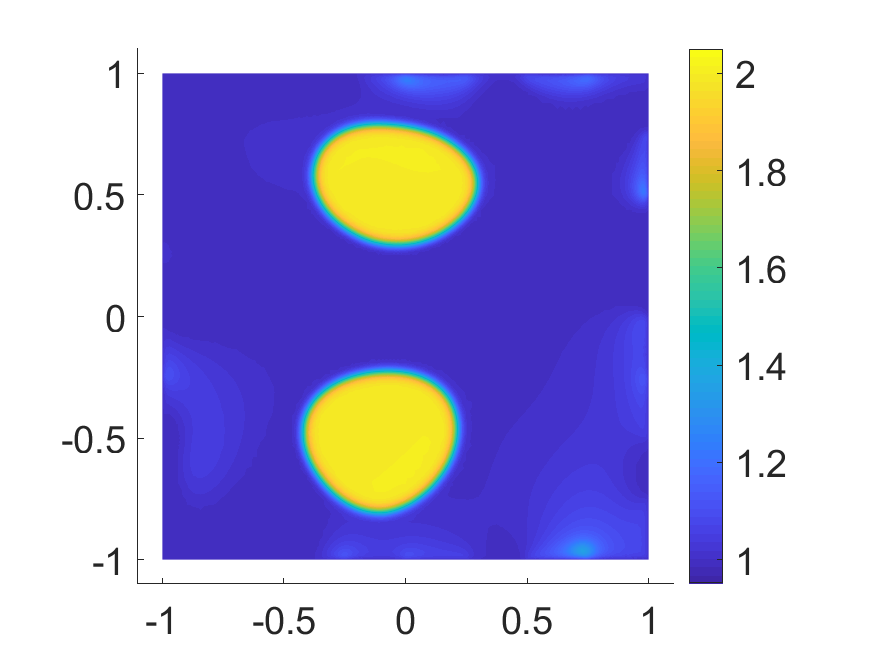}
    & \includegraphics[trim = .5cm 0cm 1cm 0cm, clip=true,width=.2\textwidth]{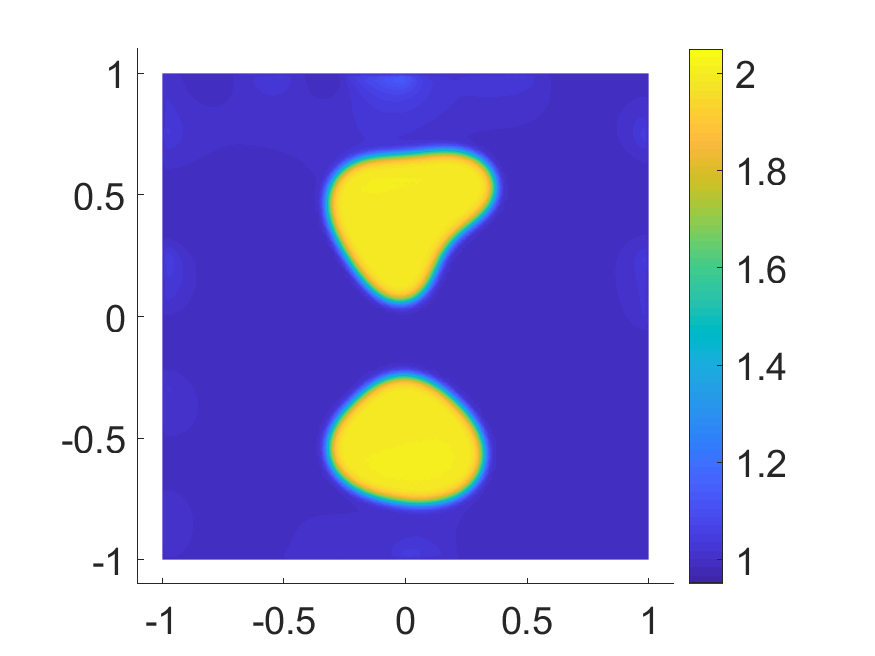}
    & \includegraphics[trim = .5cm 0cm 1cm 0cm, clip=true,width=.2\textwidth]{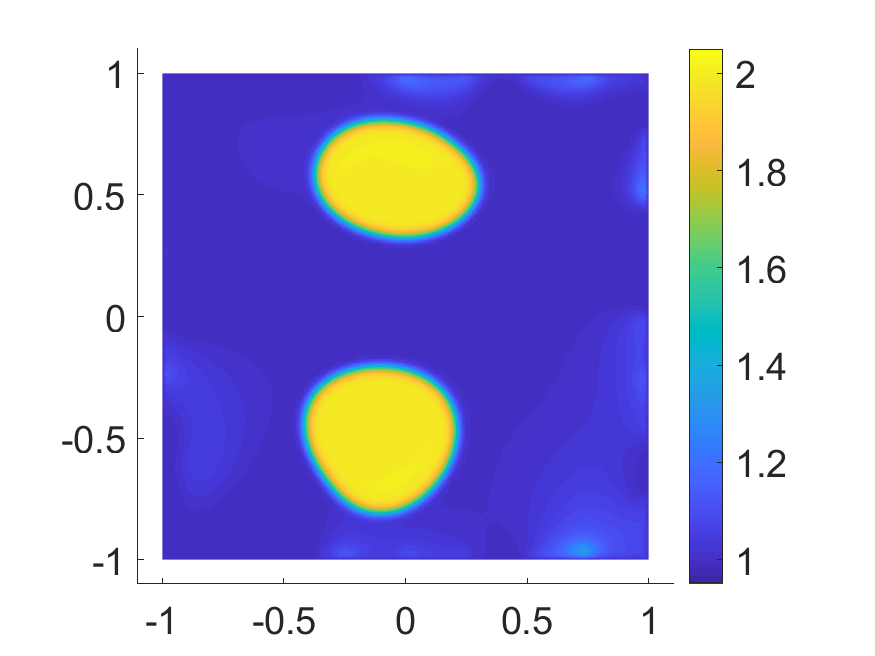}\\
    (a) true conductivity & (b) adaptive & (c) adaptive & (d) uniform & (e) uniform
  \end{tabular}
  \caption{The final recoveries by the adaptive and uniform refinements for Example
  \ref{exam2}(i). The results in (b) and (d) are for $\epsilon=\text{1e-3}$ and
  $\tilde \alpha = \text{2e-2}$, and  (c) and (e) for $\epsilon=\text{1e-2}$
  and $\tilde \alpha=\text{3e-2}$. d.o.f. in (b), (c), (d) and (e) are $15830$, $18770$, $16641$
  and 16641, respectively.}\label{fig:exam2i-recon}
\end{figure}

The first set of examples are concerned with two inclusions.
\begin{example}\label{exam2}
The background conductivity $\sigma_0(x)=1$.
\begin{itemize}
  \item[$\rm(i)$] The true conductivity $\sigma^\dag$ is given by $ \sigma_0(x)+\chi_{B_1}(x) +
  \chi_{B_2}(x)$, with $B_1$ and $B_2$ denote two circles centered at $(0,0.5)$ and $(0,-0.5)$,
  respectively, both with a radius 0.3.
  \item[$\rm(ii)$] The true conductivity $\sigma^\dag$ is given by $ \sigma_0(x)+1 + 1.2e^{-\frac{25(x_1^2+(x_2-0.5)^2)}{2}} + 1.2e^{-\frac{25(x_1^2+(x_2+0.5)^2)}{2}}$, i.e., two Gaussian bumps centered at $(0,0.5)$ and $(0,-0.5)$.
  \item[$\rm(iii)$] The true conductivity $\sigma^\dag$ is given by $ \sigma_0(x)+5\chi_{B_1}(x) +
  5\chi_{B_2}(x)$, with $B_1$ and $B_2$ denote two circles centered at $(0,0.5)$ and $(0,-0.5)$,
  respectively, both with a radius 0.3.
\end{itemize}
\end{example}

The numerical results for Example \ref{exam2}(i) with $\epsilon=\text{1e-3}$ and $\epsilon=\text{1e-2}$ are
shown in Figs. \ref{fig:exam2i-recon}--\ref{fig:exam2-efficiency}, where d.o.f. denotes the degree of
freedom of the mesh. It is observed from Fig. \ref{fig:exam2i-recon} that with both uniform and
adaptive refinements, the final recoveries have comparable accuracy and capture well the inclusion locations.

\begin{figure}[hbt!]
 \centering
 \setlength{\tabcolsep}{0pt}
 \begin{tabular}{cccccccc}
 \includegraphics[trim = {2.5cm 1.5cm 2.5cm 1.2cm}, clip, width=.2\textwidth]{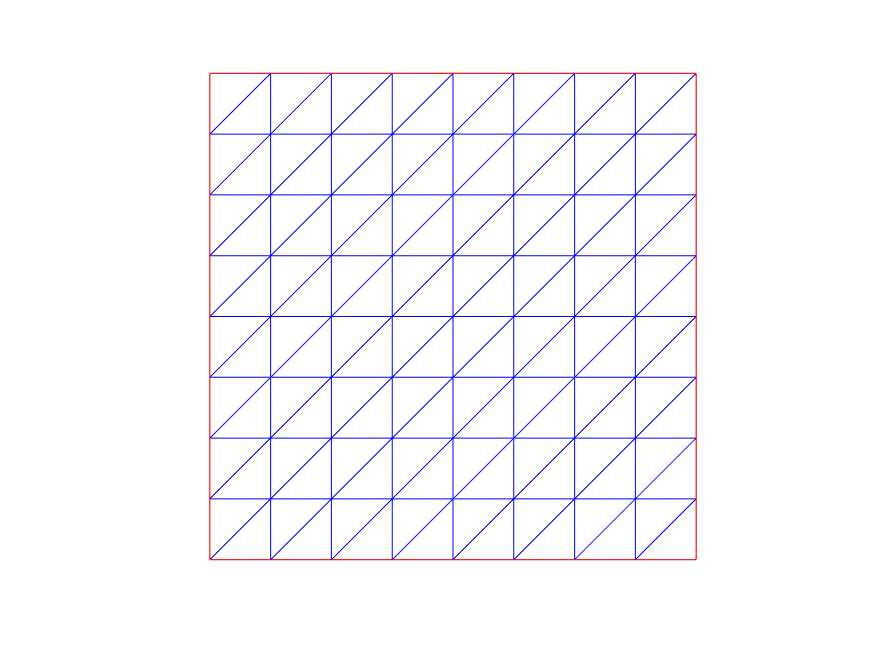}&
 \includegraphics[trim = {2.5cm 1.5cm 2.5cm 1.2cm}, clip, width=.2\textwidth]{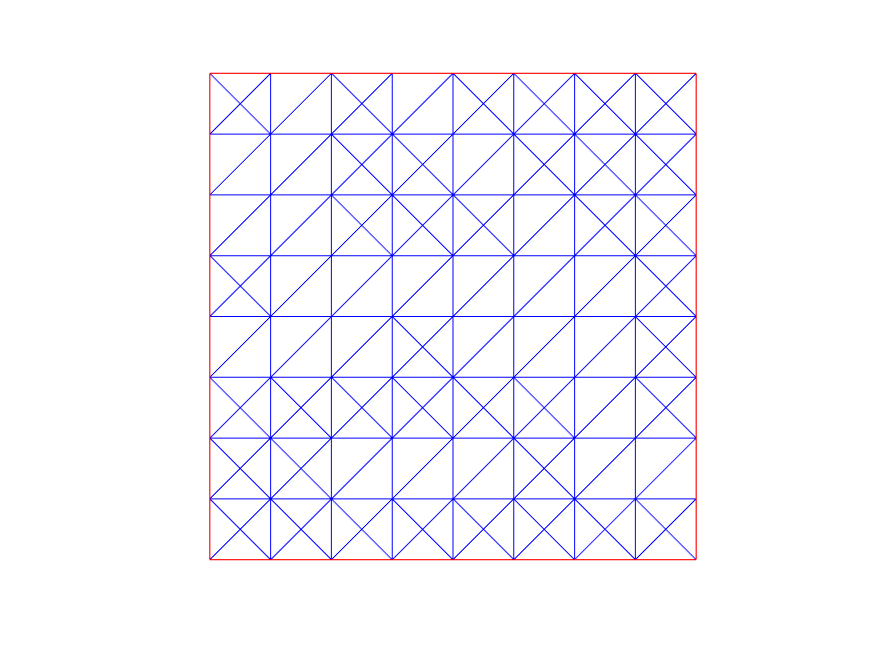}&
 \includegraphics[trim = {2.5cm 1.5cm 2.5cm 1.2cm}, clip, width=.2\textwidth]{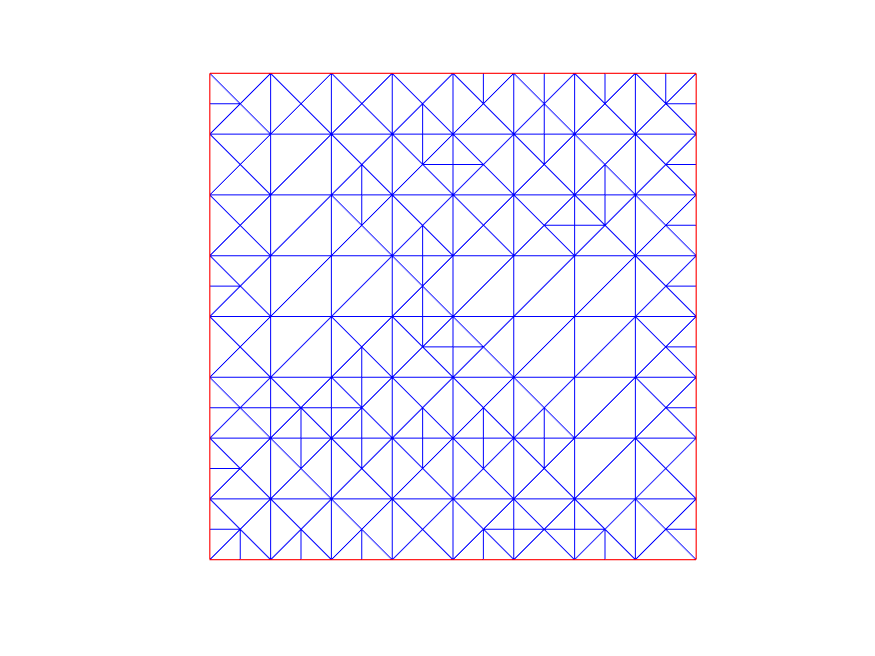}&
 \includegraphics[trim = {2.5cm 1.5cm 2.5cm 1.2cm}, clip, width=.2\textwidth]{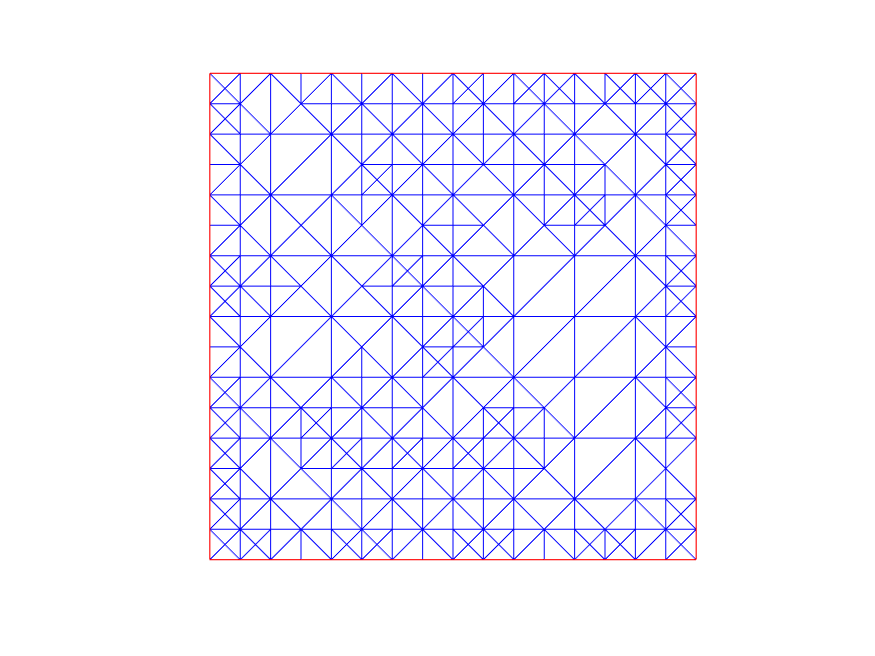}&
 \includegraphics[trim = {2.5cm 1.5cm 2.5cm 1.2cm}, clip, width=.2\textwidth]{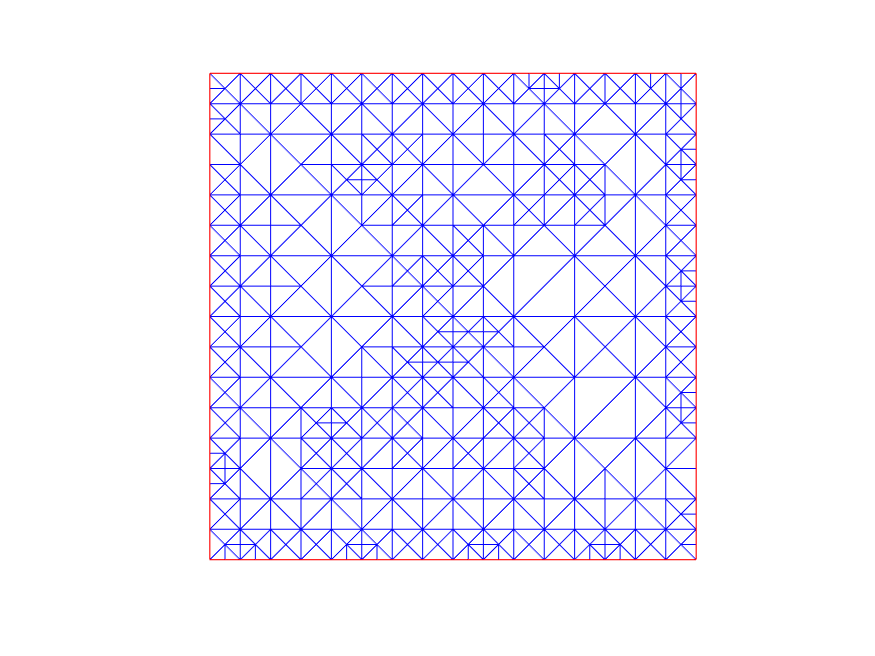}\\
 \includegraphics[trim = 1cm 0.5cm 0.5cm 0cm, width=.2\textwidth]{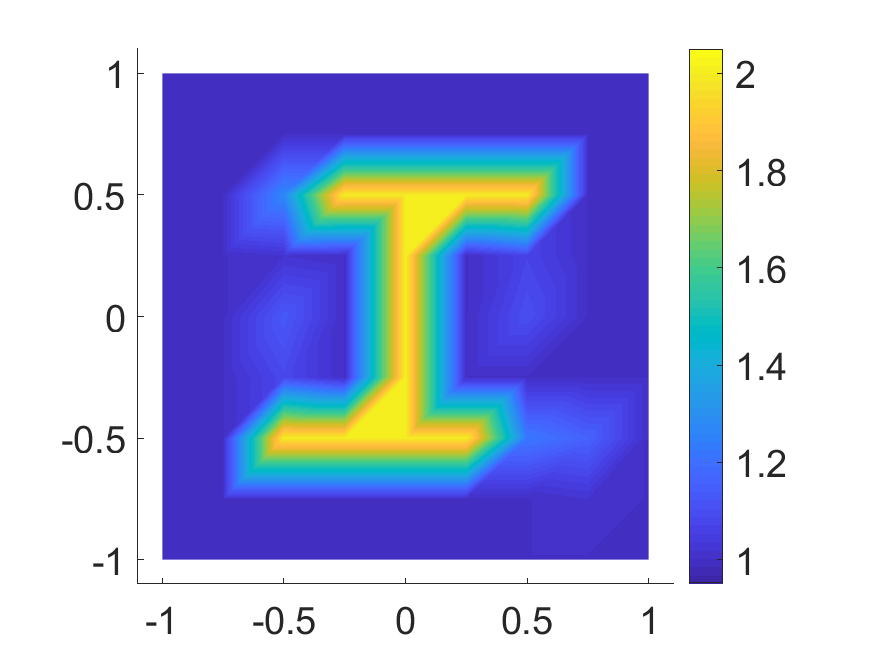}&
 \includegraphics[trim = 1cm 0.5cm 0.5cm 0cm, width=.2\textwidth]{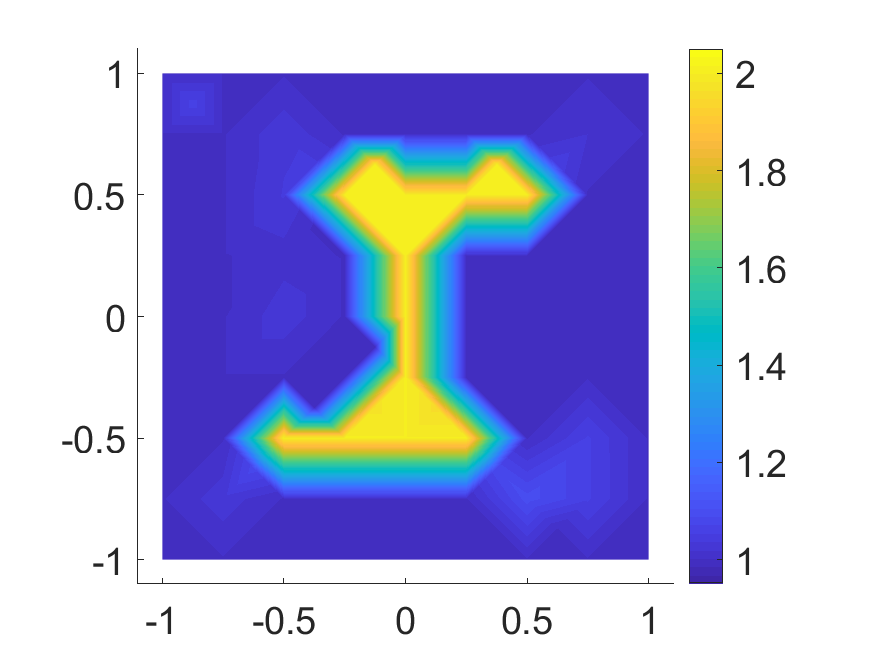}&
 \includegraphics[trim = 1cm 0.5cm 0.5cm 0cm, width=.2\textwidth]{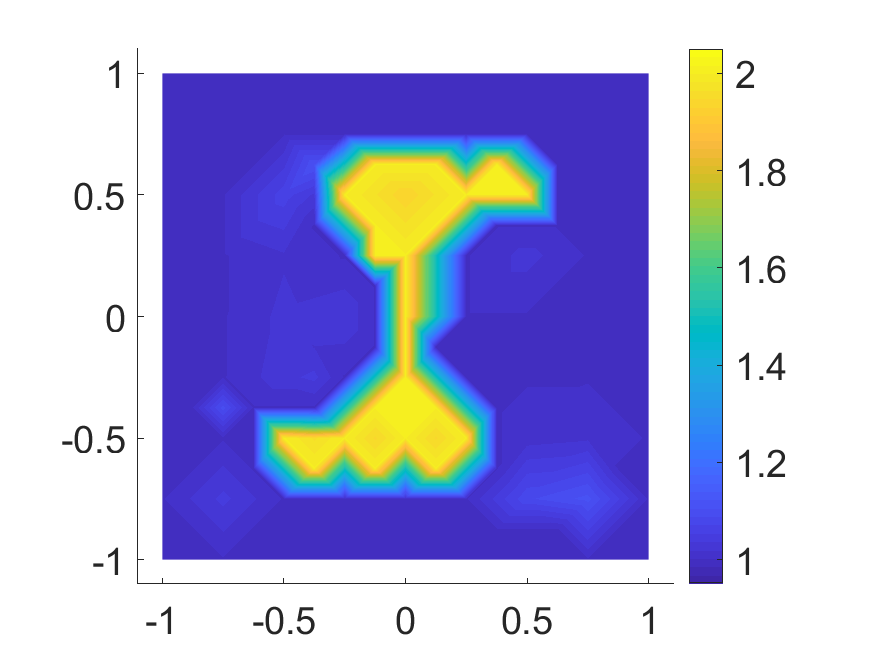}&
 \includegraphics[trim = 1cm 0.5cm 0.5cm 0cm, width=.2\textwidth]{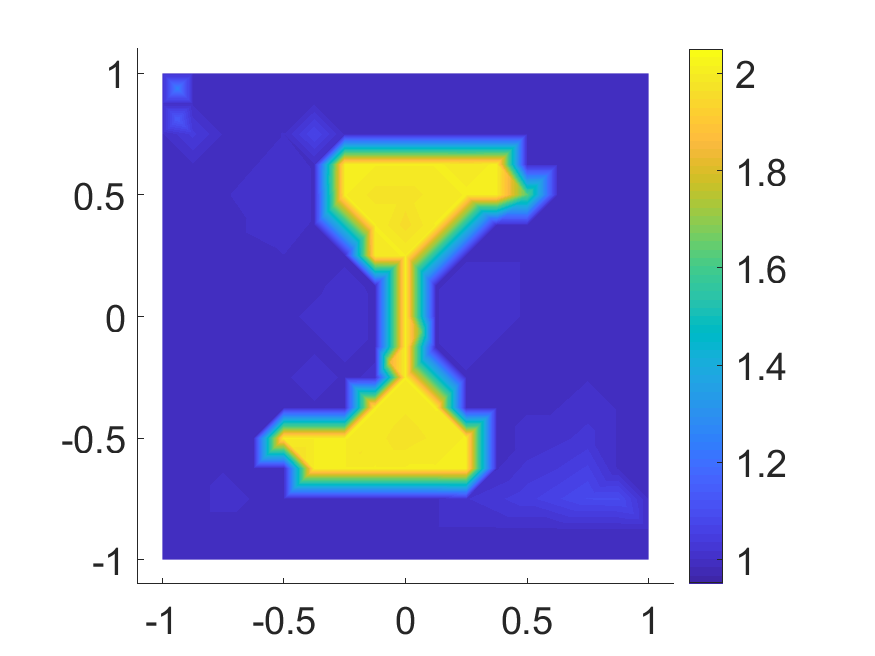}&
 \includegraphics[trim = 1cm 0.5cm 0.5cm 0cm, width=.2\textwidth]{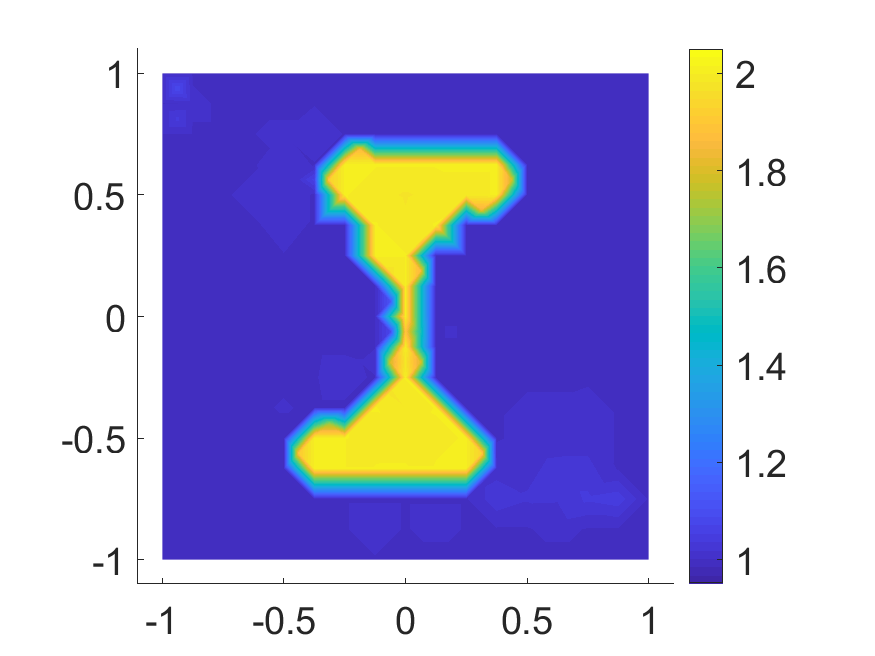}\\
 81    &     119   &      173  &       277   &      390   \\
 \includegraphics[trim = {2.5cm 1.5cm 2.5cm 1.2cm}, clip, width=.2\textwidth]{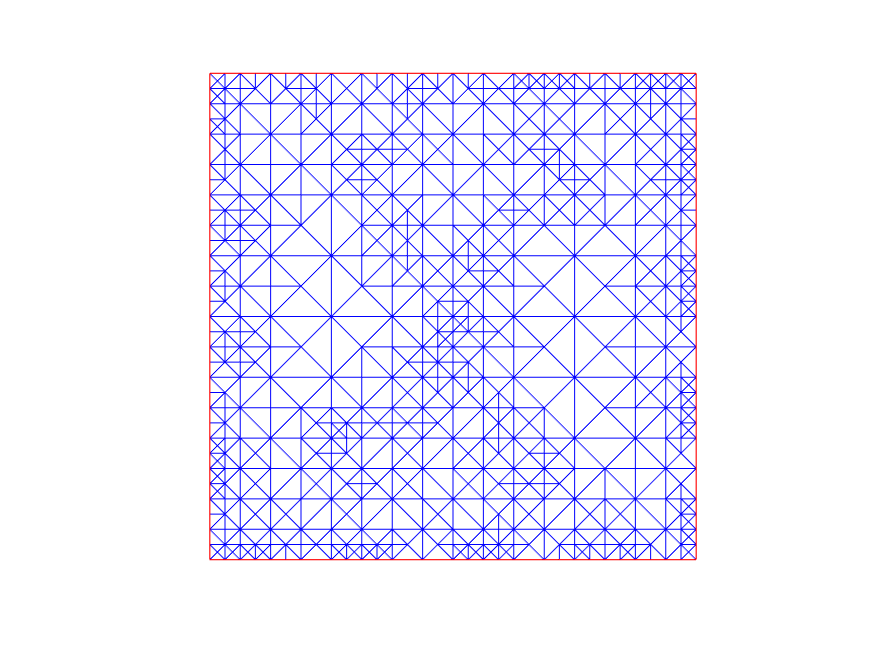}&
 \includegraphics[trim = {2.5cm 1.5cm 2.5cm 1.2cm}, clip, width=.2\textwidth]{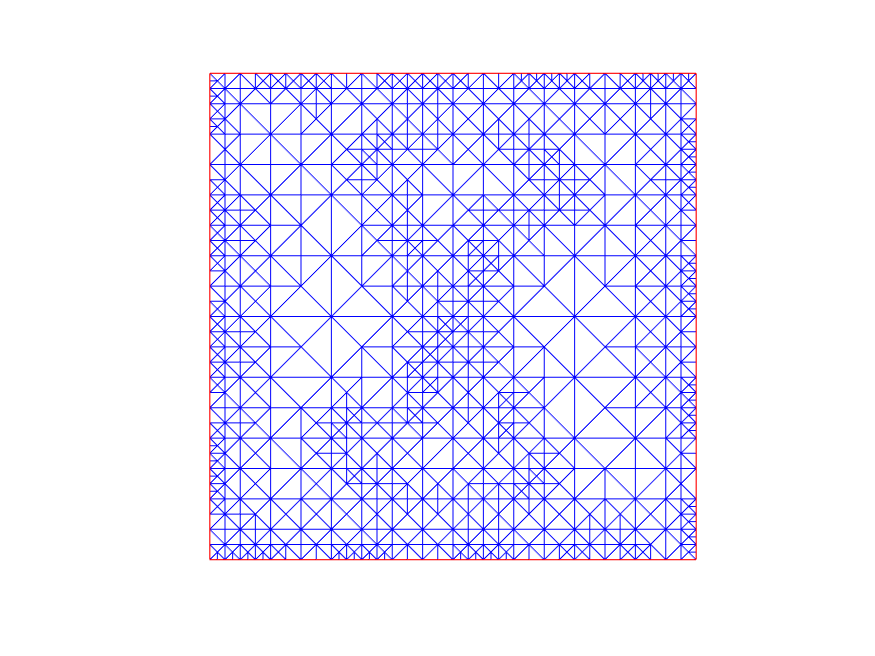}&
 \includegraphics[trim = {2.5cm 1.5cm 2.5cm 1.2cm}, clip, width=.2\textwidth]{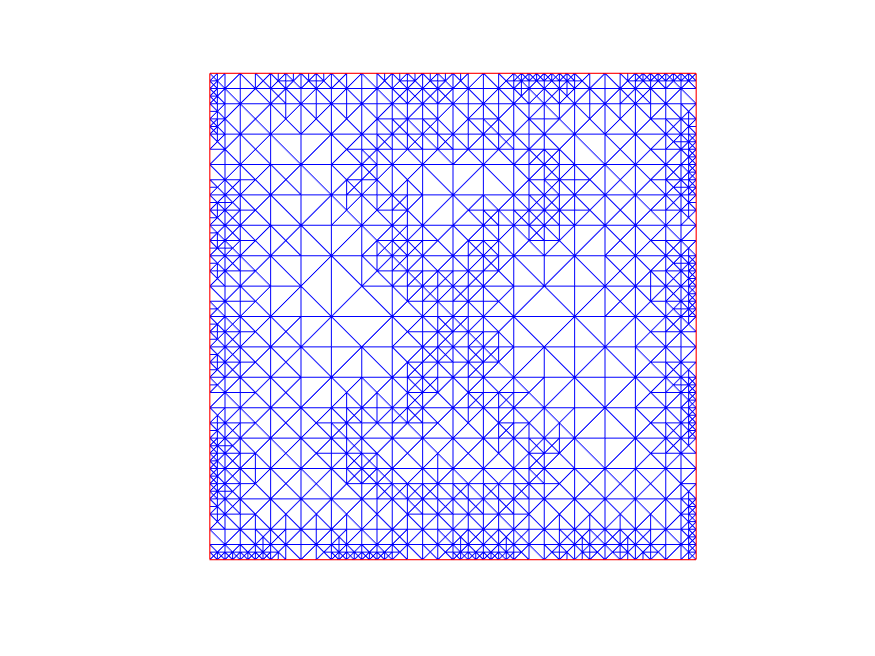}&
 \includegraphics[trim = {2.5cm 1.5cm 2.5cm 1.2cm}, clip, width=.2\textwidth]{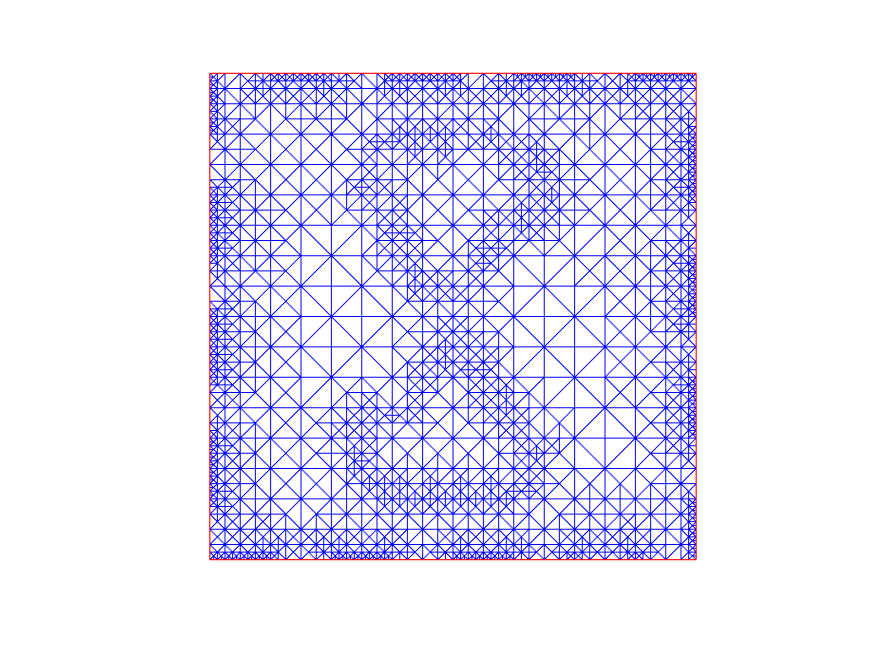}&
 \includegraphics[trim = {2.5cm 1.5cm 2.5cm 1.2cm}, clip, width=.2\textwidth]{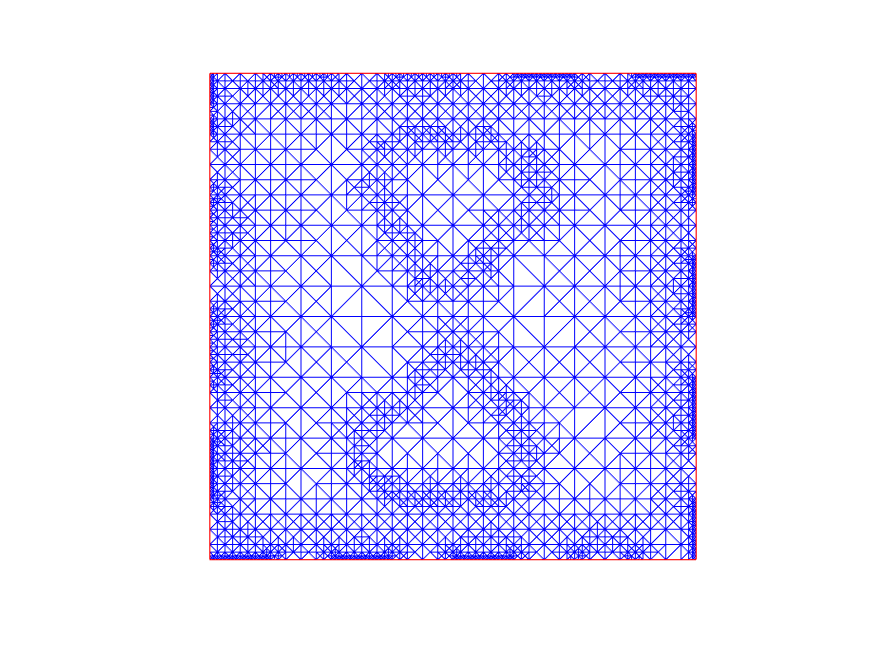}\\
  \includegraphics[trim = 1cm 0.5cm 0.5cm 0cm, width=.2\textwidth]{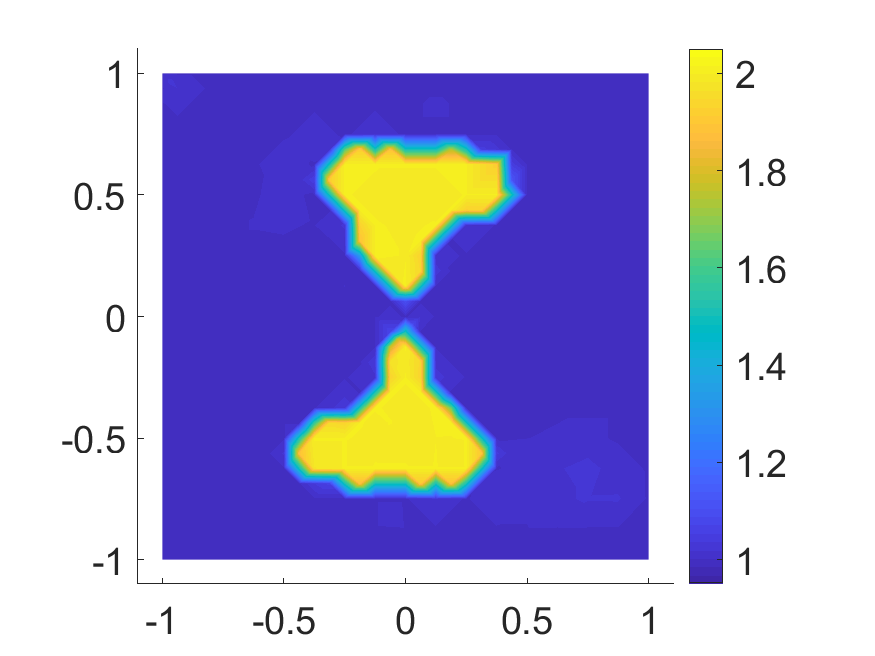}&
 \includegraphics[trim = 1cm 0.5cm 0.5cm 0cm, width=.2\textwidth]{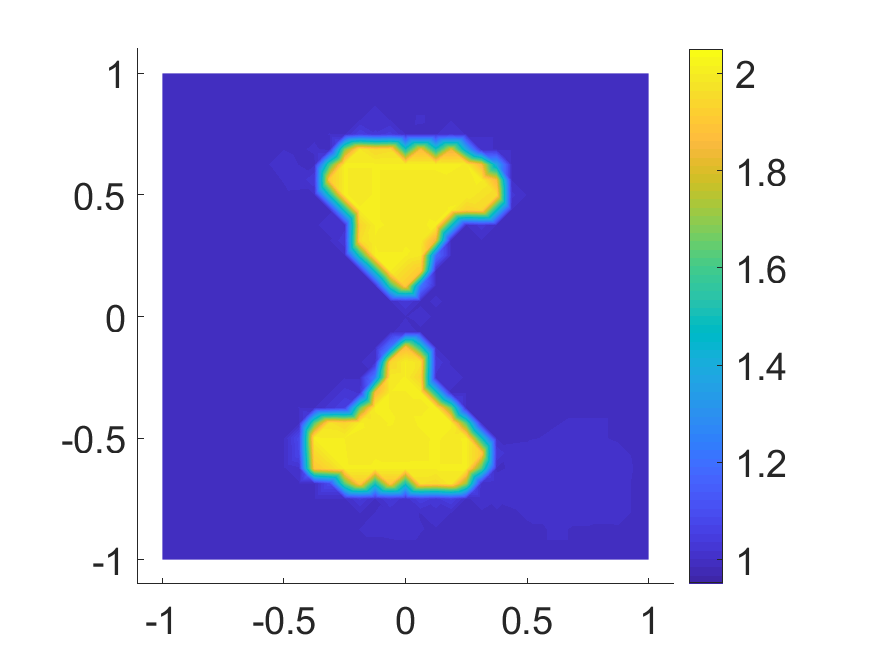}&
 \includegraphics[trim = 1cm 0.5cm 0.5cm 0cm, width=.2\textwidth]{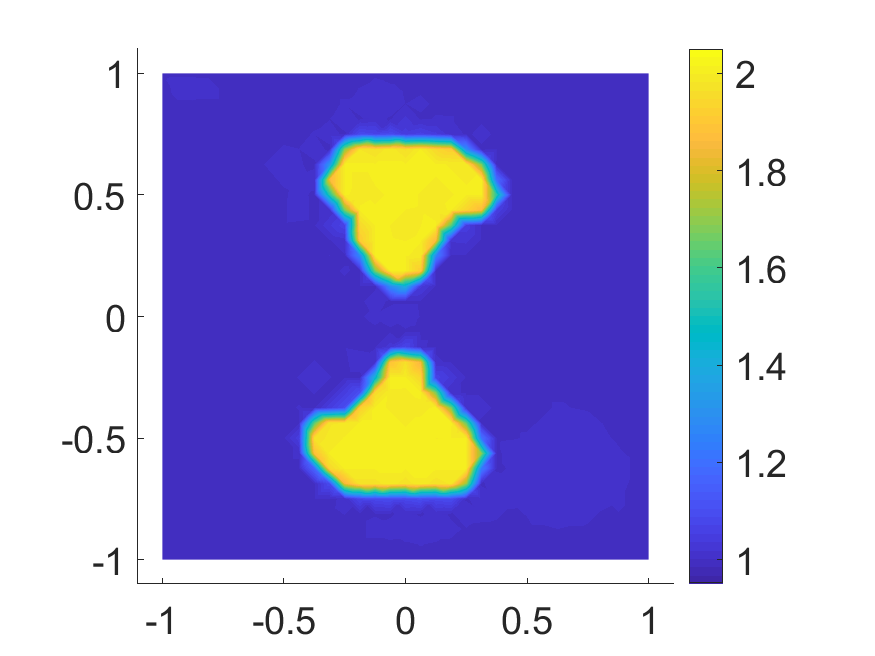}&
 \includegraphics[trim = 1cm 0.5cm 0.5cm 0cm, width=.2\textwidth]{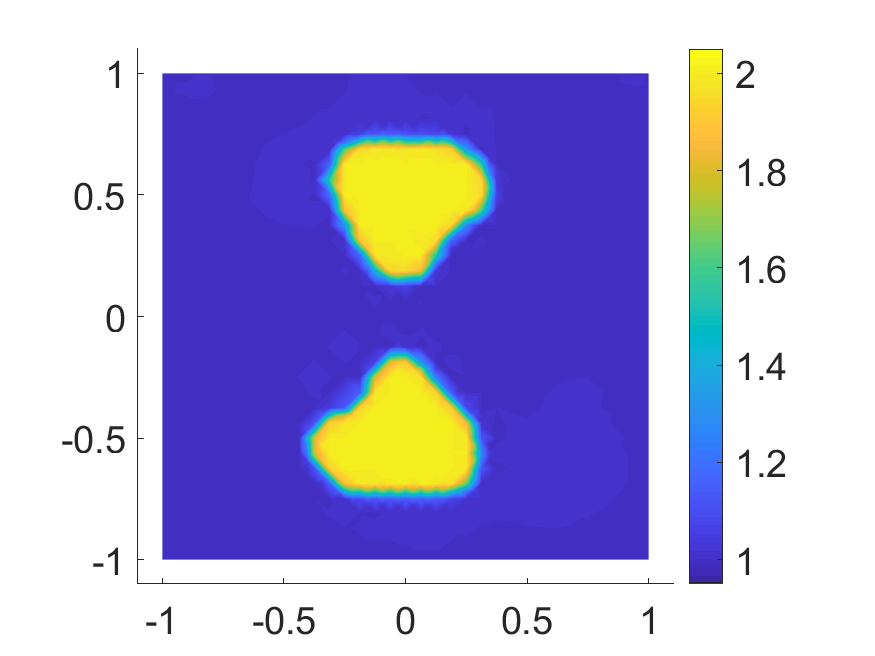}&
 \includegraphics[trim = 1cm 0.5cm 0.5cm 0cm, width=.2\textwidth]{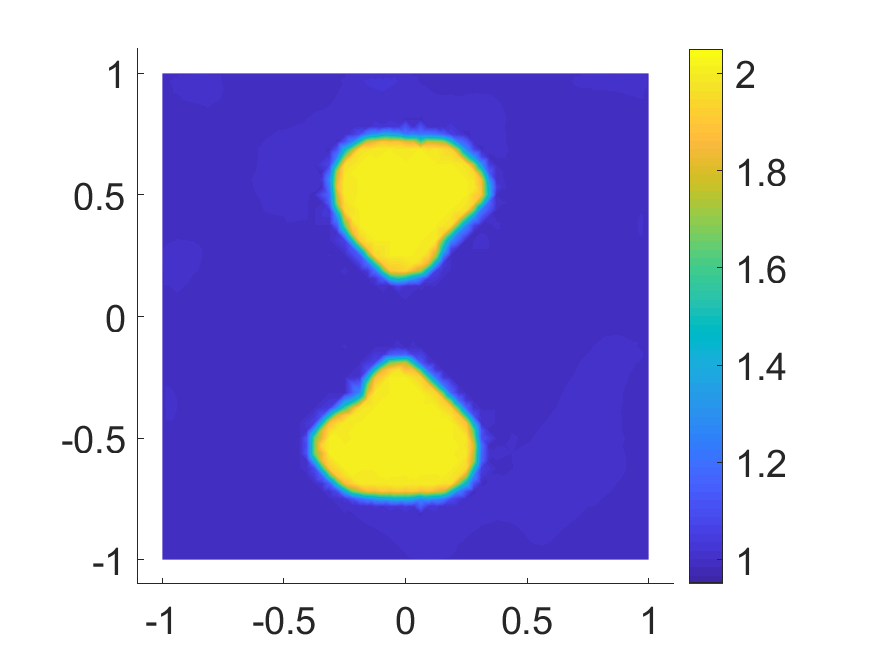}\\
    605   &    812  &    1266 &  1714  &  2630  \\
  \includegraphics[trim = {2.5cm 1.5cm 2.5cm 1.2cm}, clip, width=.2\textwidth]{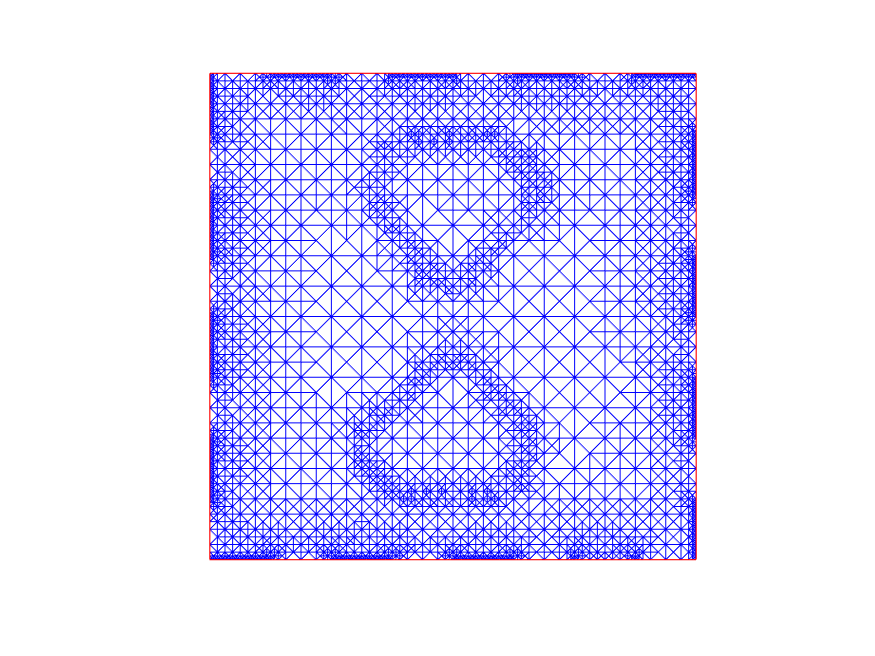}&
 \includegraphics[trim = {2.5cm 1.5cm 2.5cm 1.2cm}, clip, width=.2\textwidth]{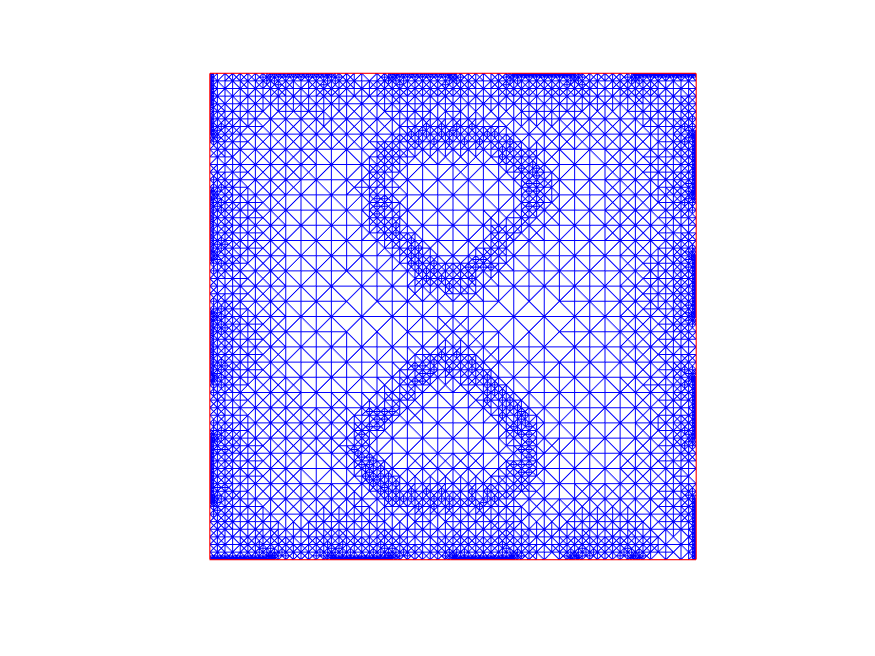}&
 \includegraphics[trim = {2.5cm 1.5cm 2.5cm 1.2cm}, clip, width=.2\textwidth]{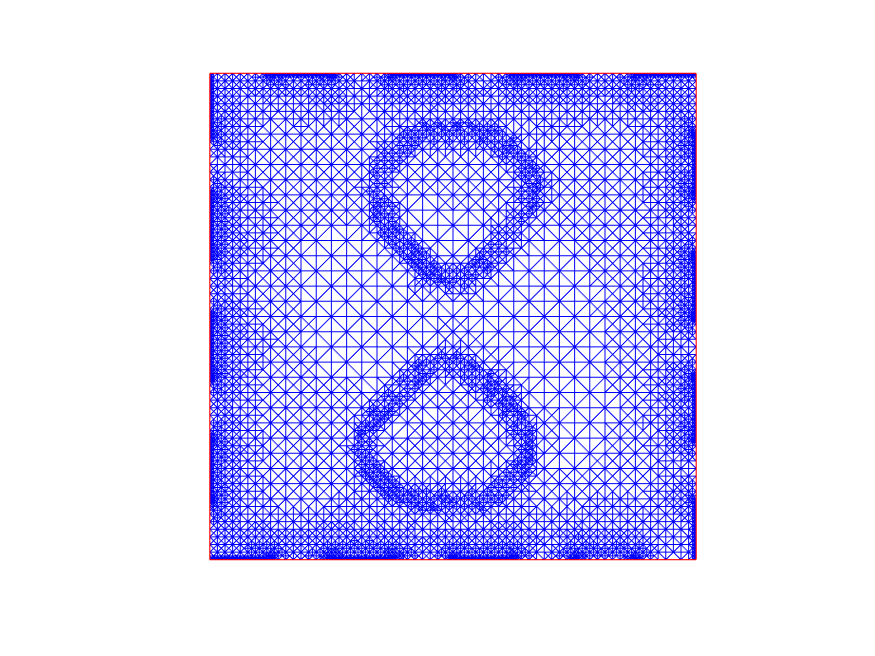}&
 \includegraphics[trim = {2.5cm 1.5cm 2.5cm 1.2cm}, clip, width=.2\textwidth]{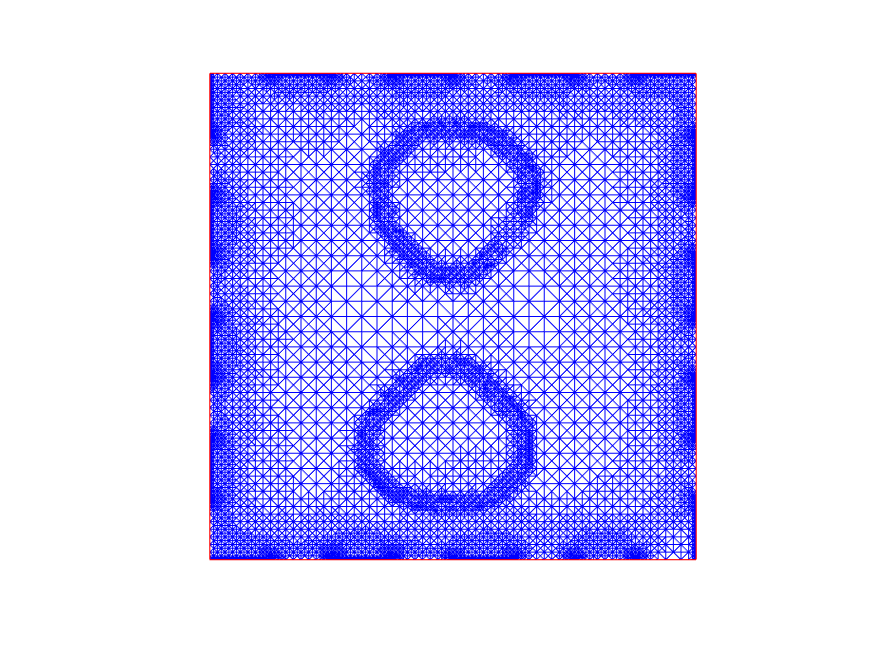}&
 \includegraphics[trim = {2.5cm 1.5cm 2.5cm 1.2cm}, clip, width=.2\textwidth]{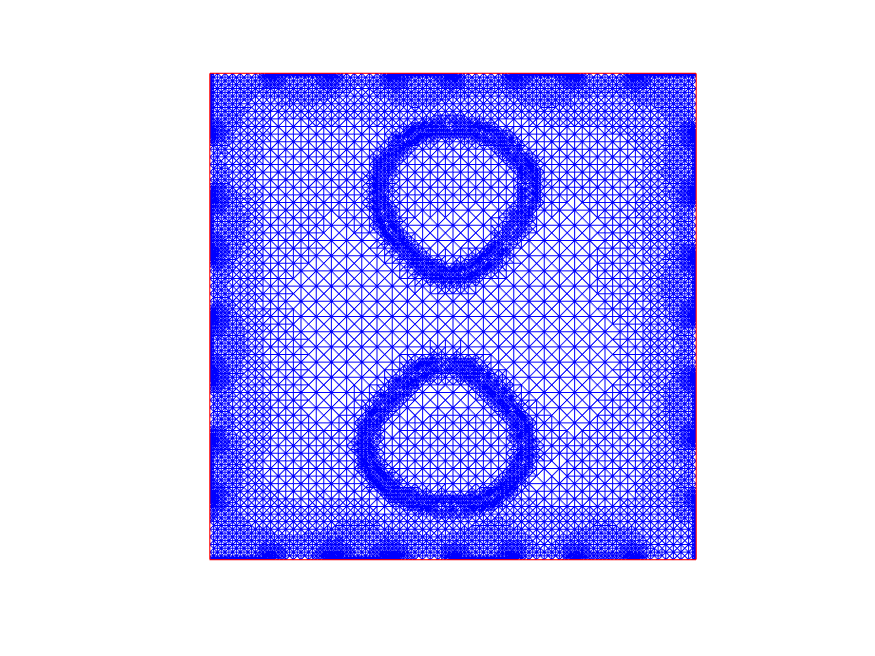}\\
 \includegraphics[trim = 1cm 0.5cm 0.5cm 0cm, width=.2\textwidth]{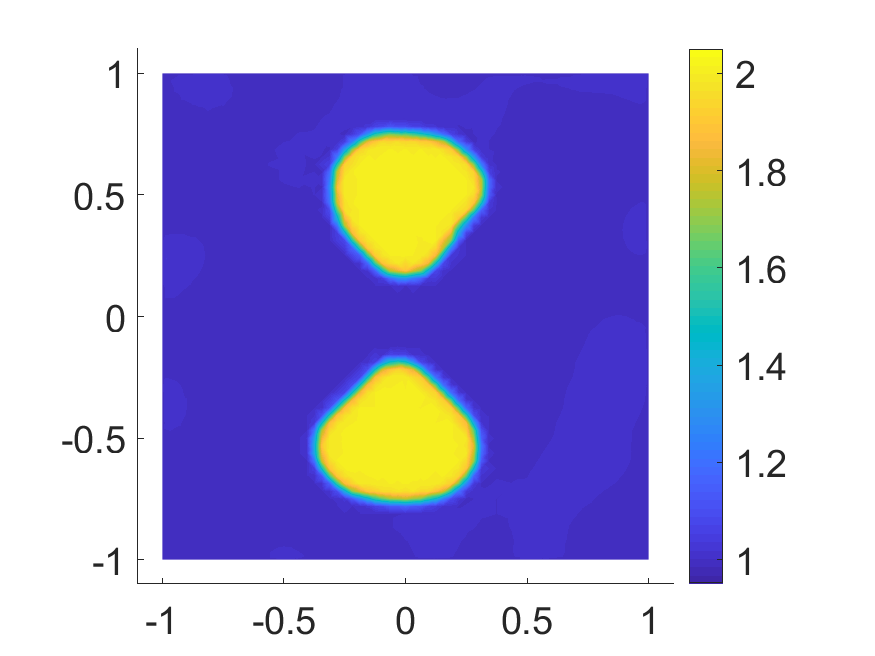}&
 \includegraphics[trim = 1cm 0.5cm 0.5cm 0cm, width=.2\textwidth]{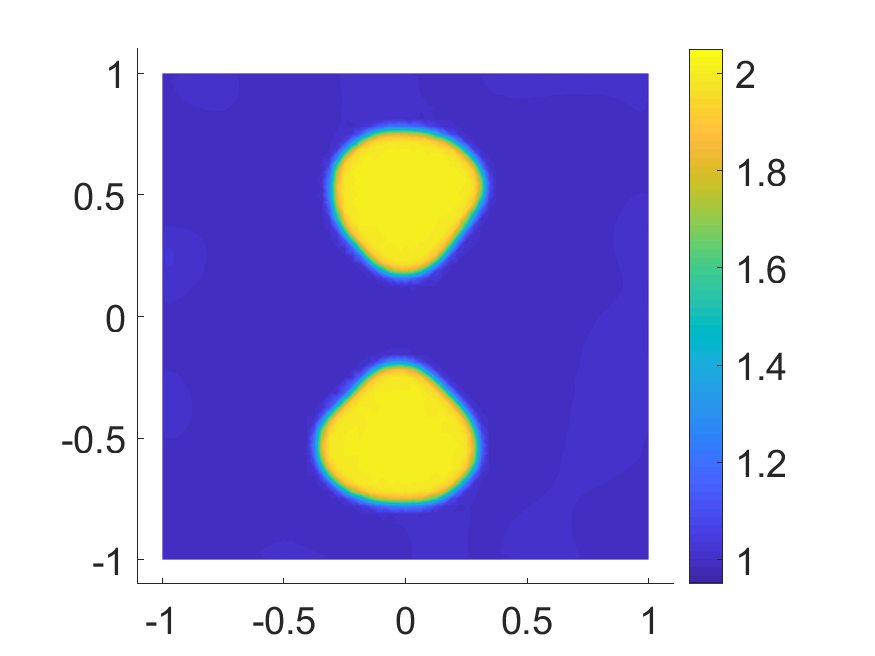}&
 \includegraphics[trim = 1cm 0.5cm 0.5cm 0cm, width=.2\textwidth]{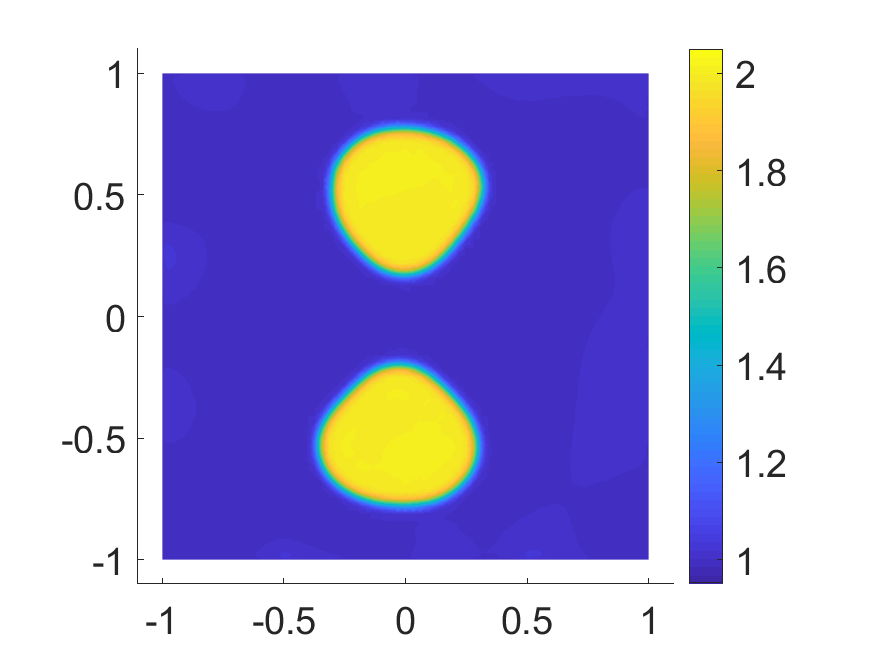}&
 \includegraphics[trim = 1cm 0.5cm 0.5cm 0cm, width=.2\textwidth]{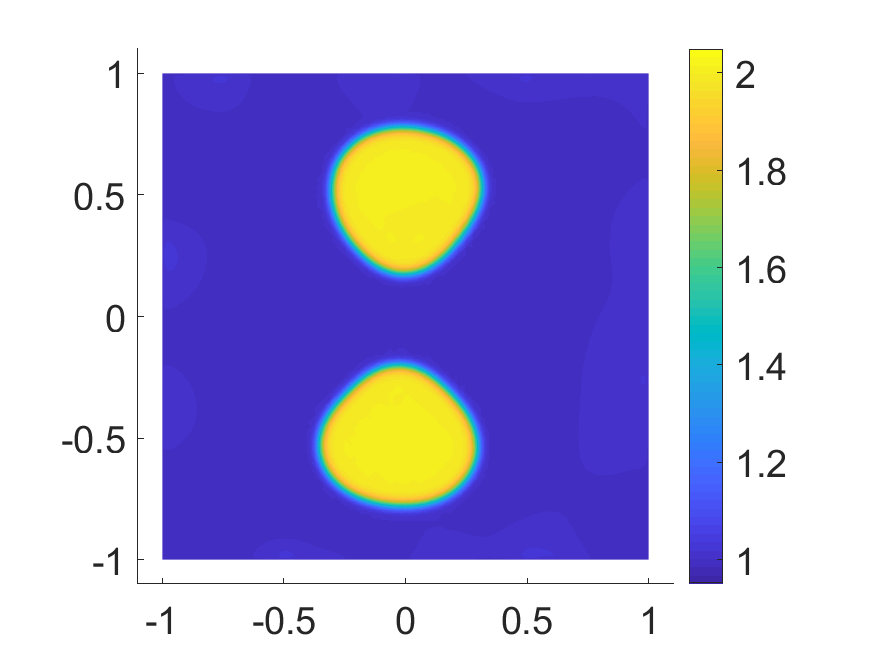}&
 \includegraphics[trim = 1cm 0.5cm 0.5cm 0cm, width=.2\textwidth]{ex2_1e3ad_it15}\\
 3526 &  5482 &  7549 &  11450 &  15830
 \end{tabular}
 \caption{The meshes $\mathcal{T}_k$ and recoveries $\sigma_k$ during the adaptive refinement, for Example \ref{exam2}(i) with
 $\epsilon=\text{1e-3}$ and $\tilde\alpha=\text{2e-2}$. The numbers refer to d.o.f.} \label{fig:exam2i-recon-iter1e3}
\end{figure}

Next we examine the adaptive refinement process more closely. In Figs. \ref{fig:exam2i-recon-iter1e3} and \ref{fig:exam2i-recon-iter1e2},
we show the meshes $\mathcal{T}_k$ during the iteration and the corresponding recoveries $\sigma_k$ for Example \ref{exam2}(i)
at two noise levels $\epsilon = \text{1e-3}$ and $\epsilon=\text{1e-2}$, respectively. On the coarse mesh $\cT_0$, the recovery
has very large errors and can only identify one component and thus fails to correctly identify the number of inclusions, due to
the severe under-resolution of both state and conductivity. Nonetheless, Algorithm \ref{alg_afem_eit} can correctly recover the
two components with reasonable accuracy after several adaptive loops, and accordingly, the support of the recovery is gradually
refined with its accuracy improving steadily. In particular, the inclusion locations stabilize after several loops, and thus
coarsening of the mesh seems unnecessary. Throughout, the refinement occurs mainly in the regions around the electrode edges
and internal interface, which is clearly observed for both noise levels. This is attributed to the separable marking
strategy, which allows detecting different sources of singularities simultaneously. In Fig. \ref{fig:exam2i_err-ind},
we display the evolution of the error indicators for Example \ref{exam2}(i) with $\epsilon=\text{1e-3}$. The estimators
play different roles: $\eta_{k,1}^2$ and $\eta_{k,2}^2$ indicate the electrode edges during first iterations and
then also internal interface, whereas throughout $\eta_{k,3}^2$ concentrates on the internal interface. Thus,
$\eta_{k,1}^2$ and $\eta_{k,2}^2$ are most effective for resolving the state and adjoint, whereas $\eta_{k,3}^2$ is effective
for detecting internal jumps of the conductivity. The magnitude of $\eta_{k,2}^2$ is much smaller than
$\eta_{k,1}^2$, since the boundary data $U^\delta-U(\sigma_k)$ for the adjoint is much smaller than the input current $I$ for the state.
Thus, a simple collective marking strategy (i.e., $\eta_k^2 =\eta_{k,1}^2 +\eta_{k,2}^2 + \eta_{k,3}^2$) may
miss the correct singularity, due to their drastically different scalings. In contrast, the separate marking in \eqref{eqn:marking} can
take care of the scaling automatically.

\begin{figure}[hbt!]
 \centering
 \setlength{\tabcolsep}{0pt}
 \begin{tabular}{cccccccc}
 \includegraphics[trim = {2.5cm 1.5cm 2.5cm 1.2cm}, clip, width=.2\textwidth]{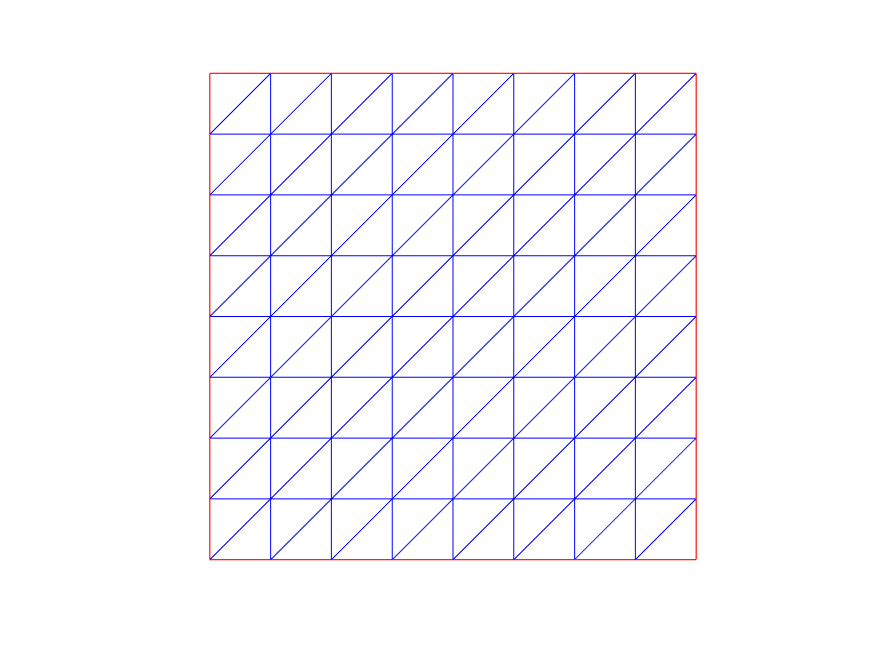}&
 \includegraphics[trim = {2.5cm 1.5cm 2.5cm 1.2cm}, clip, width=.2\textwidth]{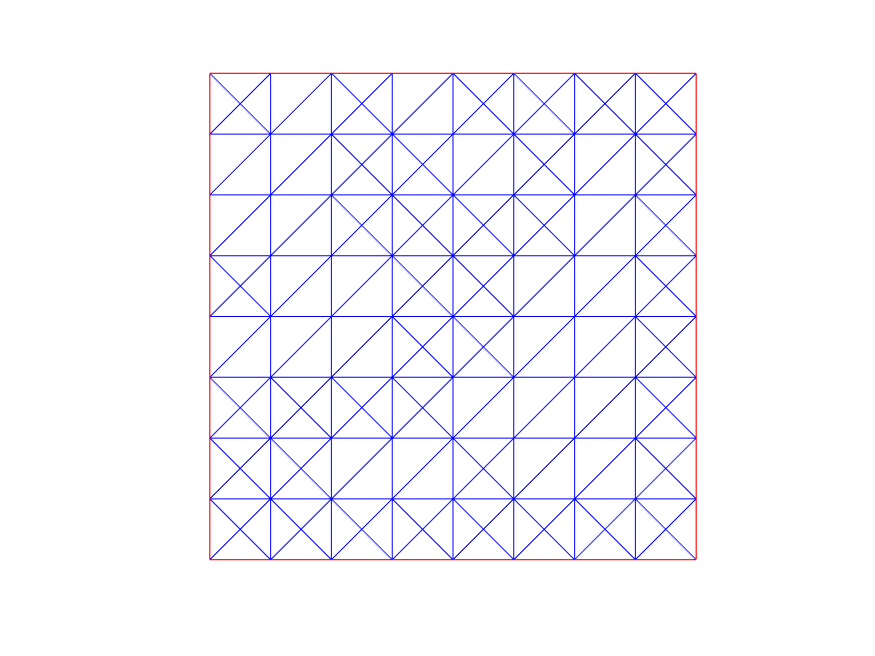}&
 \includegraphics[trim = {2.5cm 1.5cm 2.5cm 1.2cm}, clip, width=.2\textwidth]{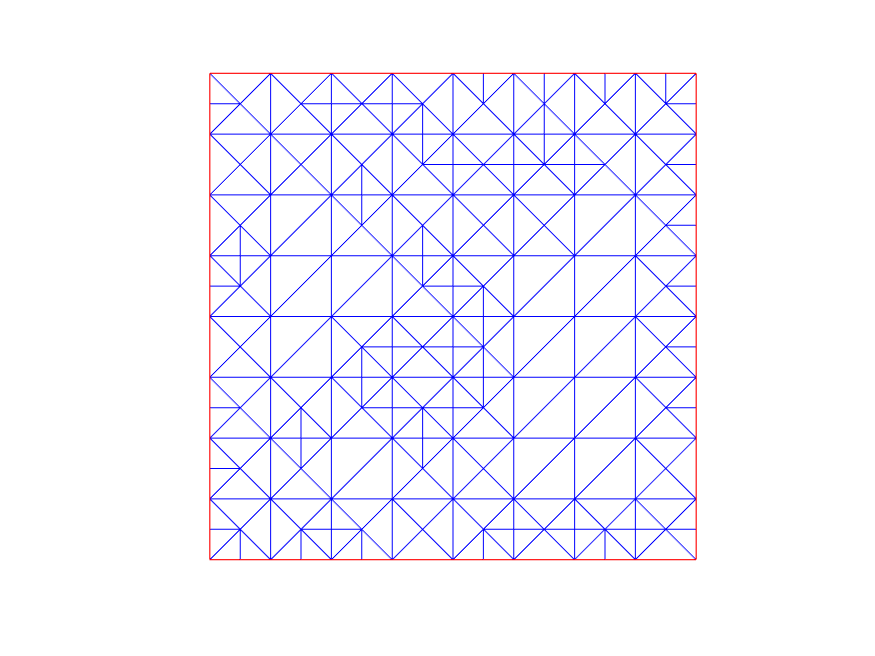}&
 \includegraphics[trim = {2.5cm 1.5cm 2.5cm 1.2cm}, clip, width=.2\textwidth]{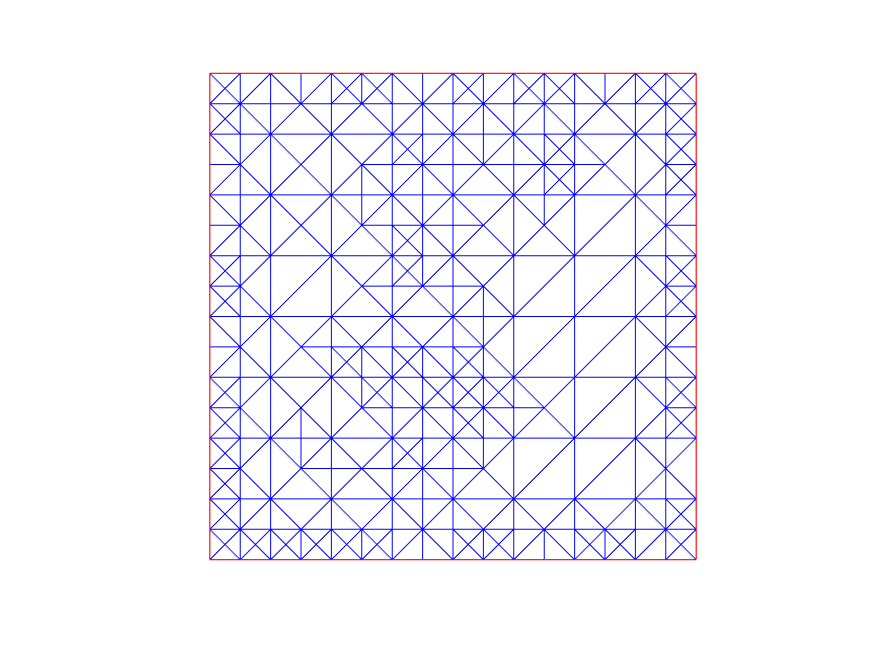}&
 \includegraphics[trim = {2.5cm 1.5cm 2.5cm 1.2cm}, clip, width=.2\textwidth]{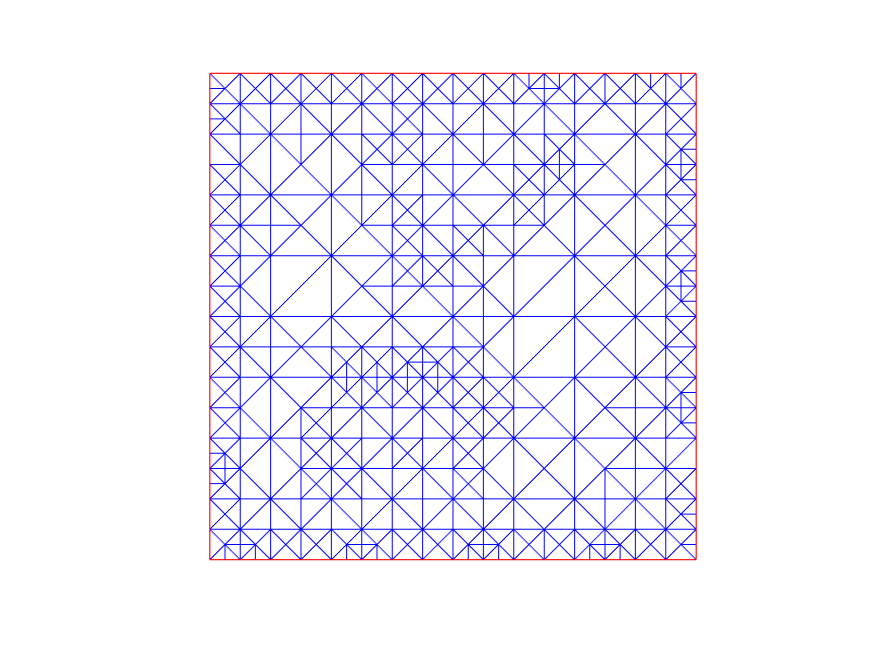}\\
 \includegraphics[trim = 1cm 0.5cm 0.5cm 0cm, width=.2\textwidth]{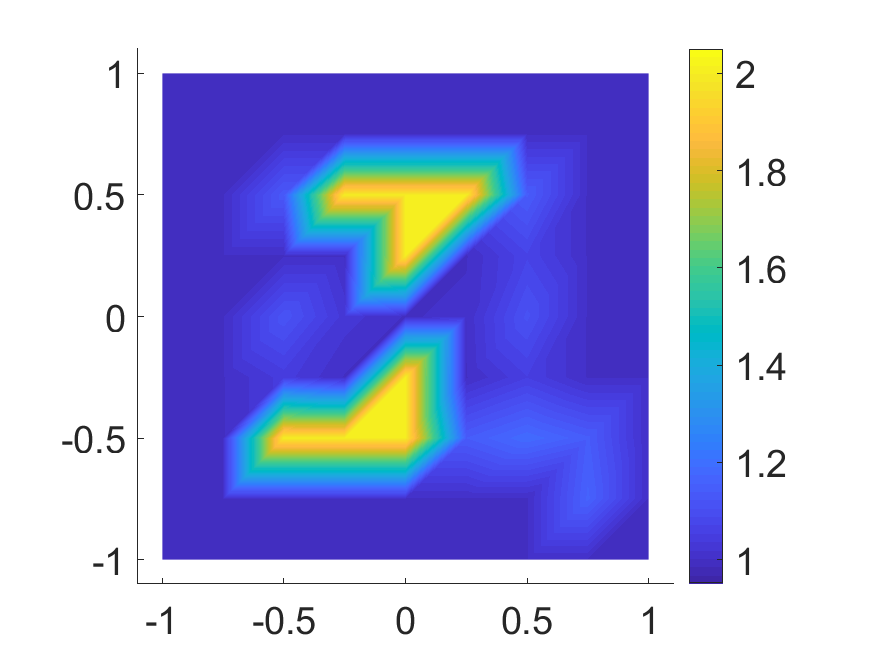}&
 \includegraphics[trim = 1cm 0.5cm 0.5cm 0cm, width=.2\textwidth]{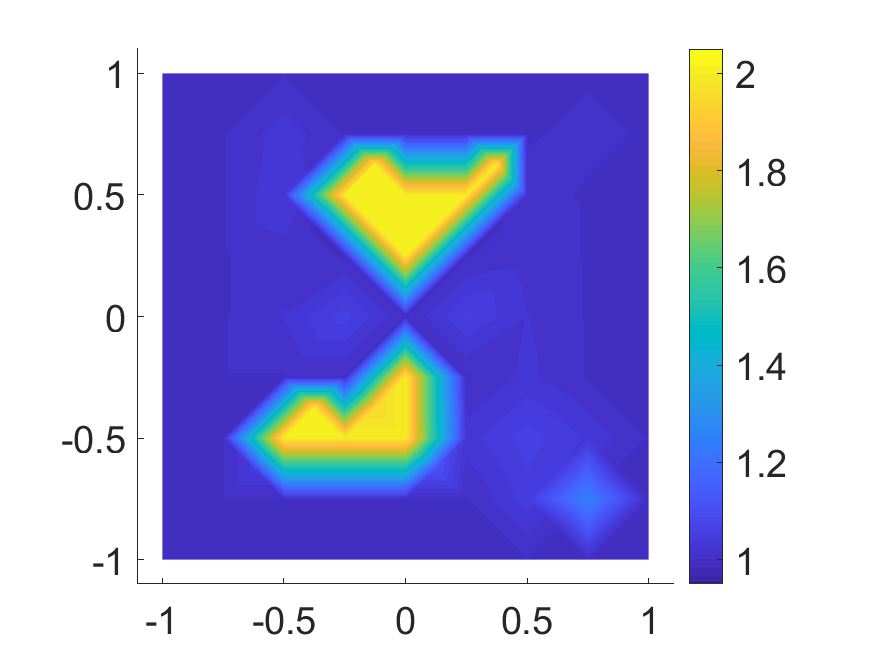}&
 \includegraphics[trim = 1cm 0.5cm 0.5cm 0cm, width=.2\textwidth]{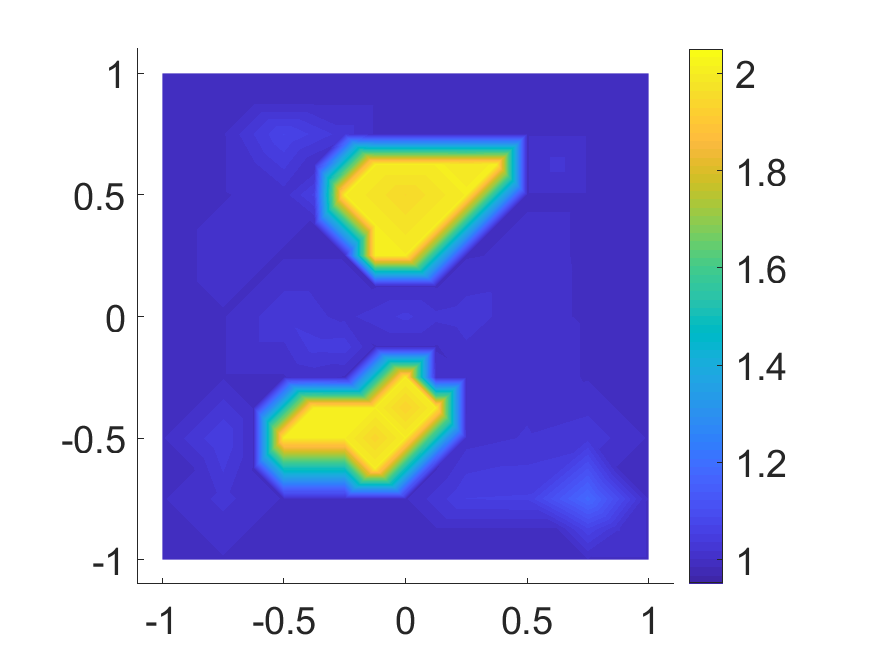}&
 \includegraphics[trim = 1cm 0.5cm 0.5cm 0cm, width=.2\textwidth]{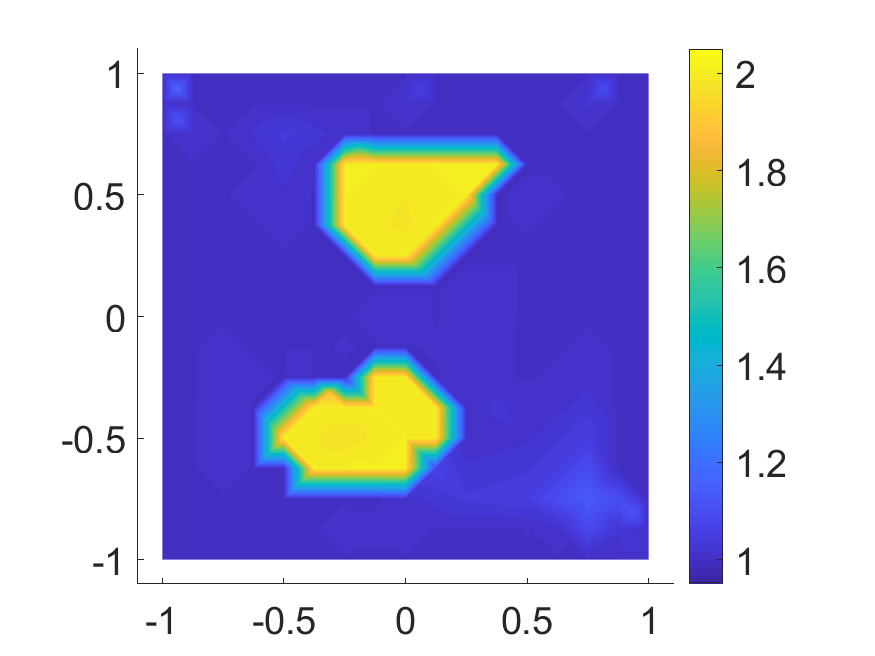}&
 \includegraphics[trim = 1cm 0.5cm 0.5cm 0cm, width=.2\textwidth]{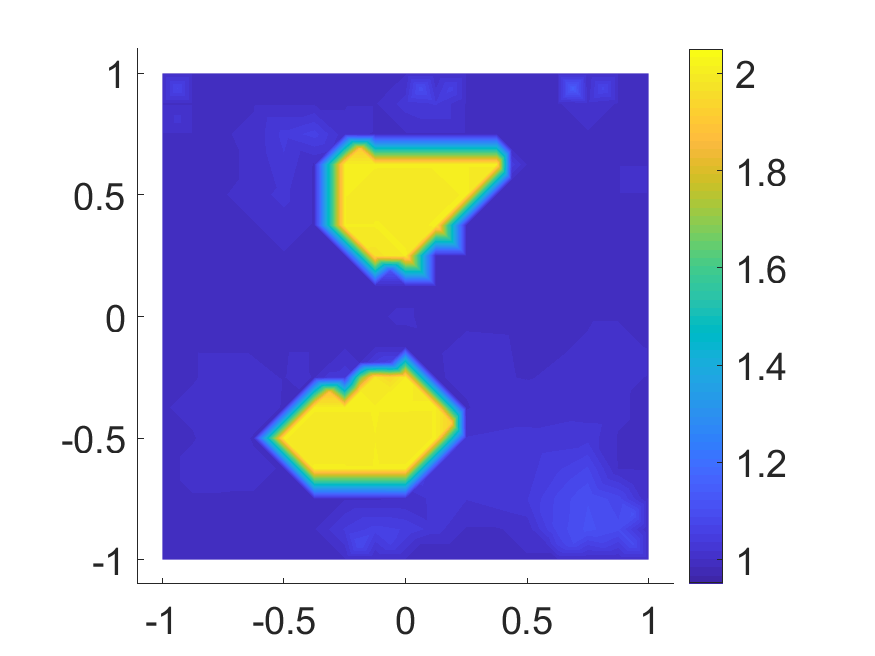}\\
 81    &   120   &   176  &   276   &   378\\
 \includegraphics[trim = {2.5cm 1.5cm 2.5cm 1.2cm}, clip, width=.2\textwidth]{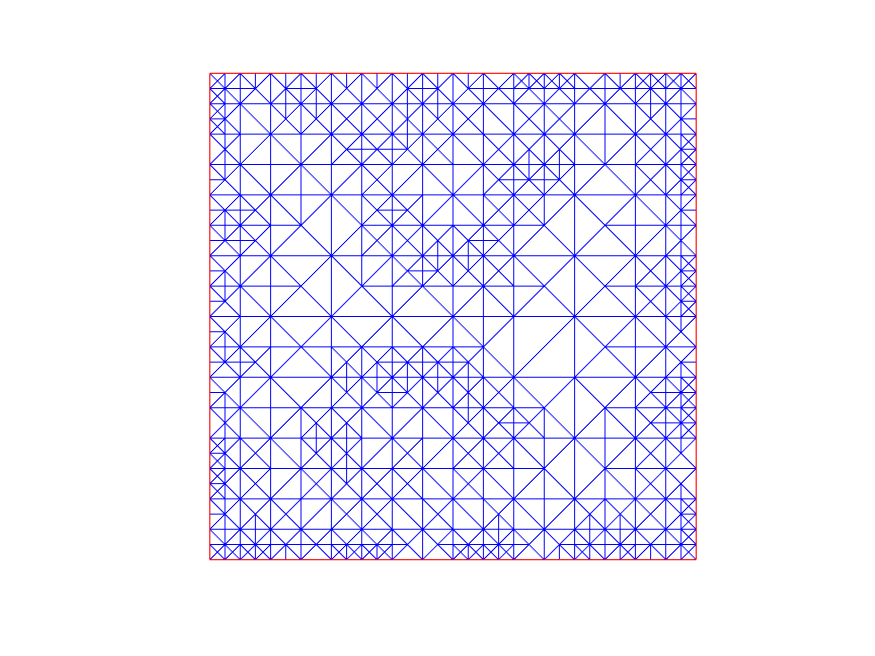}&
 \includegraphics[trim = {2.5cm 1.5cm 2.5cm 1.2cm}, clip, width=.2\textwidth]{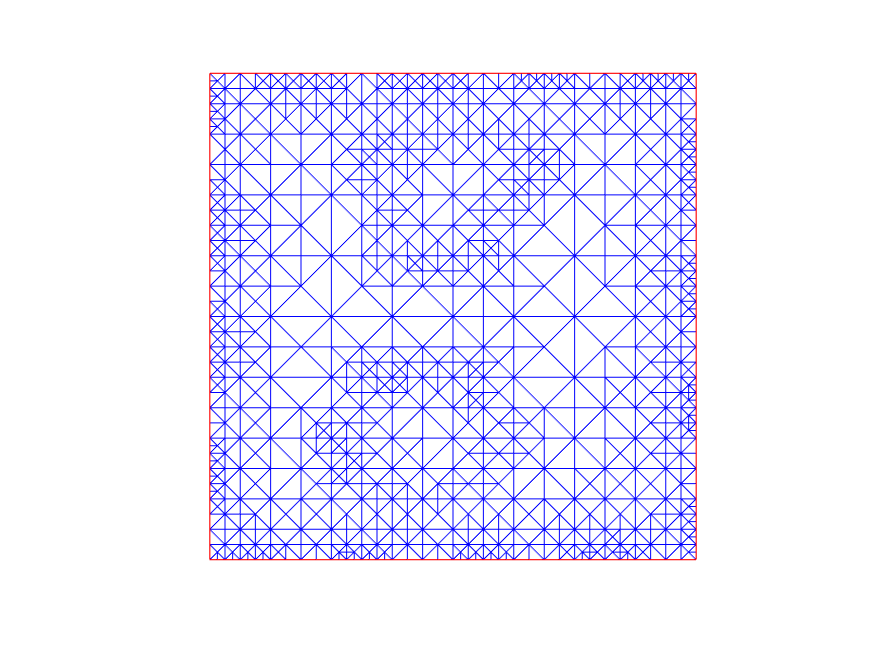}&
 \includegraphics[trim = {2.5cm 1.5cm 2.5cm 1.2cm}, clip, width=.2\textwidth]{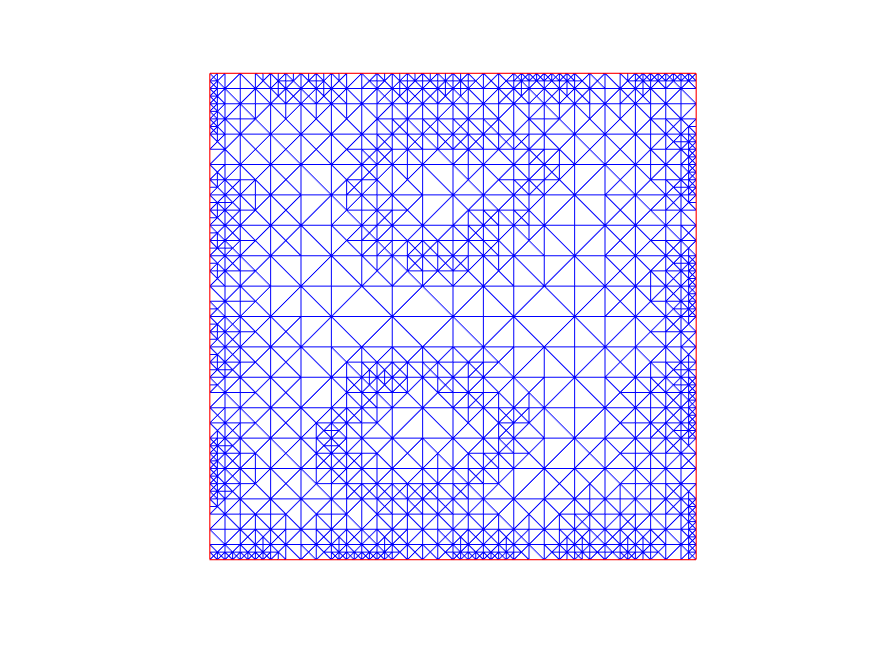}&
 \includegraphics[trim = {2.5cm 1.5cm 2.5cm 1.2cm}, clip, width=.2\textwidth]{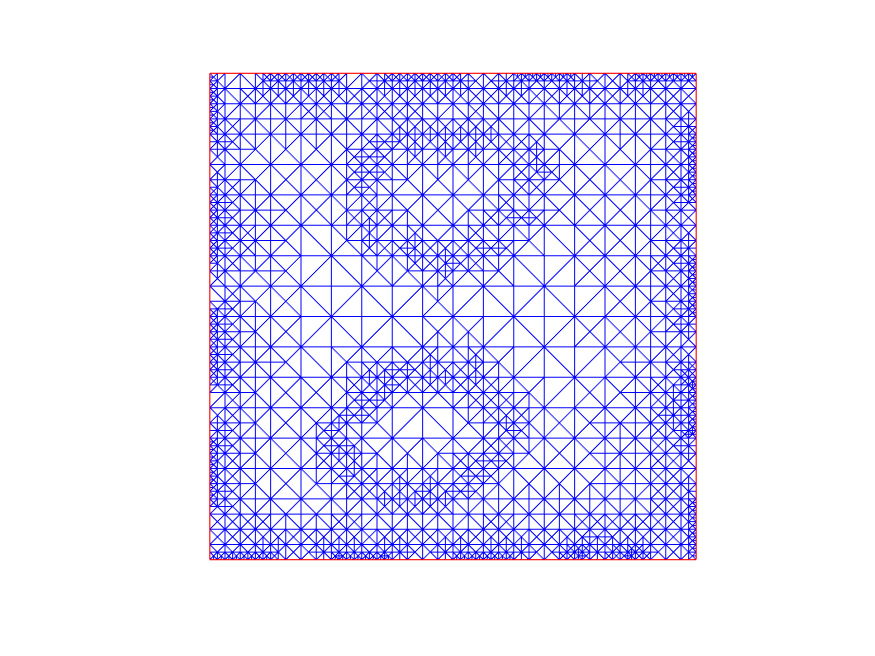}&
 \includegraphics[trim = {2.5cm 1.5cm 2.5cm 1.2cm}, clip, width=.2\textwidth]{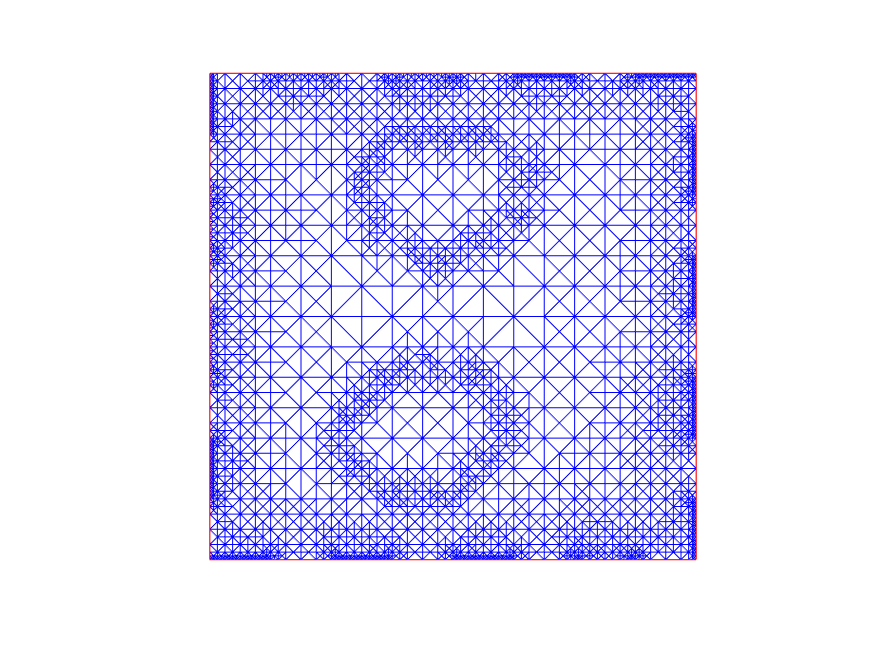}\\
  \includegraphics[trim = 1cm 0.5cm 0.5cm 0cm, width=.2\textwidth]{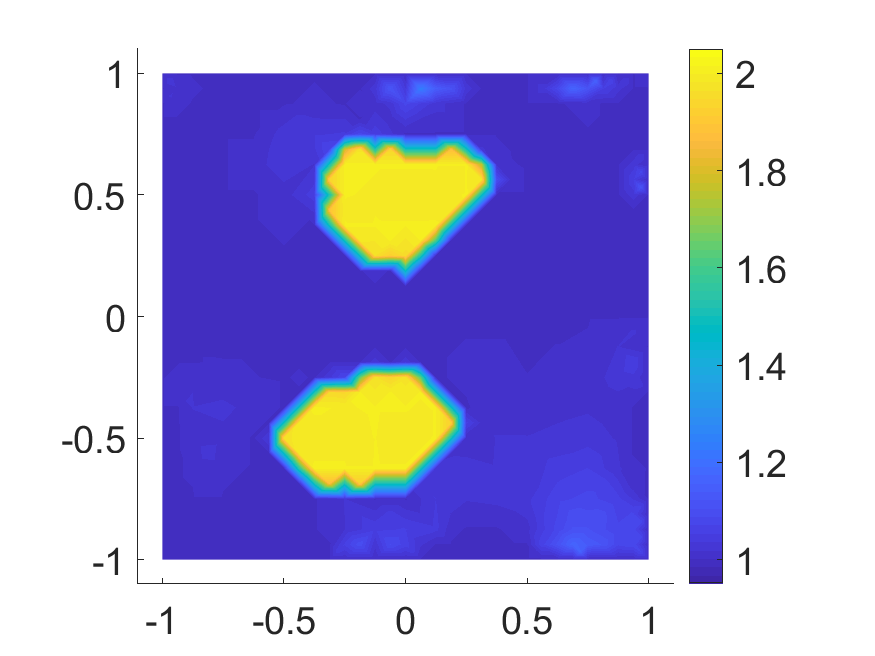}&
 \includegraphics[trim = 1cm 0.5cm 0.5cm 0cm, width=.2\textwidth]{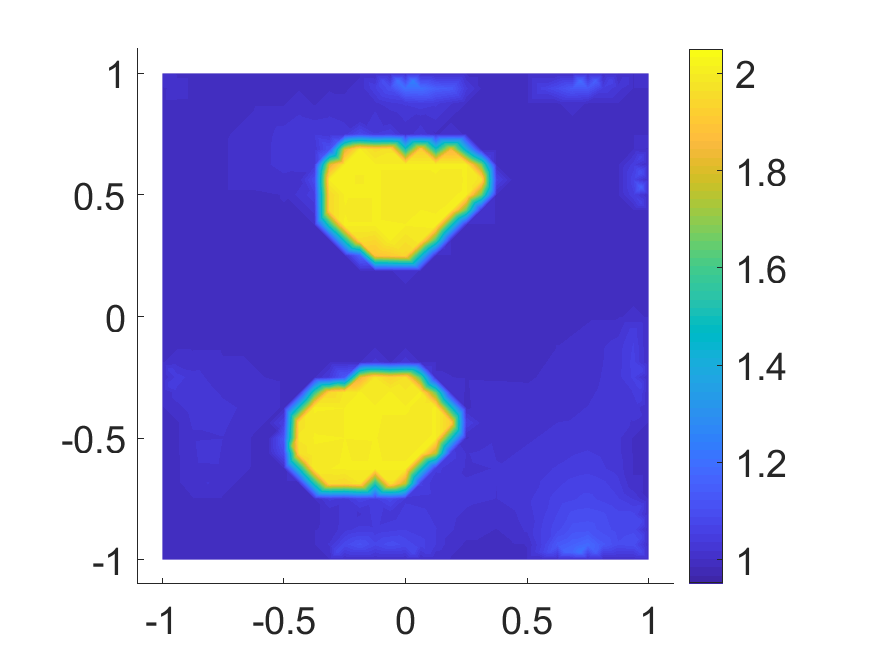}&
 \includegraphics[trim = 1cm 0.5cm 0.5cm 0cm, width=.2\textwidth]{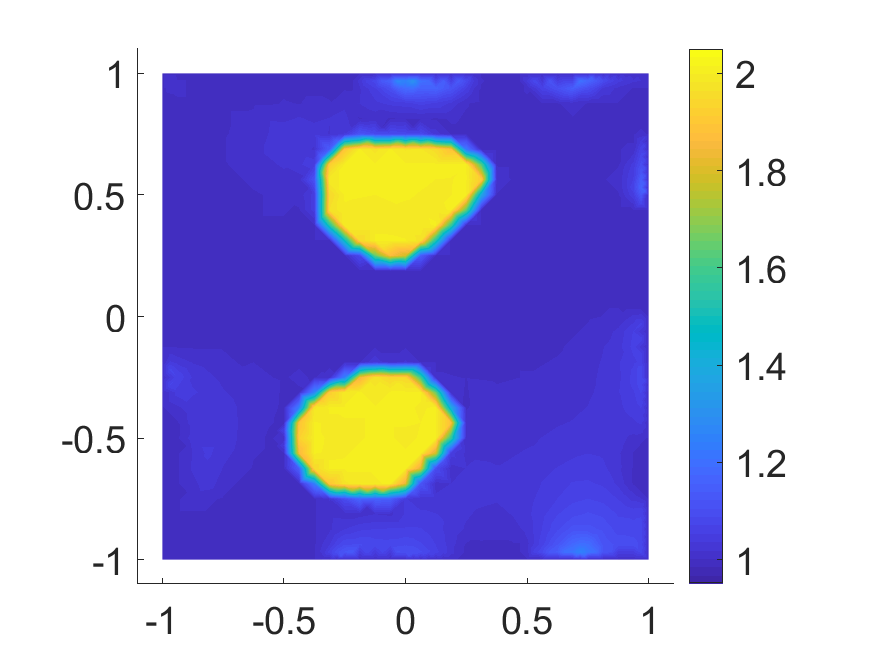}&
 \includegraphics[trim = 1cm 0.5cm 0.5cm 0cm, width=.2\textwidth]{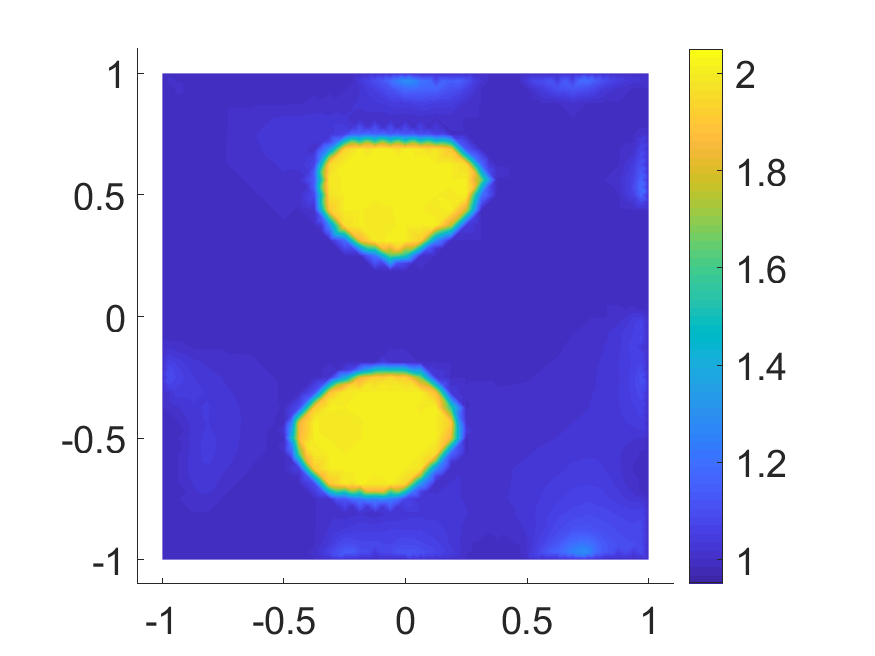}&
 \includegraphics[trim = 1cm 0.5cm 0.5cm 0cm, width=.2\textwidth]{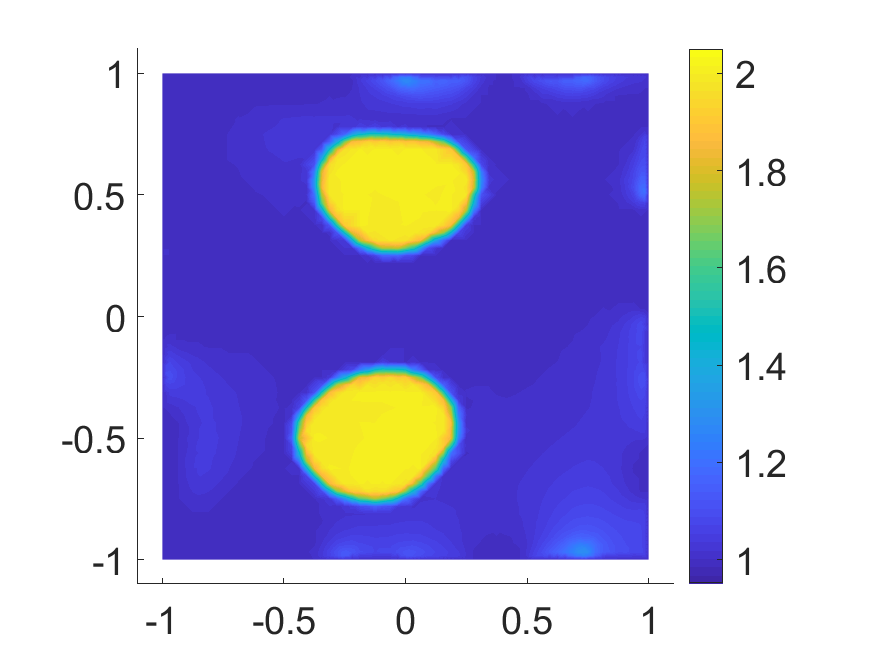}\\
   595    &   816   &       1324  &       1862  &       2884\\
  \includegraphics[trim = {2.5cm 1.5cm 2.5cm 1.2cm}, clip, width=.2\textwidth]{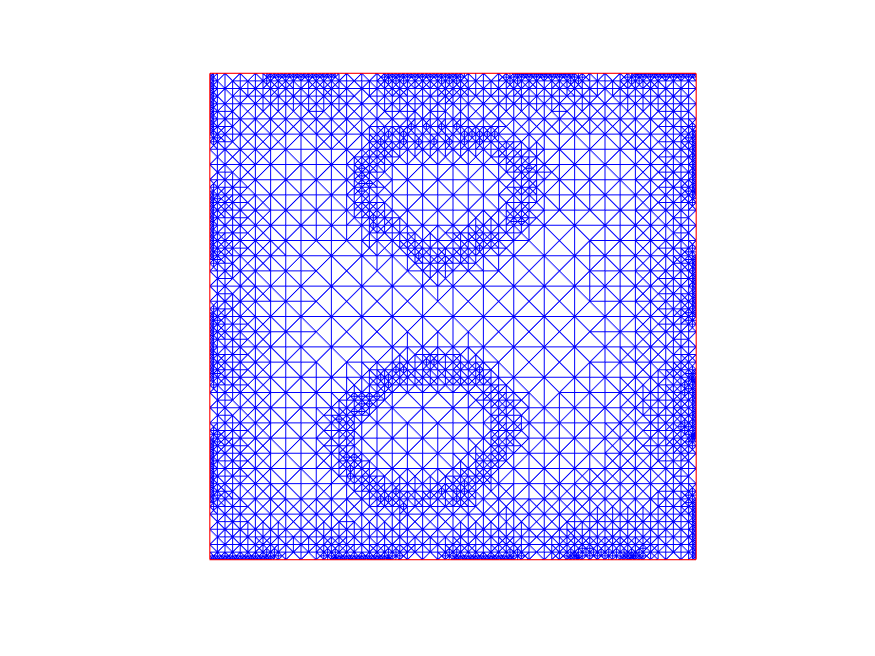}&
 \includegraphics[trim = {2.5cm 1.5cm 2.5cm 1.2cm}, clip, width=.2\textwidth]{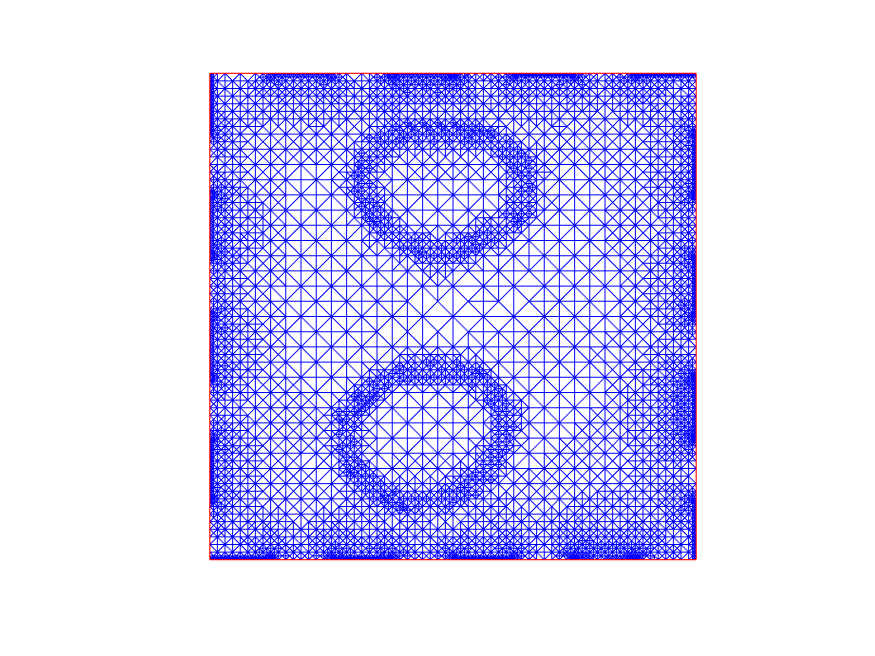}&
 \includegraphics[trim = {2.5cm 1.5cm 2.5cm 1.2cm}, clip, width=.2\textwidth]{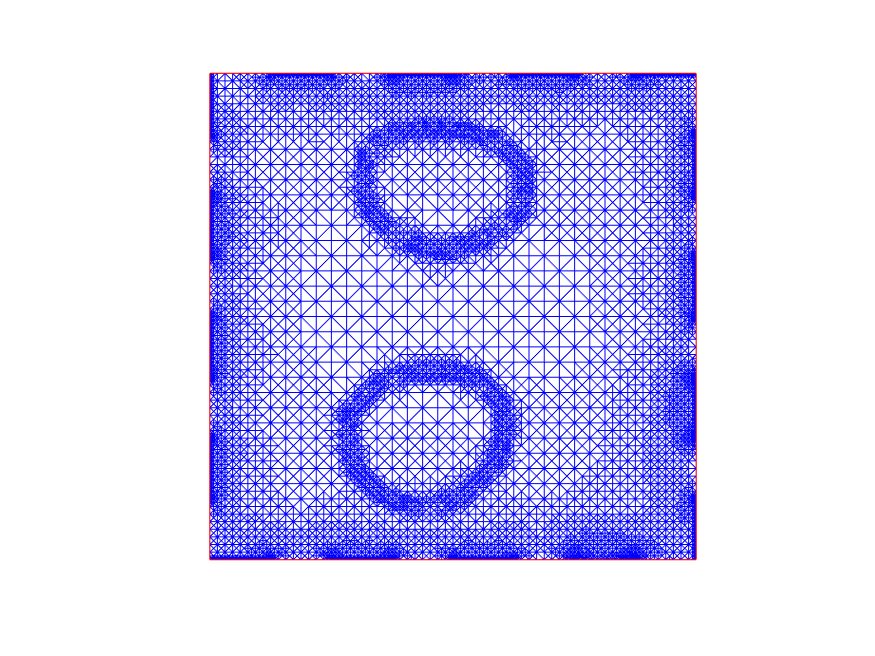}&
 \includegraphics[trim = {2.5cm 1.5cm 2.5cm 1.2cm}, clip, width=.2\textwidth]{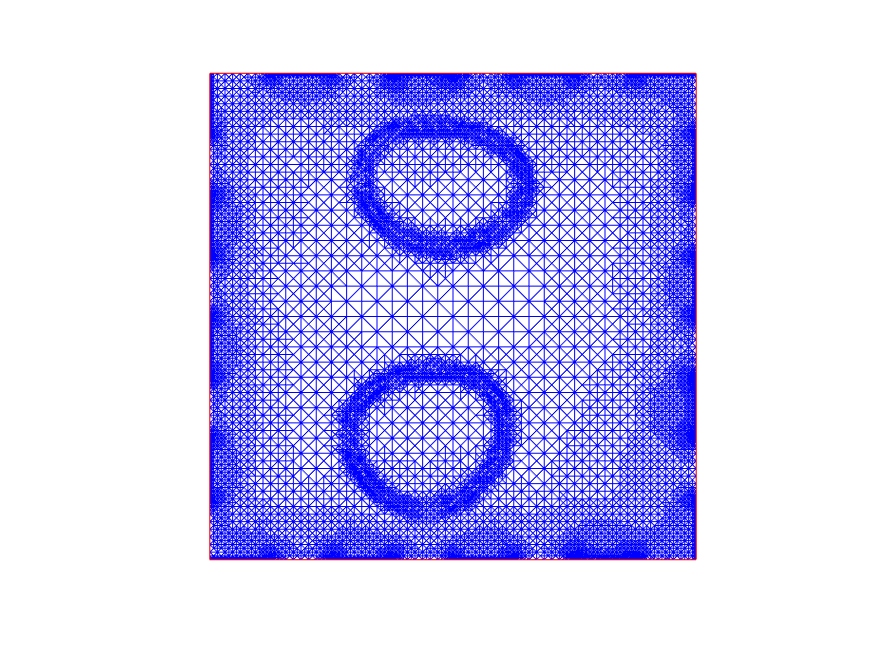}&
 \includegraphics[trim = {2.5cm 1.5cm 2.5cm 1.2cm}, clip, width=.2\textwidth]{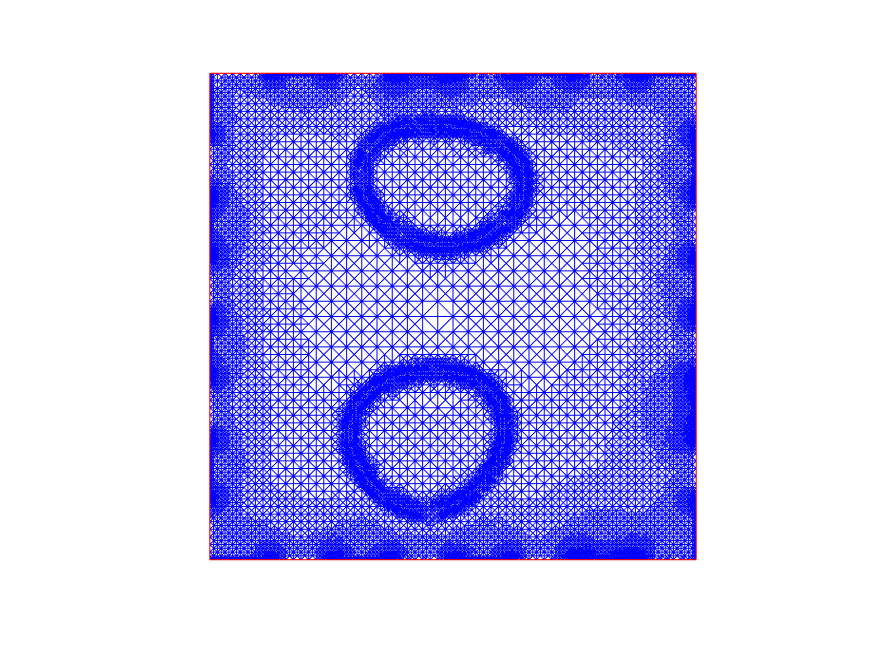}\\
 \includegraphics[trim = 1cm 0.5cm 0.5cm 0cm, width=.2\textwidth]{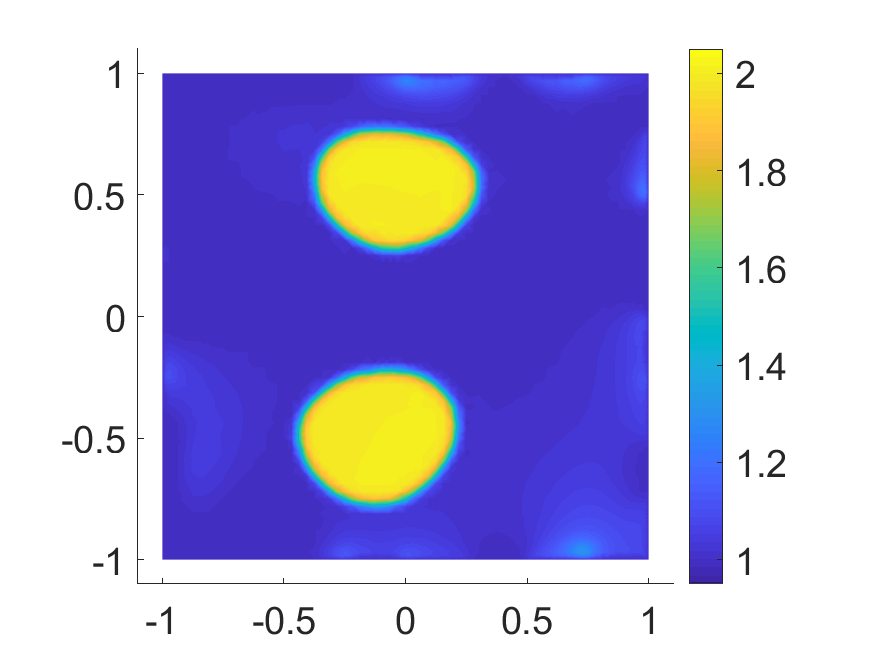}&
 \includegraphics[trim = 1cm 0.5cm 0.5cm 0cm, width=.2\textwidth]{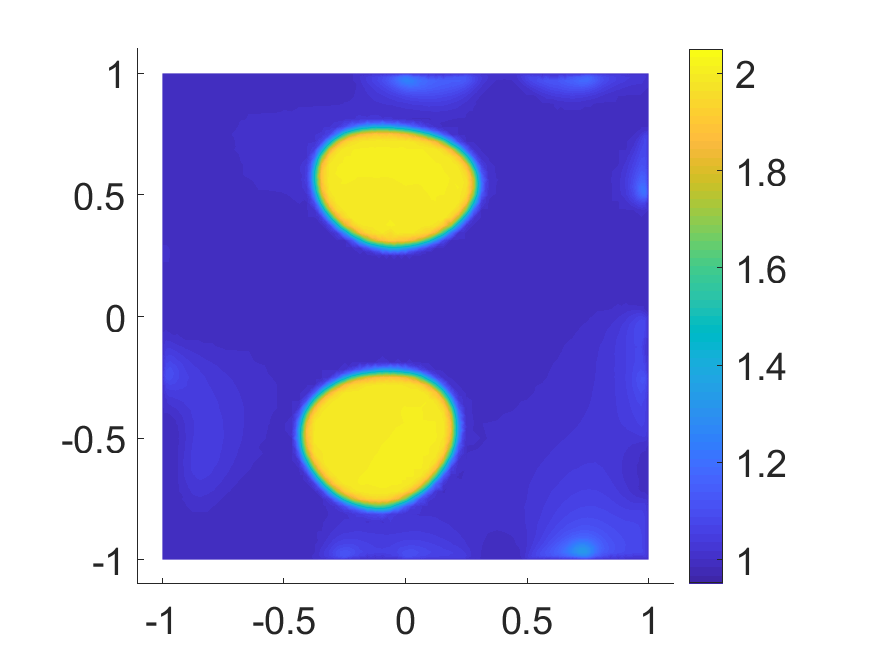}&
 \includegraphics[trim = 1cm 0.5cm 0.5cm 0cm, width=.2\textwidth]{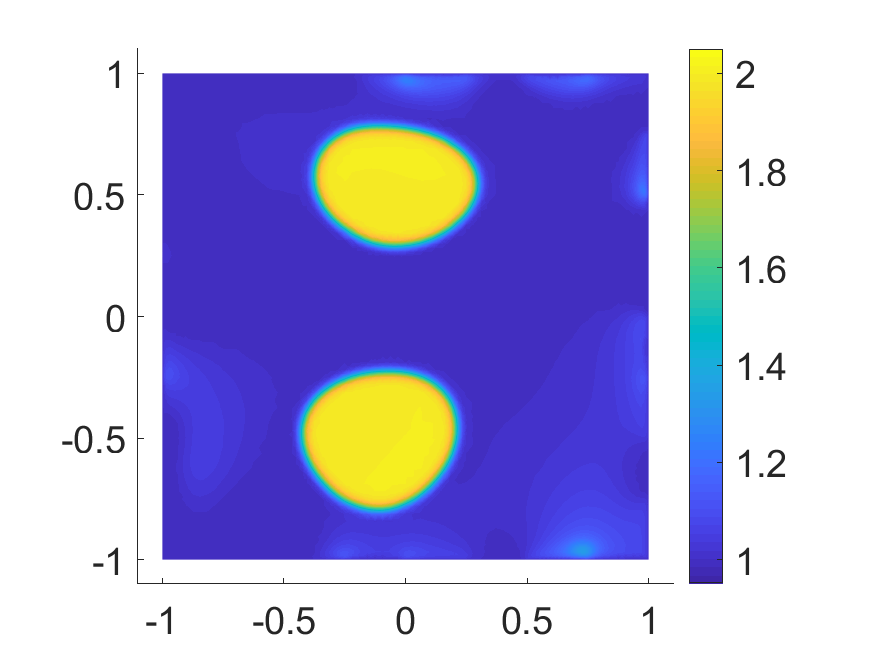}&
 \includegraphics[trim = 1cm 0.5cm 0.5cm 0cm, width=.2\textwidth]{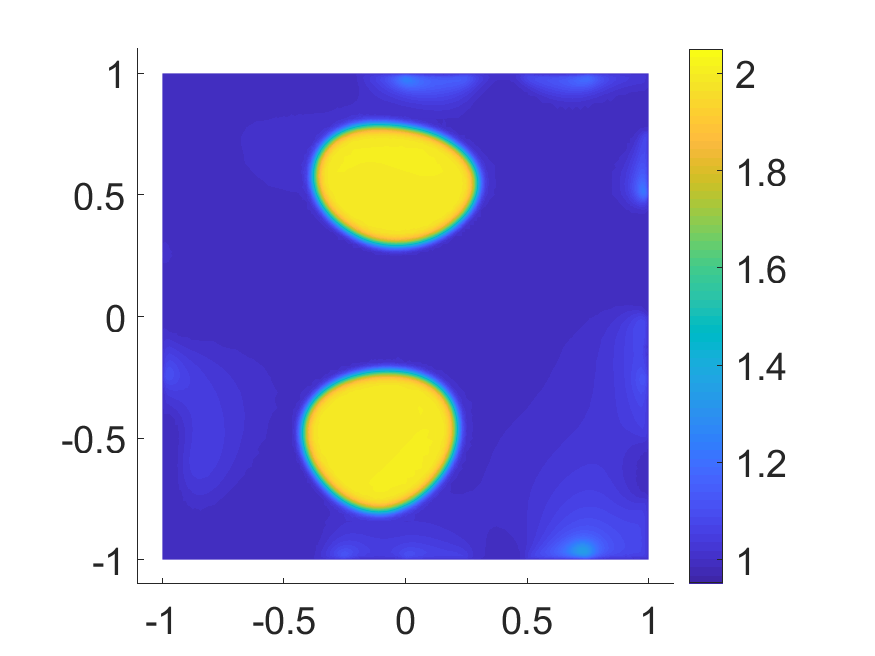}&
 \includegraphics[trim = 1cm 0.5cm 0.5cm 0cm, width=.2\textwidth]{ex2_1e2ad_it15}\\
  4004   &       6250   &        8690    &      13374   &      18770
 \end{tabular}
 \caption{The meshes $\mathcal{T}_k$ and recoveries $\sigma_k$ during the adaptive
 refinement, for Example \ref{exam2}(i) with $\epsilon=\text{1e-2}$ and $\tilde\alpha=\text{3e-2}$. The numbers
 refer to d.o.f.} \label{fig:exam2i-recon-iter1e2}
\end{figure}

\begin{figure}[hbt!]
  \centering
  \begin{tabular}{ccccc}
    \includegraphics[trim = {.2cm 0cm 0cm 0cm}, clip, width=.18\textwidth]{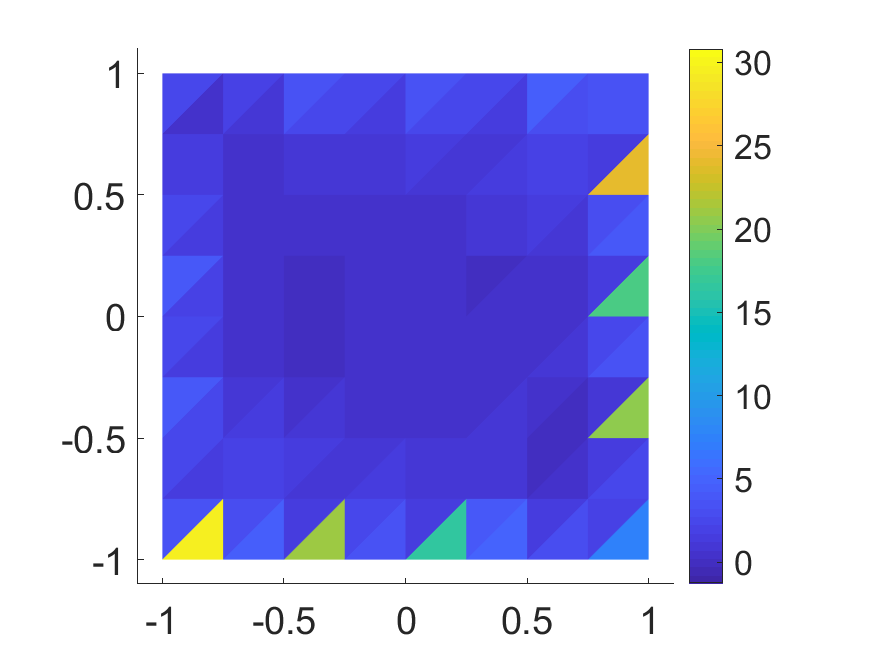} &
    \includegraphics[trim = {.2cm 0cm 0cm 0cm}, clip, width=.18\textwidth]{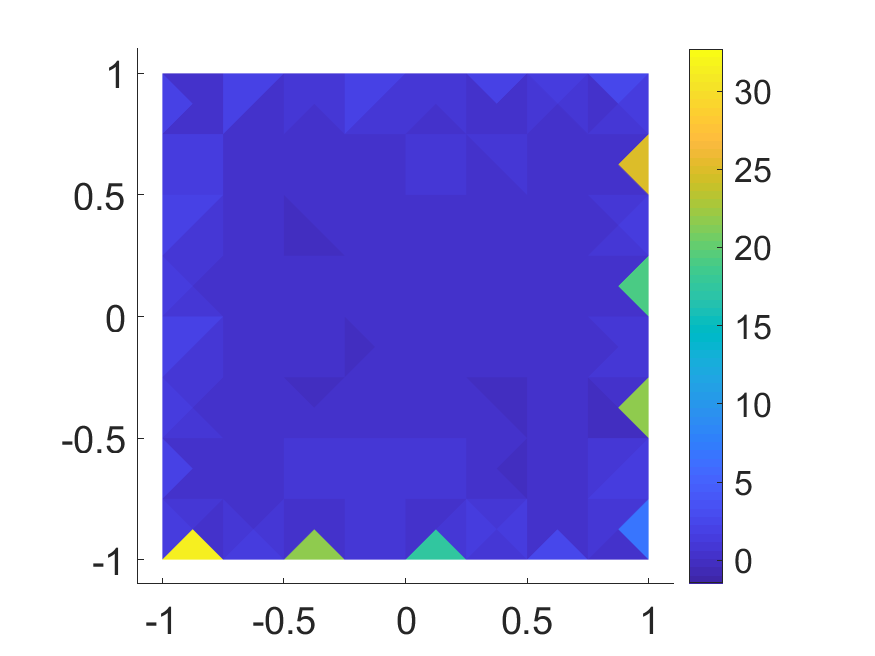} &
    \includegraphics[trim = {.2cm 0cm 0cm 0cm}, clip, width=.18\textwidth]{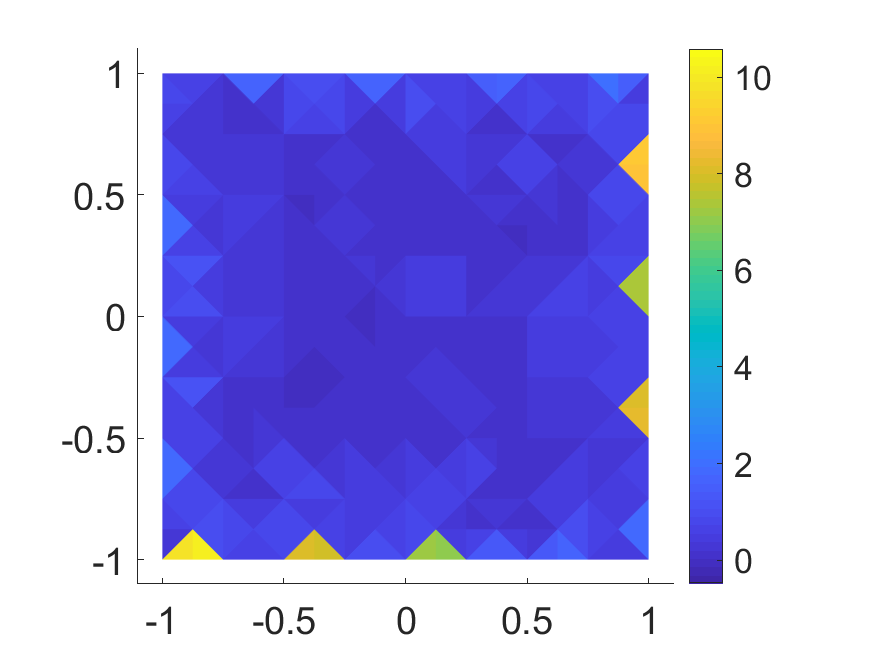} &
    \includegraphics[trim = {.2cm 0cm 0cm 0cm}, clip, width=.18\textwidth]{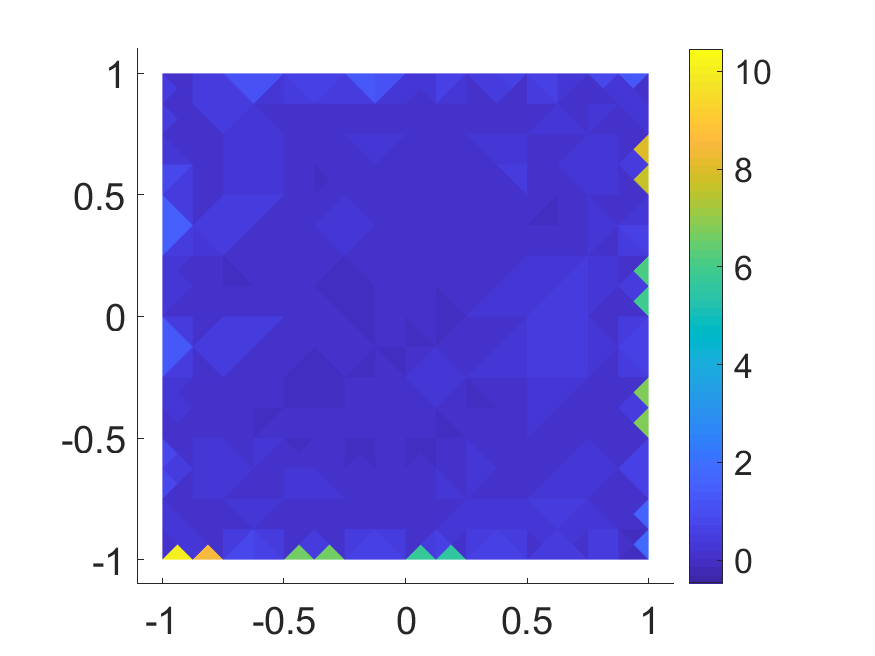} &
    \includegraphics[trim = {.2cm 0cm 0cm 0cm}, clip, width=.18\textwidth]{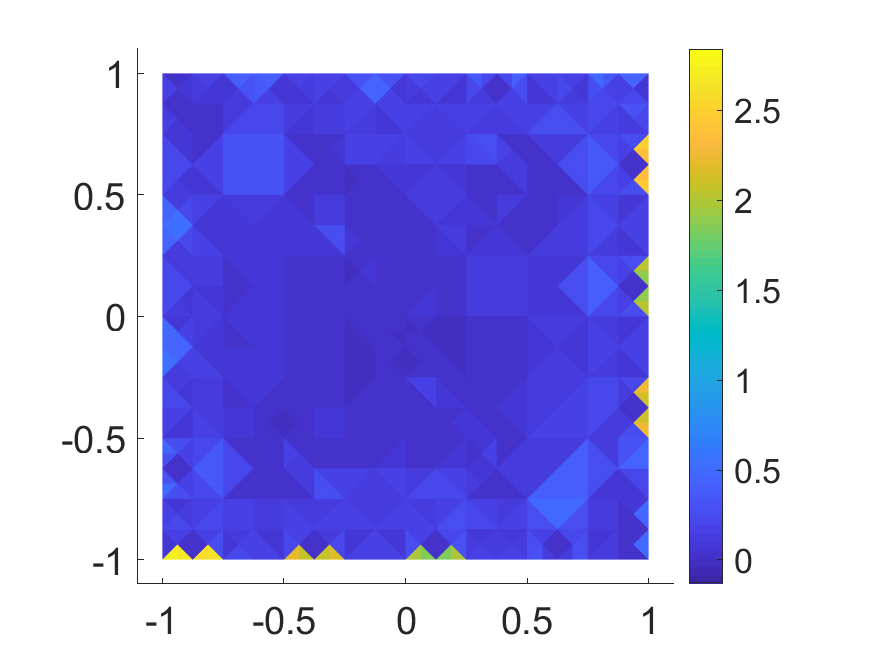}\\
    \includegraphics[trim = {.2cm 0cm 0cm 0cm}, clip, width=.18\textwidth]{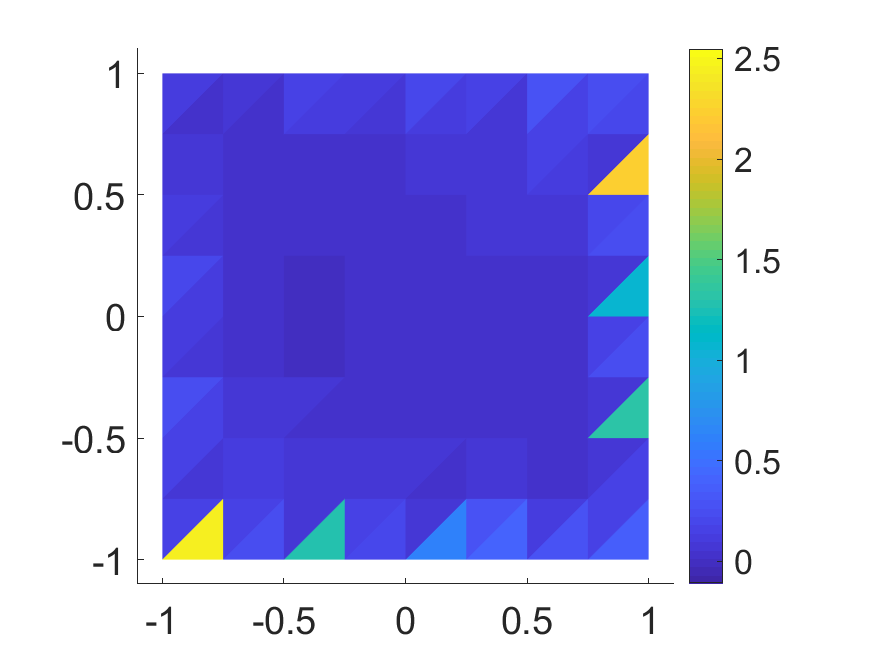} &
    \includegraphics[trim = {.2cm 0cm 0cm 0cm}, clip, width=.18\textwidth]{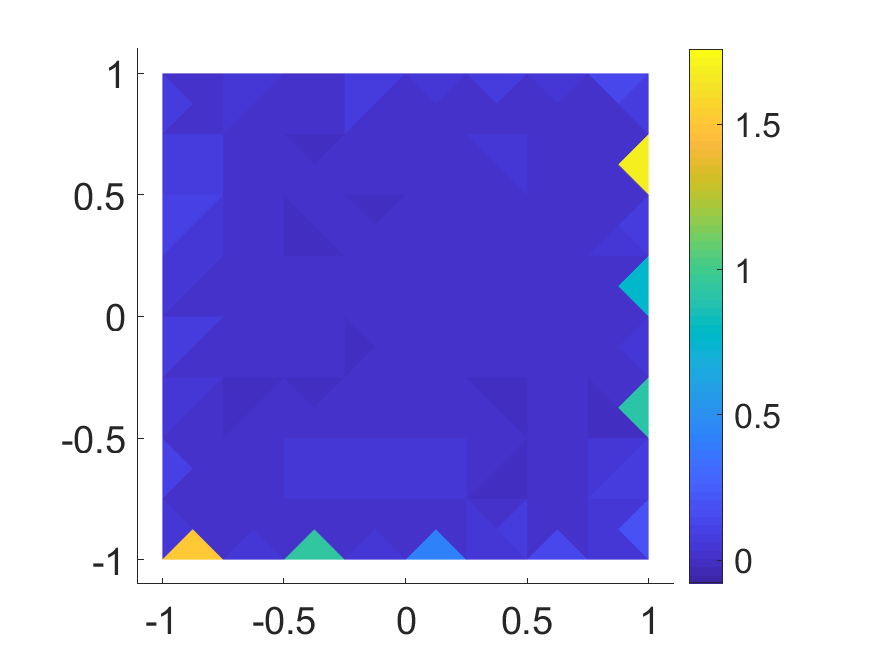} &
    \includegraphics[trim = {.2cm 0cm 0cm 0cm}, clip, width=.18\textwidth]{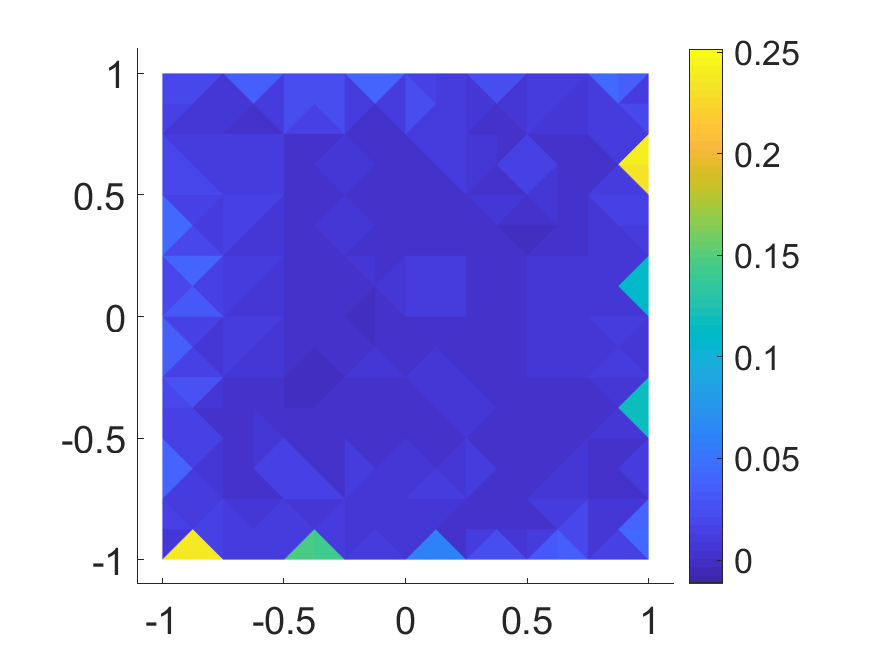} &
    \includegraphics[trim = {.2cm 0cm 0cm 0cm}, clip, width=.18\textwidth]{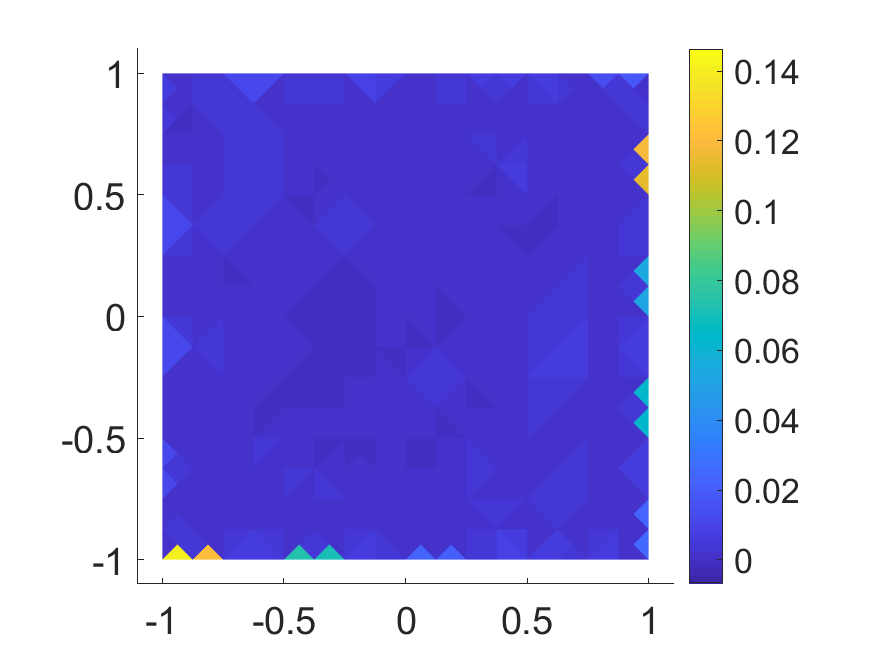} &
    \includegraphics[trim = {.2cm 0cm 0cm 0cm}, clip, width=.18\textwidth]{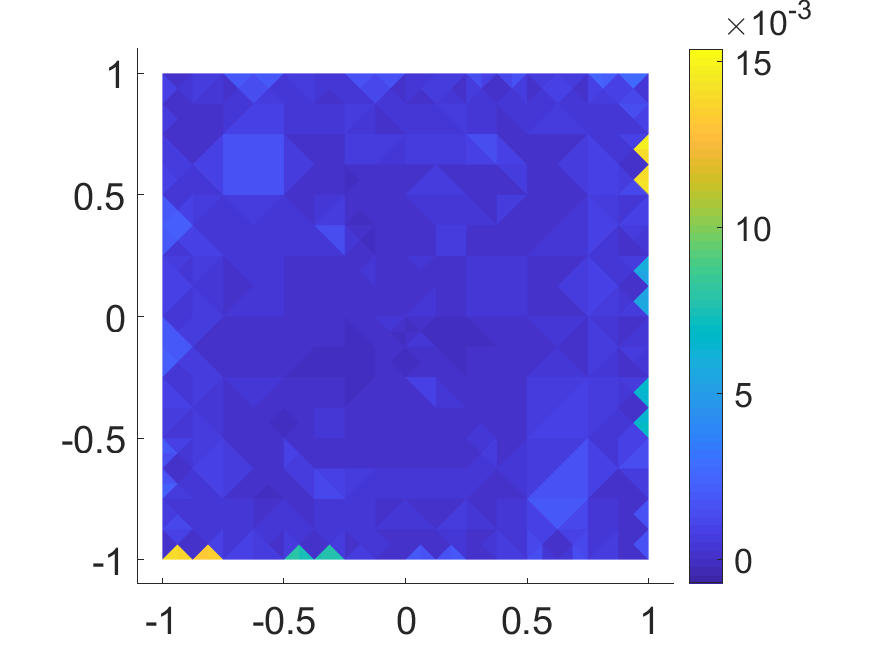}\\
    \includegraphics[trim = {.2cm 0cm 0cm 0cm}, clip, width=.18\textwidth]{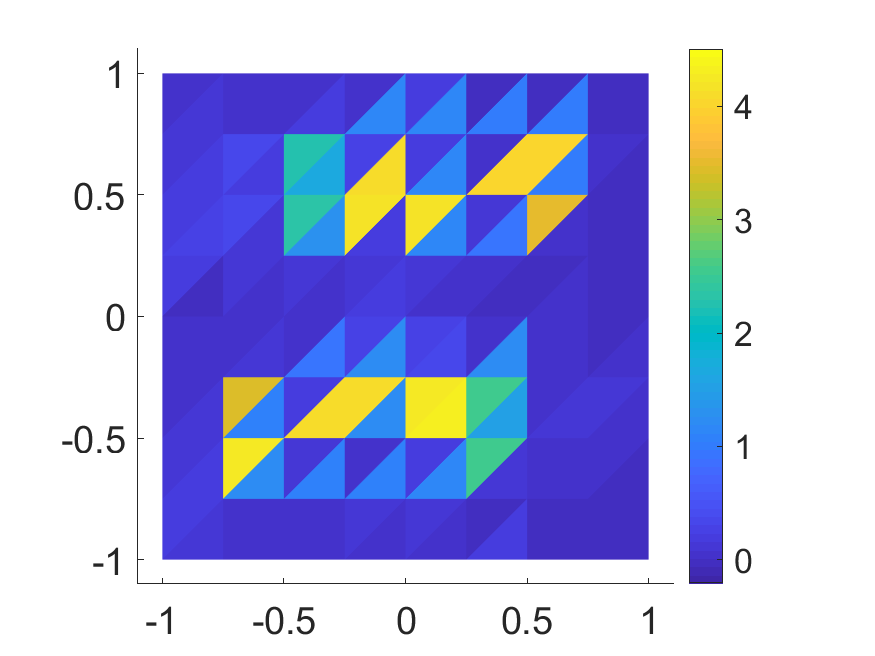} &
    \includegraphics[trim = {.2cm 0cm 0cm 0cm}, clip, width=.18\textwidth]{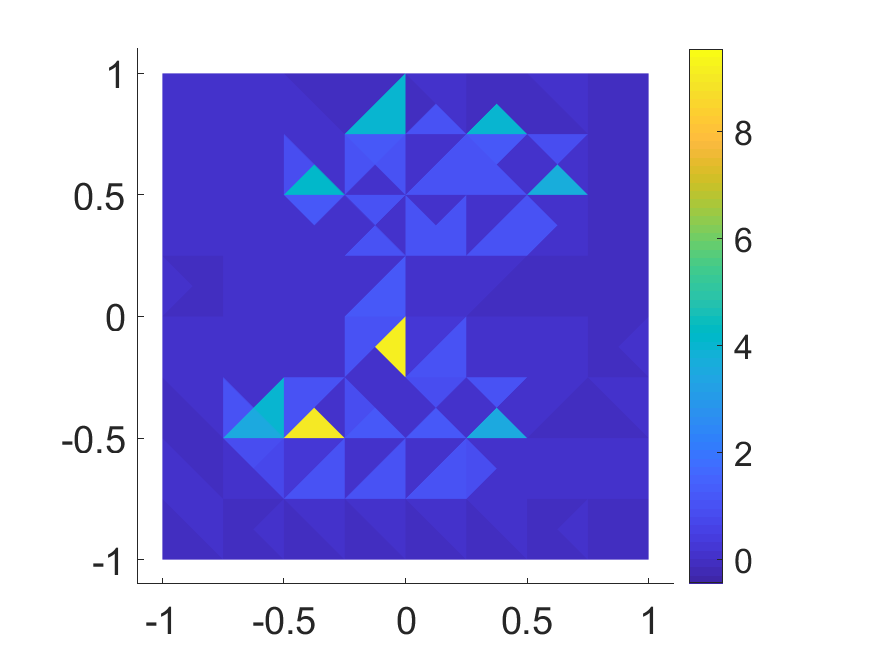} &
    \includegraphics[trim = {.2cm 0cm 0cm 0cm}, clip, width=.18\textwidth]{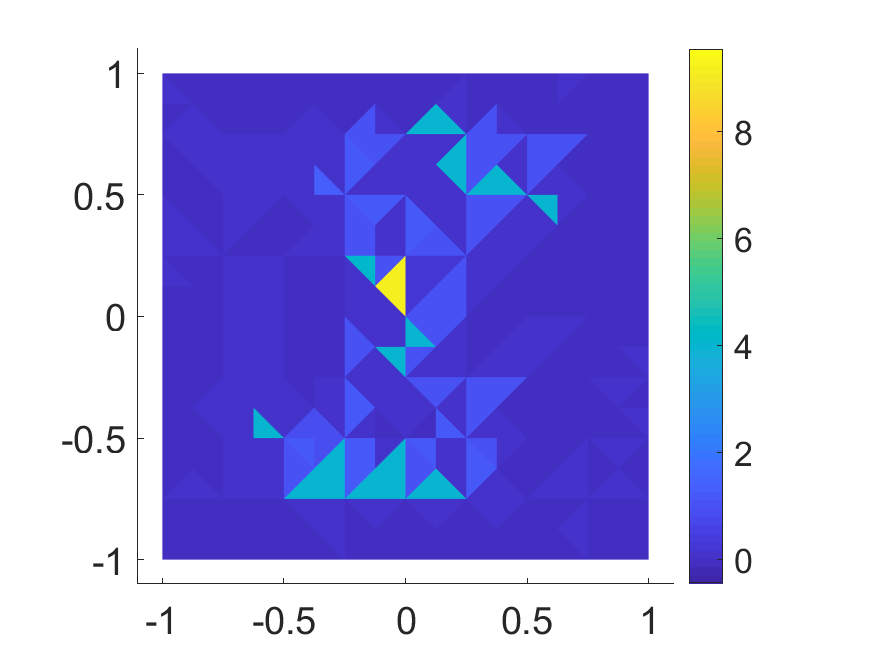} &
    \includegraphics[trim = {.2cm 0cm 0cm 0cm}, clip, width=.18\textwidth]{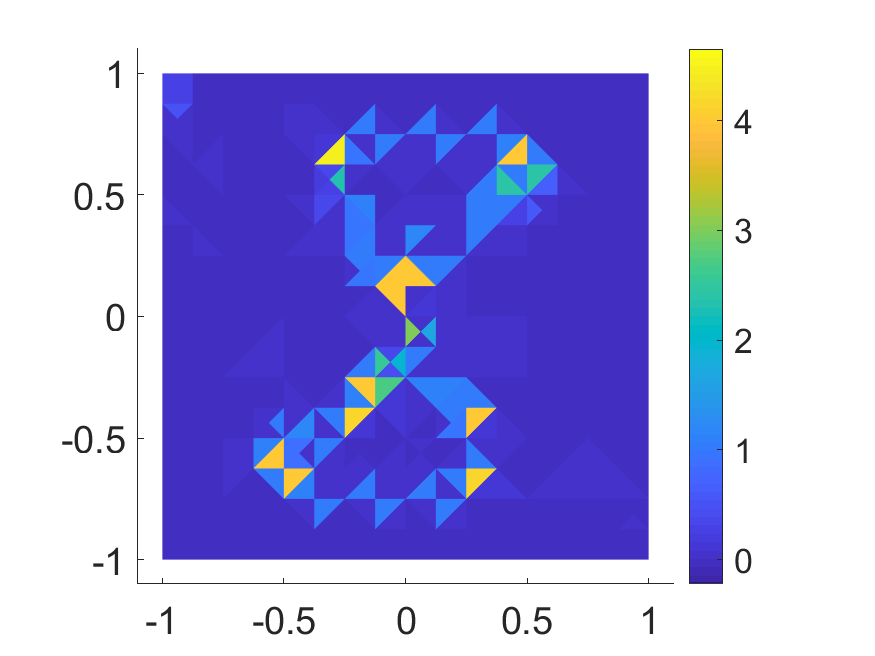} &
    \includegraphics[trim = {.2cm 0cm 0cm 0cm}, clip, width=.18\textwidth]{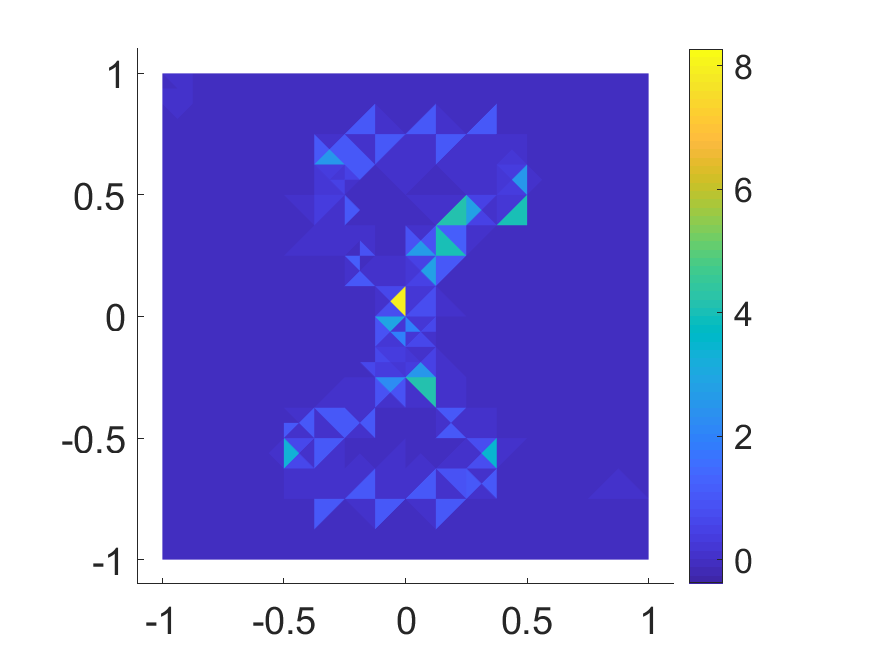}\\
     $k=0$ & $k=1$ & $k=2$ & $k=3$ & $k=4$ \\
    \includegraphics[trim = {.2cm 0cm 0cm 0cm}, clip, width=.18\textwidth]{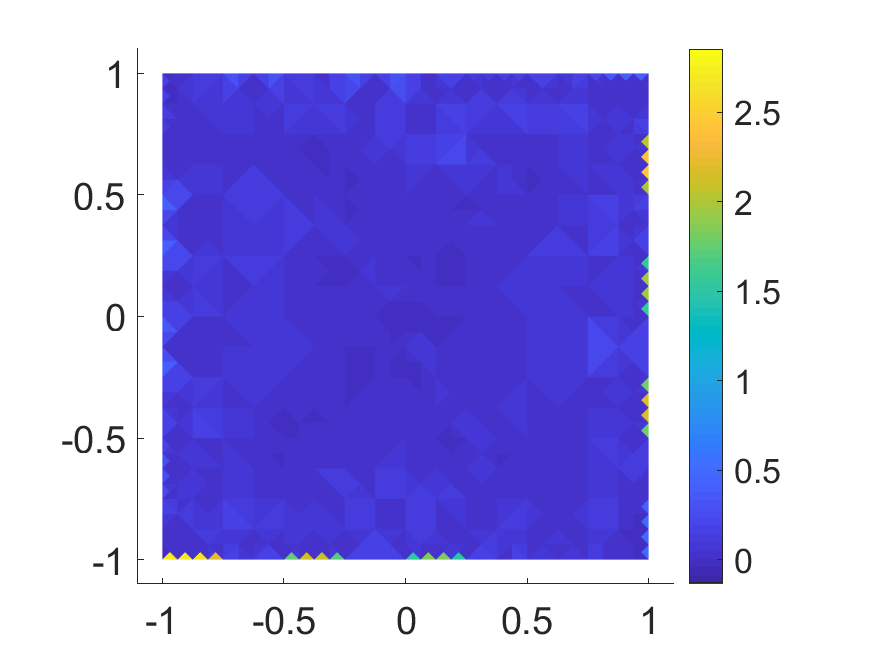} &
    \includegraphics[trim = {.2cm 0cm 0cm 0cm}, clip, width=.18\textwidth]{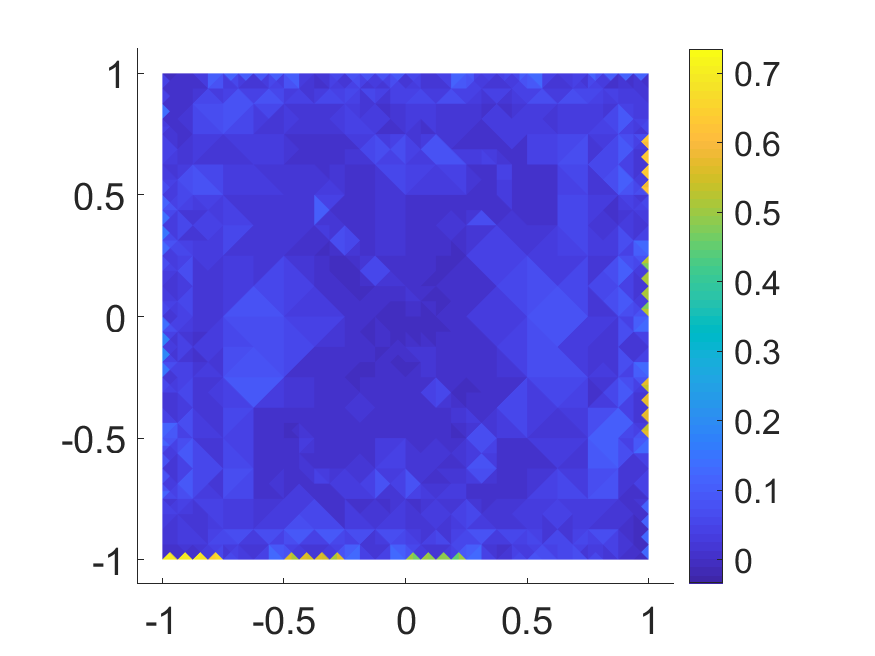} &
    \includegraphics[trim = {.2cm 0cm 0cm 0cm}, clip, width=.18\textwidth]{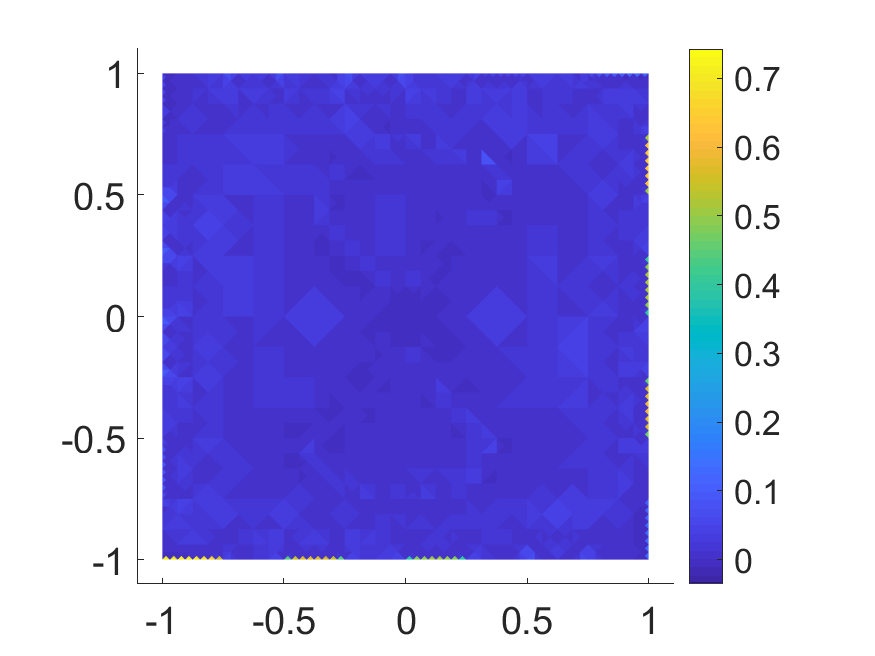} &
    \includegraphics[trim = {.2cm 0cm 0cm 0cm}, clip, width=.18\textwidth]{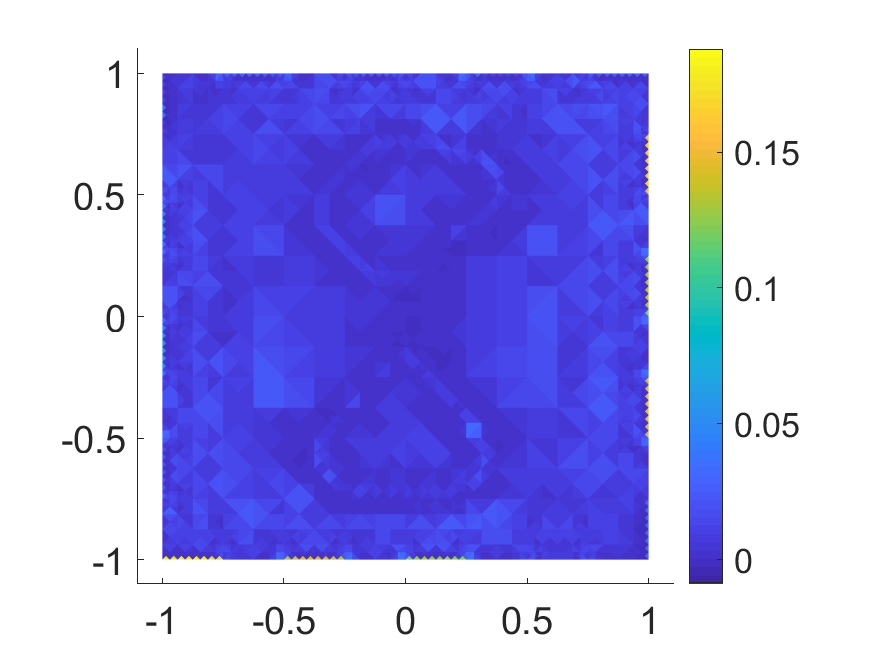} &
    \includegraphics[trim = {.2cm 0cm 0cm 0cm}, clip, width=.18\textwidth]{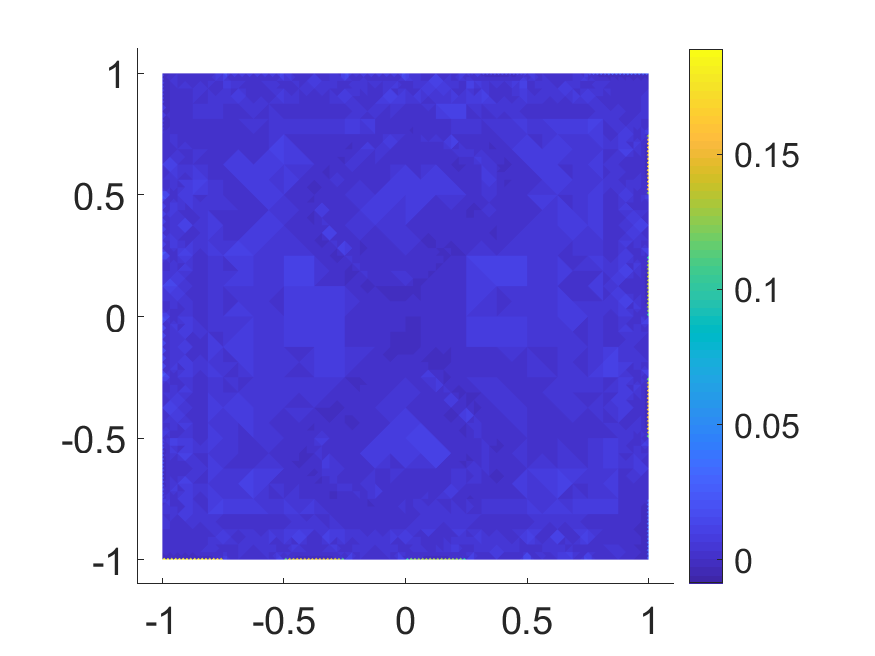}\\
    \includegraphics[trim = {.2cm 0cm 0cm 0cm}, clip, width=.18\textwidth]{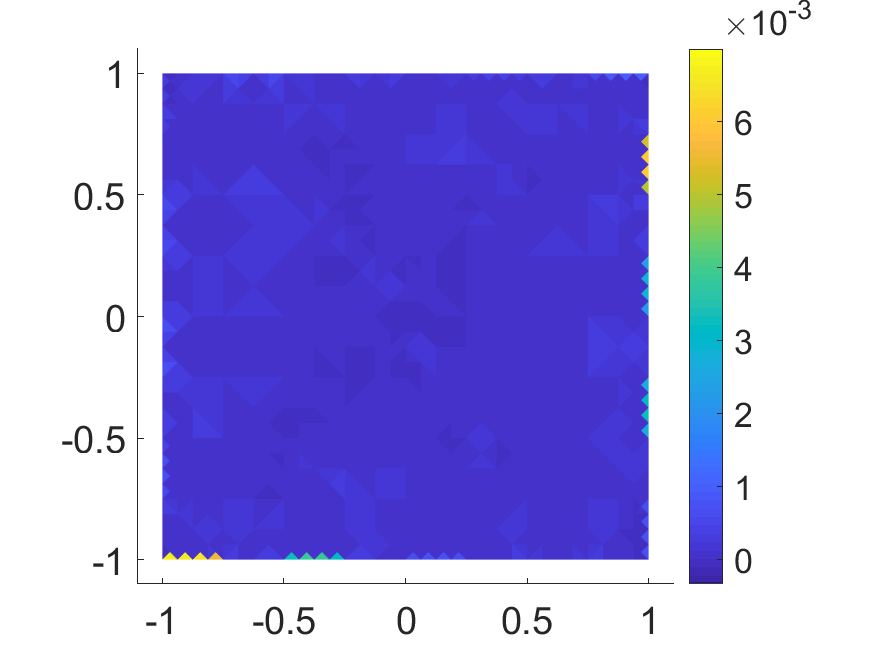} &
    \includegraphics[trim = {.2cm 0cm 0cm 0cm}, clip, width=.18\textwidth]{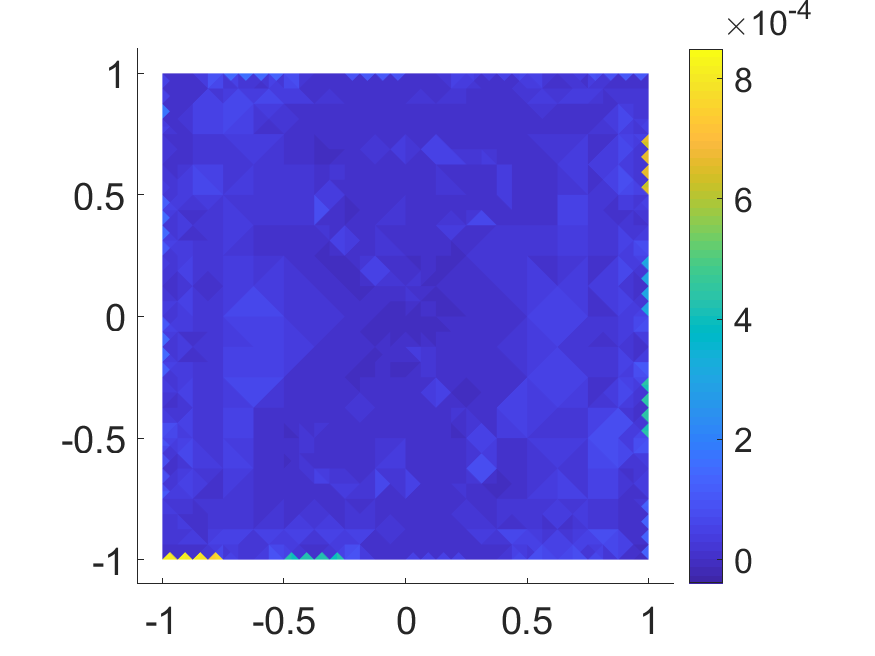} &
    \includegraphics[trim = {.2cm 0cm 0cm 0cm}, clip, width=.18\textwidth]{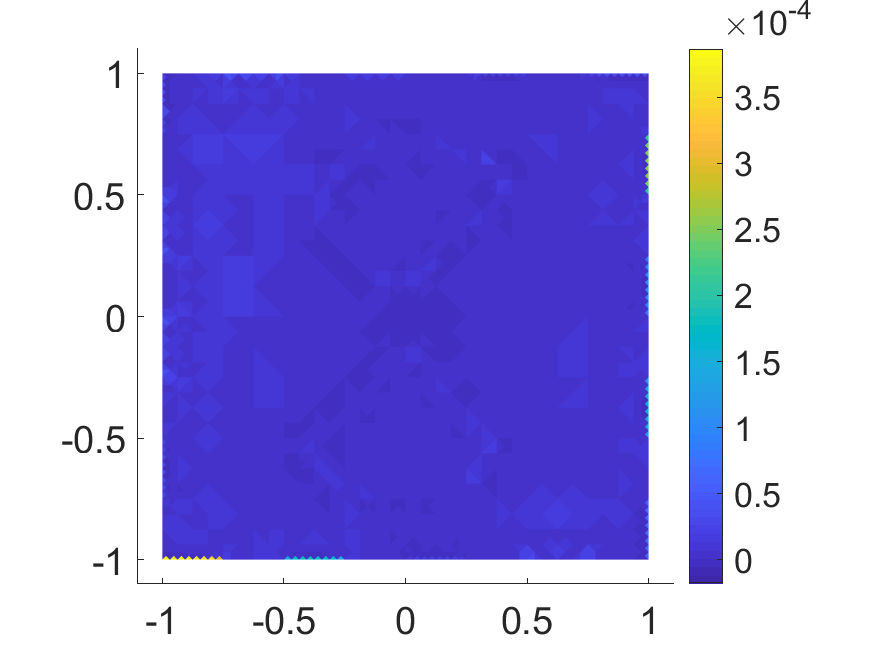} &
    \includegraphics[trim = {.2cm 0cm 0cm 0cm}, clip, width=.18\textwidth]{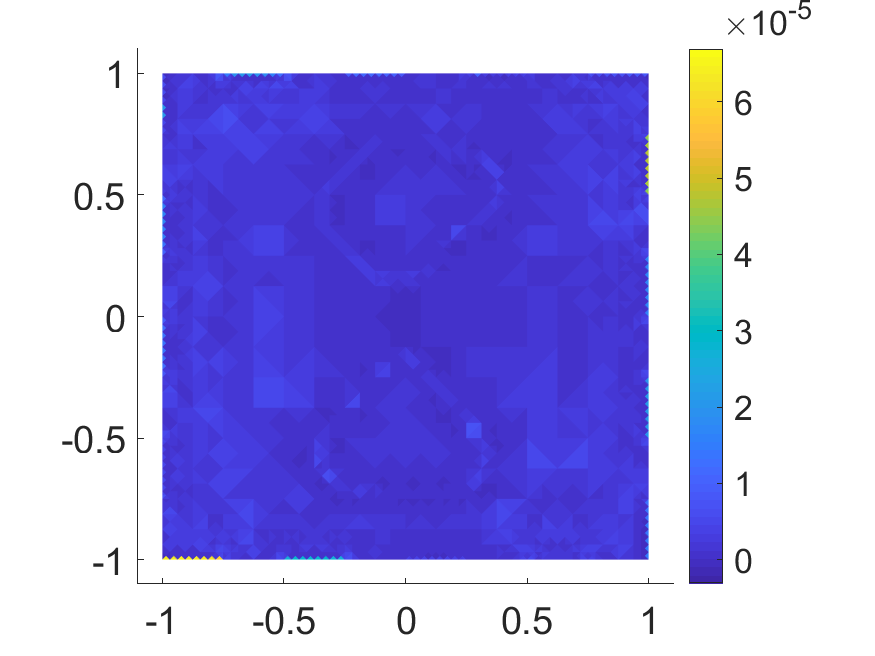} &
    \includegraphics[trim = {.2cm 0cm 0cm 0cm}, clip, width=.18\textwidth]{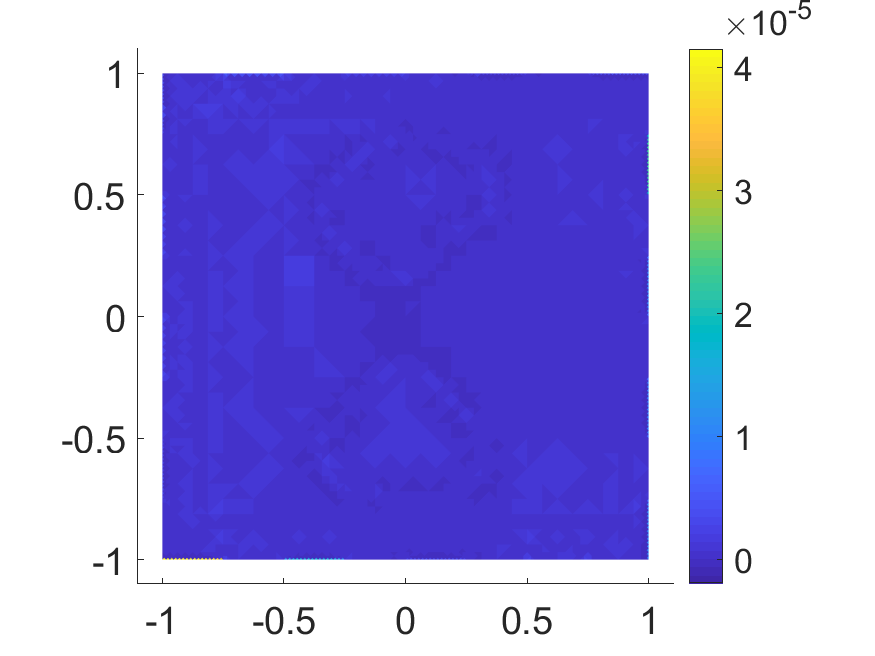}\\
    \includegraphics[trim = {.2cm 0cm 0cm 0cm}, clip, width=.18\textwidth]{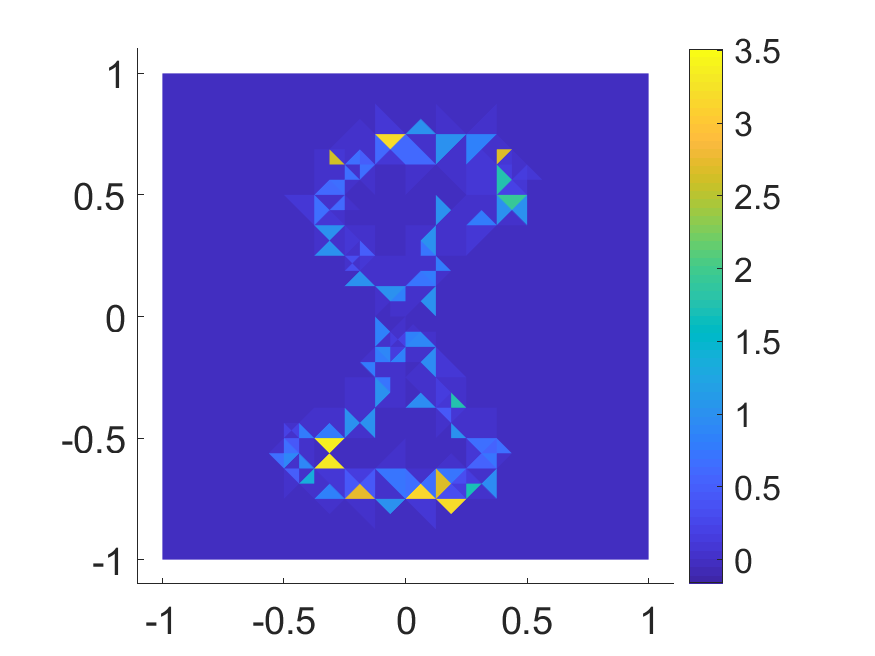} &
    \includegraphics[trim = {.2cm 0cm 0cm 0cm}, clip, width=.18\textwidth]{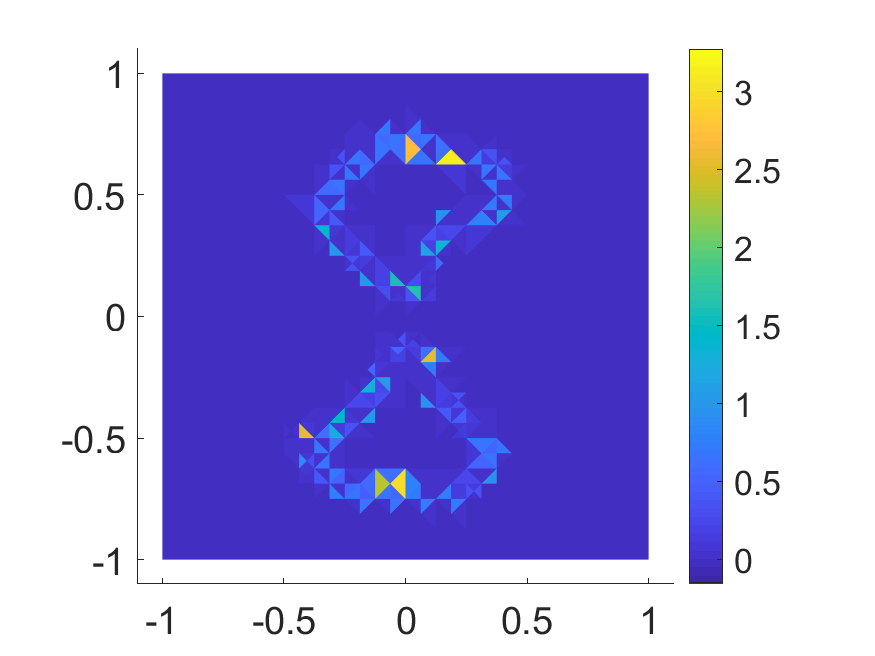} &
    \includegraphics[trim = {.2cm 0cm 0cm 0cm}, clip, width=.18\textwidth]{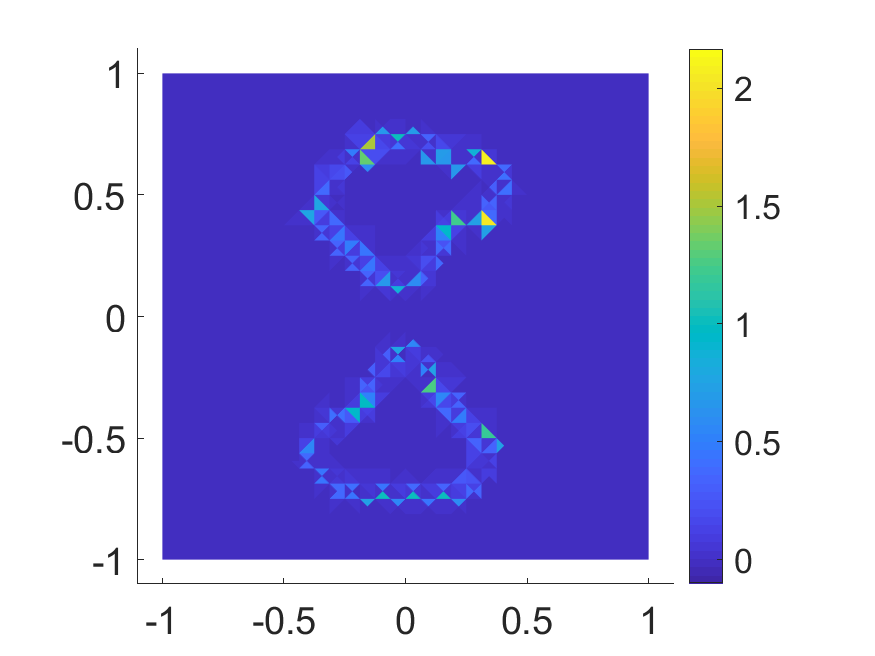} &
    \includegraphics[trim = {.2cm 0cm 0cm 0cm}, clip, width=.18\textwidth]{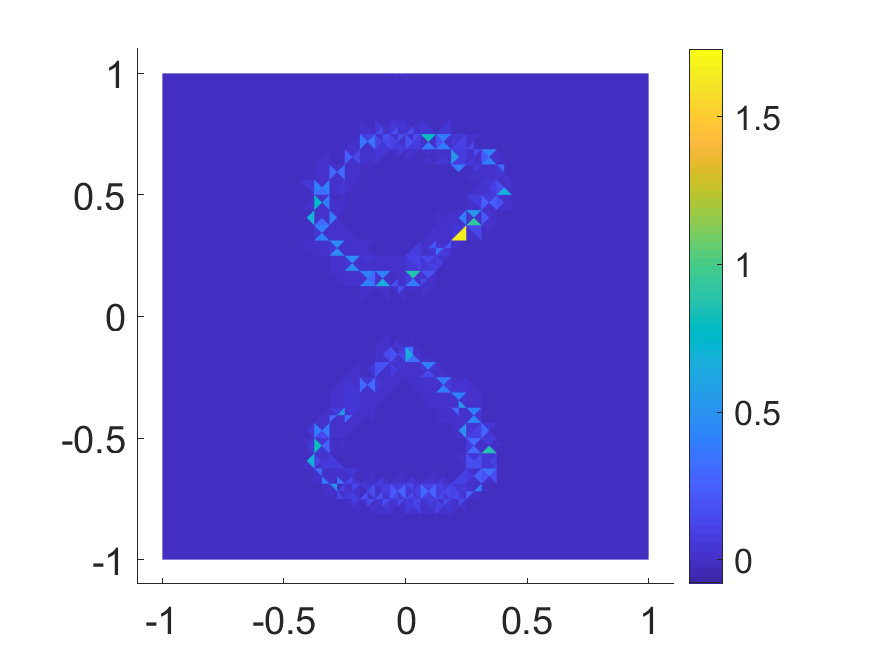} &
    \includegraphics[trim = {.2cm 0cm 0cm 0cm}, clip, width=.18\textwidth]{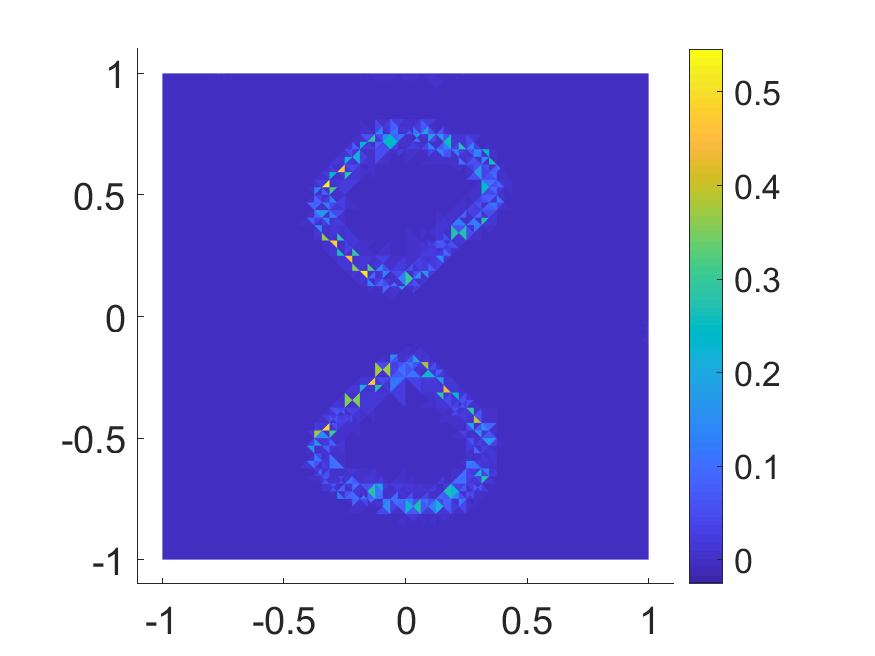}\\
    $k=5$ & $k=6$ & $k=7$ & $k=8$ & $k=9$
  \end{tabular}
  \caption{The evolution of the three error indicators $\eta_{k,i}^2$ for $k=0,1,\ldots,9$, $i=1$ (top), $i=2$ (middle) and $i=3$ (bottom), for
  Example \ref{exam2}(i) with $\epsilon=\text{1e-3}$  and $\tilde\alpha=\text{2e-2}$.}\label{fig:exam2i_err-ind}
\end{figure}

\begin{figure}[hbt!]
  \centering
  \begin{tabular}{cc}
    \includegraphics[trim = .5cm 0cm 0cm 0cm, clip=true,width=.23\textwidth]{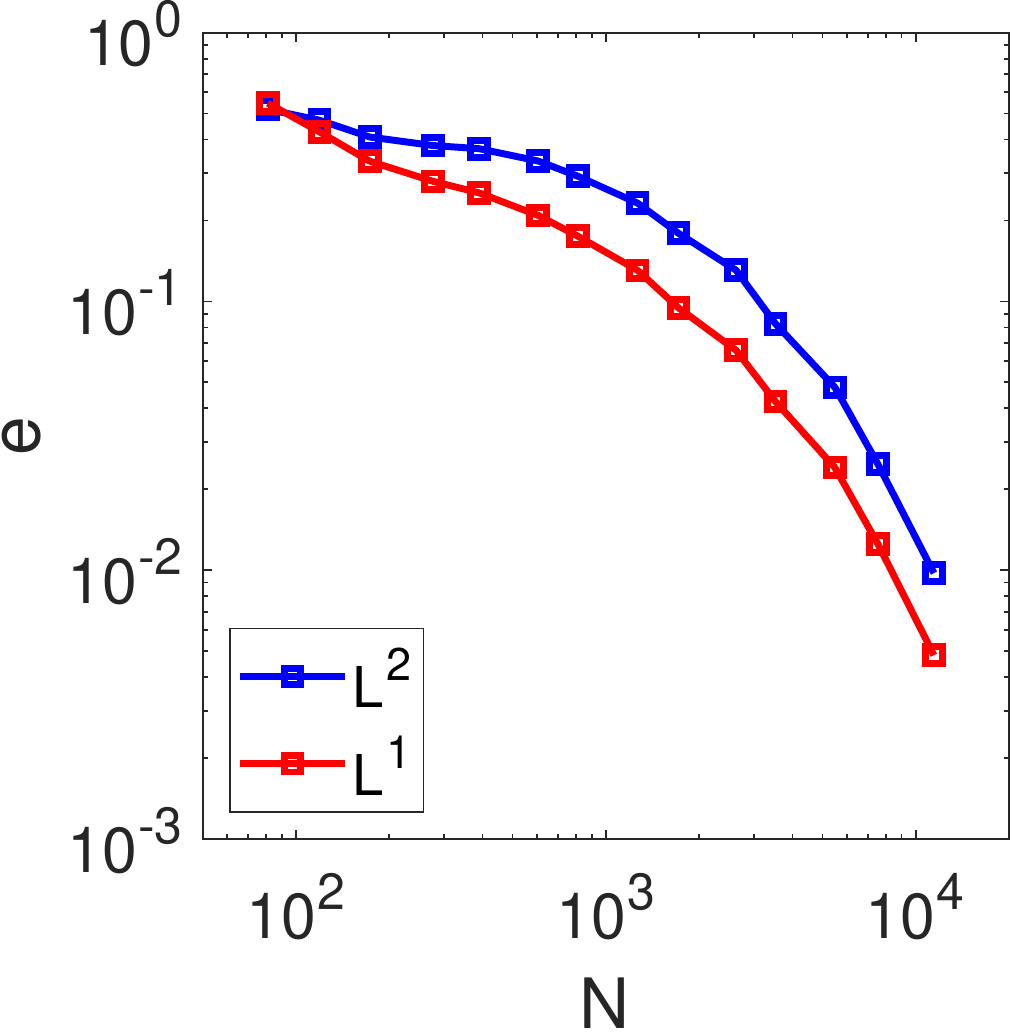}
     \includegraphics[trim = .5cm 0cm 0cm 0cm, clip=true,width=.23\textwidth]{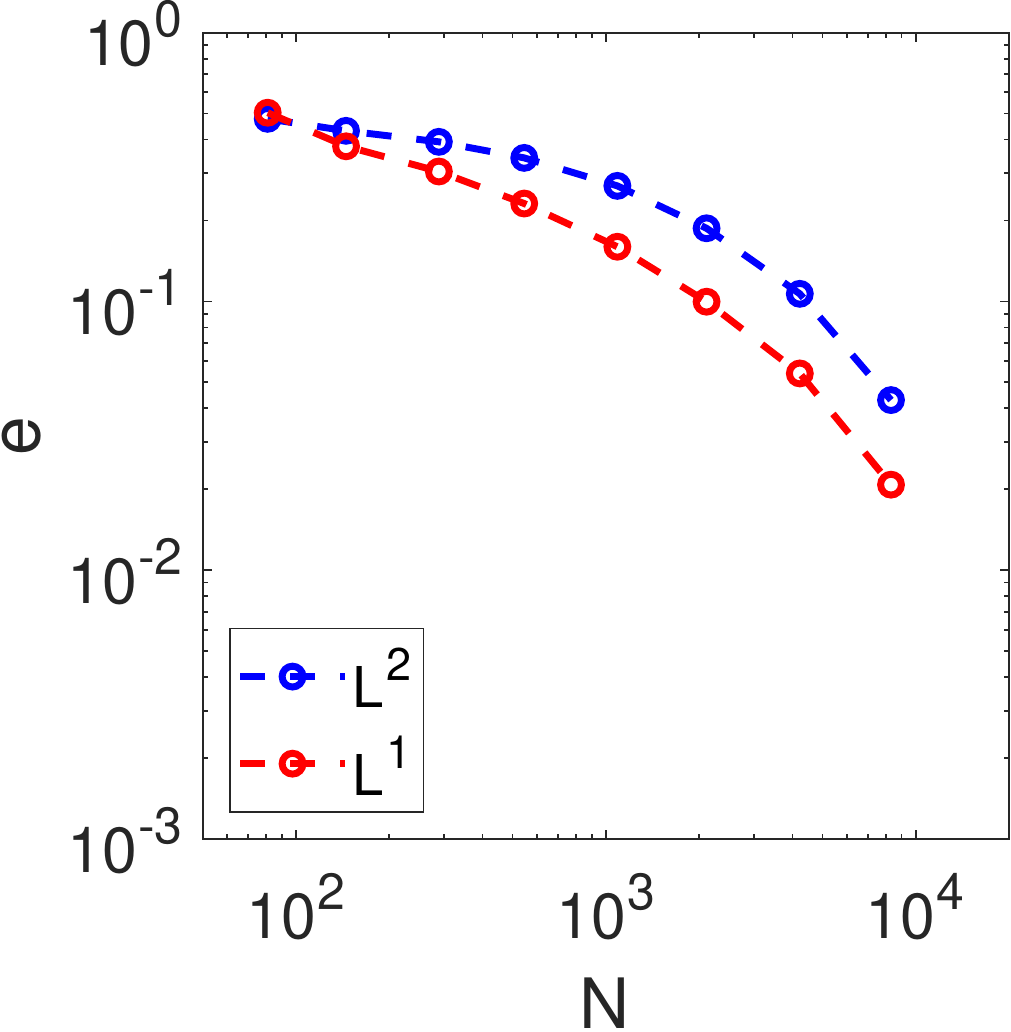}&
    \includegraphics[trim = .5cm 0cm 0cm 0cm, clip=true,width=.23\textwidth]{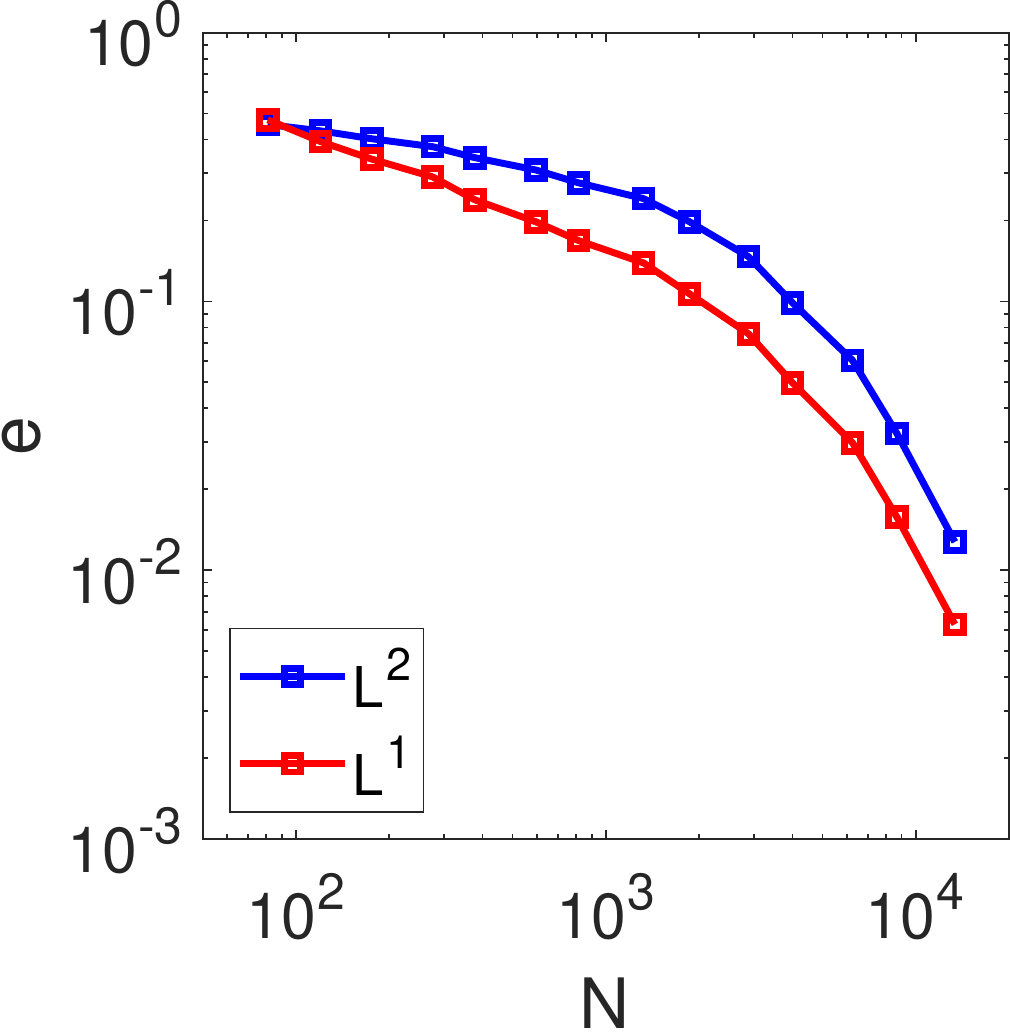}
     \includegraphics[trim = .5cm 0cm 0cm 0cm, clip=true,width=.23\textwidth]{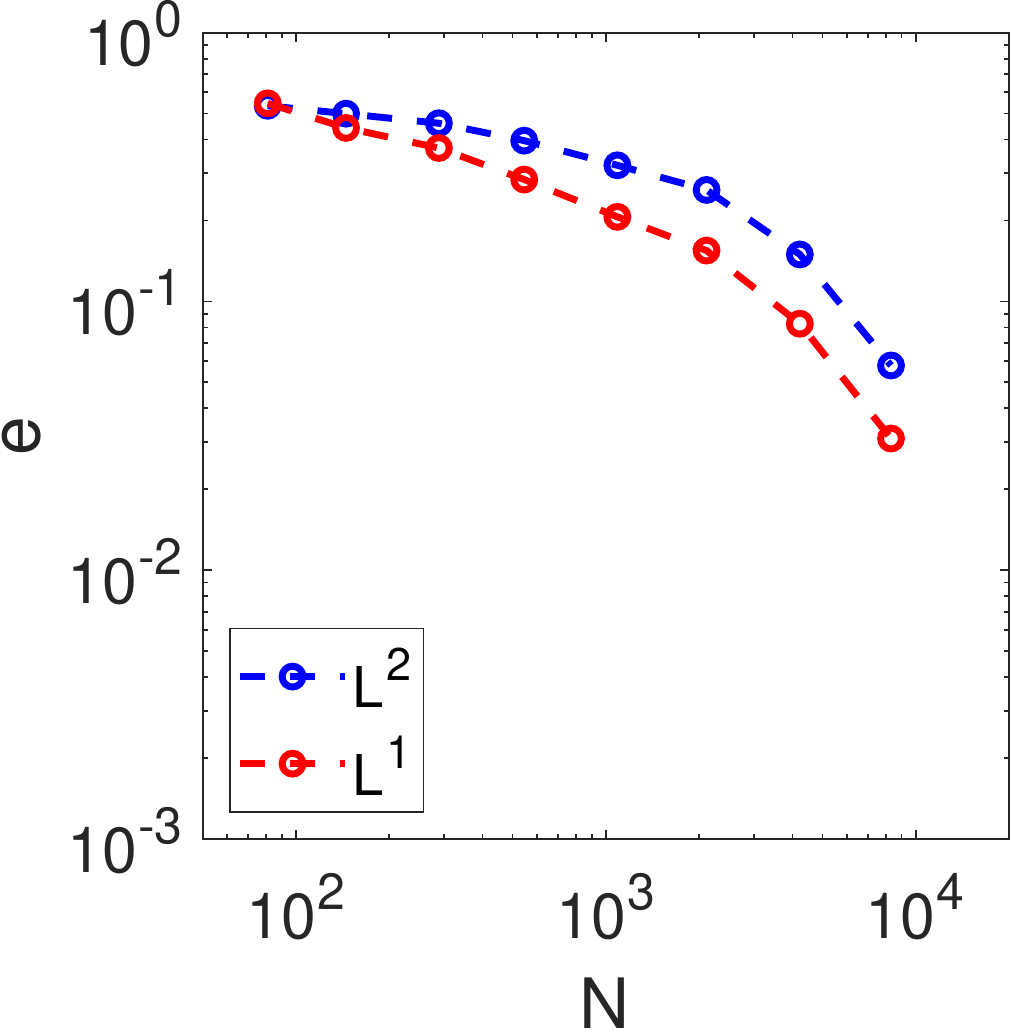}\\
    (a) $\epsilon=\text{1e-3}$, $\tilde\alpha=\text{2e-2}$ & (b) $\epsilon=\text{1e-2}$, $\tilde\alpha=\text{3e-2}$
  \end{tabular}
  \caption{The $L^2(\Omega)$ and $L^1(\Omega)$ errors versus d.o.f. $N$ of the mesh, for Example \ref{exam2}(i),
  using the adaptive (solid) and uniform (dashed) refinement.}\label{fig:exam2-efficiency}
\end{figure}

In Fig. \ref{fig:exam2-efficiency}, we plot the $L^2(\Omega)$ and $L^1(\Omega)$ errors of the recoveries
versus d.o.f. $N$, where the recovery on the corresponding finest mesh is taken as the reference (since
the recoveries by the adaptive and uniform refinements are slightly different; see Fig. \ref{fig:exam2i-recon}).
Due to the discontinuity of the sought-for conductivity, the $L^1(\Omega)$ norm is especially suitable for
measuring the convergence. The convergence of the algorithm is clearly observed for both adaptive and uniform
refinements. Further, with a fixed d.o.f., AFEM gives more accurate results than the uniform one in both
error metrics. These observations show the computational efficiency of the adaptive algorithm.

Examples \ref{exam2}(ii) and (iii) are variations of Example \ref{exam2}(i), and the results are presented in
Figs. \ref{fig:exam2ii-recon}--\ref{fig:exam2iii-recon-iter-1e3}. The proposed approach assumes a piecewise
constant conductivity with known lower and upper bounds. Example \ref{exam2}(ii) does not fulfill the assumption,
since the true conductivity $\sigma^\dag$ is not piecewise constant. Thus the algorithm can only produce a piecewise
constant approximation to the exact one. Nonetheless, the inclusion support is reasonably identified. When the
noise level $\epsilon$ increases from $\text{1e-3}$ to $\text{1e-2}$, the reconstruction accuracy deteriorates
significantly; see Fig. \ref{fig:exam2ii-recon}. Example \ref{exam2}(iii) involves high contrast inclusions,
which are well known to be numerically more challenging. This is clearly observed in Fig. \ref{fig:exam2iii-recon},
where the recovery accuracy is inferior, especially for the noise level $\epsilon=\text{1e-2}$. However,
the adaptive refinement procedure works well similarly as the preceding examples: the refinement occurs
mainly around the electrode edges and inclusion interface; see Figs. \ref{fig:exam2ii-recon-iter-1e3}
and \ref{fig:exam2iii-recon-iter-1e3} for the details.

\begin{figure}[hbt!]
  \centering\setlength{\tabcolsep}{0em}
  \begin{tabular}{ccccc}
      \includegraphics[trim = .5cm 0cm 1cm 0cm, clip=true,width=0.2\textwidth]{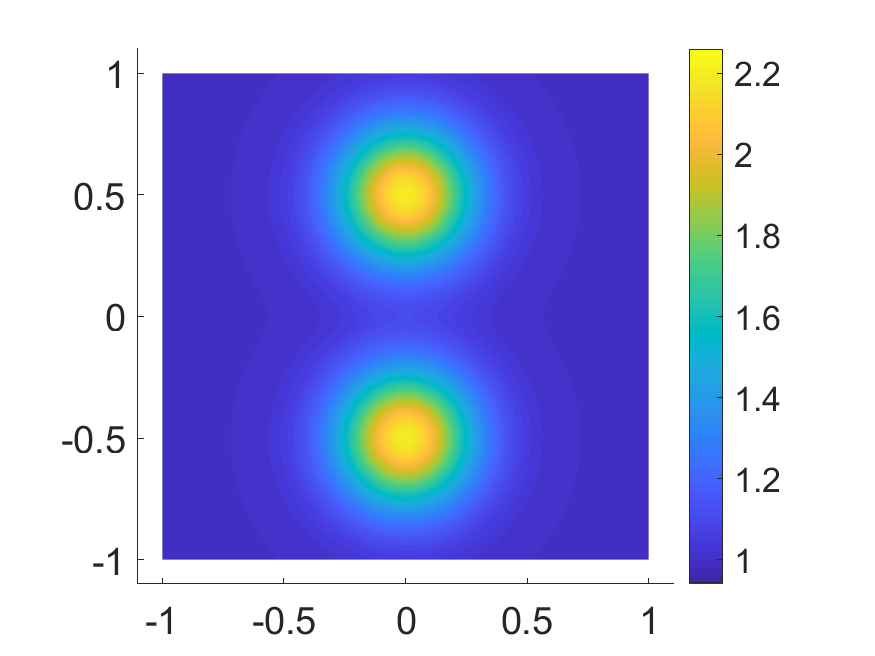}
    & \includegraphics[trim = .5cm 0cm 1cm 0cm, clip=true,width=.2\textwidth]{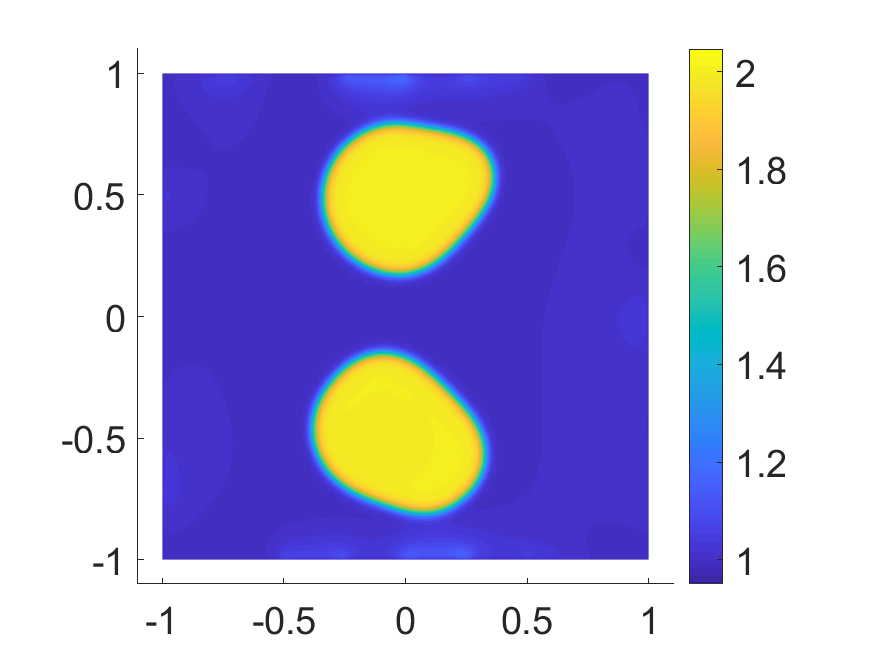}
    & \includegraphics[trim = .5cm 0cm 1cm 0cm, clip=true,width=.2\textwidth]{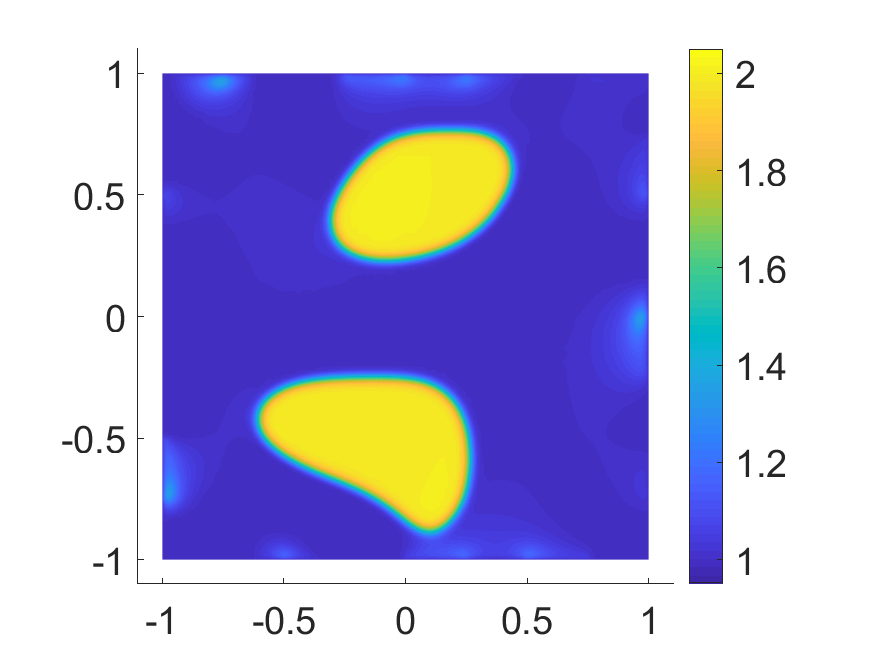}
    & \includegraphics[trim = .5cm 0cm 1cm 0cm, clip=true,width=.2\textwidth]{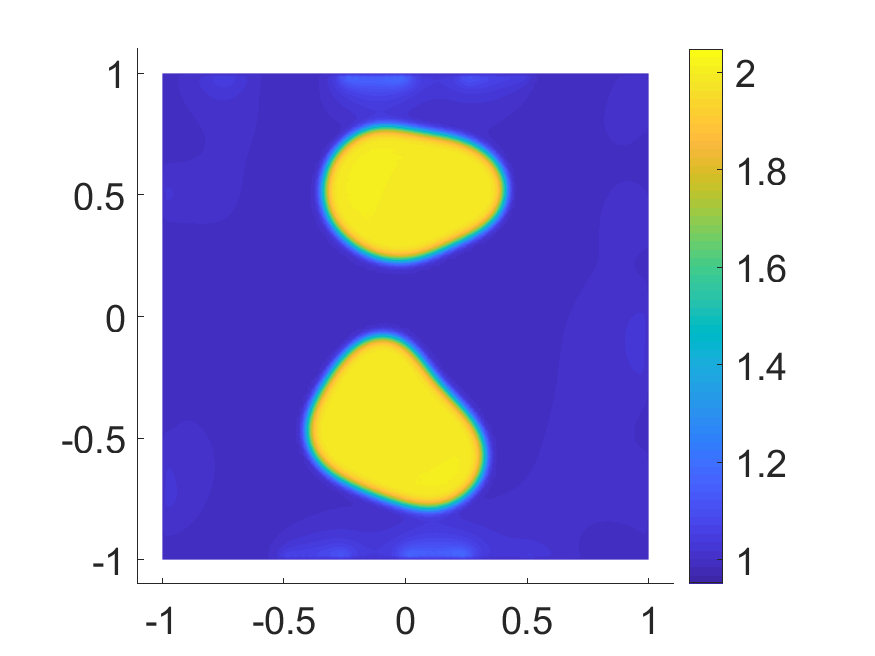}
    & \includegraphics[trim = .5cm 0cm 1cm 0cm, clip=true,width=.2\textwidth]{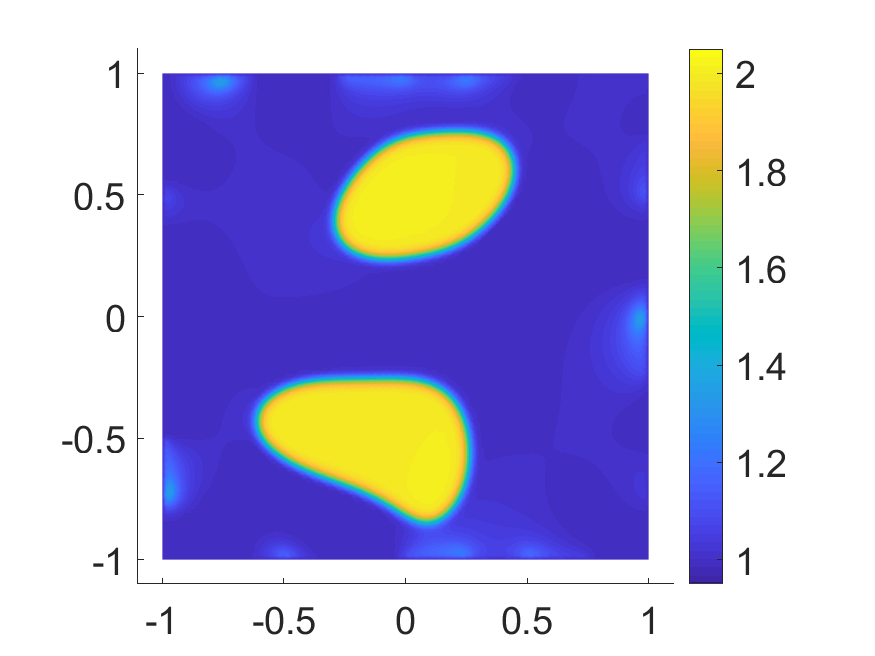}\\
    (a) true conductivity & (b) adaptive & (c) adaptive & (d) uniform & (e) uniform
  \end{tabular}
  \caption{The final recoveries by the adaptive and uniform refinements for Example
  \ref{exam2}(ii). The results in (b) and (d) are for $\epsilon=\text{1e-3}$ and $\tilde\alpha=\text{2e-2}$,
  and (c) and (e) for $\epsilon=\text{1e-2}$ and $\tilde\alpha=\text{5e-2}$. The d.o.f. of (b), (c), (d) and (e) are 17736, 20524,
  16641 and 16641.}\label{fig:exam2ii-recon}
\end{figure}

\begin{figure}[hbt!]
 \centering
 \setlength{\tabcolsep}{0pt}
 \begin{tabular}{cccccccc}
 \includegraphics[trim = {2.5cm 1.5cm 2.5cm 1.2cm}, clip, width=.2\textwidth]{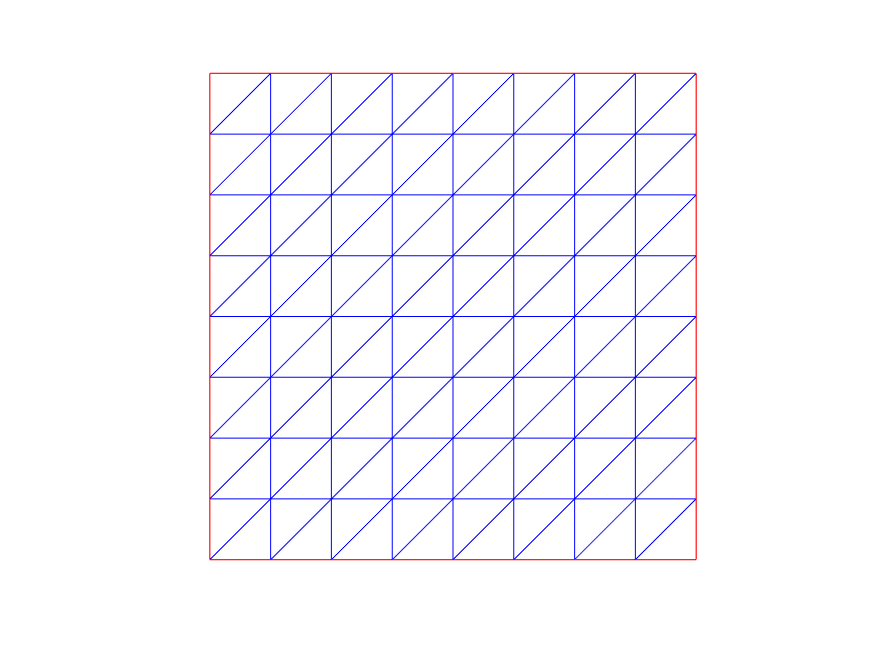}&
 \includegraphics[trim = {2.5cm 1.5cm 2.5cm 1.2cm}, clip, width=.2\textwidth]{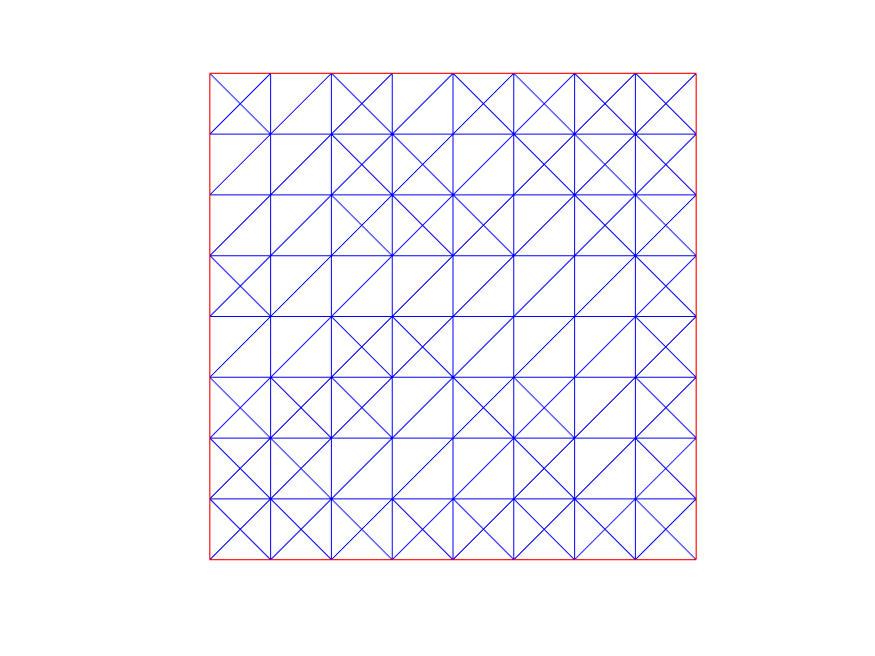}&
 \includegraphics[trim = {2.5cm 1.5cm 2.5cm 1.2cm}, clip, width=.2\textwidth]{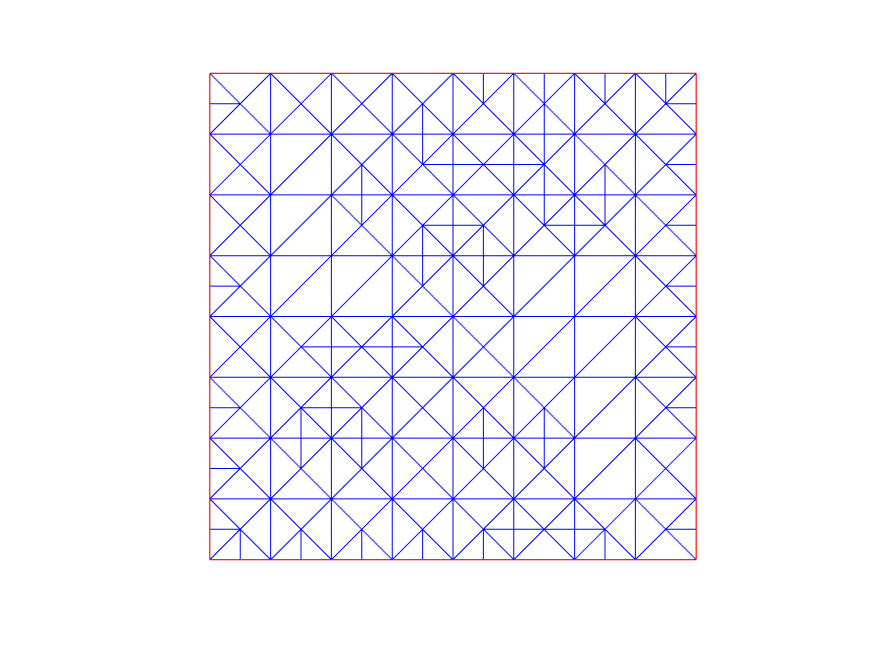}&
 \includegraphics[trim = {2.5cm 1.5cm 2.5cm 1.2cm}, clip, width=.2\textwidth]{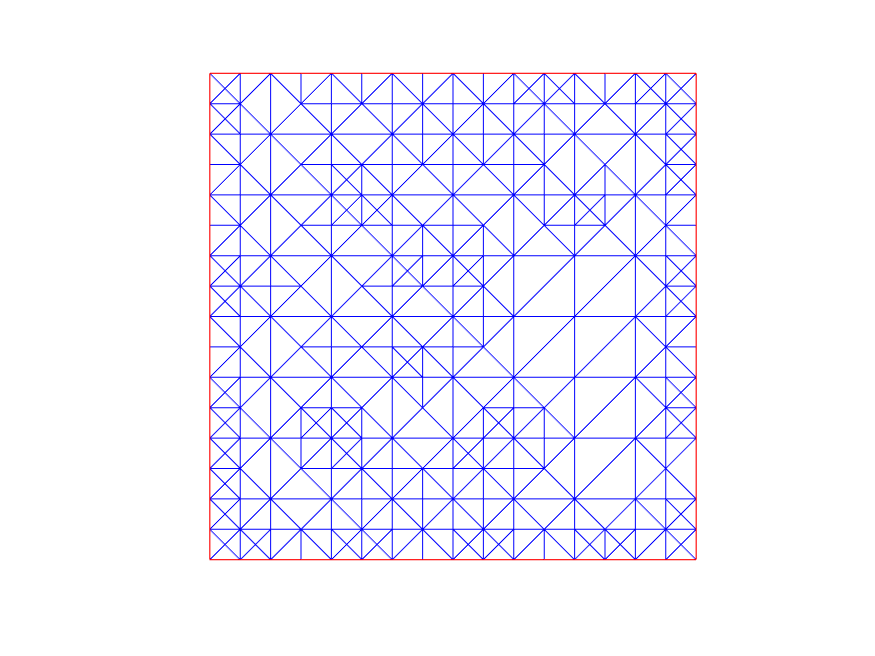}&
 \includegraphics[trim = {2.5cm 1.5cm 2.5cm 1.2cm}, clip, width=.2\textwidth]{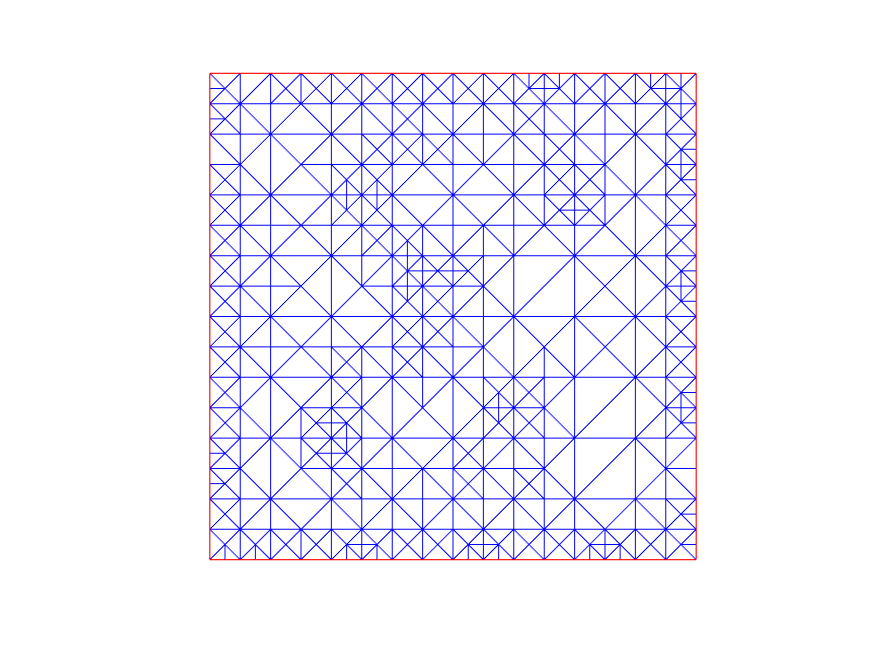}\\
 \includegraphics[trim = 1cm 0.5cm 0.5cm 0cm, width=.2\textwidth]{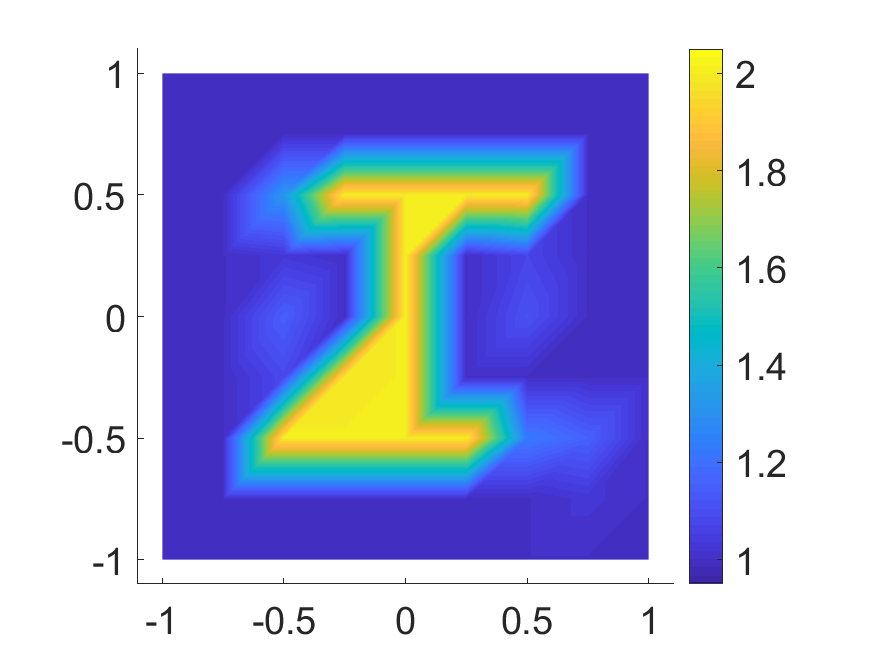}&
 \includegraphics[trim = 1cm 0.5cm 0.5cm 0cm, width=.2\textwidth]{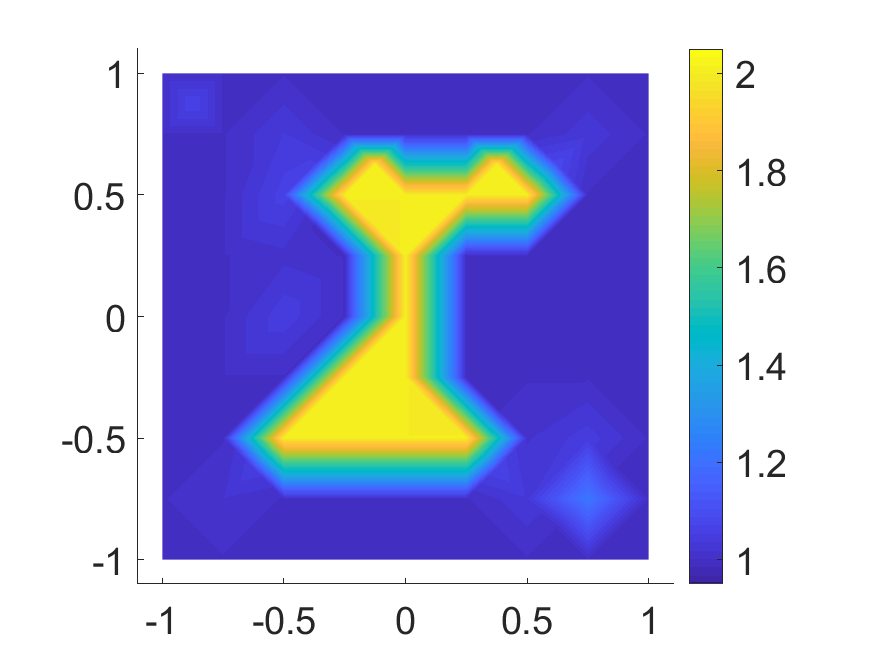}&
 \includegraphics[trim = 1cm 0.5cm 0.5cm 0cm, width=.2\textwidth]{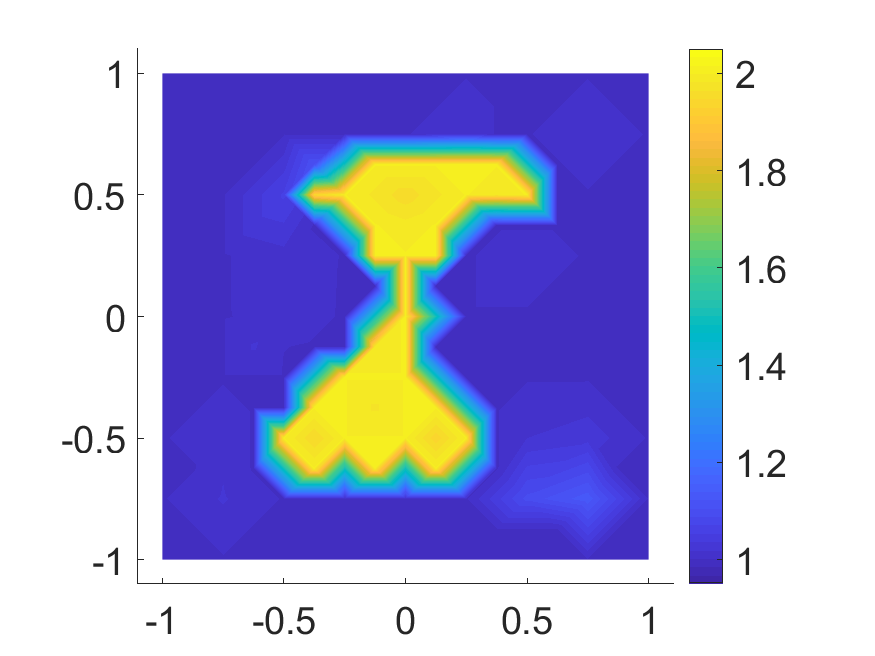}&
 \includegraphics[trim = 1cm 0.5cm 0.5cm 0cm, width=.2\textwidth]{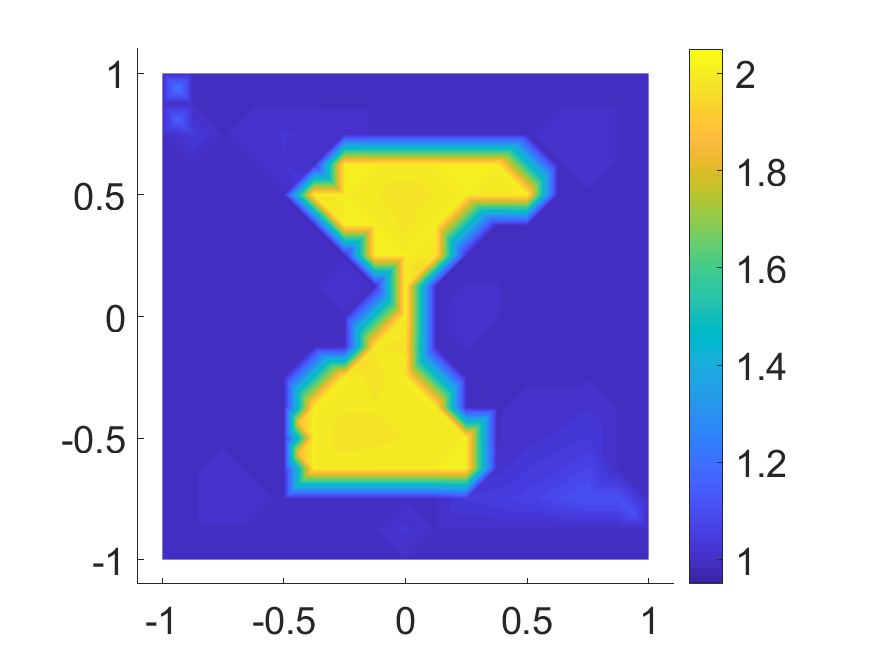}&
 \includegraphics[trim = 1cm 0.5cm 0.5cm 0cm, width=.2\textwidth]{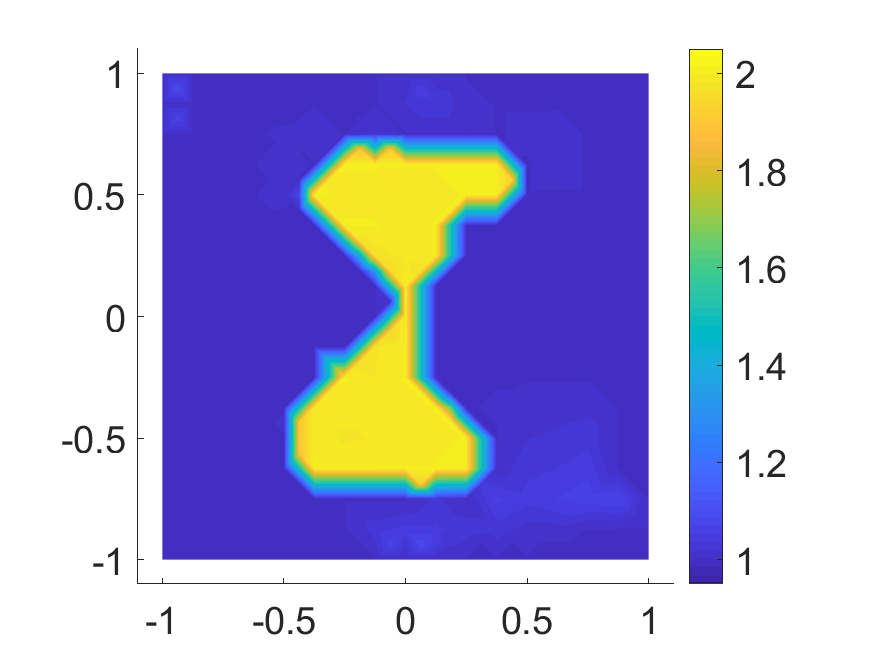}\\
  81    &     120  &  177  &  276  & 389   \\
 \includegraphics[trim = {2.5cm 1.5cm 2.5cm 1.2cm}, clip, width=.2\textwidth]{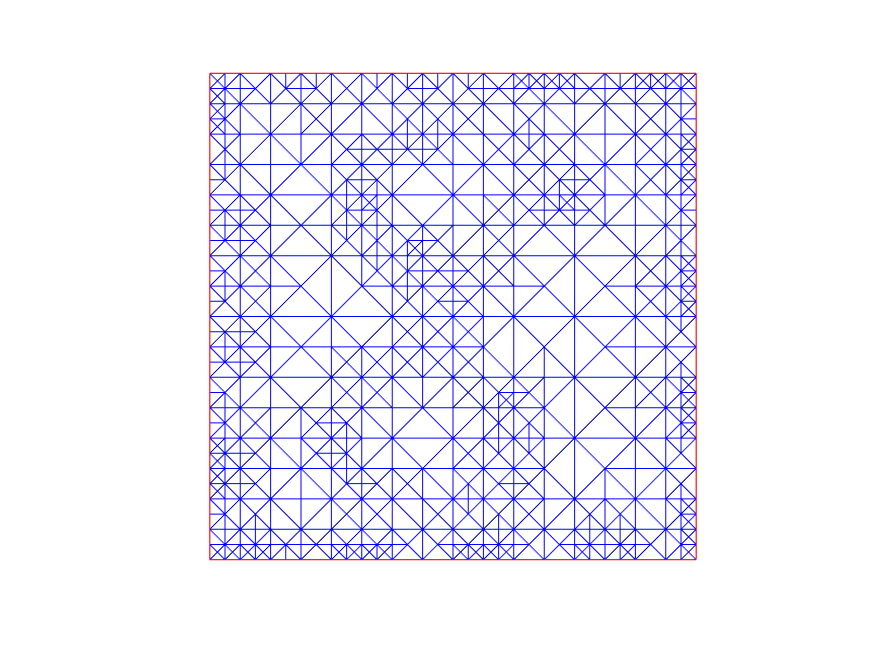}&
 \includegraphics[trim = {2.5cm 1.5cm 2.5cm 1.2cm}, clip, width=.2\textwidth]{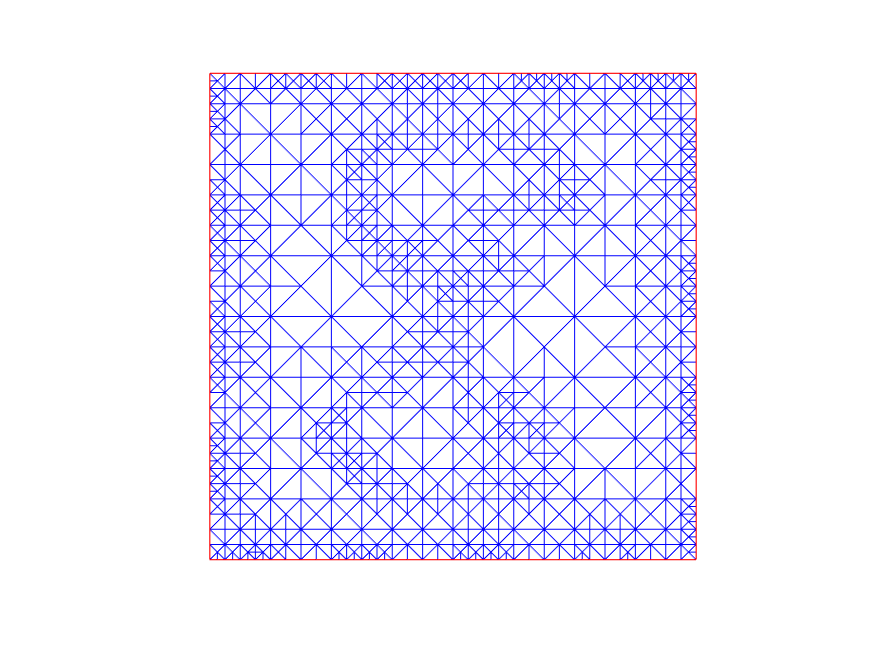}&
 \includegraphics[trim = {2.5cm 1.5cm 2.5cm 1.2cm}, clip, width=.2\textwidth]{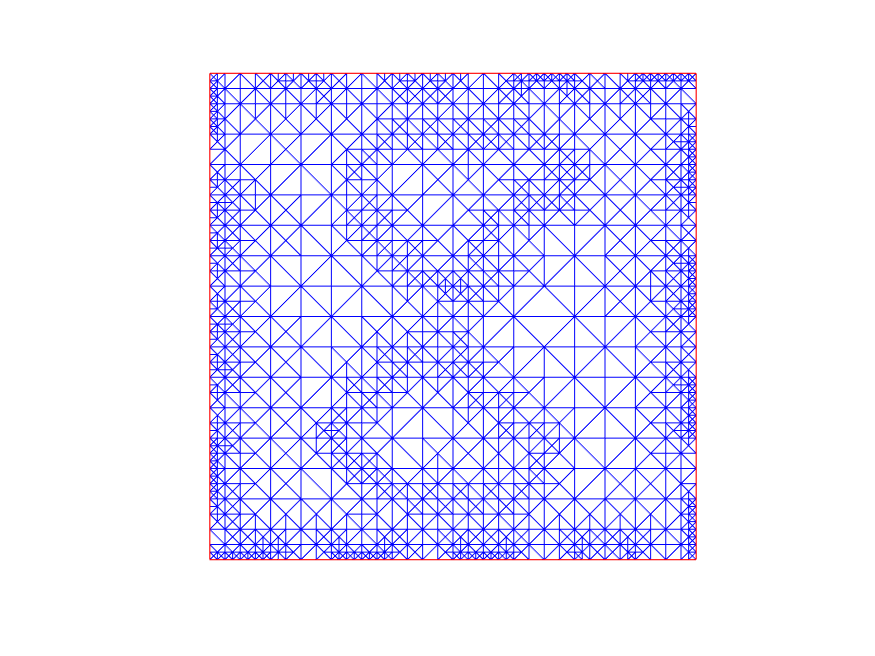}&
 \includegraphics[trim = {2.5cm 1.5cm 2.5cm 1.2cm}, clip, width=.2\textwidth]{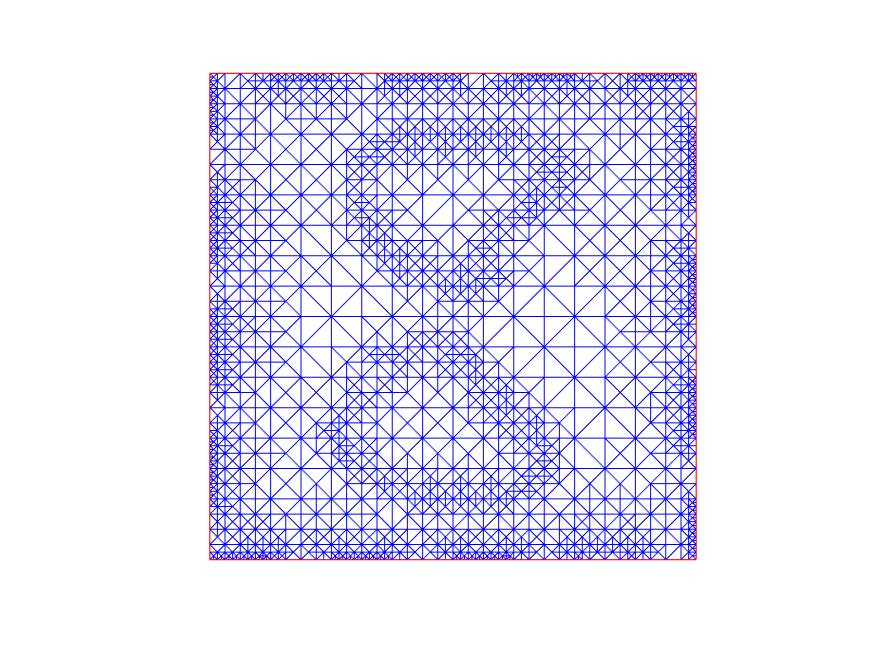}&
 \includegraphics[trim = {2.5cm 1.5cm 2.5cm 1.2cm}, clip, width=.2\textwidth]{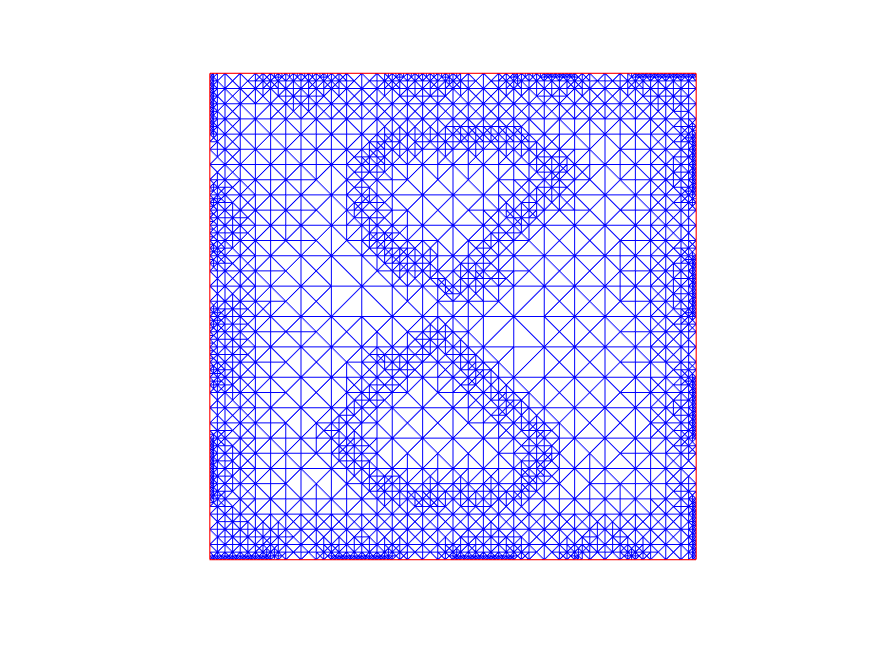}\\
  \includegraphics[trim = 1cm 0.5cm 0.5cm 0cm, width=.2\textwidth]{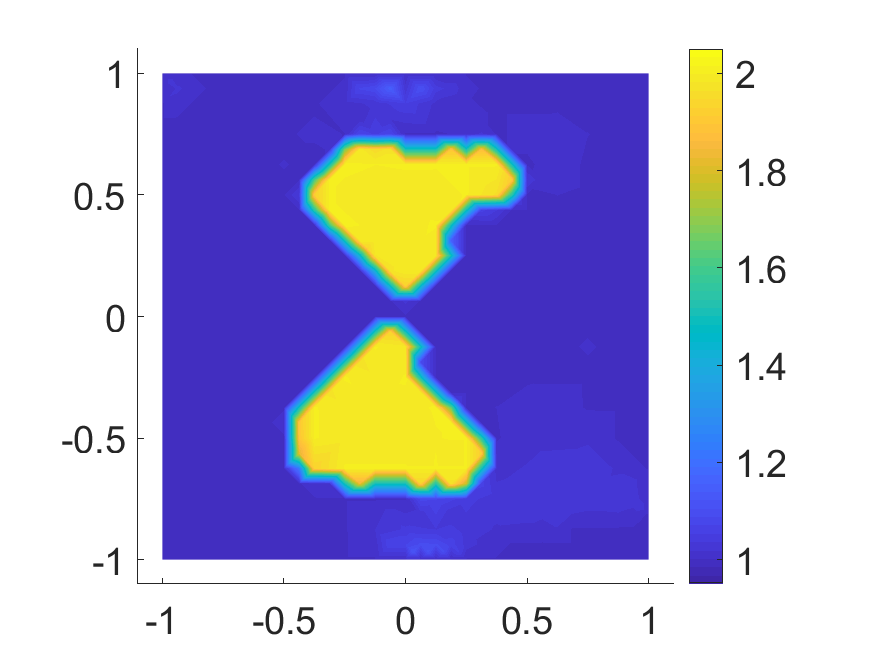}&
 \includegraphics[trim = 1cm 0.5cm 0.5cm 0cm, width=.2\textwidth]{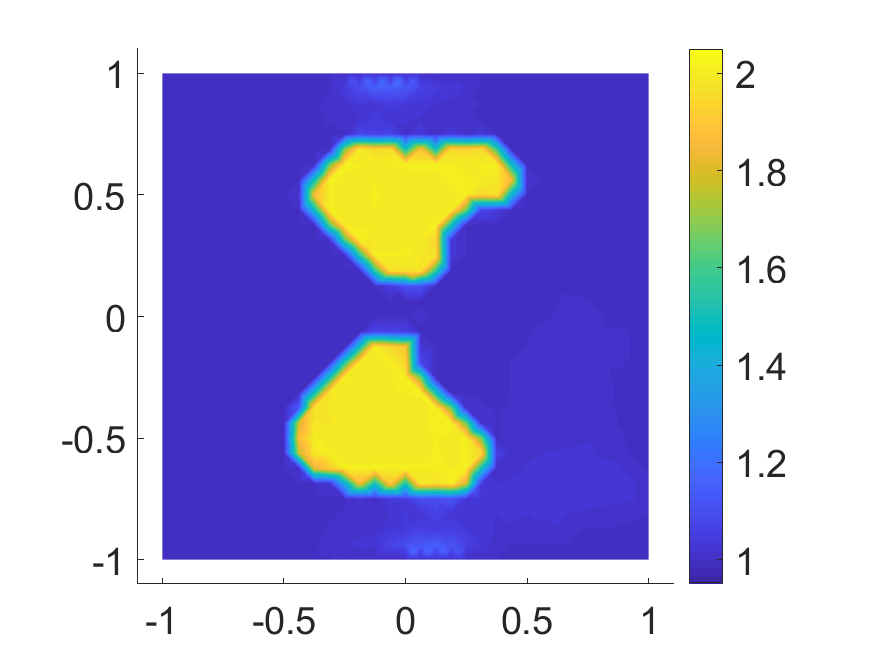}&
 \includegraphics[trim = 1cm 0.5cm 0.5cm 0cm, width=.2\textwidth]{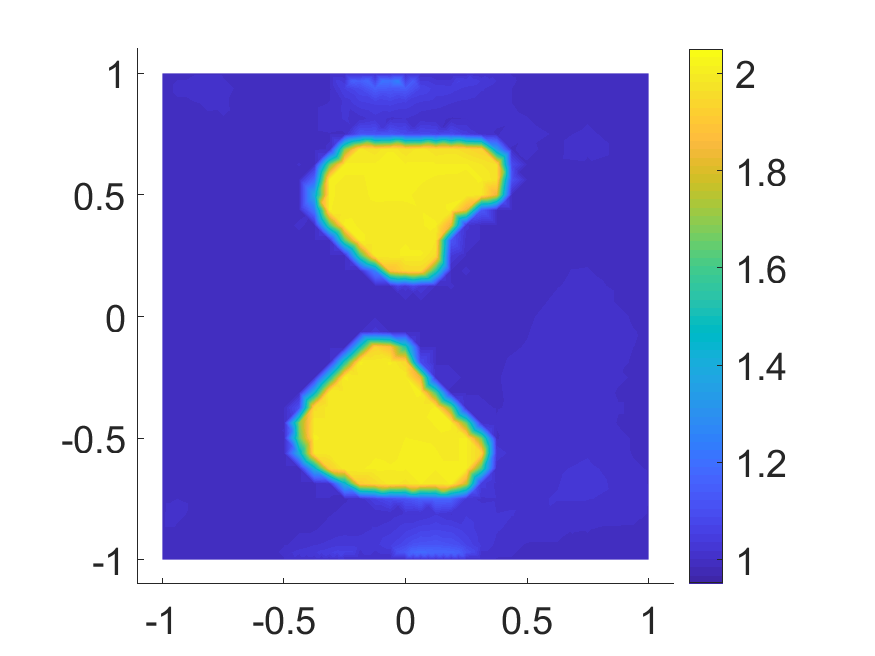}&
 \includegraphics[trim = 1cm 0.5cm 0.5cm 0cm, width=.2\textwidth]{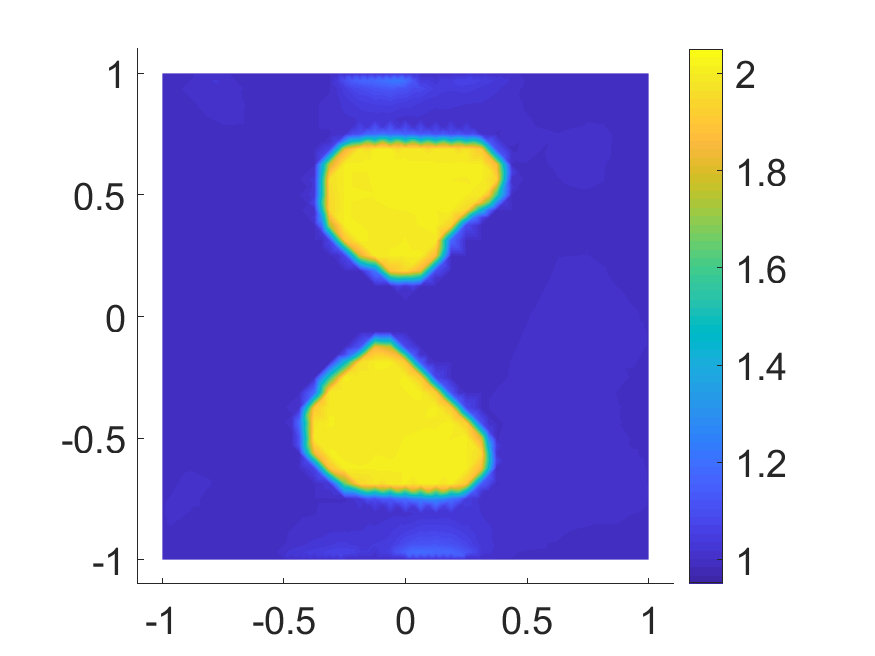}&
 \includegraphics[trim = 1cm 0.5cm 0.5cm 0cm, width=.2\textwidth]{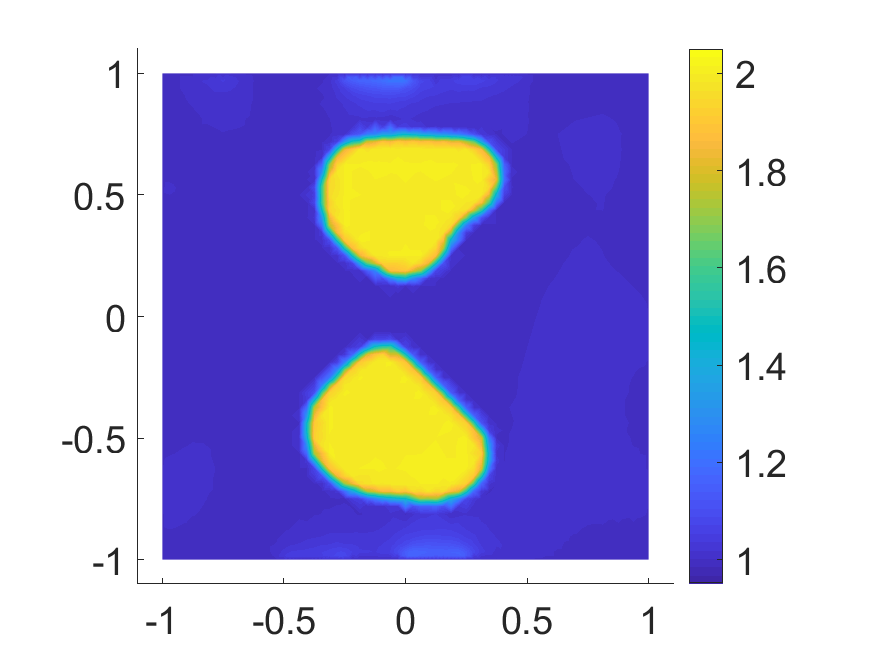}\\
  606     &     821     &        1308  &      1792    &       2764\\
  \includegraphics[trim = {2.5cm 1.5cm 2.5cm 1.2cm}, clip, width=.2\textwidth]{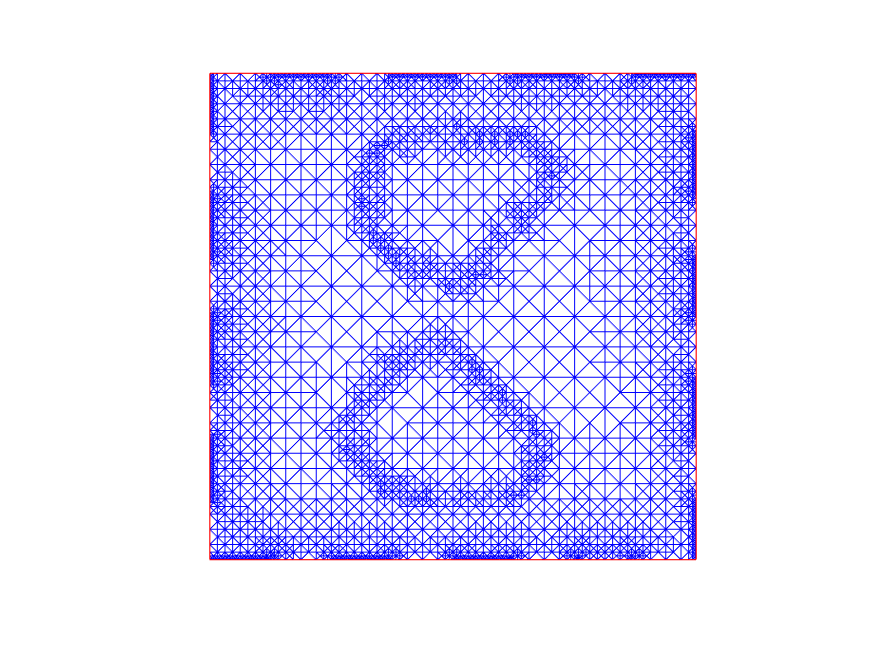}&
 \includegraphics[trim = {2.5cm 1.5cm 2.5cm 1.2cm}, clip, width=.2\textwidth]{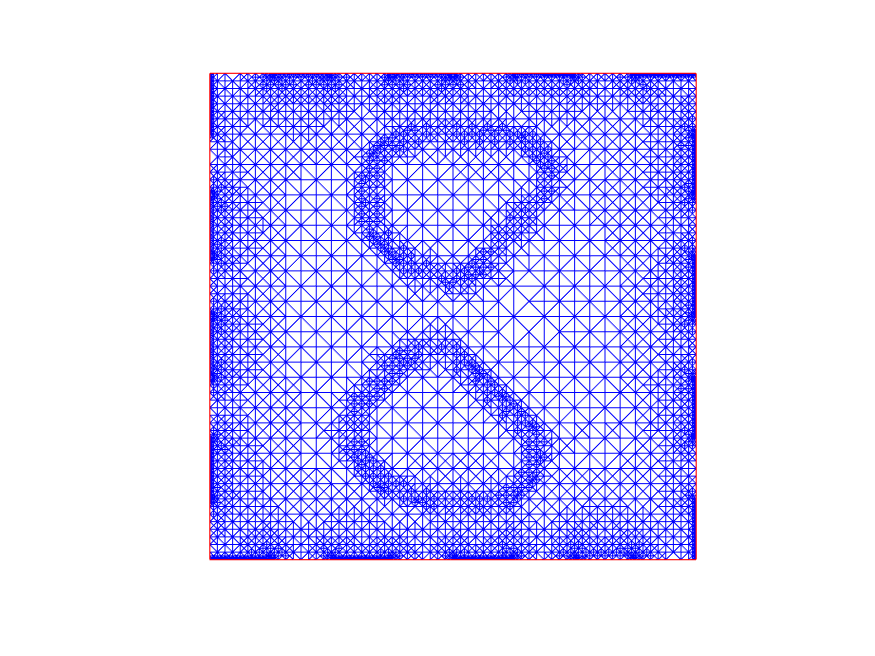}&
 \includegraphics[trim = {2.5cm 1.5cm 2.5cm 1.2cm}, clip, width=.2\textwidth]{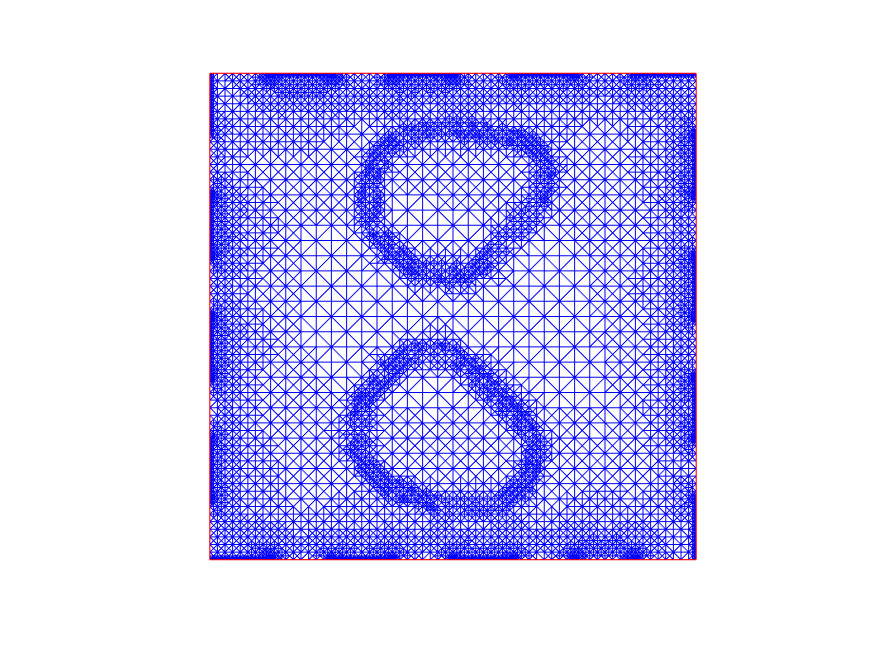}&
 \includegraphics[trim = {2.5cm 1.5cm 2.5cm 1.2cm}, clip, width=.2\textwidth]{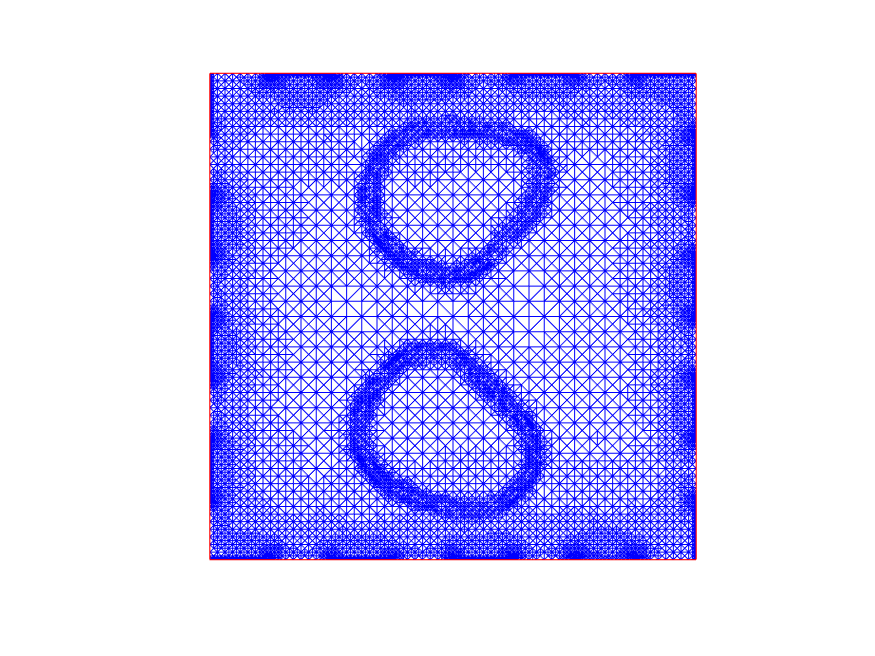}&
 \includegraphics[trim = {2.5cm 1.5cm 2.5cm 1.2cm}, clip, width=.2\textwidth]{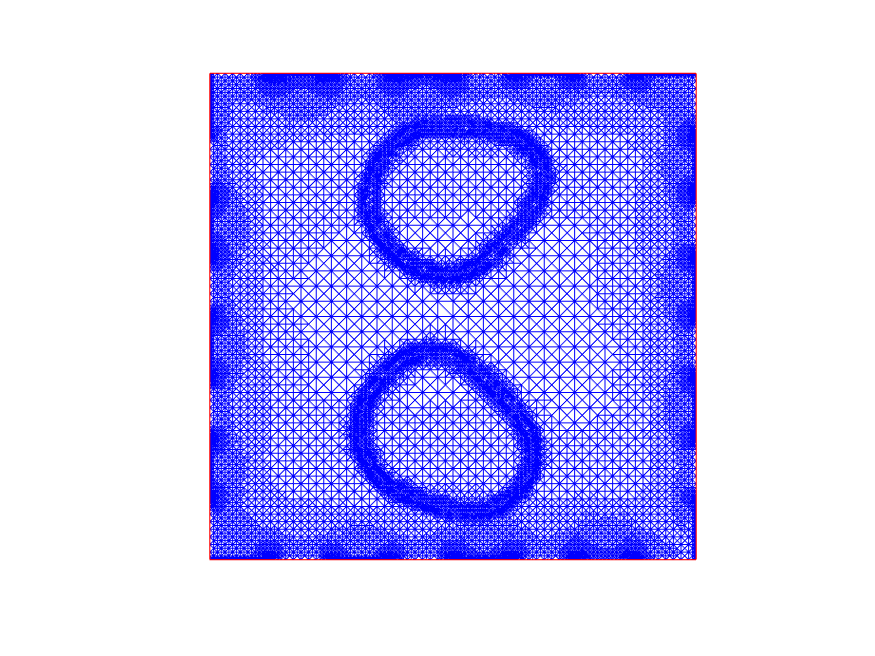}\\
 \includegraphics[trim = 1cm 0.5cm 0.5cm 0cm, width=.2\textwidth]{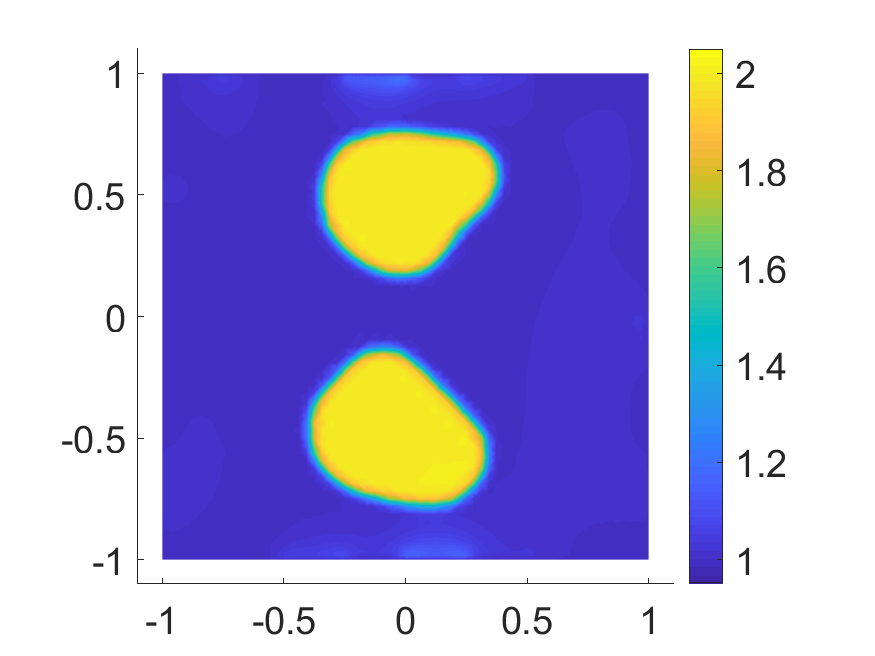}&
 \includegraphics[trim = 1cm 0.5cm 0.5cm 0cm, width=.2\textwidth]{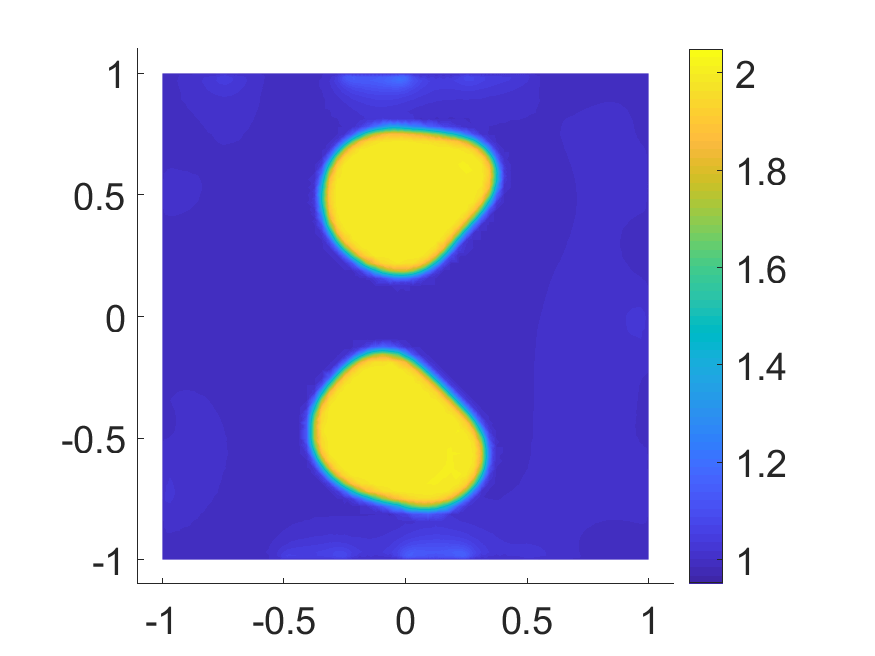}&
 \includegraphics[trim = 1cm 0.5cm 0.5cm 0cm, width=.2\textwidth]{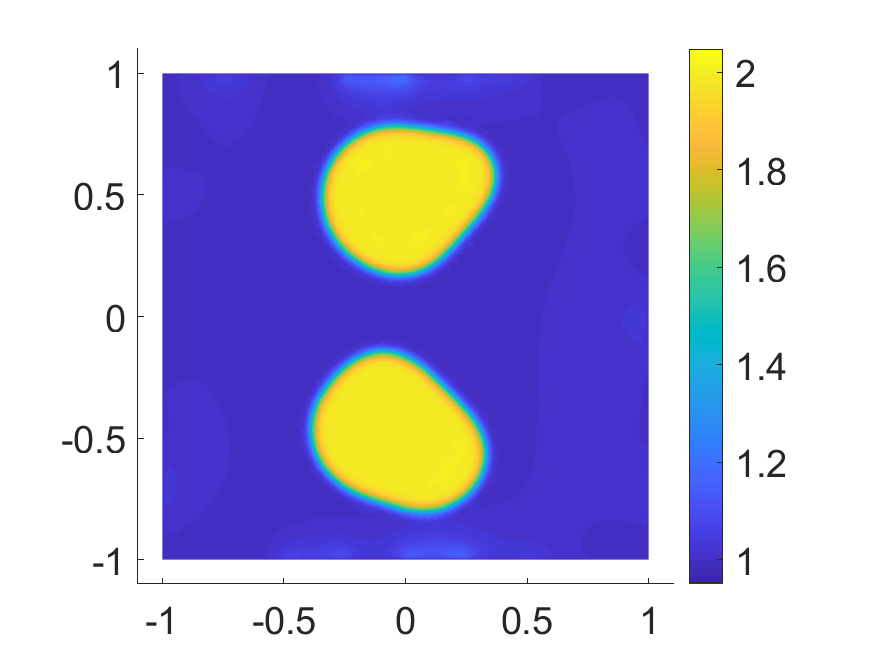}&
 \includegraphics[trim = 1cm 0.5cm 0.5cm 0cm, width=.2\textwidth]{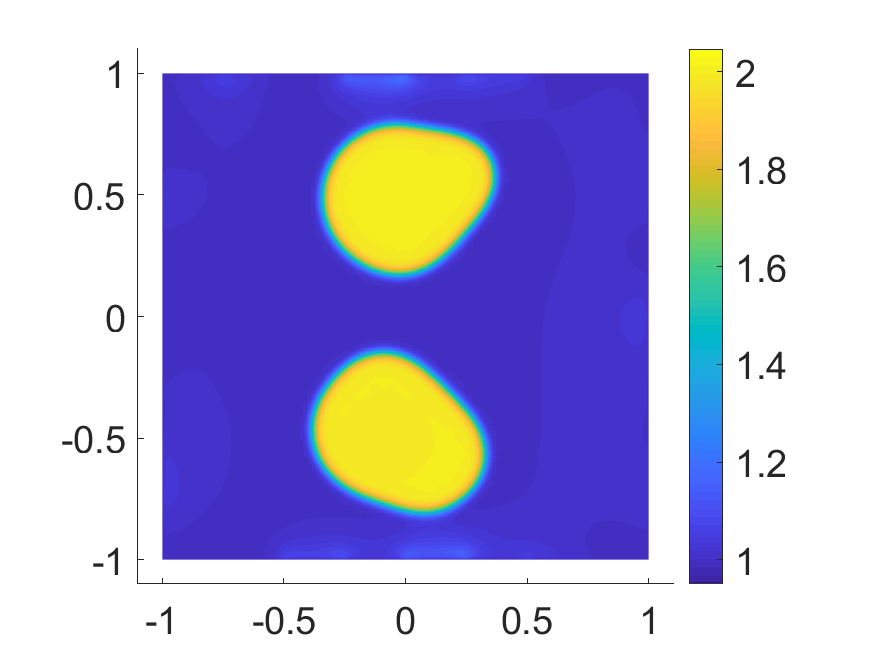}&
 \includegraphics[trim = 1cm 0.5cm 0.5cm 0cm, width=.2\textwidth]{ex4_1e3ad_it15}\\
 3816     &      5934       &        8299        &      12661       &       17736
 \end{tabular}
 \caption{The meshes $\mathcal{T}_k$ and recovered conductivities $\sigma_k$ during the adaptive iteration for Example \ref{exam2}(ii) with
 $\epsilon=\text{1e-3}$ and $\tilde\alpha=\text{2e-2}.$
 The number under each figure refers to d.o.f.} \label{fig:exam2ii-recon-iter-1e3}
\end{figure}

\begin{figure}[hbt!]
  \centering\setlength{\tabcolsep}{0em}
  \begin{tabular}{ccccc}
      \includegraphics[trim = .5cm 0cm 1cm 0cm, clip=true,width=.2\textwidth]{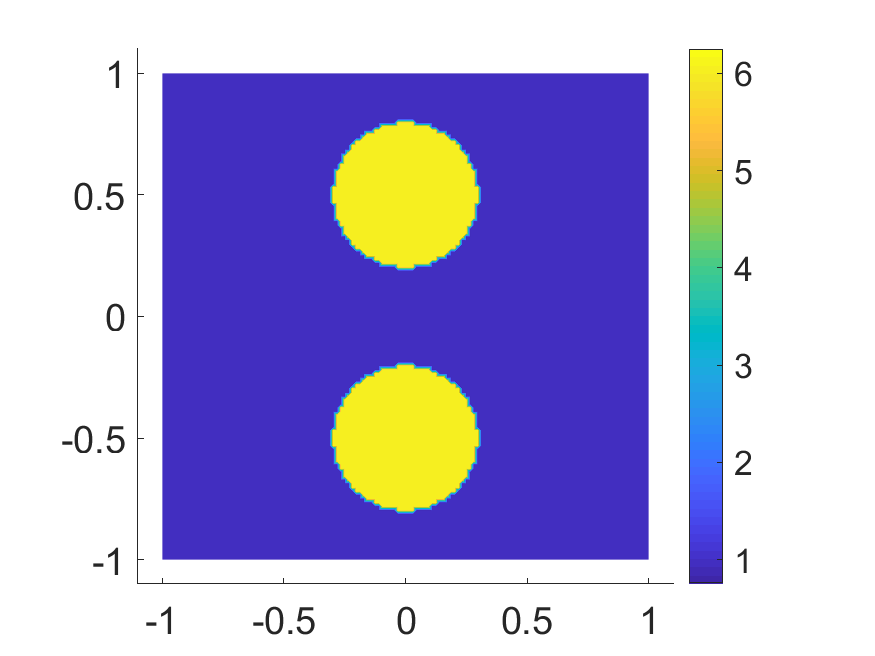}
    & \includegraphics[trim = .5cm 0cm 1cm 0cm, clip=true,width=.2\textwidth]{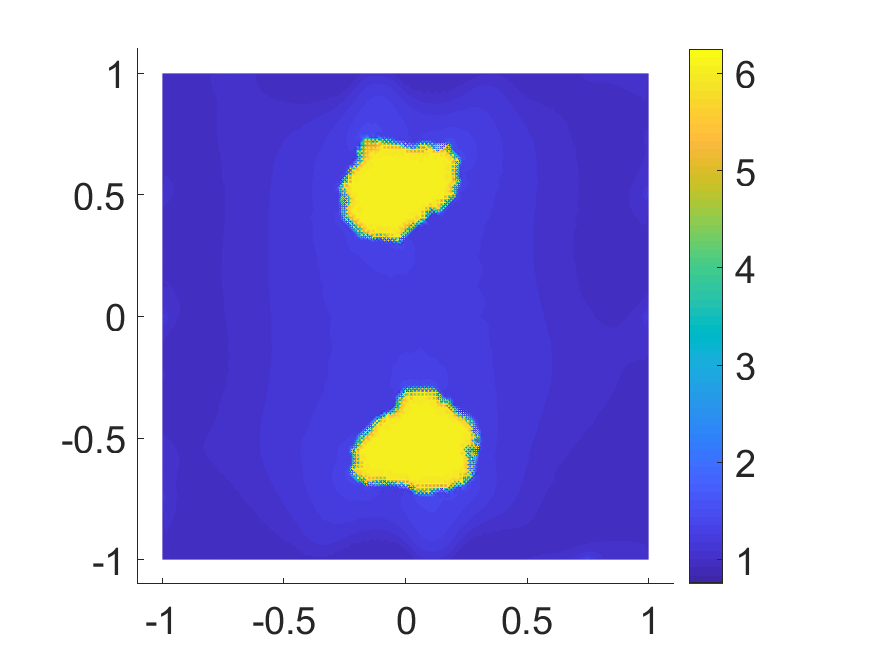}
    & \includegraphics[trim = .5cm 0cm 1cm 0cm, clip=true,width=.2\textwidth]{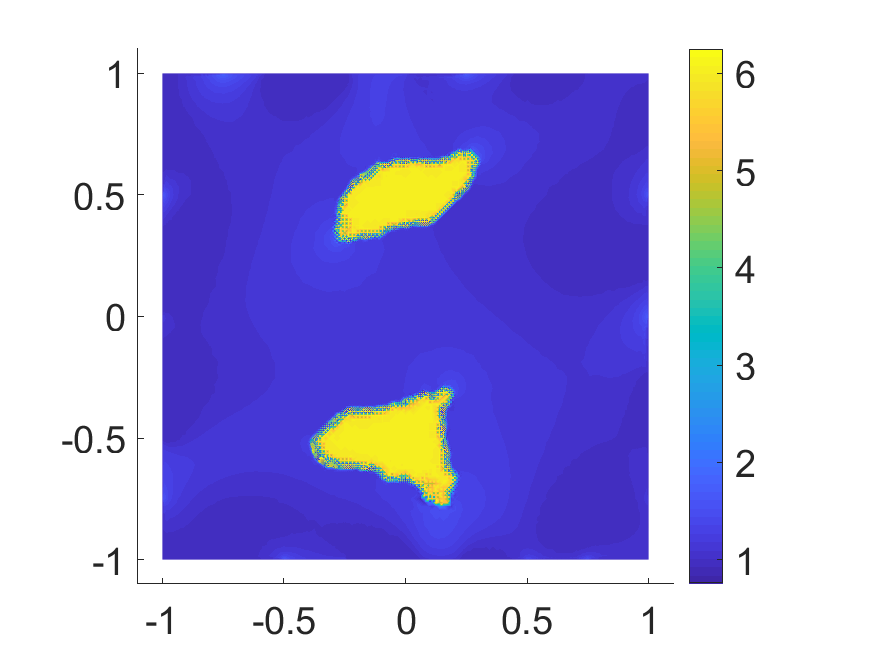}
    & \includegraphics[trim = .5cm 0cm 1cm 0cm, clip=true,width=.2\textwidth]{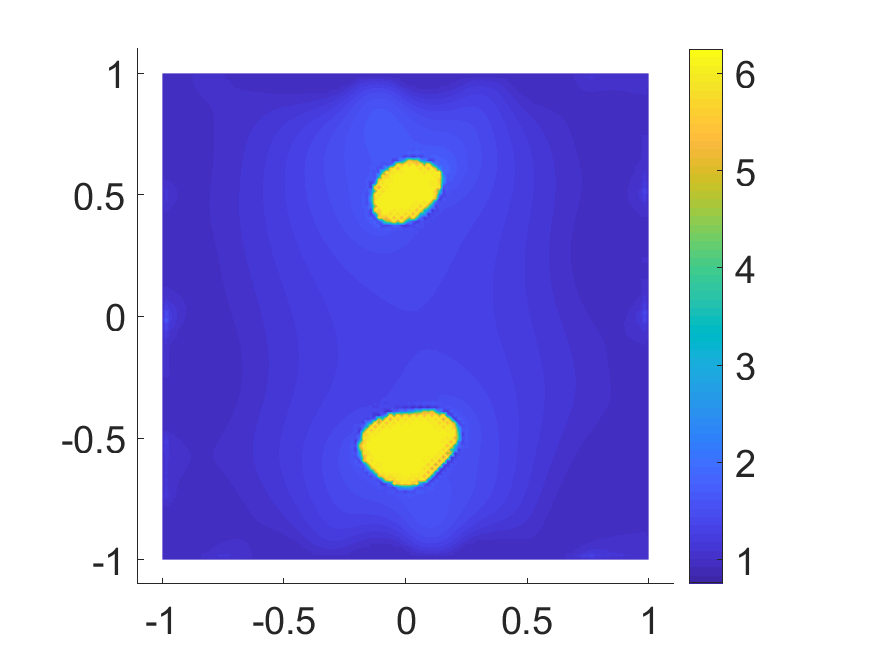}
    & \includegraphics[trim = .5cm 0cm 1cm 0cm, clip=true,width=.2\textwidth]{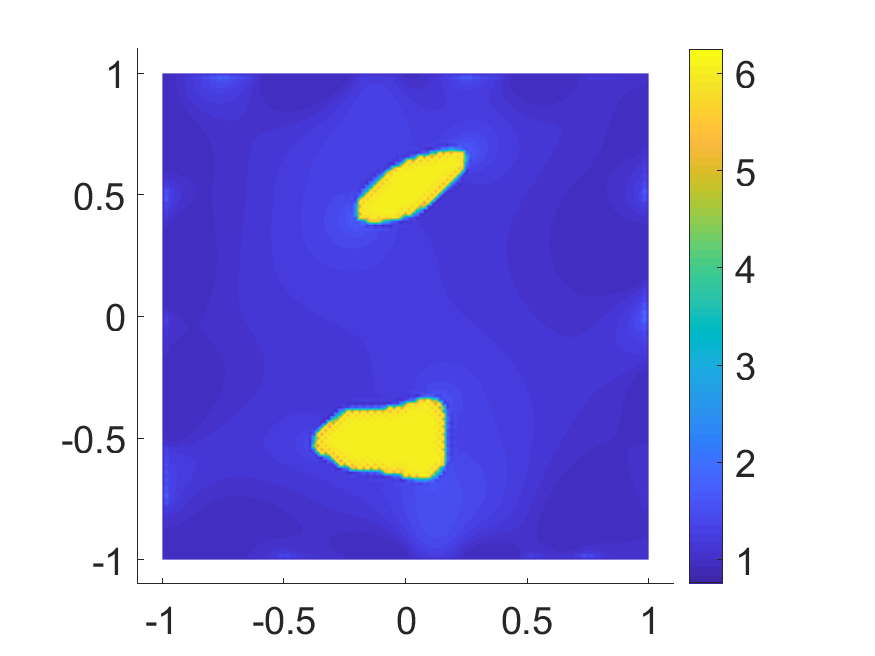}\\
    (a) true conductivity & (b) adaptive & (c) adaptive & (d) uniform & (e) uniform
  \end{tabular}
  \caption{The final recoveries by the adaptive and uniform refinements for Example
  \ref{exam2}(iii). The results in (b) and (d) are for $\epsilon=\text{1e-3}$ and
  $\tilde\alpha=\text{1e-4}$, and (c) and (e) for $\epsilon=\text{1e-2}$ and $\tilde\alpha=\text{2e-4}$.
  The d.o.f. for (b), (c), (d) and (e) is  14620, 19355, 16641 and 16641 respectively.}\label{fig:exam2iii-recon}
\end{figure}

\begin{figure}[hbt!]
 \centering
 \setlength{\tabcolsep}{0pt}
 \begin{tabular}{cccccccc}
 \includegraphics[trim = {2.5cm 1.5cm 2.5cm 1.2cm}, clip, width=.2\textwidth]{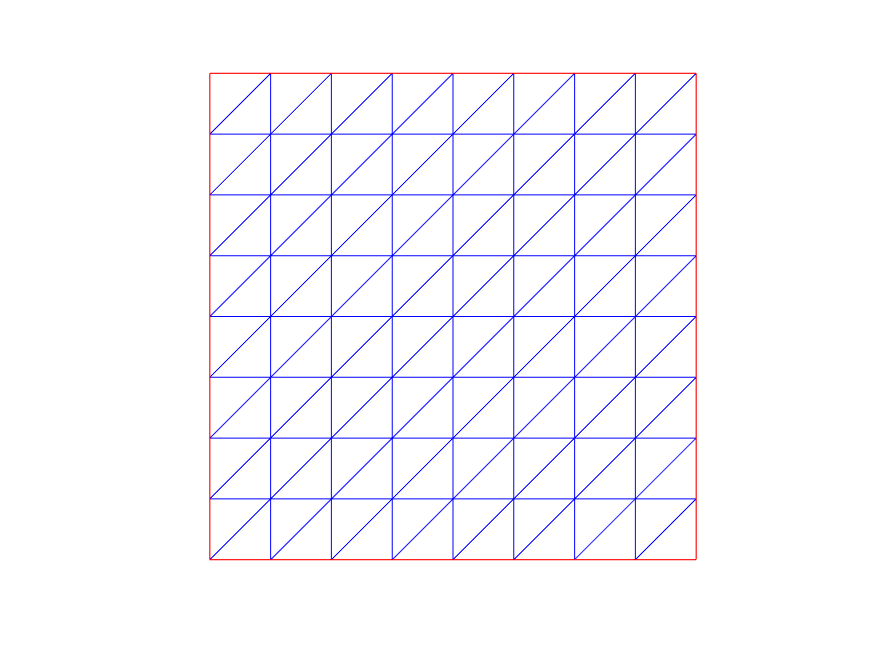}&
 \includegraphics[trim = {2.5cm 1.5cm 2.5cm 1.2cm}, clip, width=.2\textwidth]{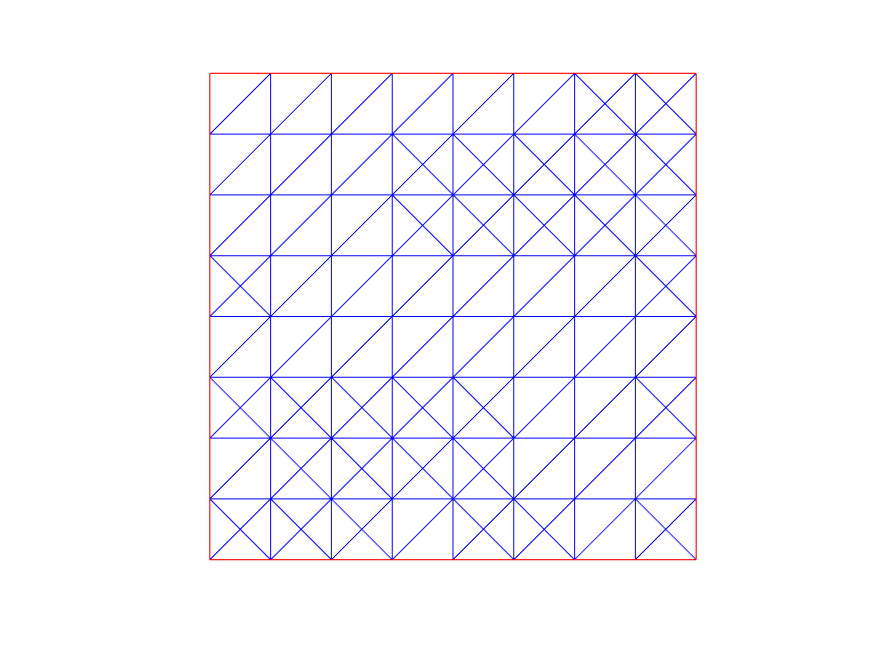}&
 \includegraphics[trim = {2.5cm 1.5cm 2.5cm 1.2cm}, clip, width=.2\textwidth]{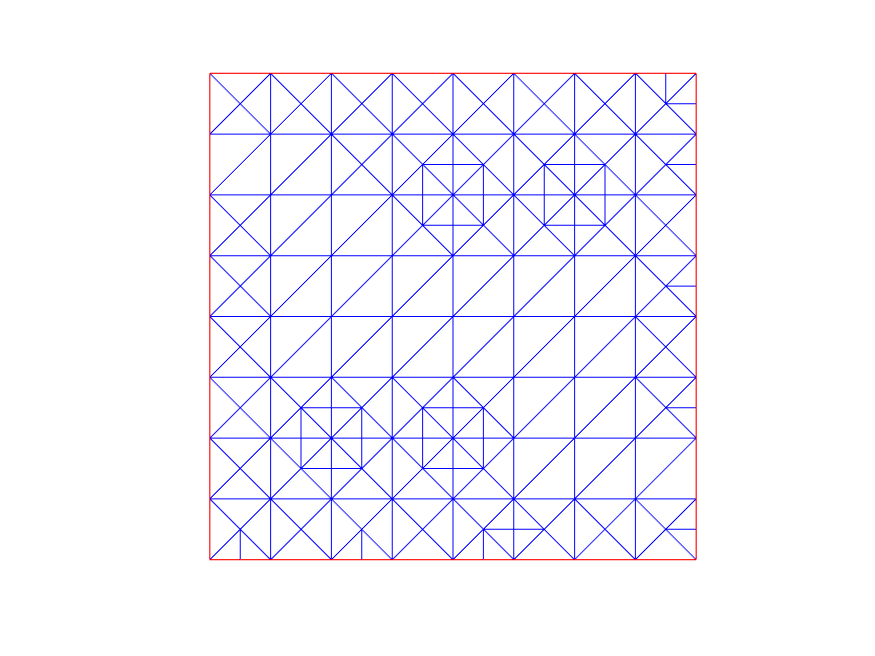}&
 \includegraphics[trim = {2.5cm 1.5cm 2.5cm 1.2cm}, clip, width=.2\textwidth]{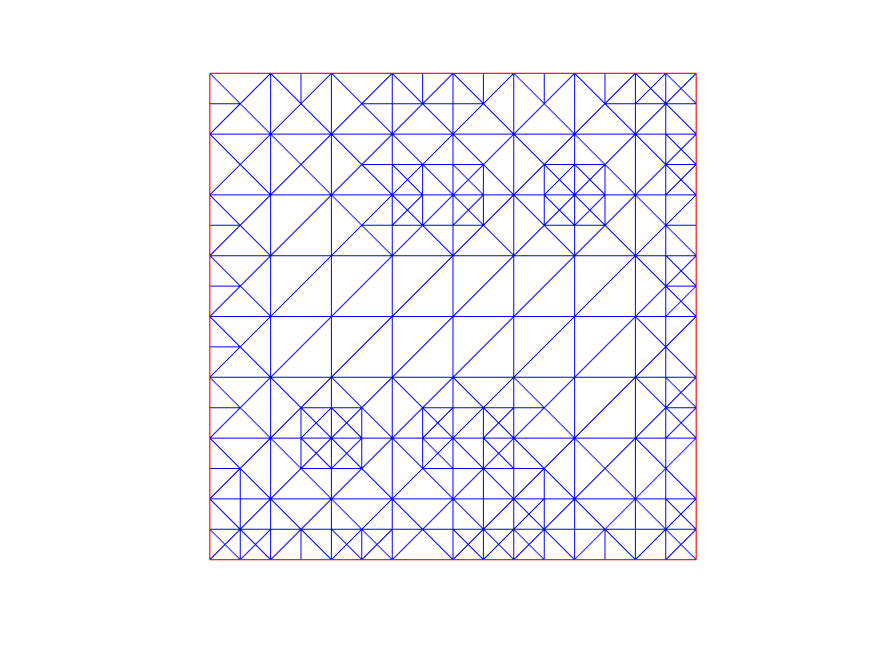}&
 \includegraphics[trim = {2.5cm 1.5cm 2.5cm 1.2cm}, clip, width=.2\textwidth]{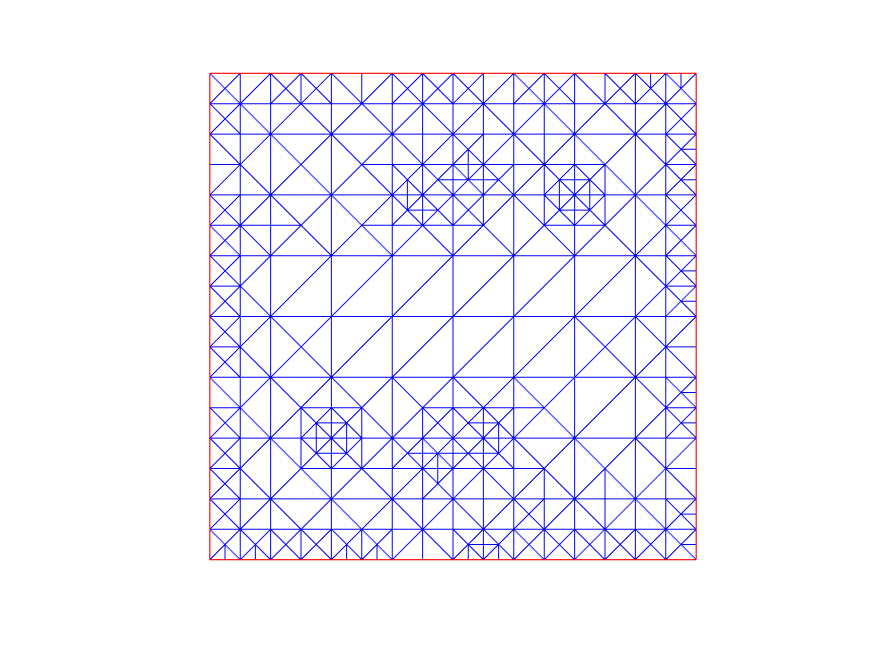}\\
 \includegraphics[trim = 1cm 0.5cm 0.5cm 0cm, width=.2\textwidth]{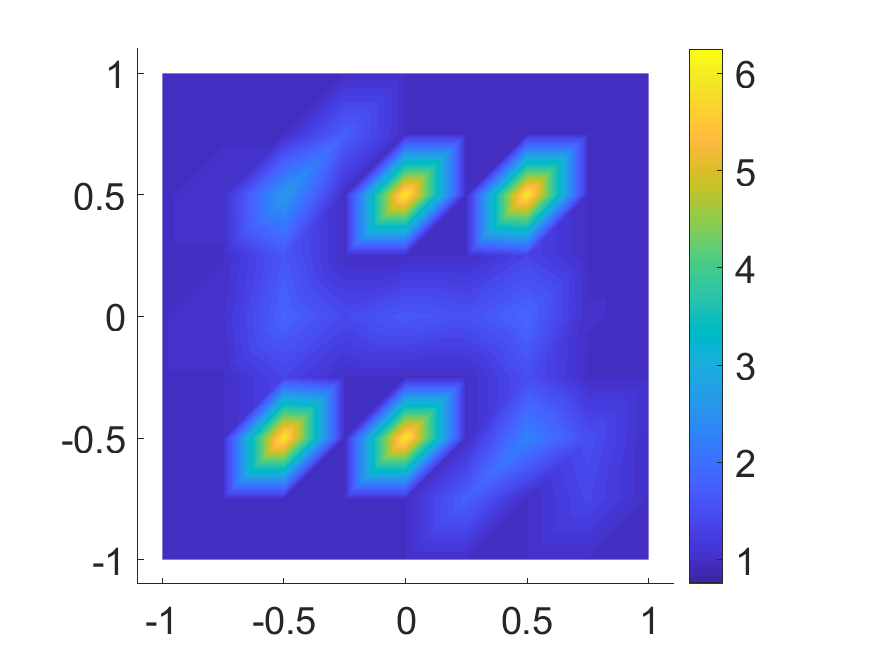}&
 \includegraphics[trim = 1cm 0.5cm 0.5cm 0cm, width=.2\textwidth]{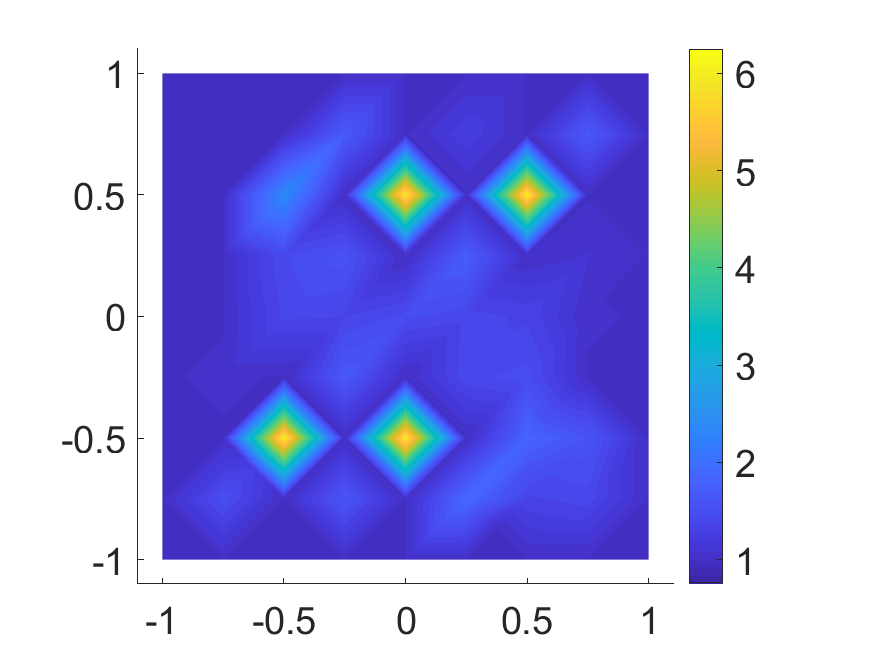}&
 \includegraphics[trim = 1cm 0.5cm 0.5cm 0cm, width=.2\textwidth]{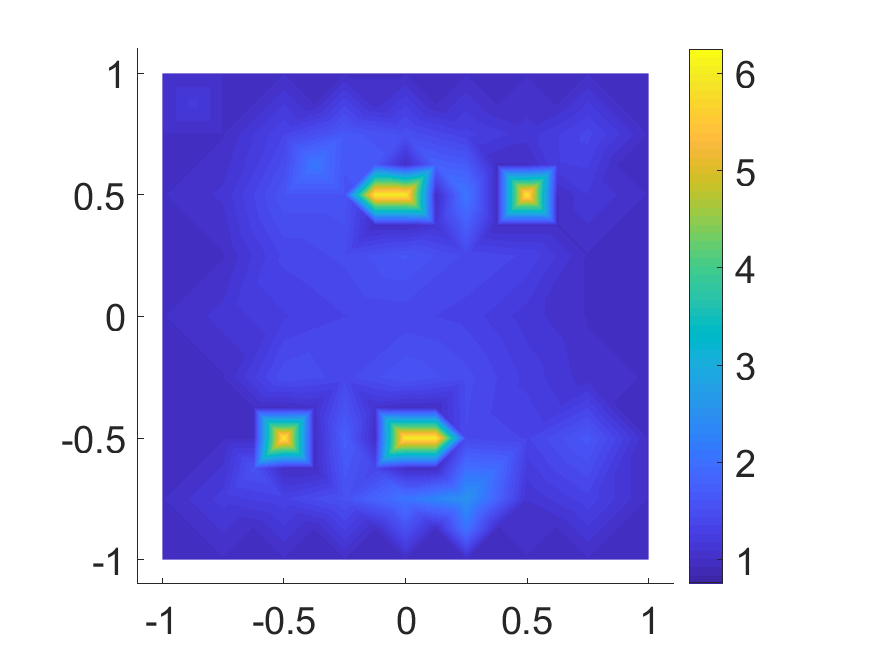}&
 \includegraphics[trim = 1cm 0.5cm 0.5cm 0cm, width=.2\textwidth]{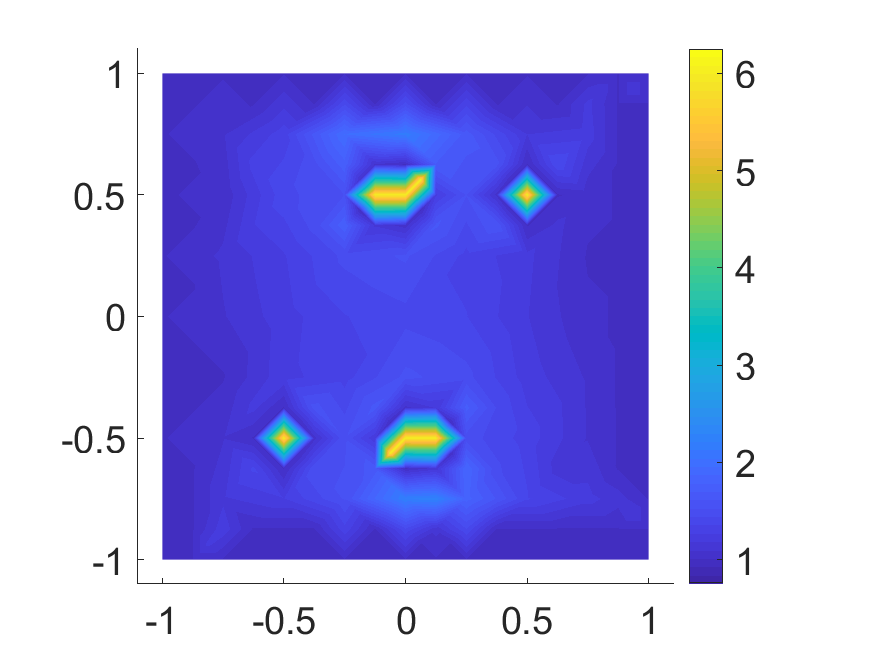}&
 \includegraphics[trim = 1cm 0.5cm 0.5cm 0cm, width=.2\textwidth]{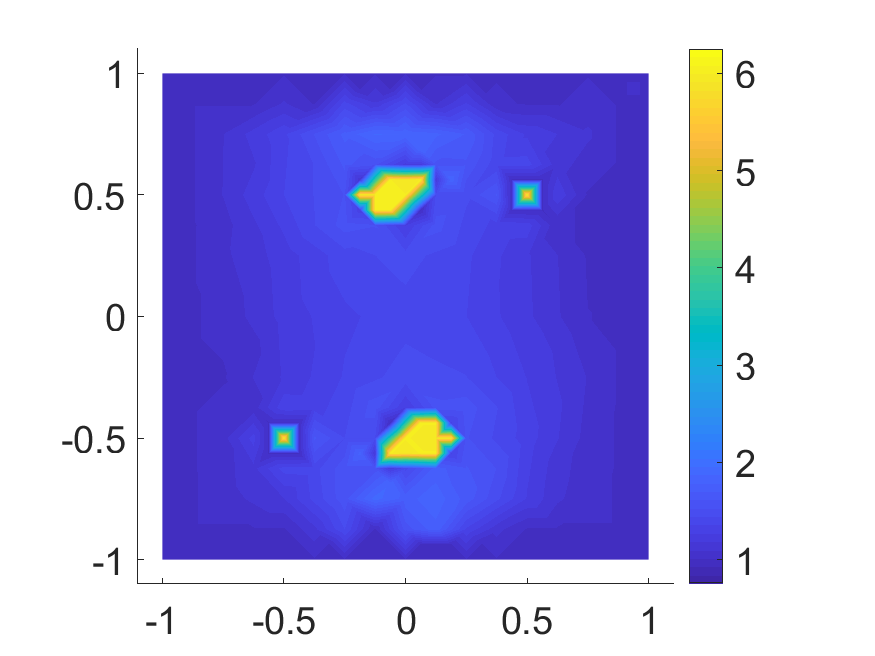}\\
    81     &     111    &    150    &     231      &   331\\
 \includegraphics[trim = {2.5cm 1.5cm 2.5cm 1.2cm}, clip, width=.2\textwidth]{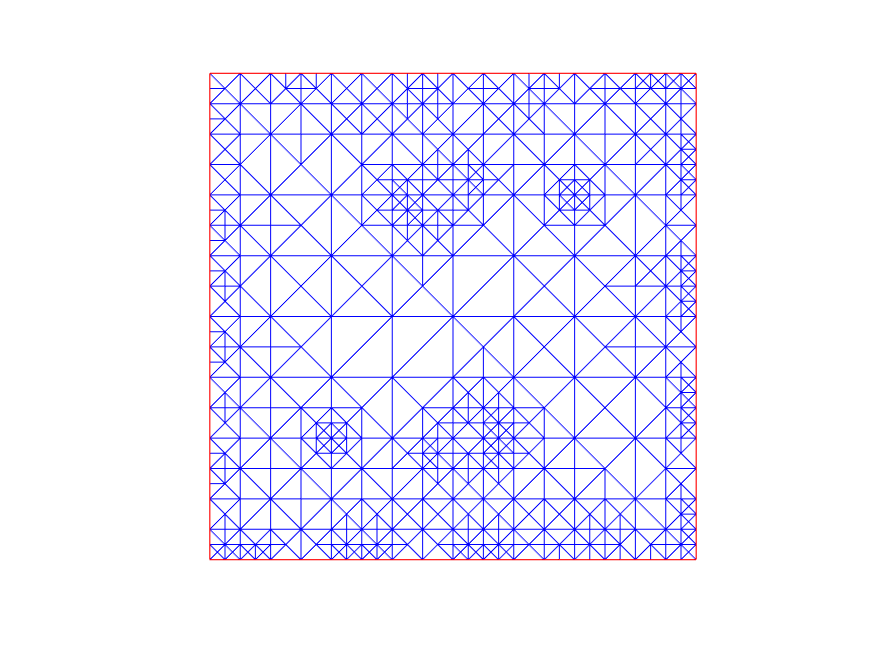}&
 \includegraphics[trim = {2.5cm 1.5cm 2.5cm 1.2cm}, clip, width=.2\textwidth]{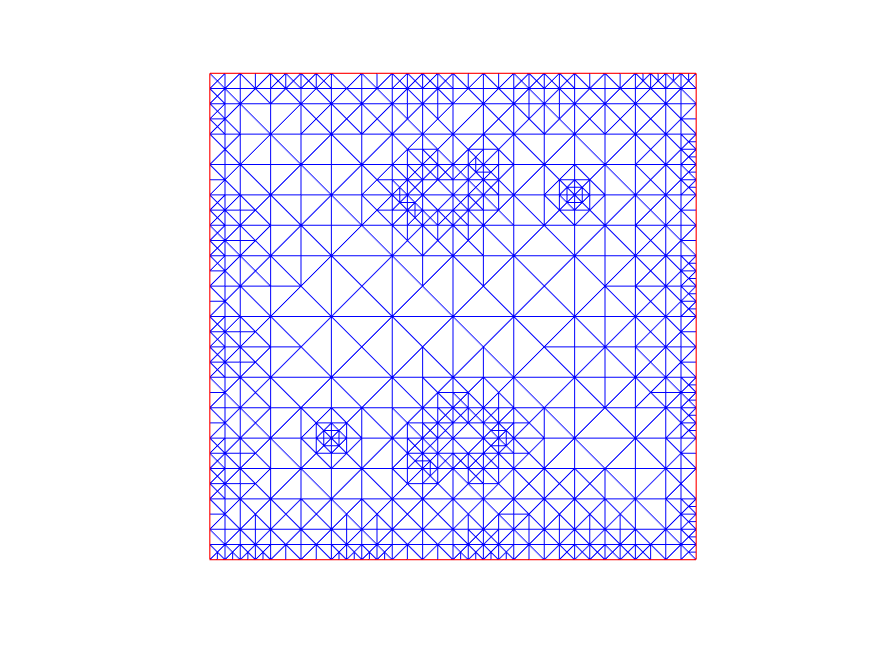}&
 \includegraphics[trim = {2.5cm 1.5cm 2.5cm 1.2cm}, clip, width=.2\textwidth]{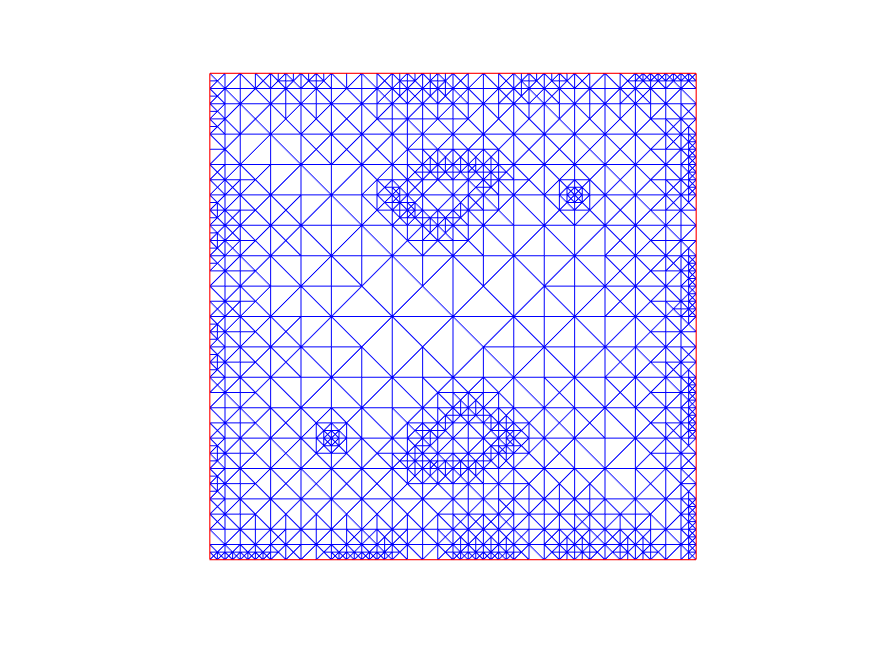}&
 \includegraphics[trim = {2.5cm 1.5cm 2.5cm 1.2cm}, clip, width=.2\textwidth]{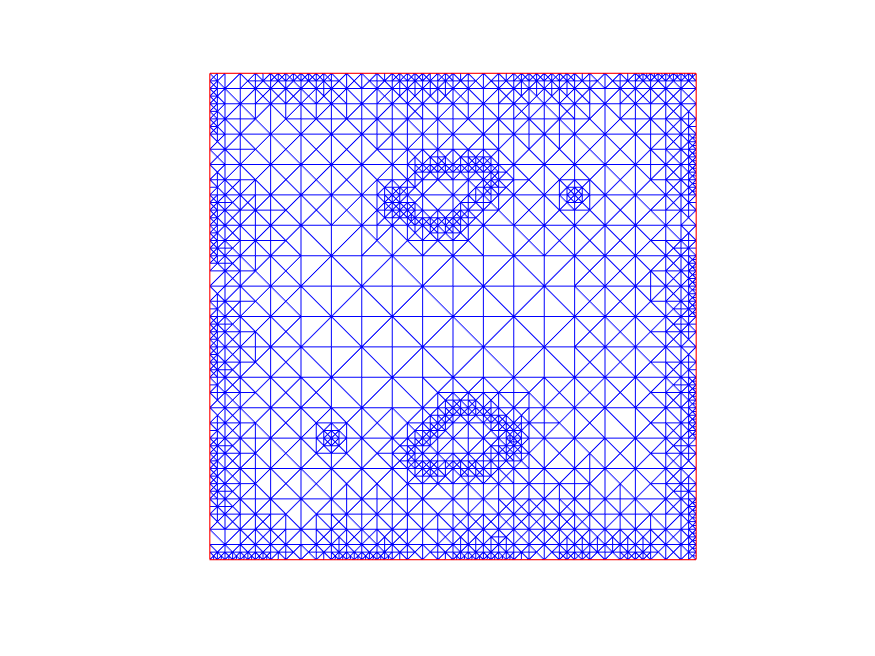}&
 \includegraphics[trim = {2.5cm 1.5cm 2.5cm 1.2cm}, clip, width=.2\textwidth]{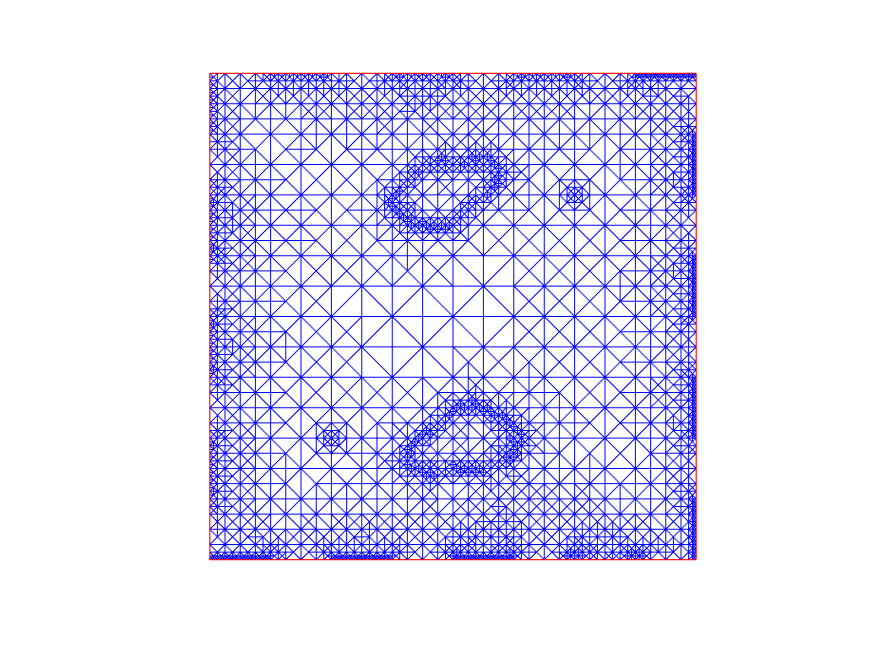}\\
 \includegraphics[trim = 1cm 0.5cm 0.5cm 0cm, width=.2\textwidth]{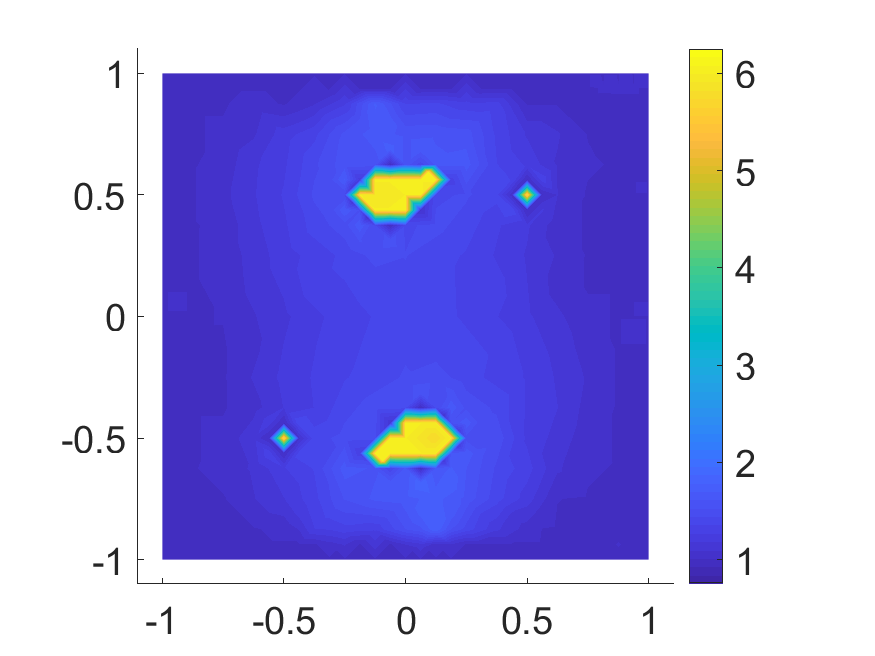}&
 \includegraphics[trim = 1cm 0.5cm 0.5cm 0cm, width=.2\textwidth]{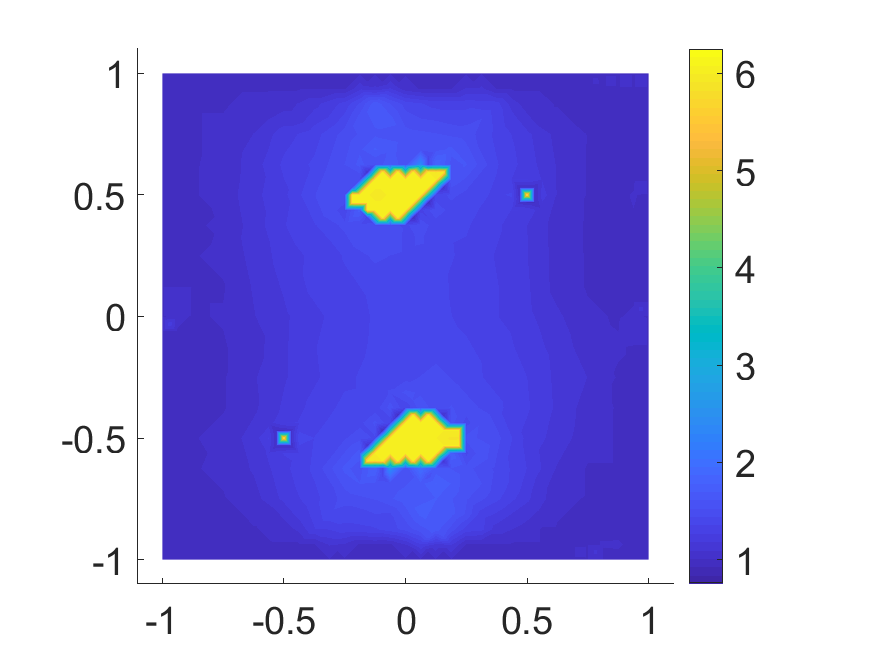}&
 \includegraphics[trim = 1cm 0.5cm 0.5cm 0cm, width=.2\textwidth]{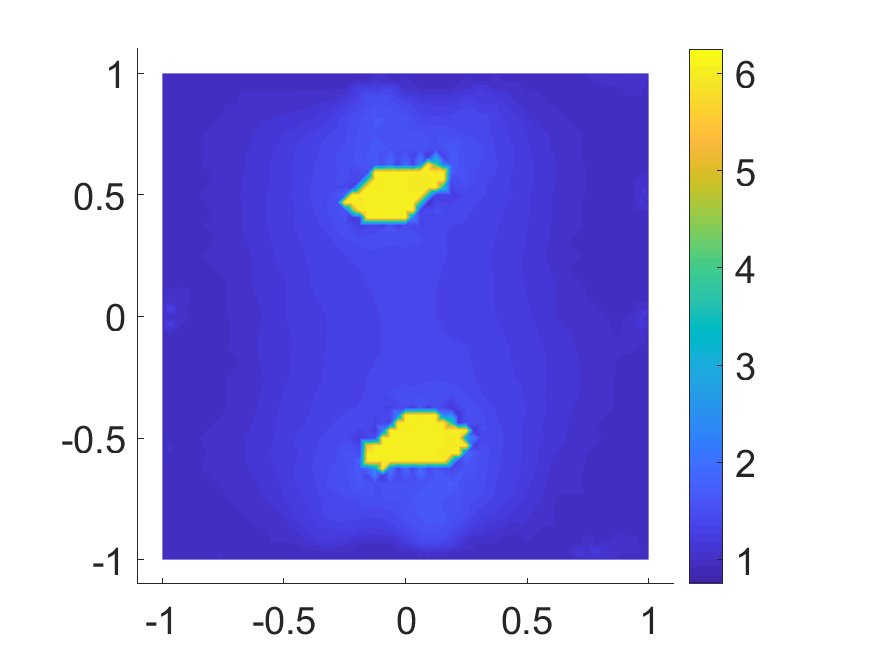}&
 \includegraphics[trim = 1cm 0.5cm 0.5cm 0cm, width=.2\textwidth]{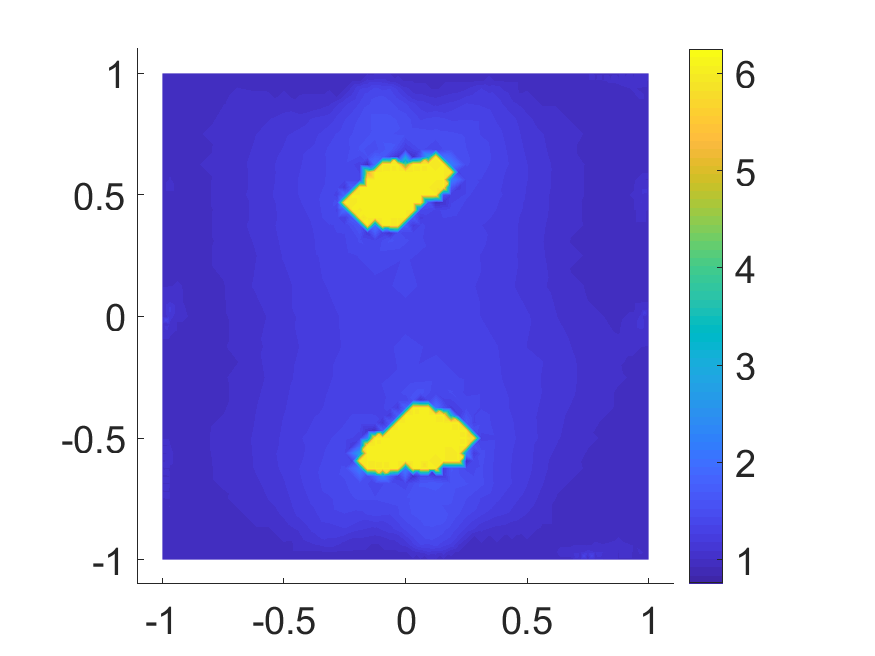}&
 \includegraphics[trim = 1cm 0.5cm 0.5cm 0cm, width=.2\textwidth]{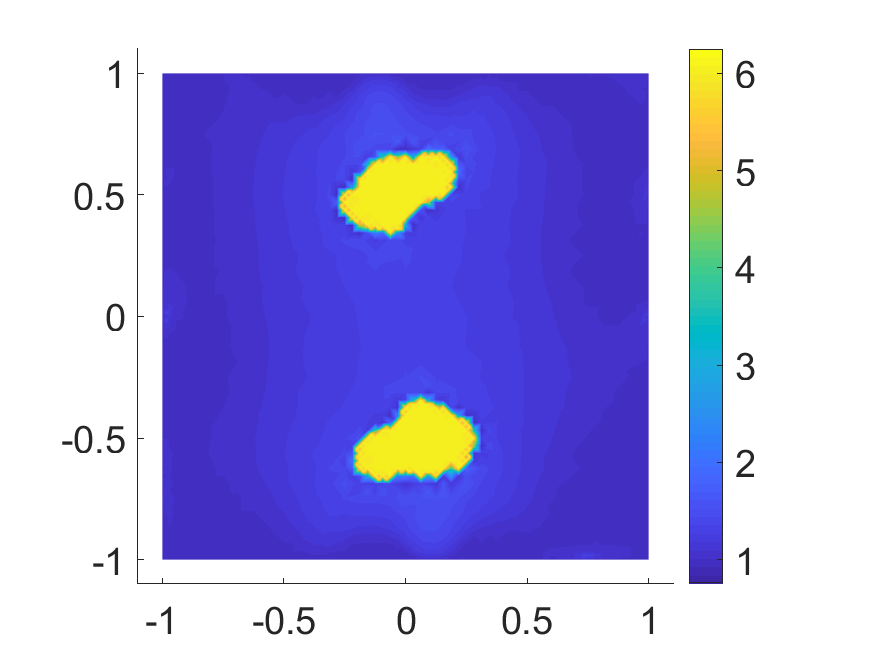}\\
    535    &   775    &     1215   &        1735    &    2594\\
 \includegraphics[trim = {2.5cm 1.5cm 2.5cm 1.2cm}, clip, width=.2\textwidth]{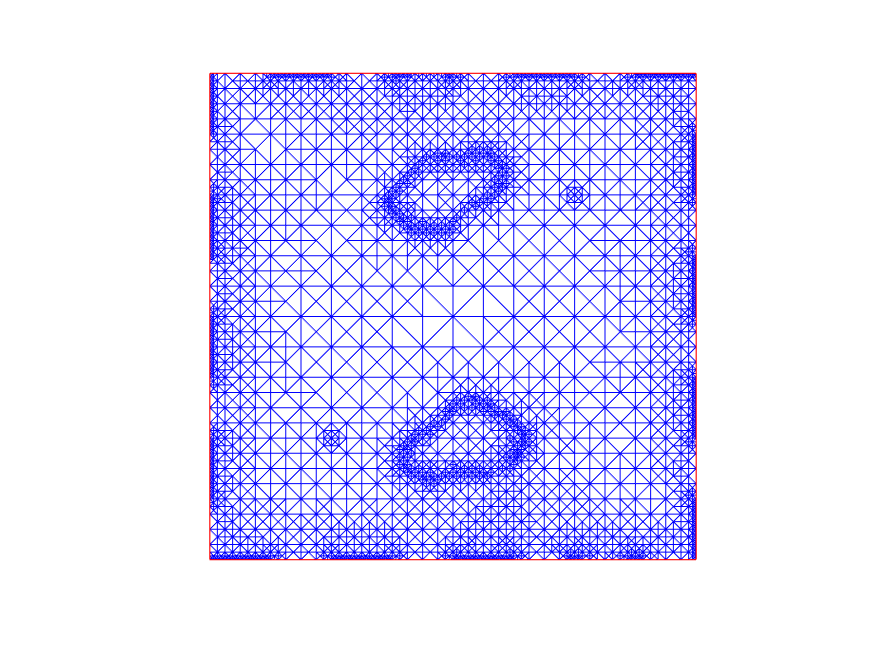}&
 \includegraphics[trim = {2.5cm 1.5cm 2.5cm 1.2cm}, clip, width=.2\textwidth]{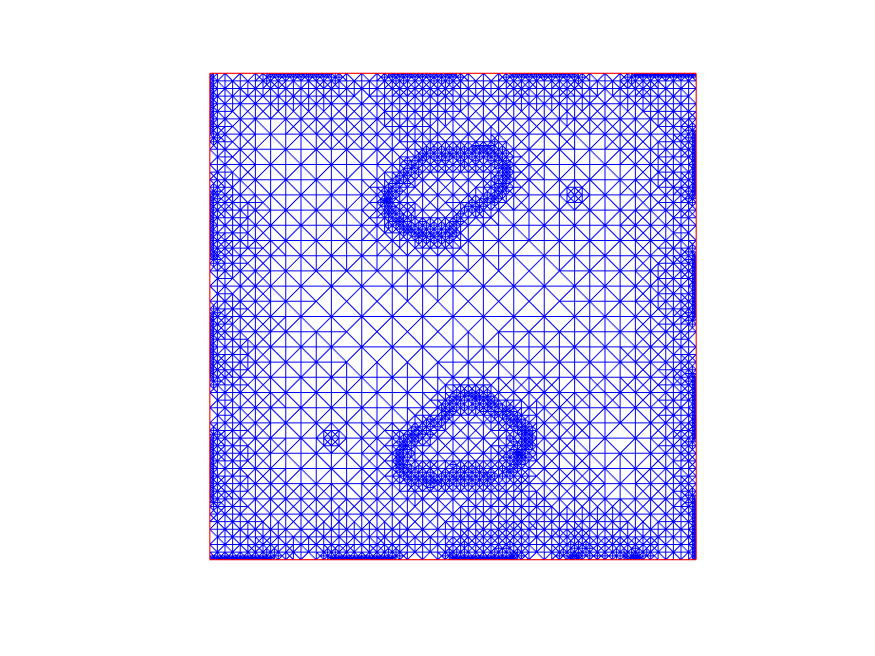}&
 \includegraphics[trim = {2.5cm 1.5cm 2.5cm 1.2cm}, clip, width=.2\textwidth]{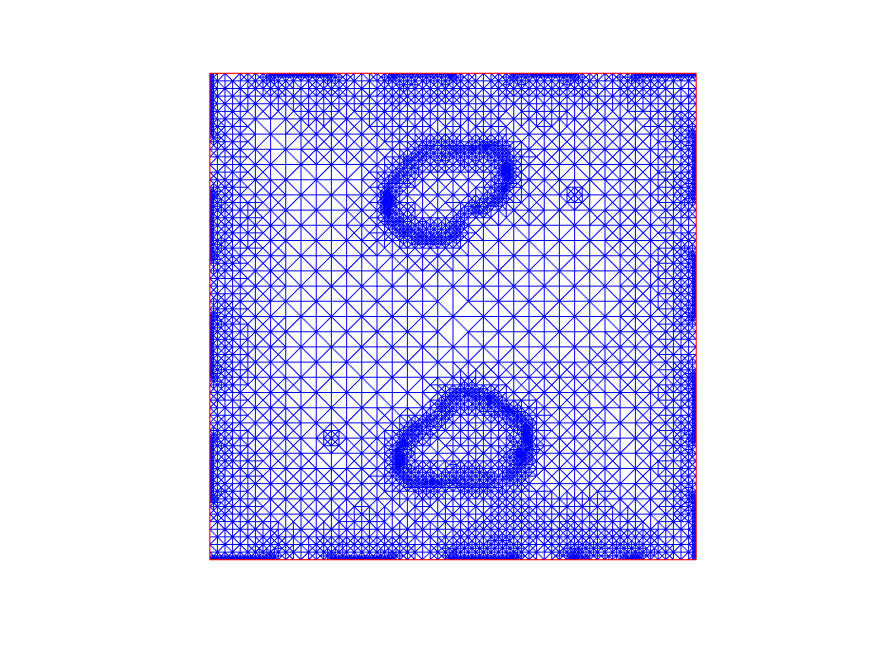}&
 \includegraphics[trim = {2.5cm 1.5cm 2.5cm 1.2cm}, clip, width=.2\textwidth]{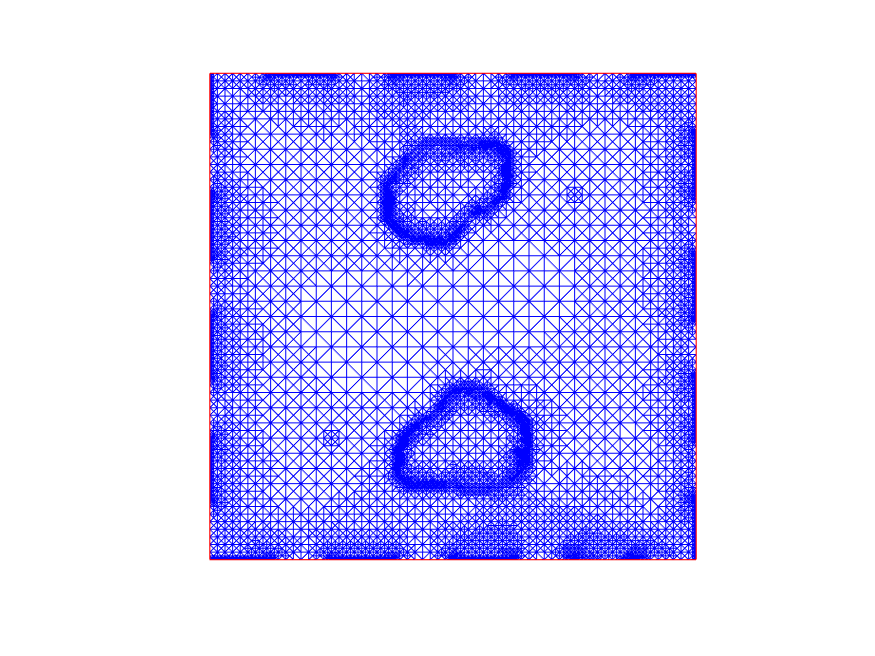}&
 \includegraphics[trim = {2.5cm 1.5cm 2.5cm 1.2cm}, clip, width=.2\textwidth]{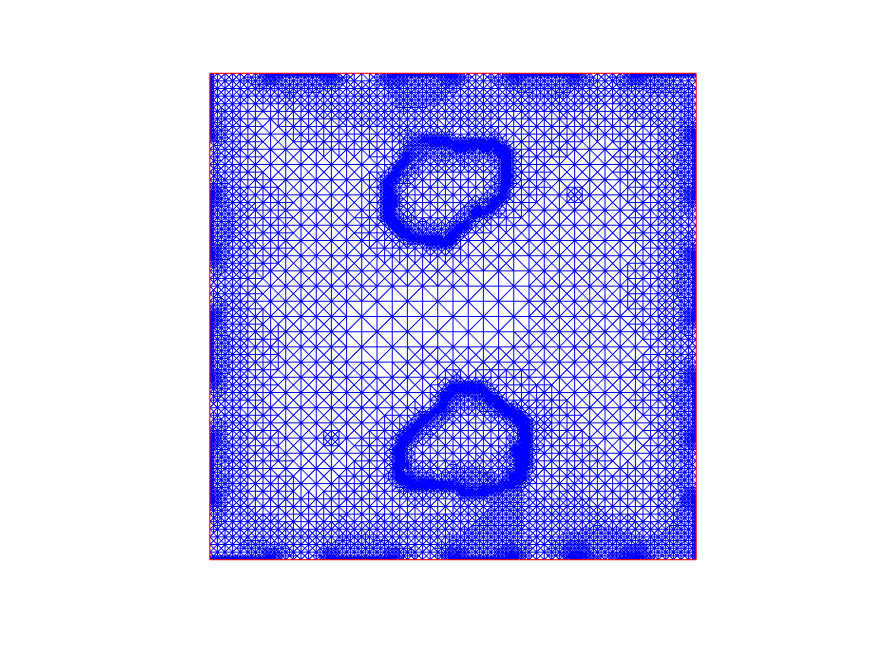}\\
 \includegraphics[trim = 1cm 0.5cm 0.5cm 0cm, width=.2\textwidth]{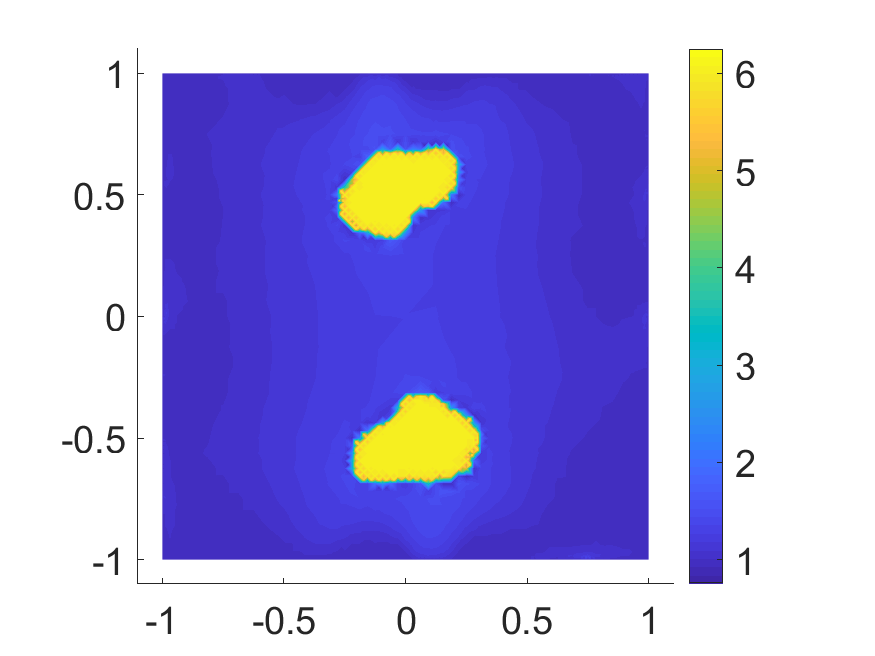}&
 \includegraphics[trim = 1cm 0.5cm 0.5cm 0cm, width=.2\textwidth]{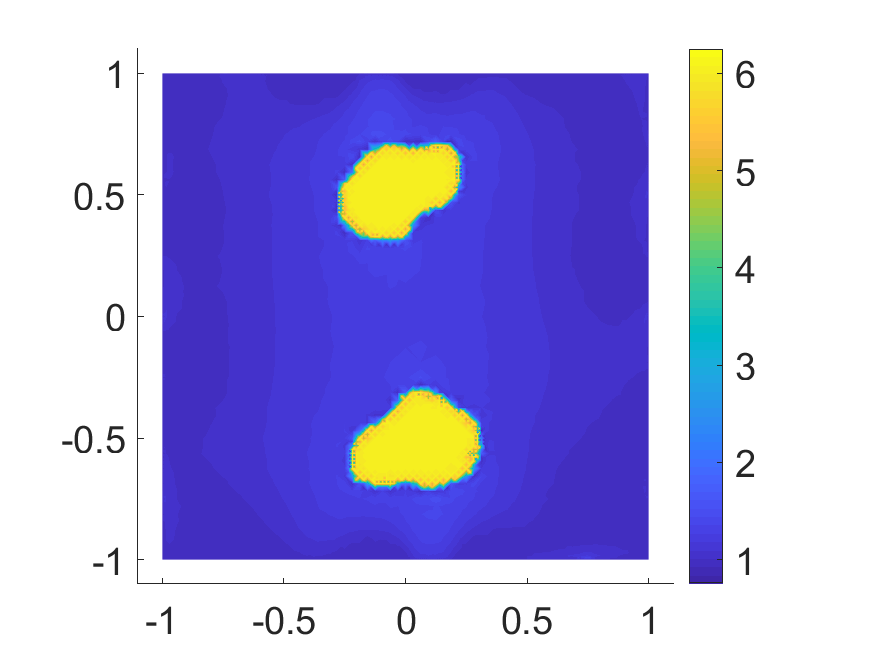}&
 \includegraphics[trim = 1cm 0.5cm 0.5cm 0cm, width=.2\textwidth]{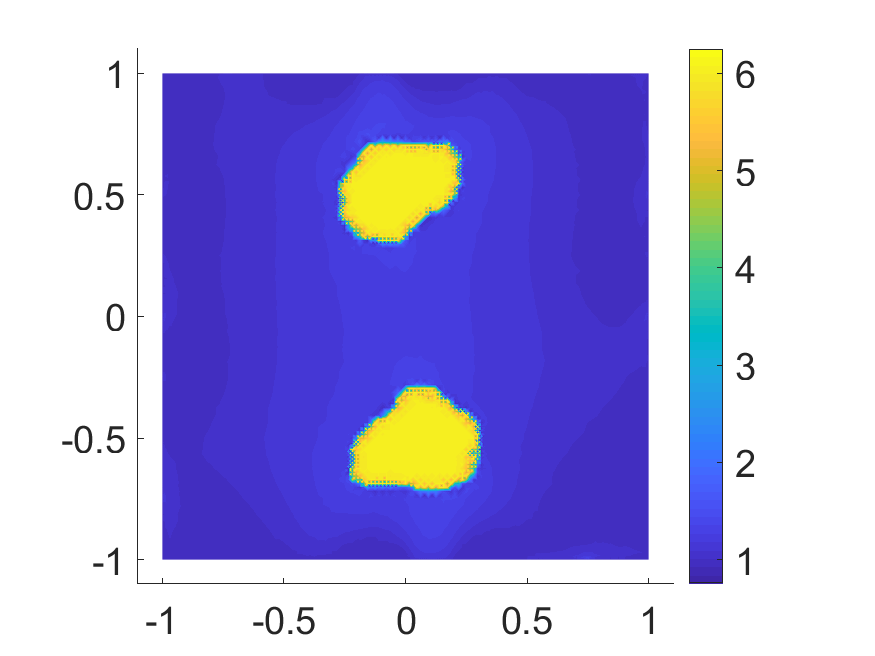}&
 \includegraphics[trim = 1cm 0.5cm 0.5cm 0cm, width=.2\textwidth]{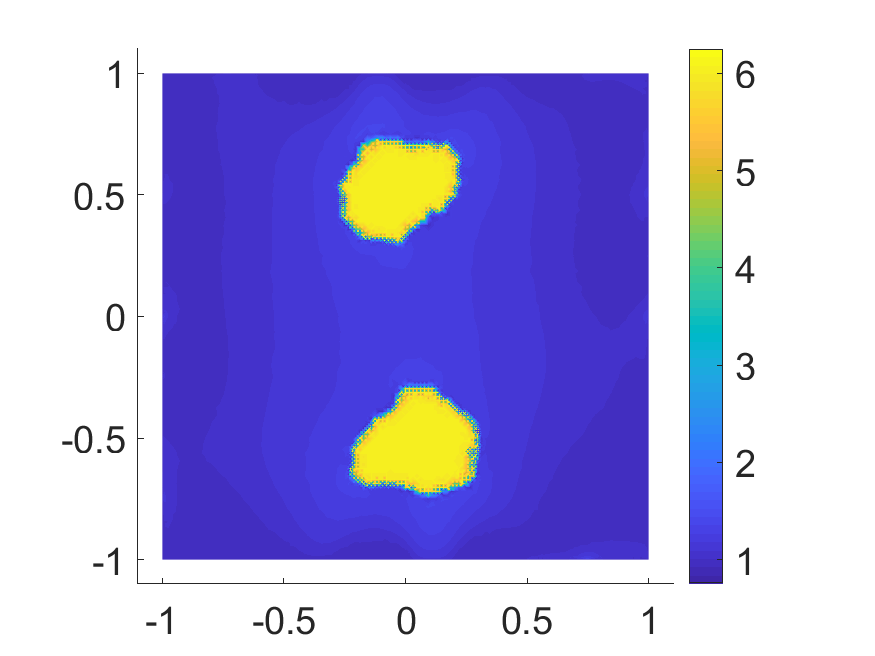}&
 \includegraphics[trim = 1cm 0.5cm 0.5cm 0cm, width=.2\textwidth]{ex5_1e3ad_it15}\\
 3521    &      5085     &       7080    &       10360    &       14620
 \end{tabular}
 \caption{The meshes $\mathcal{T}_k$ and recovered conductivities $\sigma_k$ during the adaptive iteration
 for Example \ref{exam2}(iii) with $\epsilon=\text{1e-3}$ and $\tilde\alpha = \text{1e-4}$. The number under each figure
 refers to d.o.f.} \label{fig:exam2iii-recon-iter-1e3}
\end{figure}

Now we consider one more challenging example with four inclusions.
\begin{example}\label{exam3}
The true conductivity $\sigma^\dag$ is given by $\sigma_0(x)+\sum_{i=1}^4\chi_{B_i}(x)$,
with the circles $B_i$ centered at $(0.6,\pm0.6)$ and $(-0.6,\pm0.6)$,
respectively, all with a radius 0.2, and the background conductivity $\sigma_0(x)=1$.
\end{example}

The numerical results for Example \ref{exam3} are given in Figs. \ref{fig:exam3-recon}--\ref{fig:exam3-efficiency}.
The results are in excellent agreement with the observations from Example \ref{exam2}: The algorithm converges
steadily as the adaptive iteration proceeds, and with a low noise level, it can accurately recover all
four inclusions, showing clearly the efficiency of the adaptive approach. The refinement is mainly around
the electrode edge and interval interface.

\begin{figure}[hbt!]
  \centering\setlength{\tabcolsep}{0em}
  \begin{tabular}{ccccc}
    \includegraphics[trim = .5cm 0cm 1cm 0cm,clip=true,width=.2\textwidth]{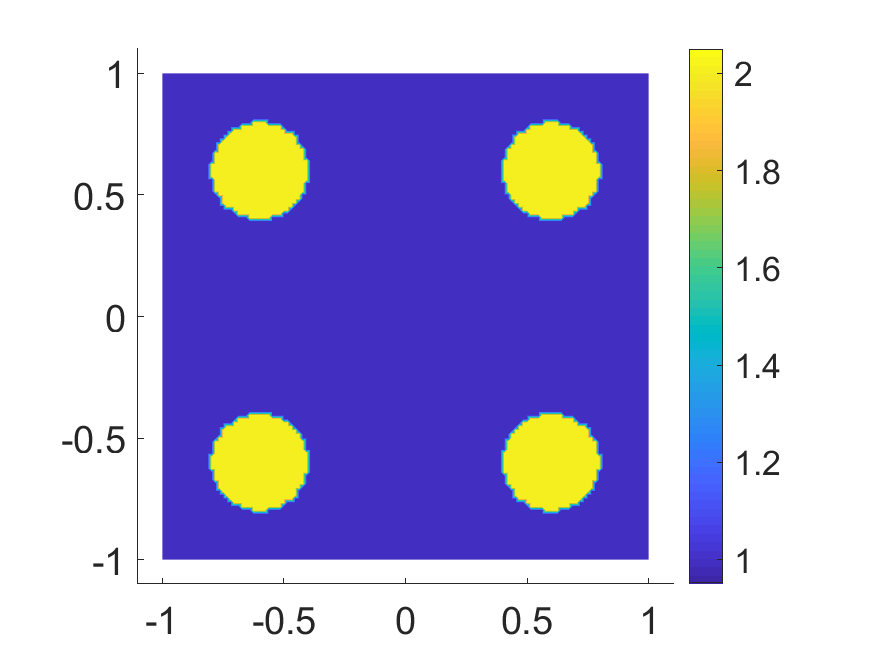}
    & \includegraphics[trim = .5cm 0cm 1cm 0cm, clip=true,width=.2\textwidth]{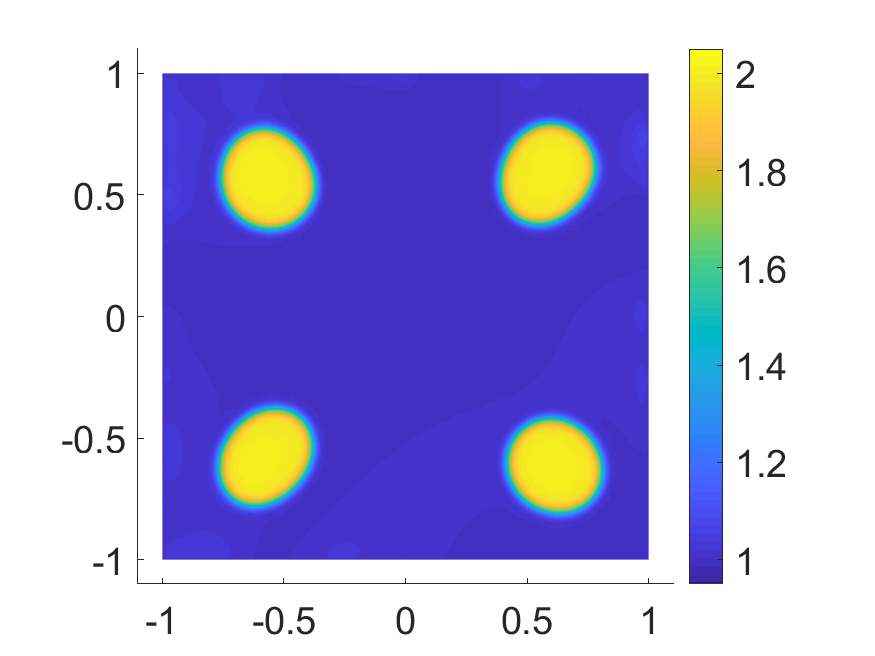}
    & \includegraphics[trim = .5cm 0cm 1cm 0cm, clip=true,width=.2\textwidth]{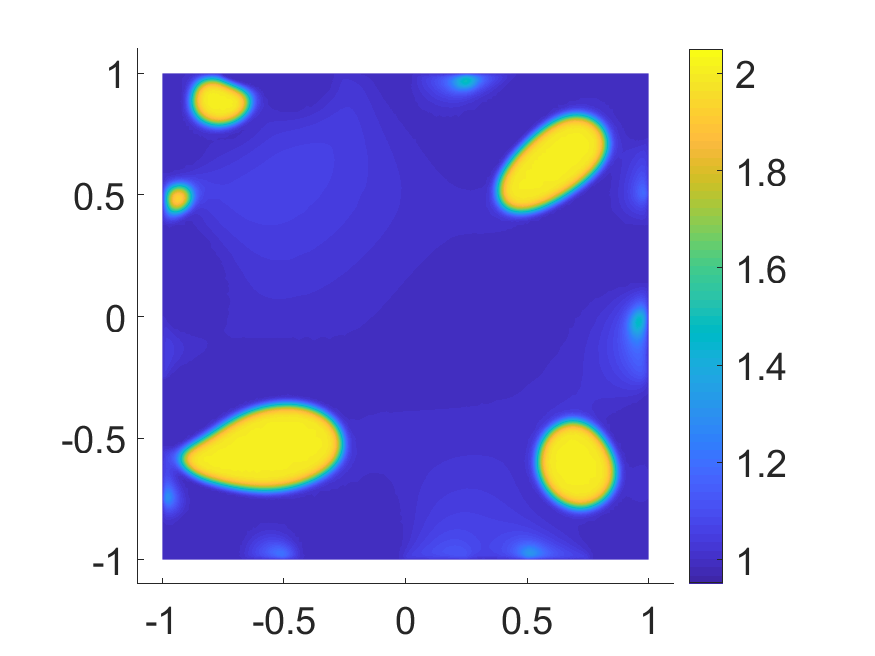}
    & \includegraphics[trim = .5cm 0cm 1cm 0cm, clip=true,width=.2\textwidth]{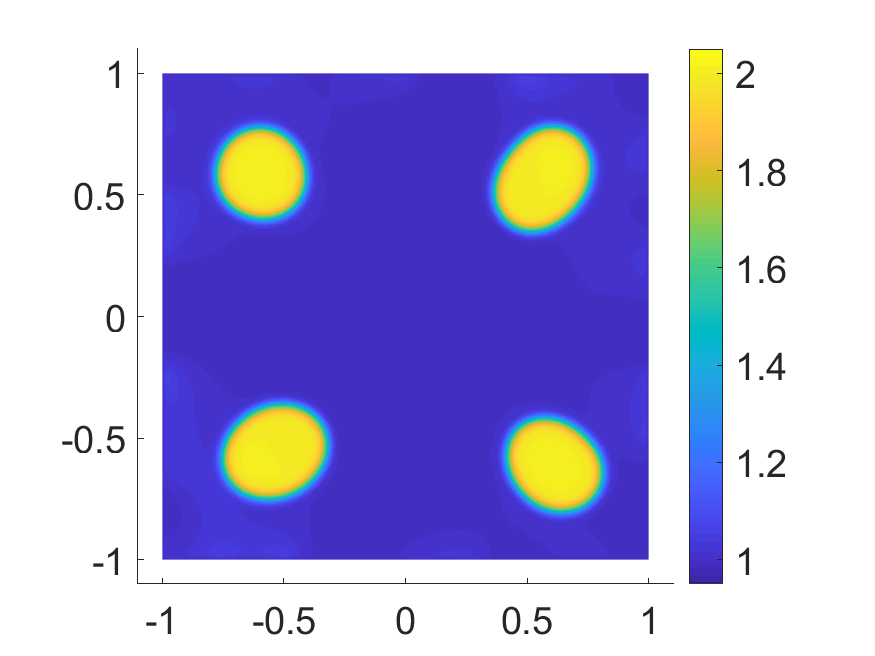}
    & \includegraphics[trim = .5cm 0cm 1cm 0cm, clip=true,width=.2\textwidth]{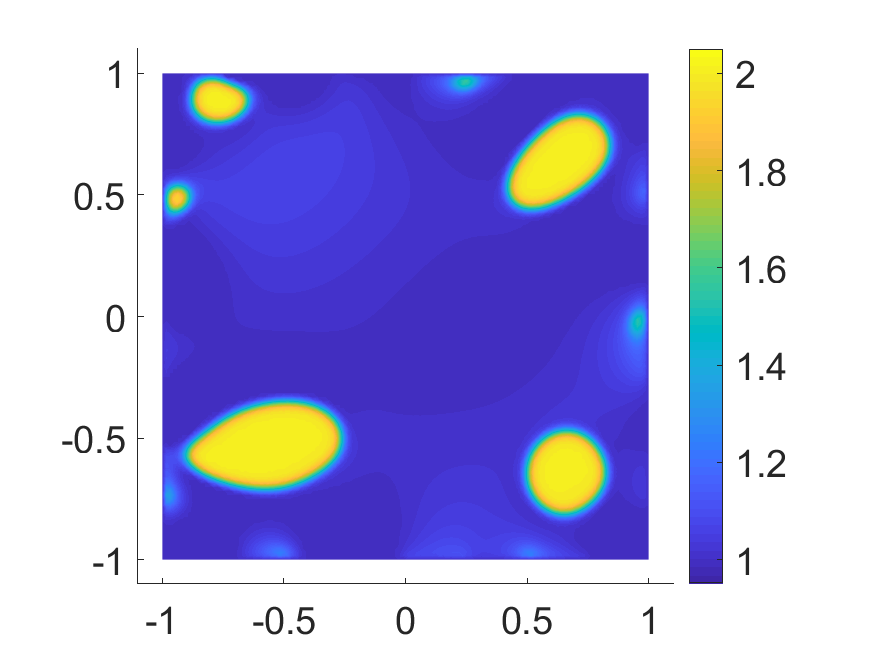}\\
    (a) exact & (b) adaptive & (c) adaptive & (d) uniform & (e) uniform
  \end{tabular}
  \caption{The final recoveries by the adaptive and uniform refinements for Example
  \ref{exam3}. The results in (b) and (d) are for $\epsilon=\text{1e-3}$ and $\tilde\alpha=\text{2e-2}$,
  and (c) and (e) for $\epsilon=\text{1e-2}$ and $\tilde\alpha=\text{3e-2}$. The d.o.f. of (b),
  (c), (d) and (e) is $18008$, $21120$ and $16641$ and $16641$, respectively.}\label{fig:exam3-recon}
\end{figure}

\begin{figure}[hbt!]
 \centering
 \setlength{\tabcolsep}{0pt}
 \begin{tabular}{cccccccc}
 \includegraphics[trim = {2.5cm 1.5cm 2.5cm 1.2cm}, clip, width=.2\textwidth]{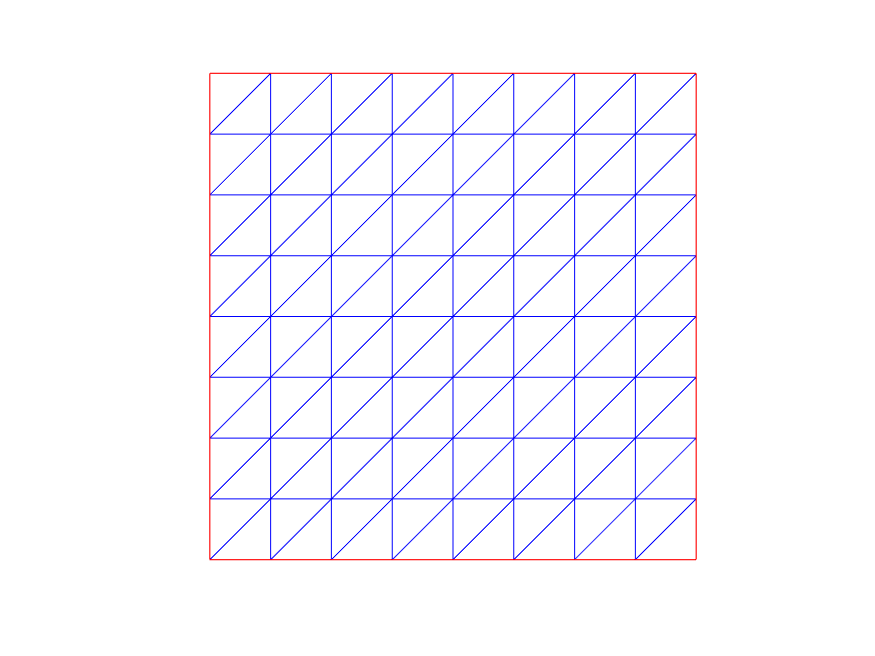}&
 \includegraphics[trim = {2.5cm 1.5cm 2.5cm 1.2cm}, clip, width=.2\textwidth]{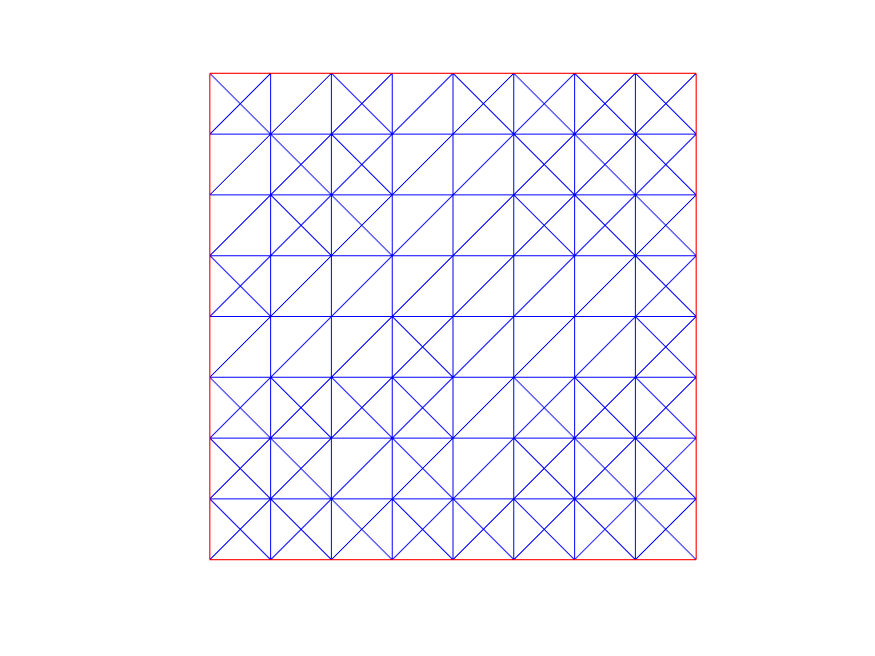}&
 \includegraphics[trim = {2.5cm 1.5cm 2.5cm 1.2cm}, clip, width=.2\textwidth]{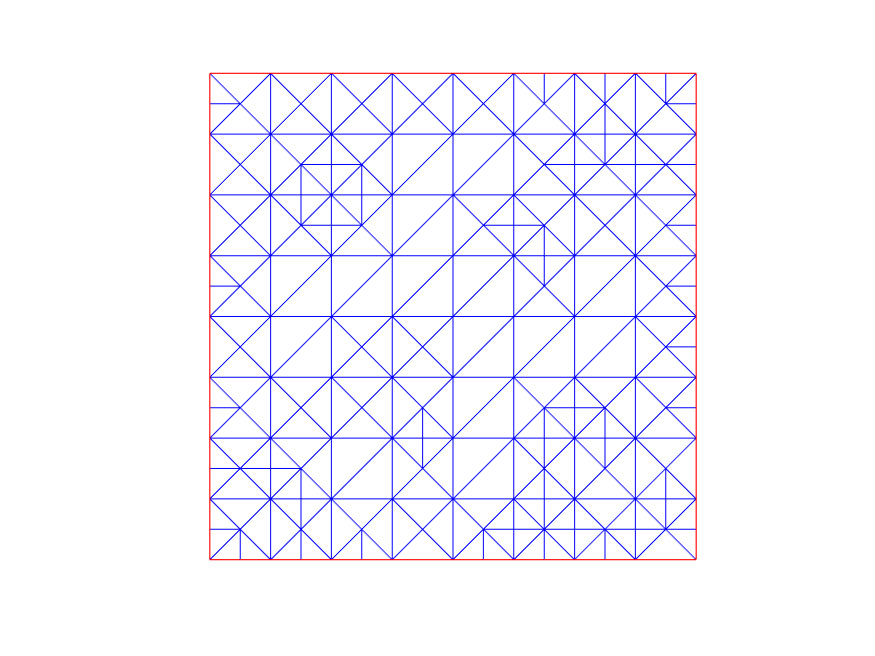}&
 \includegraphics[trim = {2.5cm 1.5cm 2.5cm 1.2cm}, clip, width=.2\textwidth]{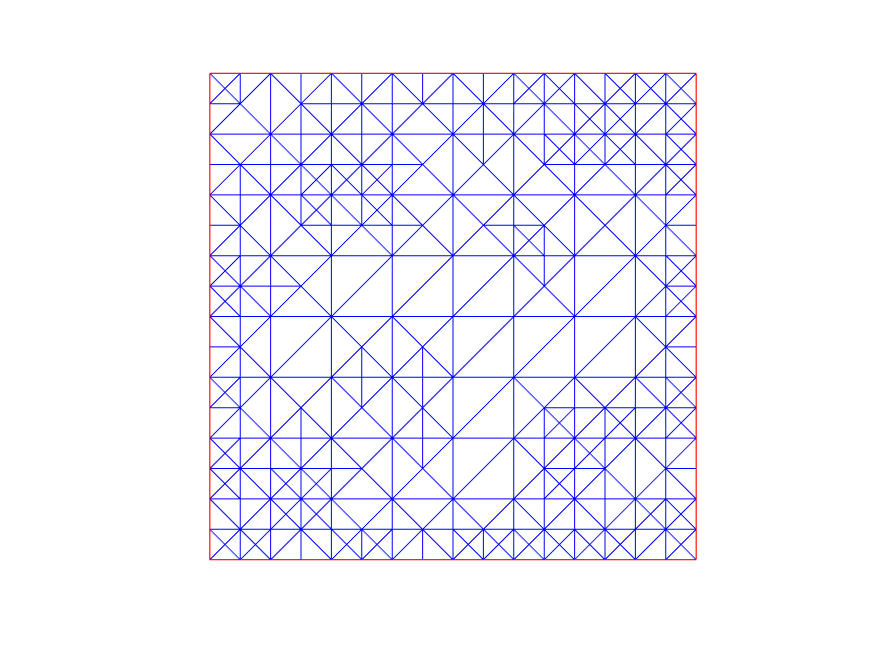}&
 \includegraphics[trim = {2.5cm 1.5cm 2.5cm 1.2cm}, clip, width=.2\textwidth]{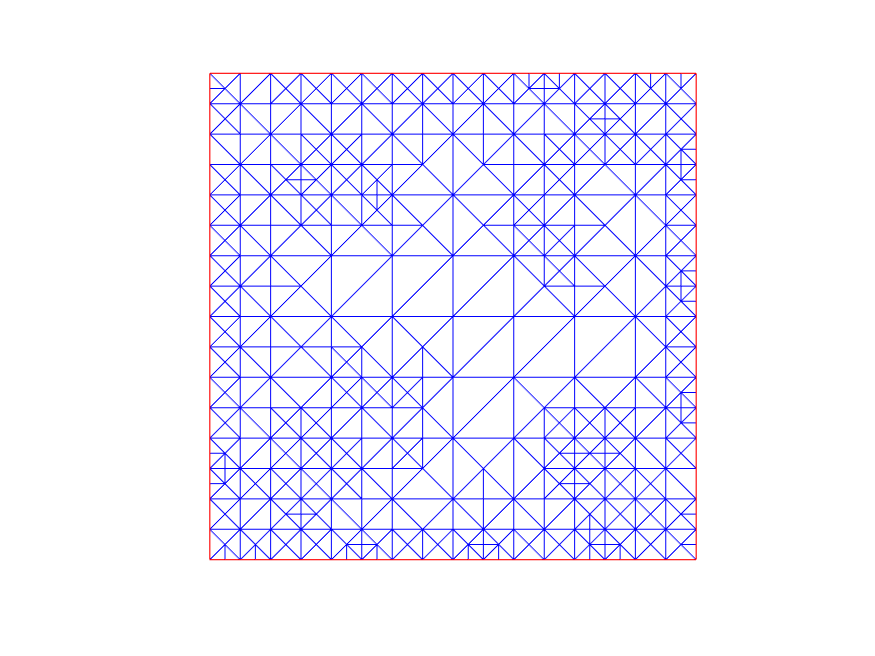}\\
 \includegraphics[trim = 1cm 0.5cm 0.5cm 0cm, width=.2\textwidth]{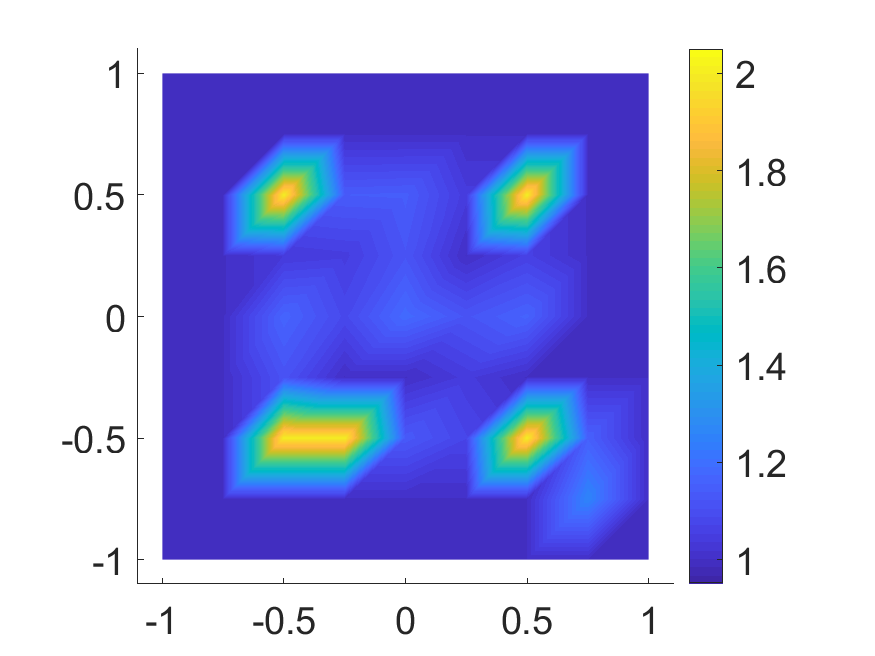}&
 \includegraphics[trim = 1cm 0.5cm 0.5cm 0cm, width=.2\textwidth]{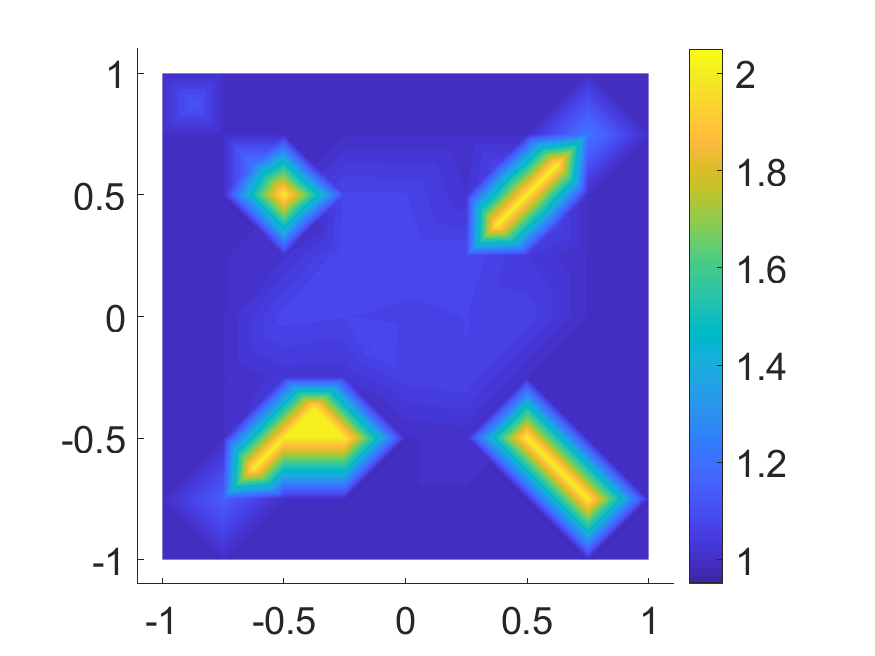}&
 \includegraphics[trim = 1cm 0.5cm 0.5cm 0cm, width=.2\textwidth]{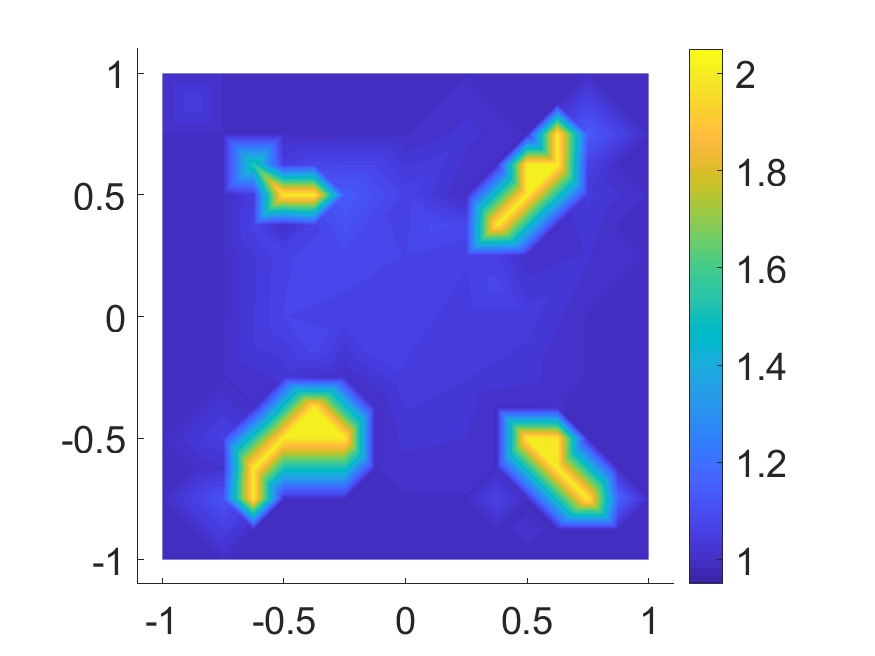}&
 \includegraphics[trim = 1cm 0.5cm 0.5cm 0cm, width=.2\textwidth]{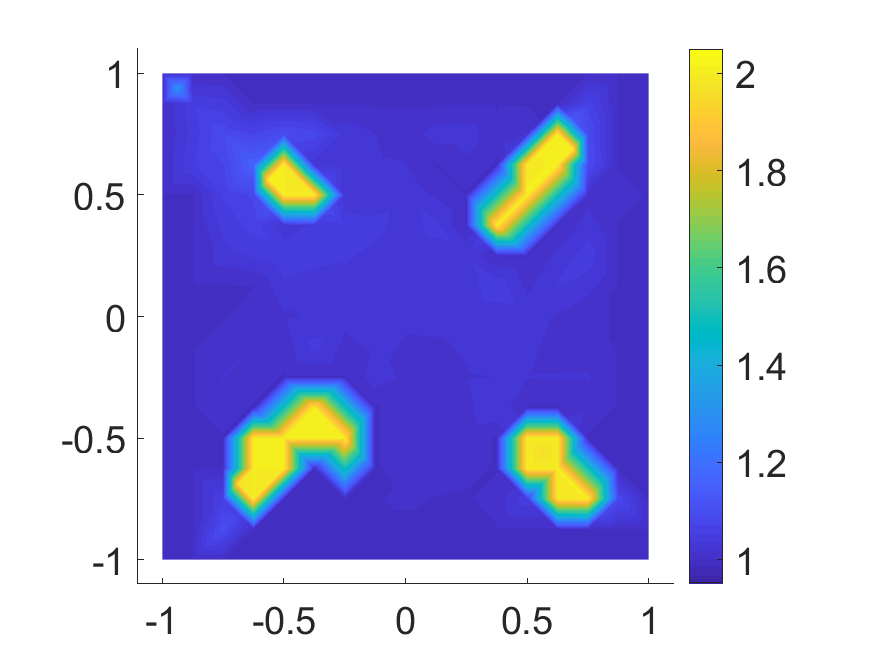}&
 \includegraphics[trim = 1cm 0.5cm 0.5cm 0cm, width=.2\textwidth]{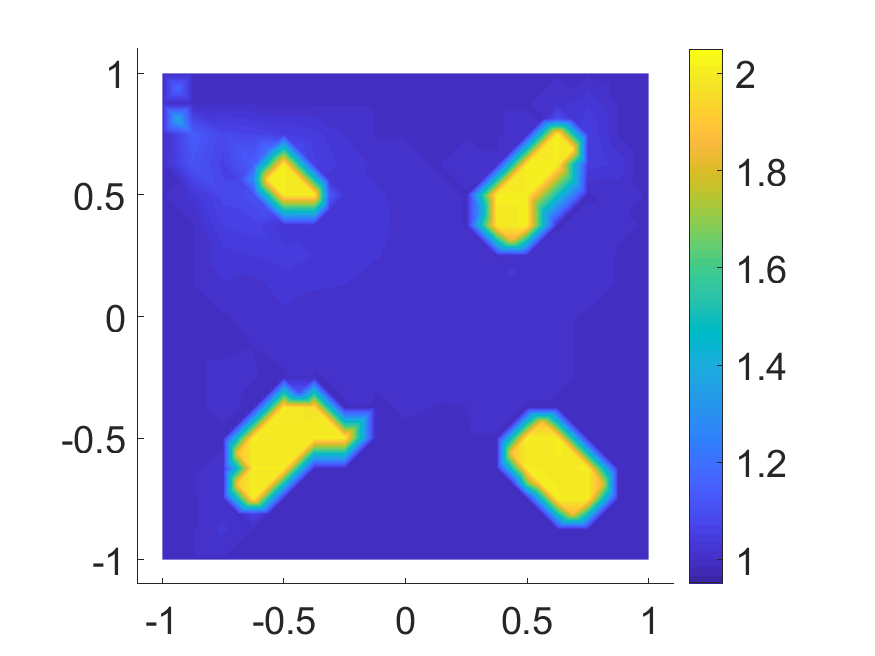}\\
  81  &  122 &  171 &  274  &  389  \\
 \includegraphics[trim = {2.5cm 1.5cm 2.5cm 1.2cm}, clip, width=.2\textwidth]{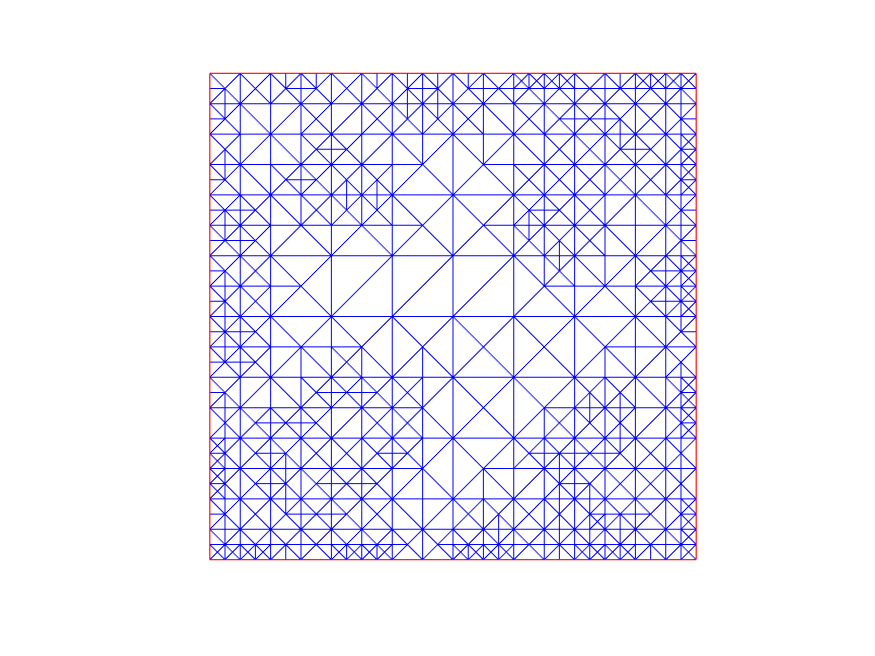}&
 \includegraphics[trim = {2.5cm 1.5cm 2.5cm 1.2cm}, clip, width=.2\textwidth]{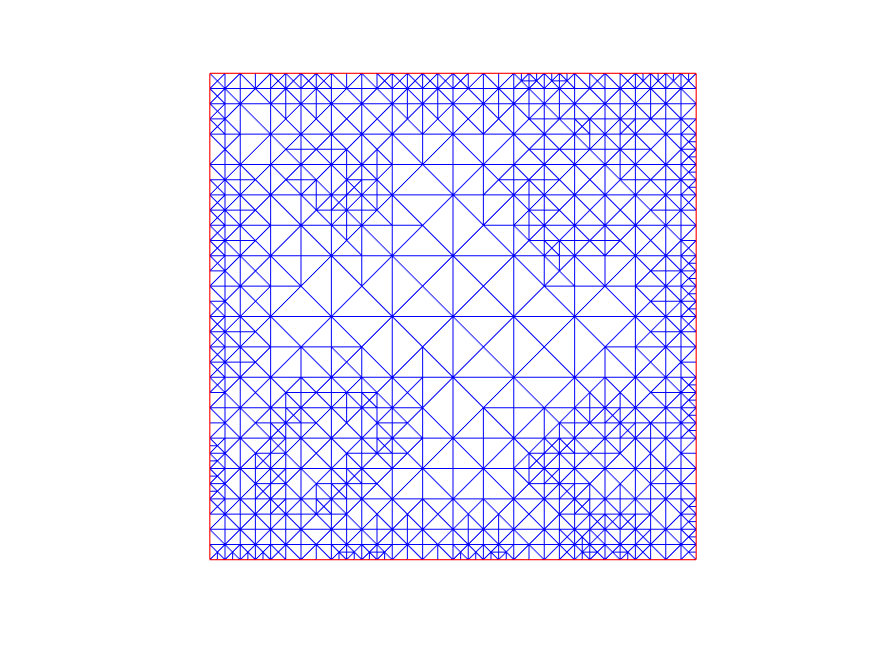}&
 \includegraphics[trim = {2.5cm 1.5cm 2.5cm 1.2cm}, clip, width=.2\textwidth]{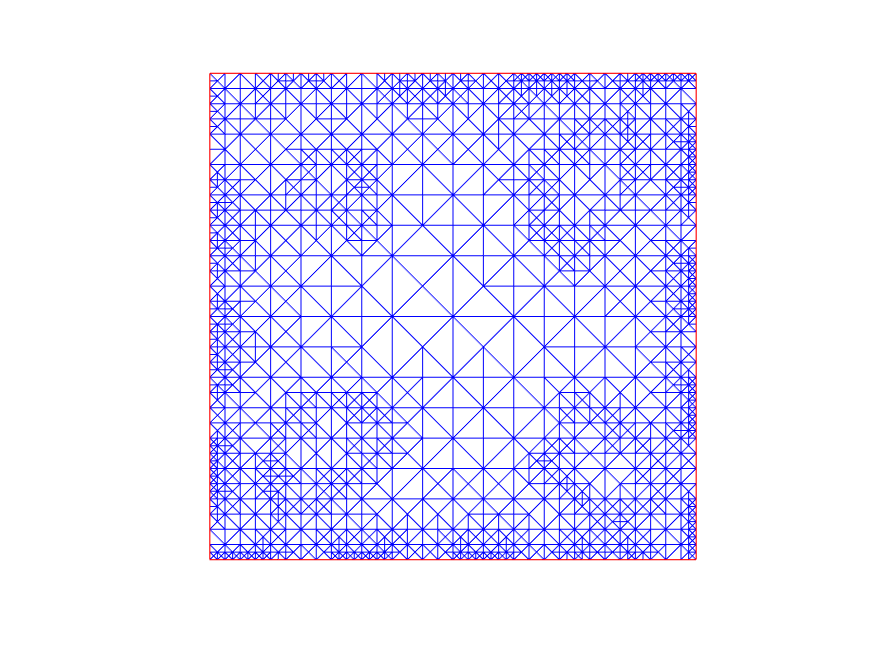}&
 \includegraphics[trim = {2.5cm 1.5cm 2.5cm 1.2cm}, clip, width=.2\textwidth]{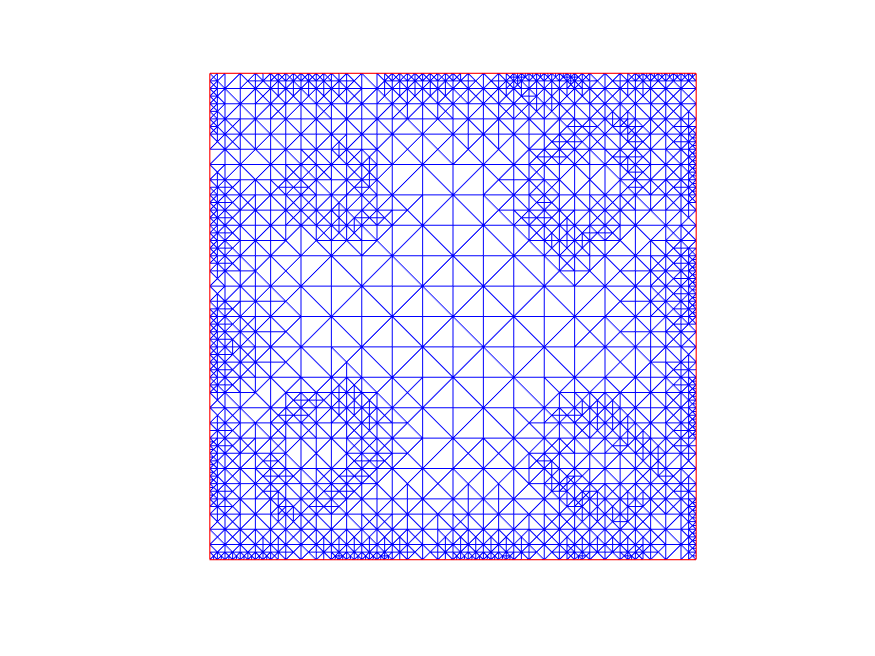}&
 \includegraphics[trim = {2.5cm 1.5cm 2.5cm 1.2cm}, clip, width=.2\textwidth]{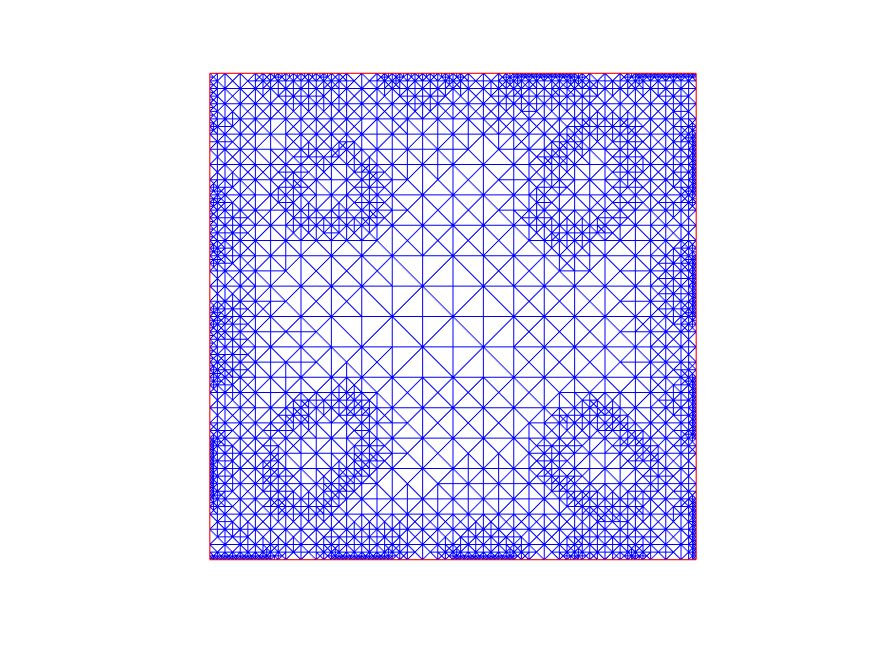}\\
 \includegraphics[trim = 1cm 0.5cm 0.5cm 0cm, width=.2\textwidth]{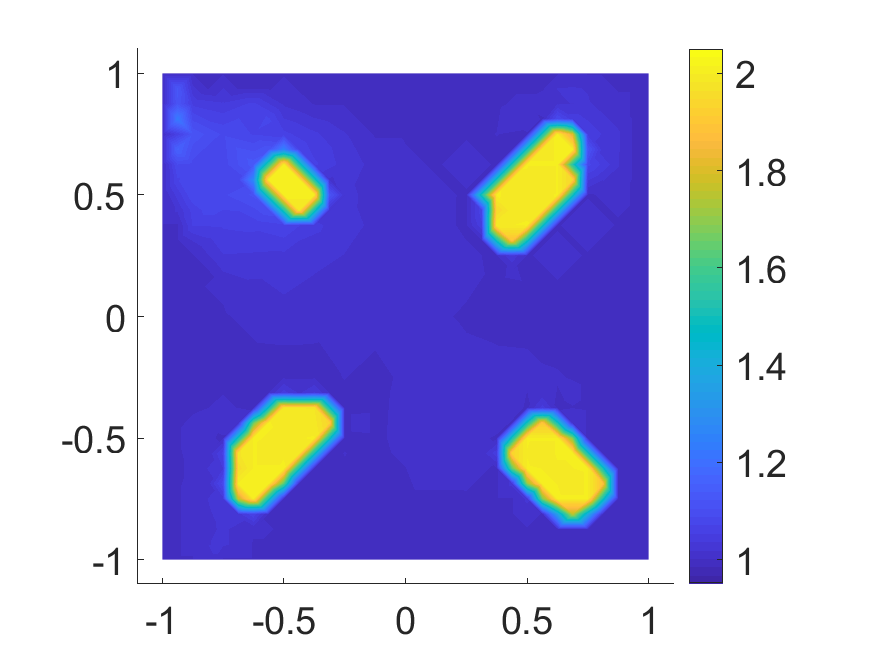}&
 \includegraphics[trim = 1cm 0.5cm 0.5cm 0cm, width=.2\textwidth]{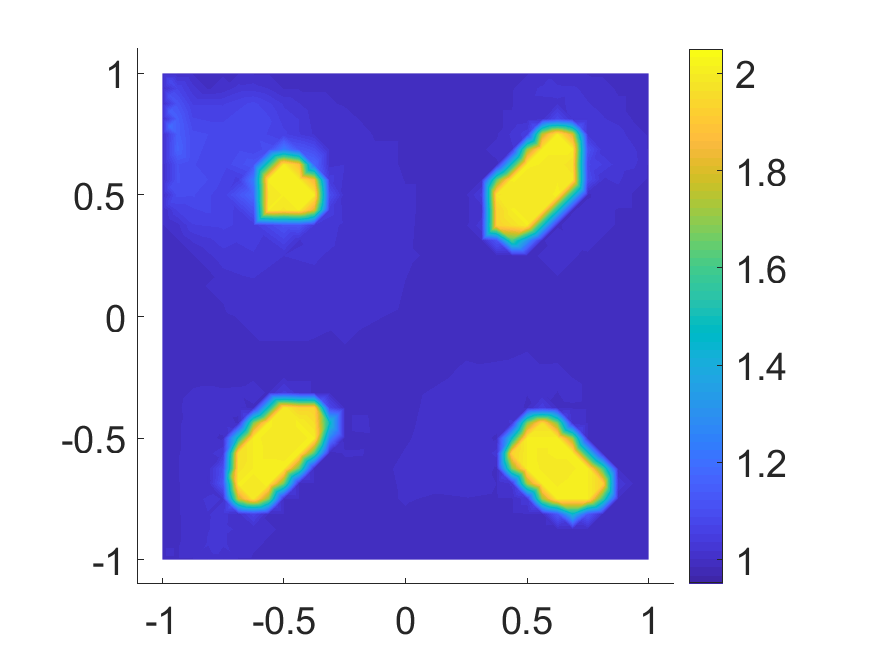}&
 \includegraphics[trim = 1cm 0.5cm 0.5cm 0cm, width=.2\textwidth]{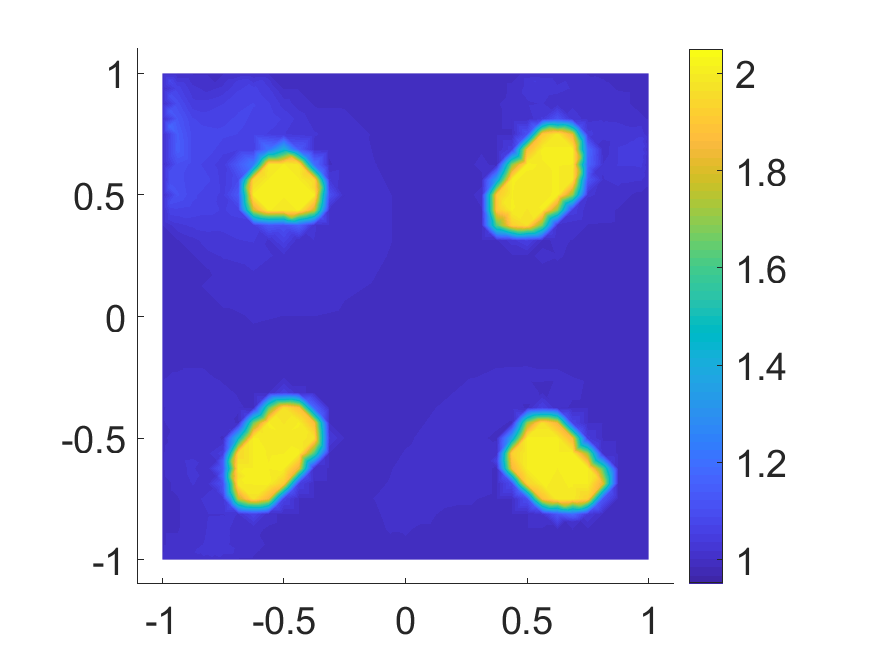}&
 \includegraphics[trim = 1cm 0.5cm 0.5cm 0cm, width=.2\textwidth]{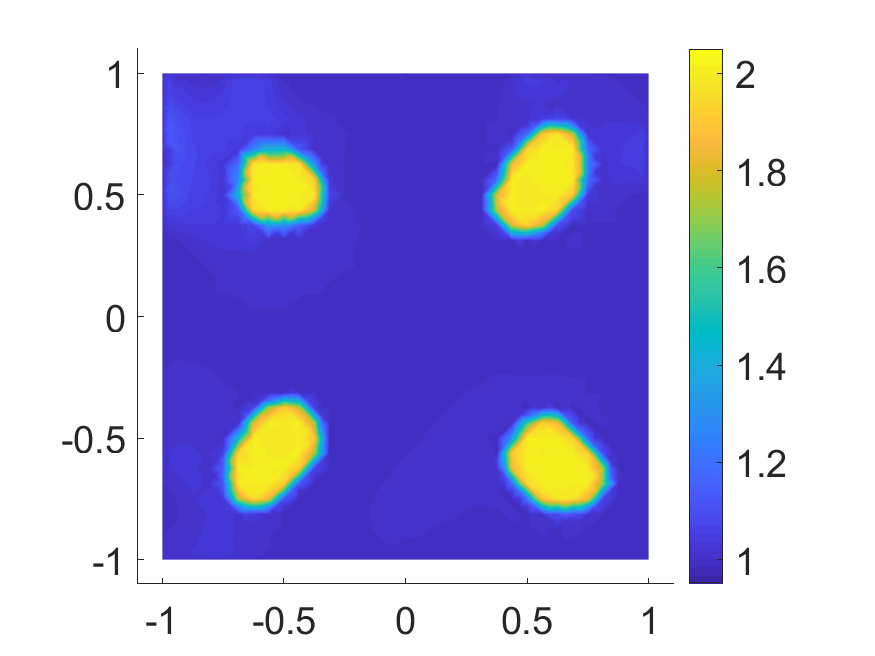}&
 \includegraphics[trim = 1cm 0.5cm 0.5cm 0cm, width=.2\textwidth]{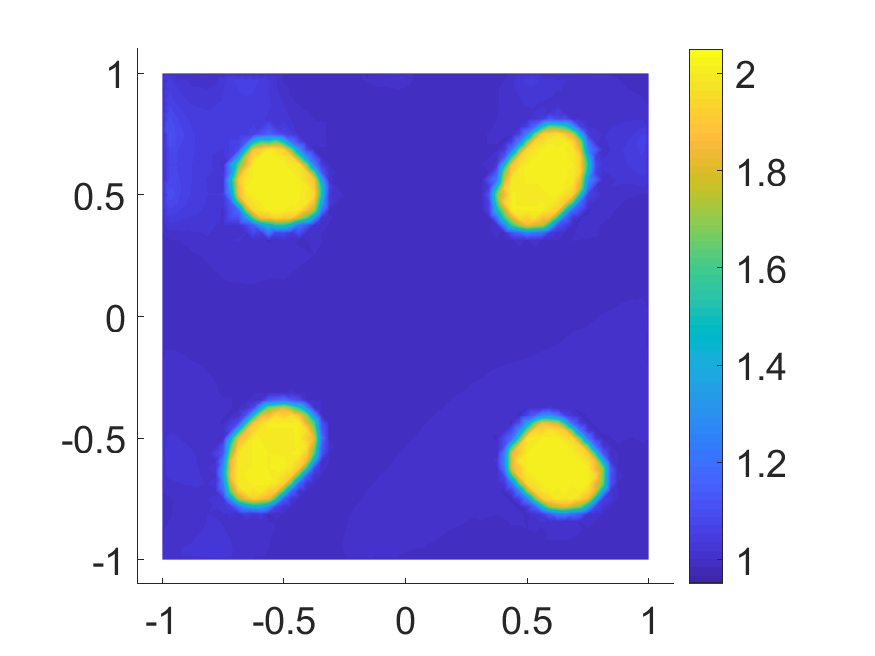}\\
    599  &  844  &  1339 &  1894  &  2904  \\
 \includegraphics[trim = {2.5cm 1.5cm 2.5cm 1.2cm}, clip, width=.2\textwidth]{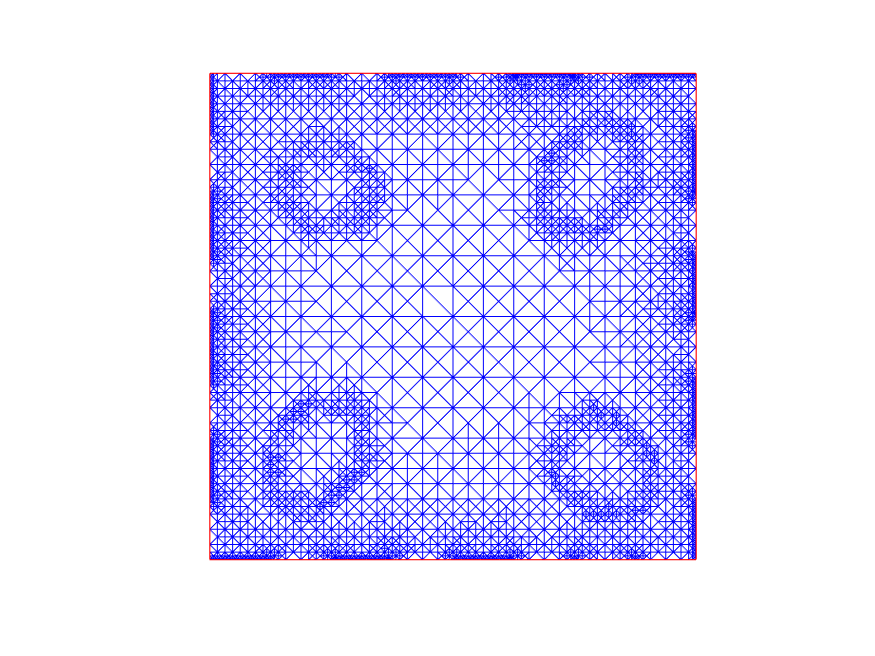}&
 \includegraphics[trim = {2.5cm 1.5cm 2.5cm 1.2cm}, clip, width=.2\textwidth]{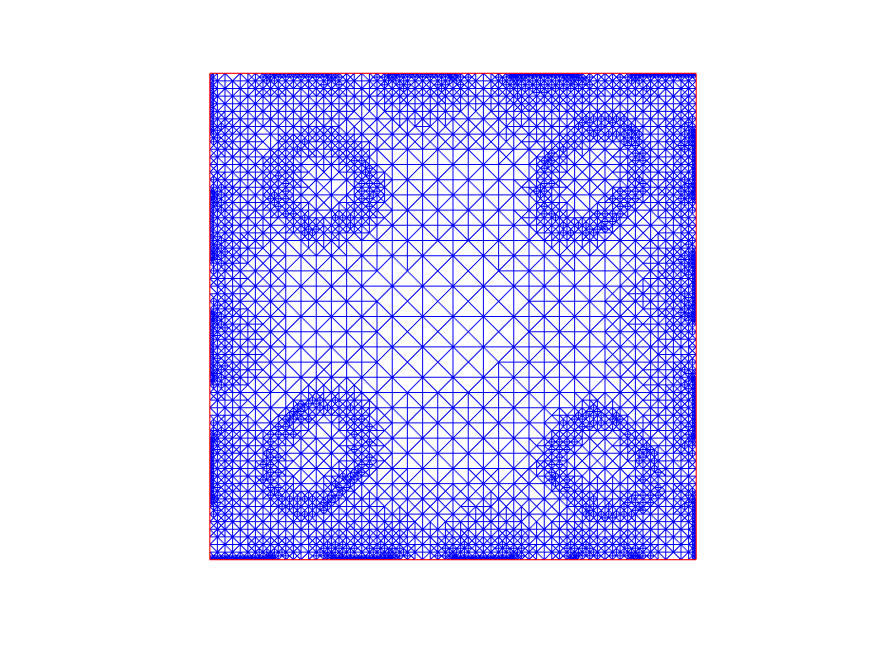}&
 \includegraphics[trim = {2.5cm 1.5cm 2.5cm 1.2cm}, clip, width=.2\textwidth]{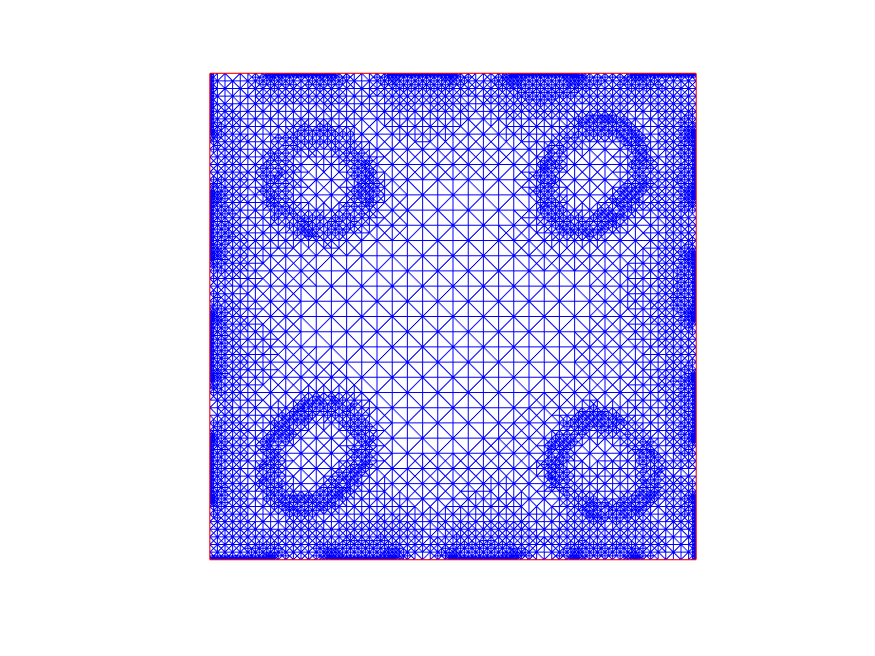}&
 \includegraphics[trim = {2.5cm 1.5cm 2.5cm 1.2cm}, clip, width=.2\textwidth]{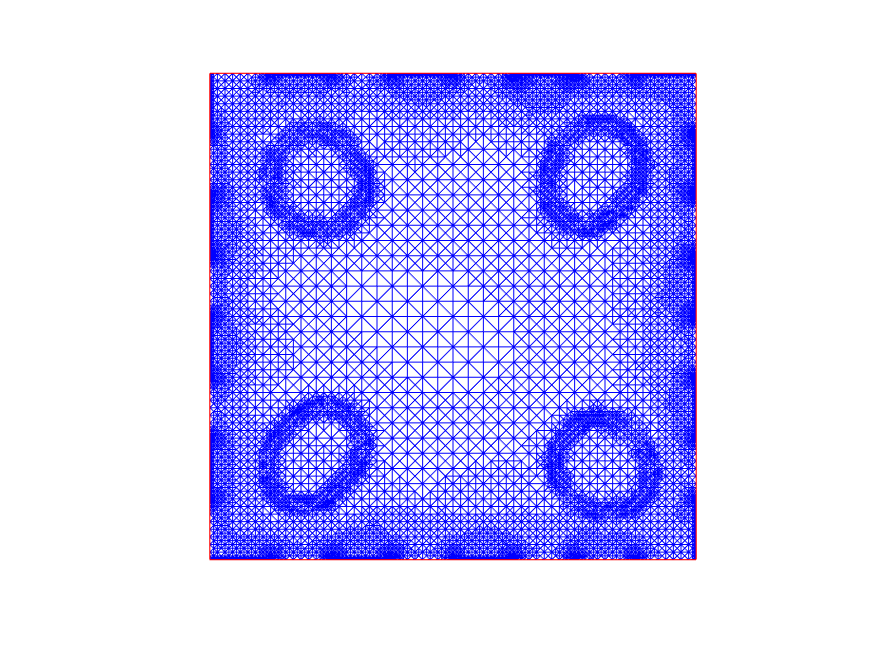}&
 \includegraphics[trim = {2.5cm 1.5cm 2.5cm 1.2cm}, clip, width=.2\textwidth]{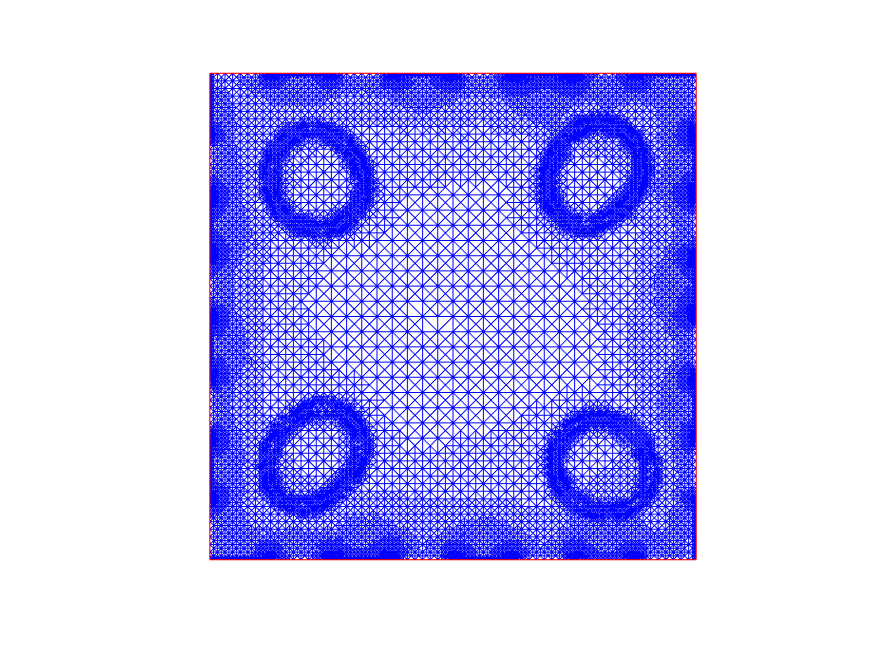}\\
 \includegraphics[trim = 1cm 0.5cm 0.5cm 0cm, width=.2\textwidth]{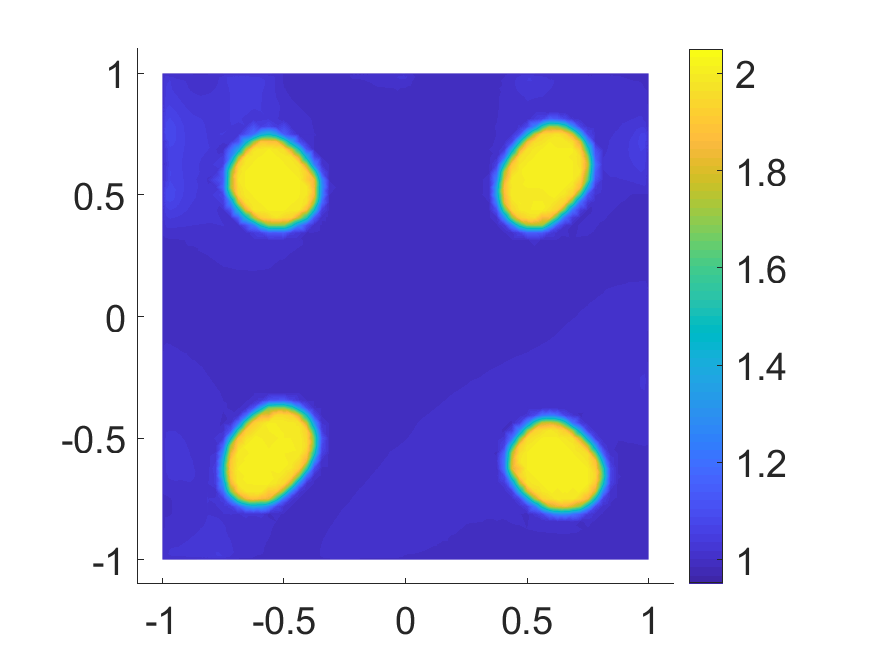}&
 \includegraphics[trim = 1cm 0.5cm 0.5cm 0cm, width=.2\textwidth]{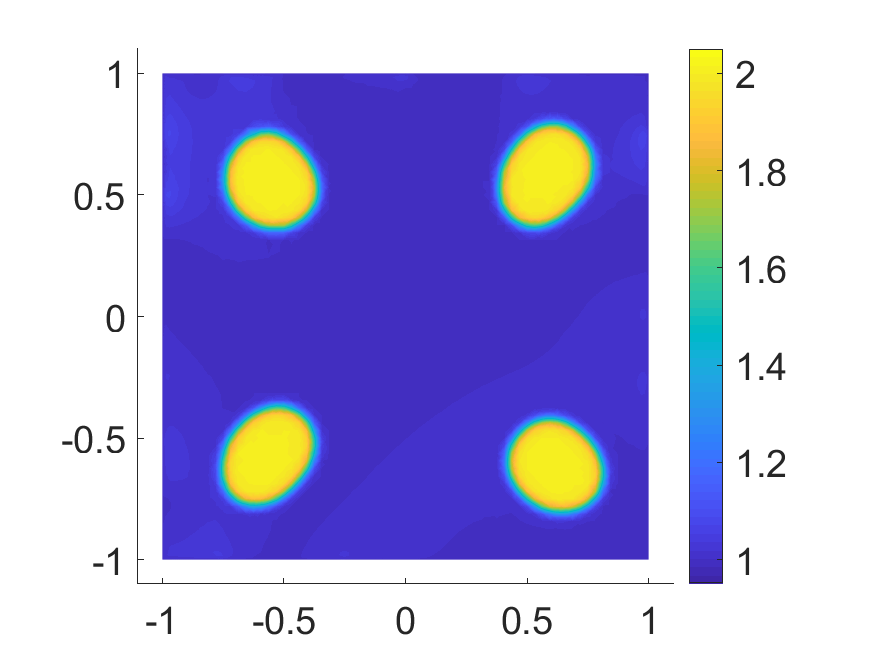}&
 \includegraphics[trim = 1cm 0.5cm 0.5cm 0cm, width=.2\textwidth]{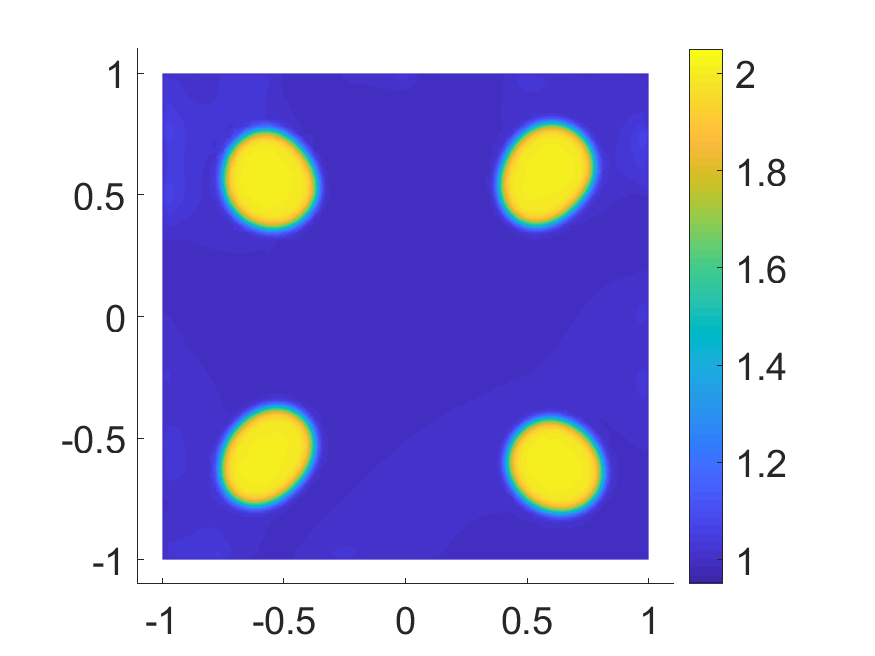}&
 \includegraphics[trim = 1cm 0.5cm 0.5cm 0cm, width=.2\textwidth]{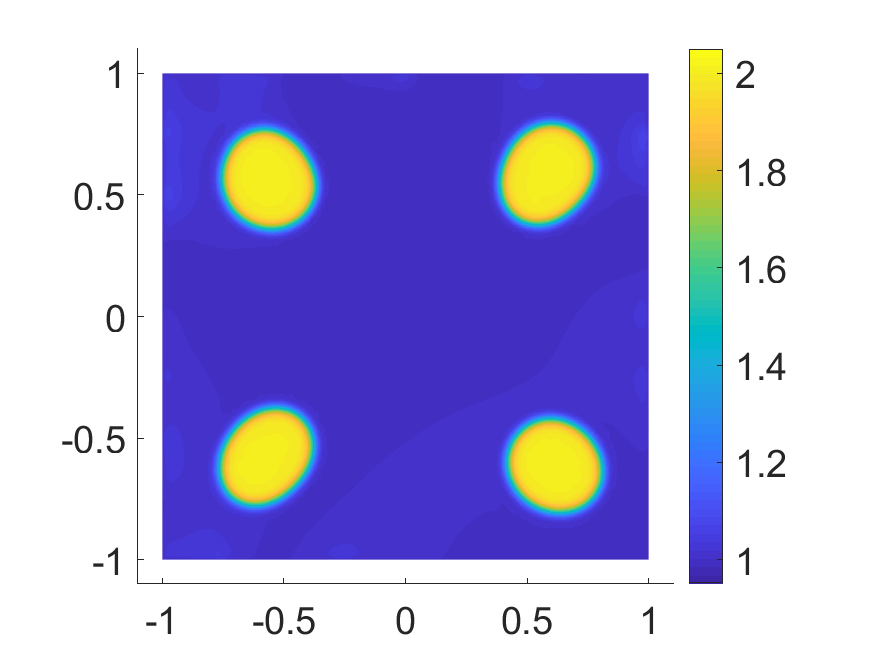}&
 \includegraphics[trim = 1cm 0.5cm 0.5cm 0cm, width=.2\textwidth]{ex3_1e3ad_it15}\\
    3999  &  6133  &  8582  & 12883 & 18008
 \end{tabular}
 \caption{The meshes $\mathcal{T}_k$ and recovered conductivities $\sigma_k$ during the adaptive refinement, for Example \ref{exam3} with
 $\epsilon=\text{1e-3}$ and  $\tilde\alpha=\text{2e-2}$.
 The number under each figure refers to d.o.f.} \label{fig:exam3-recon-iter}
\end{figure}

\begin{figure}[hbt!]
  \centering
  \begin{tabular}{cc}
    \includegraphics[trim = .5cm 0cm 0cm 0cm, clip=true,width=.23\textwidth]{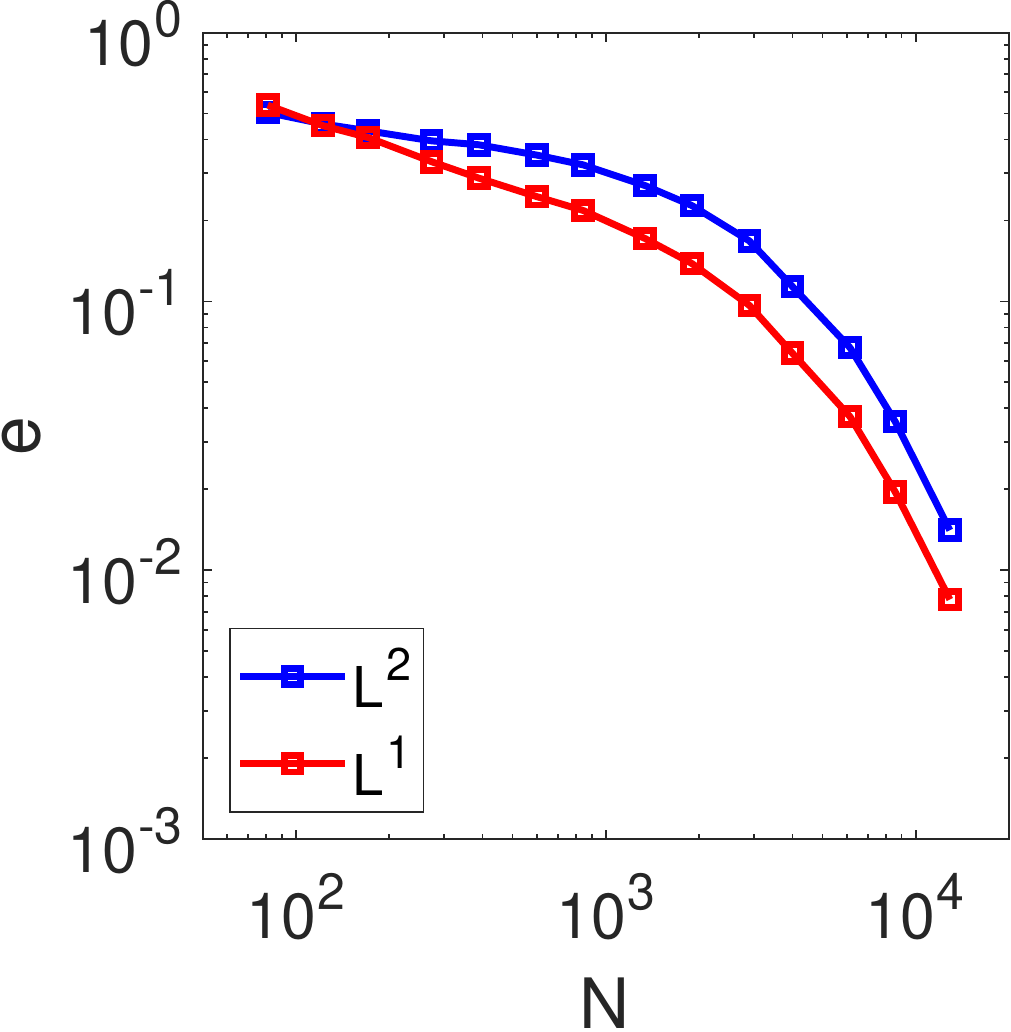}
     \includegraphics[trim = .5cm 0cm 0cm 0cm, clip=true,width=.23\textwidth]{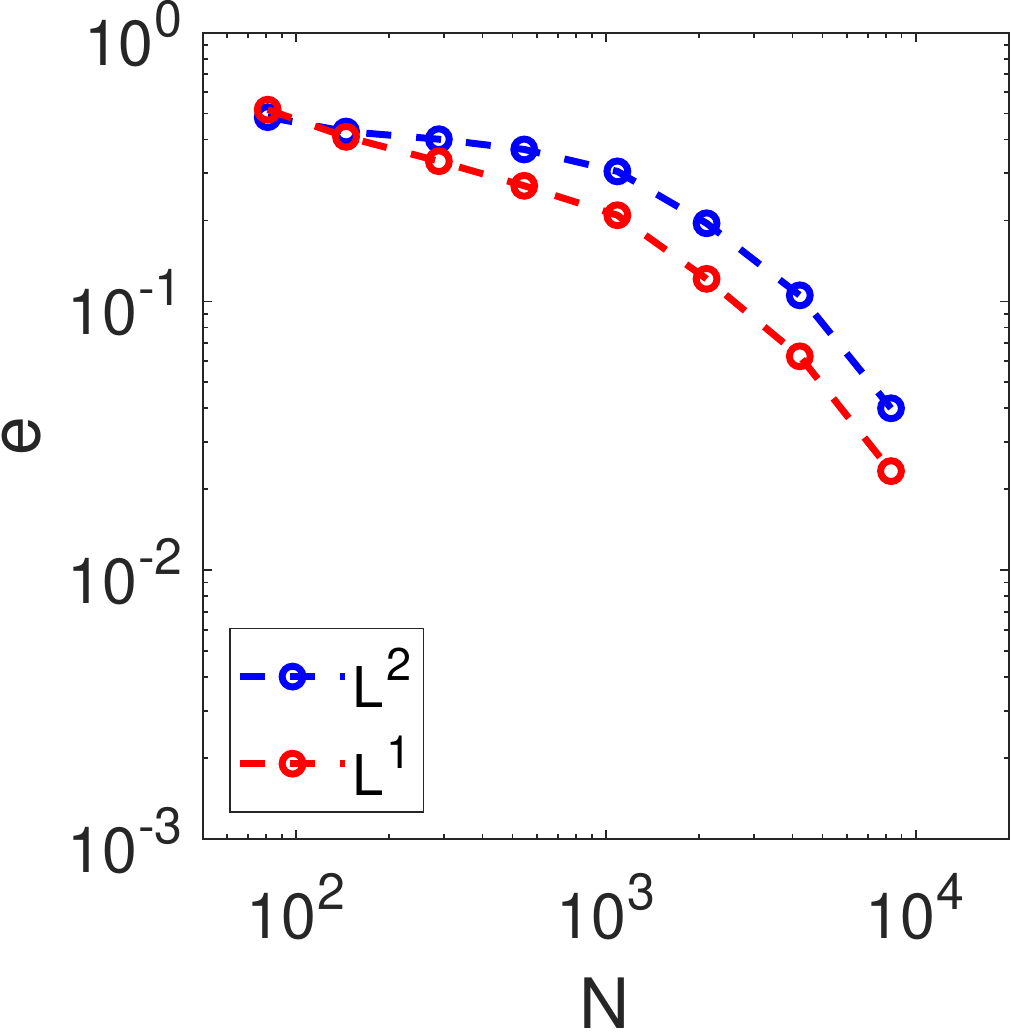}&
    \includegraphics[trim = .5cm 0cm 0cm 0cm, clip=true,width=.23\textwidth]{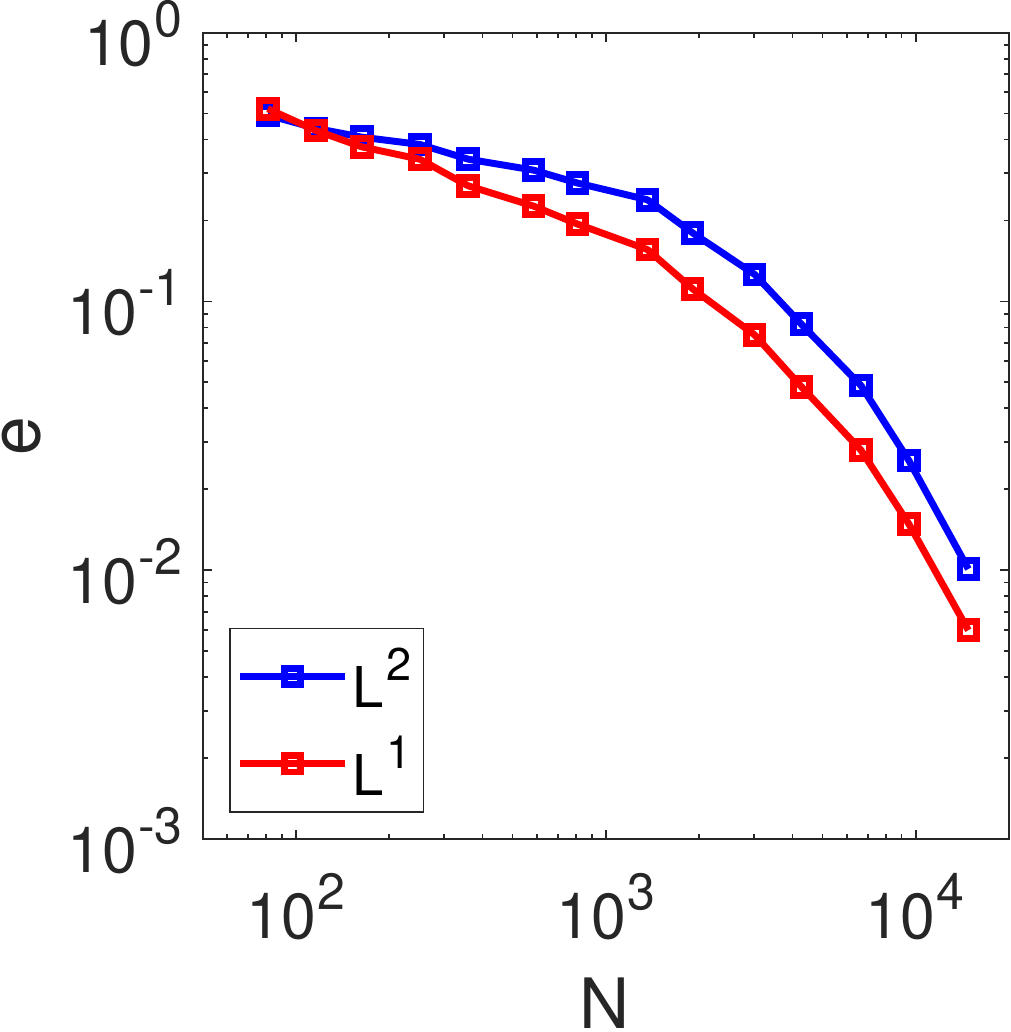}
     \includegraphics[trim = .5cm 0cm 0cm 0cm, clip=true,width=.23\textwidth]{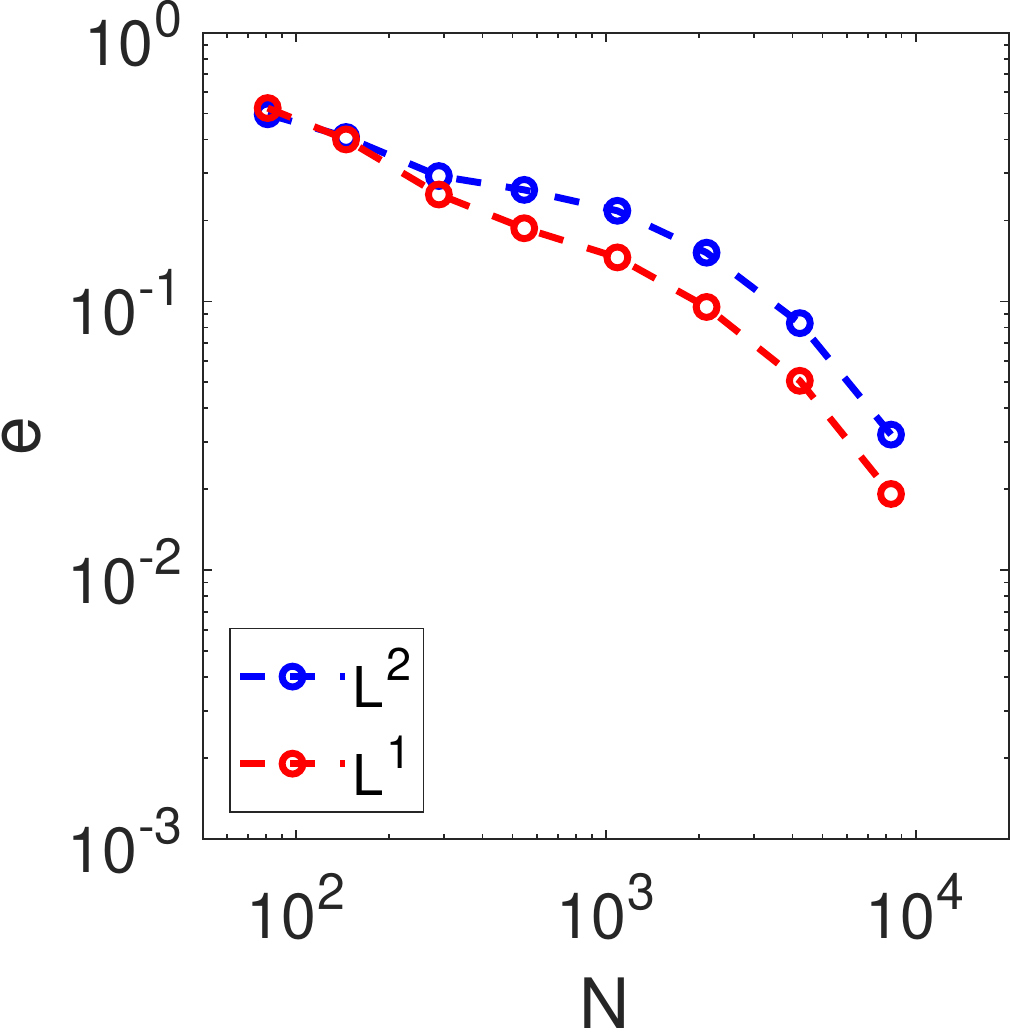}\\
    (a) $\epsilon=\text{1e-3}$, $\tilde\alpha=\text{2e-2}$ & (b) $\epsilon=\text{1e-2}$, $\tilde\alpha=\text{3e-2}$
  \end{tabular}
  \caption{The $L^2(\Omega)$ and $L^1(\Omega)$ errors versus the degree of freedom $N$ of the mesh, for Example \ref{exam3},
  using the adaptive (solid) and uniform (dashed) refinement.}\label{fig:exam3-efficiency}
\end{figure}

\section{Proof of Theorem \ref{thm:conv_alg}}\label{sect:conv}

The lengthy and technical proof is divided into two steps: Step 1 shows the convergence to an auxiliary
minimization problem over a limiting admissible set in Section \ref{subsect:conv1}, and Step 2 shows that
the solution of the auxiliary problem satisfies the necessary optimality system \eqref{eqn:cem-optsys}
in section \ref{subsect:conv2}. The overall proof strategy is similar to \cite{JinXuZou:2016}, and hence
we omit relevant arguments.

\subsection{Auxiliary convergence}\label{subsect:conv1}

Since the two sequences $\{\mathbb{H}_k\}_{k\geq0}$ and $\{\Atilde_k\}_{k\geq0}$ generated by Algorithm \ref{alg_afem_eit} are nested, we may define
\begin{equation*}
    \mathbb{H}_{\infty}:=\overline{\bigcup_{k\geq 0}\mathbb{H}_{k}}~ (\mbox{in}~\mathbb{H}\mbox{-norm})\quad\mbox{and}\quad
    \Atilde_{\infty}:=\overline{\bigcup_{k\geq 0}\Atilde_{k}}~(\mbox{in}~H^{1}(\Omega)\mbox{-norm}).
\end{equation*}

Clearly $\mathbb{H}_{\infty}$ is a closed subspace of $\mathbb{H}$. For the set $\Atilde_{\infty}$, we have
the following result \cite[Lemma 4.1]{JinXuZou:2016}.

\begin{lemma}\label{lem:convexclosed}
    $\Atilde_{\infty}$ is a closed convex subset of $\Atilde$.
\end{lemma}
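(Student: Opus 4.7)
The plan is to unpack the definition of $\widetilde{\mathcal{A}}_\infty$ as the $H^1(\Omega)$-closure of the nested union $\bigcup_{k\geq 0}\widetilde{\mathcal{A}}_k$, and verify in turn that (a) it is contained in $\widetilde{\mathcal{A}}$, (b) it is $H^1(\Omega)$-closed, and (c) it is convex. Closedness (b) is free: $\widetilde{\mathcal{A}}_\infty$ is a closure, so there is nothing to prove for that property.

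For the inclusion (a), I would first observe that $\widetilde{\mathcal{A}}$ itself is closed in $H^1(\Omega)$. Indeed, if $\sigma_n \to \sigma$ in $H^1(\Omega)$ with $\sigma_n \in \widetilde{\mathcal{A}}$, then $\sigma_n \to \sigma$ in $L^2(\Omega)$, and a subsequence converges almost everywhere; the pointwise box constraint $c_0 \leq \sigma_n \leq c_1$ therefore passes to the limit. Since each discrete set satisfies $\widetilde{\mathcal{A}}_k \subseteq \widetilde{\mathcal{A}}$, taking $H^1(\Omega)$-closure preserves the inclusion, yielding $\widetilde{\mathcal{A}}_\infty \subseteq \widetilde{\mathcal{A}}$.

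The only substantive step is convexity (c), and this is where the nestedness of the meshes is essential. Because the triangulations $\mathcal{T}_k$ are produced by bisection from a common $\mathcal{T}_0$, one has $V_{\mathcal{T}_k} \subseteq V_{\mathcal{T}_{k+1}}$, and consequently $\widetilde{\mathcal{A}}_k = V_{\mathcal{T}_k} \cap \widetilde{\mathcal{A}} \subseteq \widetilde{\mathcal{A}}_{k+1}$. Given $\sigma,\tau\in\widetilde{\mathcal{A}}_\infty$ and $t\in[0,1]$, choose sequences $\sigma_n\in\widetilde{\mathcal{A}}_{k_n}$ and $\tau_n\in\widetilde{\mathcal{A}}_{m_n}$ with $\sigma_n\to\sigma$ and $\tau_n\to\tau$ in $H^1(\Omega)$. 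Setting $l_n:=\max(k_n,m_n)$, the nestedness places both $\sigma_n$ and $\tau_n$ in the same level $\widetilde{\mathcal{A}}_{l_n}$, which is itself convex as the intersection of the linear space $V_{\mathcal{T}_{l_n}}$ with the convex set $\widetilde{\mathcal{A}}$. Hence $t\sigma_n+(1-t)\tau_n \in \widetilde{\mathcal{A}}_{l_n}\subseteq \bigcup_{k\geq 0}\widetilde{\mathcal{A}}_k$, and passing to the $H^1(\Omega)$-limit yields $t\sigma+(1-t)\tau \in \widetilde{\mathcal{A}}_\infty$.

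The main (though still mild) obstacle is the convexity step, since one must be careful to place both approximating sequences in a common discrete space before taking the convex combination; this is precisely where the hierarchical structure of bisection-based refinement enters, and why the argument would fail for non-nested discretizations.
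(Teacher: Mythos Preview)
Your argument is correct. The paper does not reproduce a proof of this lemma at all; it simply cites \cite[Lemma 4.1]{JinXuZou:2016}. Your direct verification of closedness (automatic from the definition as a closure), of the inclusion $\widetilde{\mathcal{A}}_\infty\subseteq\widetilde{\mathcal{A}}$ (via $H^1$-closedness of $\widetilde{\mathcal{A}}$), and of convexity (via nestedness of the discrete admissible sets to place approximants at a common level) is exactly the standard route and is what one would expect to find in that reference.
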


Over the limiting set $\Atilde_{\infty}$, we define an auxiliary limiting minimization problem:
\begin{equation}\label{eqn:medopt}
  \min_{\sigma_{\infty}\in\Atilde_{\infty}}\left\{\J_{\varepsilon,\infty}(\sigma_{\infty}) = \tfrac{1}{2}\|U_{\infty}(\sigma_{\infty})-U^\delta\|^2 + \tfrac{\widetilde{\alpha}}{2}\mathcal{F}_\varepsilon(\sigma_\infty)\right\},
\end{equation}
where $(u_{\infty},U_{\infty})\in \mathbb{H}_{\infty}$ satisfies
\begin{equation}\label{eqn:medpoly}
  a(\sigma_\infty,(u_\infty,U_\infty),(v,V)) = \langle I,V\rangle \quad \forall (v,V)\in\mathbb{H}_\infty.
\end{equation}
By Lemma \ref{lem:normequiv} and Lax-Milgram theorem, problem \eqref{eqn:medpoly} is well-posed
for any fixed $\sigma_\infty\in\mathcal{A}_\infty$. The next result gives the existence
of a minimizer to \eqref{eqn:medopt}--\eqref{eqn:medpoly}.

\begin{theorem}\label{thm:existmedmin}
    There exists at least one minimizer to problem \eqref{eqn:medopt}--\eqref{eqn:medpoly}.
\end{theorem}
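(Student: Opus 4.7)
The plan is to mimic the direct method used for Theorem \ref{thm:tikh-MM}, adapted to the restricted admissible set $\widetilde{\mathcal{A}}_\infty$ and the restricted state space $\mathbb{H}_\infty$. First I take a minimizing sequence $\{\sigma_n\}_{n\geq1}\subset \widetilde{\mathcal{A}}_\infty$ with $\mathcal{J}_{\varepsilon,\infty}(\sigma_n)\to m_{\varepsilon,\infty}:=\inf_{\sigma\in\widetilde{\mathcal{A}}_\infty}\mathcal{J}_{\varepsilon,\infty}(\sigma)$. Boundedness of $\mathcal{F}_\varepsilon(\sigma_n)$ forces $\|\nabla\sigma_n\|_{L^2(\Omega)}$ to be uniformly bounded, and together with the pointwise constraint $c_0\le \sigma_n\le c_1$ this yields $\|\sigma_n\|_{H^1(\Omega)}\le c$.

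Next, by the Rellich--Kondrachov embedding, I extract a (relabeled) subsequence such that $\sigma_n\rightharpoonup\sigma^\ast$ weakly in $H^1(\Omega)$, $\sigma_n\to\sigma^\ast$ strongly in $L^1(\Omega)$, and $\sigma_n\to\sigma^\ast$ almost everywhere in $\Omega$. Since $\widetilde{\mathcal{A}}_\infty$ is closed and convex by Lemma \ref{lem:convexclosed}, Mazur's lemma gives $\sigma^\ast\in\widetilde{\mathcal{A}}_\infty$. Then I pass to the limit in the three pieces of $\mathcal{J}_{\varepsilon,\infty}$: the seminorm $\|\nabla\sigma\|_{L^2(\Omega)}^2$ by weak lower semicontinuity; the double-well term $\int_\Omega W(\sigma_n)\,\mathrm{d}x\to \int_\Omega W(\sigma^\ast)\,\mathrm{d}x$ by Lebesgue's dominated convergence theorem, since $W\in C^2[c_0,c_1]$ is uniformly bounded on $[c_0,c_1]$ and $W(\sigma_n)\to W(\sigma^\ast)$ almost everywhere; and the fidelity term by continuity of the restricted forward map.

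The one point that deserves attention is the continuity of $\sigma\mapsto(u_\infty(\sigma),U_\infty(\sigma))$ defined by \eqref{eqn:medpoly}, since Lemma \ref{lem:fm_cont} is stated for the full state space $\mathbb{H}$. However, $\mathbb{H}_\infty$ is a closed subspace of $\mathbb{H}$, and the bilinear form $a(\sigma,\cdot,\cdot)$ remains continuous and coercive on $\mathbb{H}_\infty\times\mathbb{H}_\infty$ by Lemma \ref{lem:normequiv}. Hence the proof of Lemma \ref{lem:fm_cont} carries over verbatim, with test functions in $\mathbb{H}_\infty$ and Lax-Milgram replaced by its Galerkin analogue, to yield
\[
    \lim_{n\to\infty}\|(u_\infty(\sigma_n)-u_\infty(\sigma^\ast),U_\infty(\sigma_n)-U_\infty(\sigma^\ast))\|_{\mathbb{H}}=0,
\]
so that $\|U_\infty(\sigma_n)-U^\delta\|^2\to \|U_\infty(\sigma^\ast)-U^\delta\|^2$.

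Combining these ingredients yields $\mathcal{J}_{\varepsilon,\infty}(\sigma^\ast)\le \liminf_{n\to\infty}\mathcal{J}_{\varepsilon,\infty}(\sigma_n)=m_{\varepsilon,\infty}$, so $\sigma^\ast$ is a minimizer. The only nontrivial step is the verification that the continuity result of Lemma \ref{lem:fm_cont} transfers to the Galerkin problem on $\mathbb{H}_\infty$; everything else is a routine repetition of the proof of Theorem \ref{thm:tikh-MM}.
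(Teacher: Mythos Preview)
Your argument is correct: the direct method on $\widetilde{\mathcal{A}}_\infty$ works exactly as you describe, and your observation that the proof of Lemma~\ref{lem:fm_cont} transfers to the closed subspace $\mathbb{H}_\infty$ is valid, since coercivity and continuity of $a(\sigma,\cdot,\cdot)$ are inherited by any closed subspace of $\mathbb{H}$.

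However, the paper takes a genuinely different route. Rather than an abstract minimizing sequence in $\widetilde{\mathcal{A}}_\infty$, it uses the sequence $\{\sigma_k^\ast\}_{k\ge 0}$ of \emph{discrete minimizers} produced by Algorithm~\ref{alg_afem_eit}. It first bounds $\mathcal{J}_{\varepsilon,k}(\sigma_k^\ast)\le \mathcal{J}_{\varepsilon,k}(c_1)\le c$, extracts a subsequence with limit $\sigma^\ast\in\widetilde{\mathcal{A}}_\infty$, and then---via an auxiliary Galerkin problem on $\mathbb{H}_{k_j}$ with coefficient $\sigma^\ast$ and Cea's lemma---proves $(u_{k_j}^\ast,U_{k_j}^\ast)\to(u_\infty^\ast,U_\infty^\ast)$ in $\mathbb{H}$. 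Finally, for arbitrary $\sigma_\infty\in\widetilde{\mathcal{A}}_\infty$ it picks approximants $\sigma_k\in\widetilde{\mathcal{A}}_k$ and uses the minimizing property of $\sigma_k^\ast$ on $\widetilde{\mathcal{A}}_k$ to get the chain
\[
\mathcal{J}_{\varepsilon,\infty}(\sigma^\ast)\le \liminf_j \mathcal{J}_{\varepsilon,k_j}(\sigma_{k_j}^\ast)\le \limsup_k \mathcal{J}_{\varepsilon,k}(\sigma_k^\ast)\le \limsup_k \mathcal{J}_{\varepsilon,k}(\sigma_k)=\mathcal{J}_{\varepsilon,\infty}(\sigma_\infty).
\]
Your approach is shorter and cleaner for existence alone. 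The paper's approach buys substantially more: it shows that the subsequential limit of the \emph{algorithmic} minimizers is itself a minimizer of $\mathcal{J}_{\varepsilon,\infty}$, together with the convergences $(u_{k_j}^\ast,U_{k_j}^\ast)\to(u_\infty^\ast,U_\infty^\ast)$ and $\int_\Omega W(\sigma_{k_j}^\ast)\to\int_\Omega W(\sigma_\infty^\ast)$. These byproducts are precisely what the proof of Theorem~\ref{thm:conv_medmin} invokes (it explicitly cites \eqref{pf:medmin_04}, \eqref{pf:medmin_05} and the chain \eqref{eqn:lsc-med}). So if you adopt your proof here, you would need to supply those convergences separately when proving Theorem~\ref{thm:conv_medmin}.
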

\begin{proof}
Let $\{\sigma_k^\ast, (u_k^\ast, U_k^\ast)\}_{k\geq 0}$ be the sequence of discrete solutions given by Algorithm
\ref{alg_afem_eit}. Since $c_1\in \Atilde_k$ for all $k$, by \eqref{stab-discpolyadj}, $\J_{\eps,k}
(\sigma_k^\ast)\leq \J_{\eps,k}(c_1)\leq c$, and thus $\{\sigma_{k}^\ast\}_{k\geq 0}$ is uniformly bounded in $H^1(\Omega)$. By Lemma \ref{lem:convexclosed}
and Sobolev embedding, there exist a subsequence, denoted by $\{\sigma_{k_j}^\ast\}_{j\geq 0}$, and some $\sigma^\ast\in
\Atilde_\infty$ such that
\begin{equation}\label{pf:medmin_01}
        \sigma_{k_j}^\ast \rightharpoonup \sigma^\ast\quad \mbox{weakly in}~H^1(\Om),\quad
        \sigma_{k_j}^\ast \to \sigma^\ast\quad \mbox{in}~L^2(\Om),\quad
        \sigma_{k_j}^\ast \to \sigma^\ast\quad \mbox{a.e. in}~\Om,
    \end{equation}
Next we introduce a discrete analogue of problem \eqref{eqn:medpoly} with $\sigma_\infty=\sigma^\ast$: find
$(u_{k_j},U_{k_j})\in \mathbb{H}_{k_j}$ such that
\begin{equation}\label{pf:medmin_02}
    a(\sigma^\ast,(u_{k_j},U_{k_j}),(v,V)) = \langle I,V\rangle \quad \forall (v,V)\in\mathbb{H}_{k_j}.
\end{equation}
By Lemma \ref{lem:normequiv}, Cea's lemma and the construction of the space $\mathbb{H}_{\infty}$, the solution
$(u^\ast_\infty,U_\infty^\ast)\in\mathbb{H}_\infty$ of \eqref{eqn:medpoly} with $\sigma_\infty=\sigma^\ast$ satisfies
\begin{equation}\label{pf:medmin_03}
    \|(u^\ast_\infty-u^\ast_{k_j},U^\ast_\infty-U^\ast_{k_j})\|_{\mathbb{H}}\leq c \inf_{(v,V)\in \mathbb{H}_{k_j}}\|(u^\ast_\infty-v,U^\ast_\infty-V)\|_{\mathbb{H}} \to 0.
\end{equation}
Taking the test function $(v,V)=(u_{k_j}-u_{k_j}^\ast,U_{k_j}-U_{k_j}^\ast)\in\mathbb{H}_{k_j}$ in the first line of \eqref{eqn:cem-discoptsys}
and \eqref{pf:medmin_02} and then applying the Cauchy-Schwarz inequality lead to
\begin{align*}
    &\quad a(\sigma_{k_j}^\ast,(u_{k_j}-u_{k_j}^\ast,U_{k_j}-U_{k_j}^\ast),(u_{k_j}-u_{k_j}^\ast,U_{k_j}-U_{k_j}^\ast))\\
    & =((\sigma_{k_j}^\ast-\sigma^\ast)\nabla(u_{k_j}-u^\ast_{\infty}),\nabla(u_{k_j}-u_{k_j}^\ast))
      +((\sigma_{k_j}^\ast-\sigma^\ast)\nabla u_{\infty}^\ast,\nabla(u_{k_j}-u_{k_j}^\ast))\\
      &\leq (\|(\sigma_{k_j}^\ast-\sigma^\ast)\nabla(u_{k_j}-u^\ast_{\infty})\|_{L^2(\Omega)} + \|(\sigma_{k_j}^\ast-\sigma^\ast)\nabla u_{\infty}^\ast\|_{L^2(\Omega)} ) \|\nabla(u_{k_j}-u_{k_j}^\ast)\|_{L^2(\Omega)}.
\end{align*}
In view of \eqref{pf:medmin_03}, pointwise convergence in  \eqref{pf:medmin_01} and  Lebesgue's dominated convergence theorem,
\[
    \|(\sigma_{k_j}^\ast-\sigma^\ast)\nabla(u_{k_j}-u^\ast_{\infty})\|_{L^2(\Omega)}\leq c_1 \|\nabla(u_{k_j}-u^\ast_{\infty})\|_{L^2(\Omega)} \to 0,\quad
    \|(\sigma_{k_j}^\ast-\sigma^\ast)\nabla u_{\infty}^\ast\|_{L^2(\Omega)} \to 0,
\]
This and Lemma \ref{lem:normequiv} imply $\|(u_{k_j}-u_{k_j}^\ast,U_{k_j}-U_{k_j}^\ast)\|_{\mathbb{H}} \to 0.$
Then, \eqref{pf:medmin_03} and the triangle inequality imply
\begin{equation}\label{pf:medmin_04}
    \|(u_{k_j}^\ast-u_{\infty}^\ast,U_{k_j}^\ast-U_{\infty}^\ast)\|_{\mathbb{H}} \to 0.
\end{equation}
Meanwhile, repeating the argument of Theorem \ref{thm:tikh-MM} gives
    \begin{equation}\label{pf:medmin_05}
        \int_\Om W(\sigma_{k_j}^\ast)\dx\to
        \int_\Om W(\sigma^\ast)\dx.
    \end{equation}
next we apply a density argument.
For any $\sigma_{\infty}\in \Atilde_\infty$, by the construction of the space $\mathbb{H}_\infty$, there exists a
sequence $\{\sigma_k\}_{k\geq 0}\subset \bigcup_{k\geq0}\Atilde_{k}$ such that $\sigma_k\to\sigma_\infty$ in $H^1(\Om)$.
Repeating the preceding argument gives $\|U(\sigma_{k})-U^\delta\|^2\to\|U(\sigma_\infty)-U^\delta\|^2$ and $\int_{\Om}
W(\sigma_k)\dx\to \int_\Om W(\sigma_{\infty})\dx$.
Now \eqref{pf:medmin_04}, the weak lower semicontinuity of the $H^1(\Omega)$-norm, \eqref{pf:medmin_05} and
the minimizing property of $\sigma_{k}^\ast$ to $\J_{\eps,k}$ over the set $\Atilde_k$ imply
    \begin{align}\label{eqn:lsc-med}
        \J_{\eps,\infty}(\sigma^\ast) &\leq \liminf_{j\to\infty}\J_{\eps,k_j}(\sigma^\ast_{k_j})\leq
        \limsup_{j\to\infty}\J_{\eps,k_j}(\sigma^\ast_{k_j})\nonumber\\
        &\leq\limsup_{k\to\infty}\J_{\eps,k}(\sigma^\ast_{k})\leq
        \limsup_{k\to\infty}\J_{\eps,k}(\sigma_{k})=\J_{\eps,\infty}(\sigma_\infty)\quad\forall\sigma_\infty\in \Atilde_{\infty}.
    \end{align}
Since $\sigma^\ast\in\Atilde_{\infty}$, $\sigma_\infty^\ast:=\sigma^\ast$ is a minimizer of $\J_{\eps,\infty}$ over $\Atilde_{\infty}$.
\end{proof}

Further, we have the following auxiliary convergence.

\begin{theorem}\label{thm:conv_medmin}
The sequence of discrete solutions
$\{\sigma_{k}^{\ast},(u^{\ast}_{k},U_{k}^{\ast})\}_{k\geq0}$ to problem \eqref{eqn:discopt} contains a subsequence
$\{\sigma_{k_{j}}^{\ast},(u_{k_{j}}^{\ast},U_{k_{j}}^\ast)\}_{j\geq 0}$ convergent to a minimizer
$(\sigma_{\infty}^{\ast},(u_{\infty}^{\ast},U_{\infty}^\ast))$ to problem \eqref{eqn:medopt}--\eqref{eqn:medpoly}:
\begin{equation*}
     \sigma_{k_{j}}^\ast\rightarrow\sigma_{\infty}^\ast\quad\mbox{ in}~H^1(\Omega),
     \quad \sigma_{k_{j}}^\ast\rightarrow\sigma_{\infty}^\ast\quad\mbox{a.e. in}~\Omega,
    \quad  (u_{k_{j}}^\ast,U_{k_j}^*)\rightarrow (u_{\infty}^\ast,U_\infty^*)\quad\mbox{ in}~\mathbb{H}.
\end{equation*}
\end{theorem}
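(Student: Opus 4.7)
\medskip\noindent\textbf{Proof plan for Theorem \ref{thm:conv_medmin}.}
The strategy is to reuse the subsequence already extracted in the proof of Theorem \ref{thm:existmedmin}, which yields the minimizer $\sigma_\infty^\ast:=\sigma^\ast\in\Atilde_\infty$, and then to upgrade the weak $H^1(\Omega)$ convergence to strong $H^1(\Omega)$ convergence. Along the chosen subsequence $\{\sigma_{k_j}^\ast\}_{j\geq 0}$, relations \eqref{pf:medmin_01}, \eqref{pf:medmin_04} and \eqref{pf:medmin_05} already give the a.e.\ convergence of $\sigma_{k_j}^\ast$, the convergence $(u_{k_j}^\ast,U_{k_j}^\ast)\to(u_\infty^\ast,U_\infty^\ast)$ in $\mathbb{H}$, and the convergence $\int_\Omega W(\sigma_{k_j}^\ast)\dx\to\int_\Omega W(\sigma_\infty^\ast)\dx$. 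So the only thing left to prove is strong convergence of the conductivity in $H^1(\Omega)$.

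The key observation is that, taking $\sigma_\infty=\sigma_\infty^\ast$ in the inequality chain \eqref{eqn:lsc-med}, every inequality must collapse to an equality, which forces
\begin{equation*}
  \lim_{j\to\infty}\J_{\eps,k_j}(\sigma_{k_j}^\ast)=\J_{\eps,\infty}(\sigma_\infty^\ast).
\end{equation*}
Writing this out explicitly and using that the fidelity term $\tfrac12\|U_{k_j}^\ast-U^\delta\|^2$ converges to $\tfrac12\|U_\infty^\ast-U^\delta\|^2$ (by \eqref{pf:medmin_04}) and that $\tfrac{1}{\eps}\int_\Omega W(\sigma_{k_j}^\ast)\dx$ converges to $\tfrac{1}{\eps}\int_\Omega W(\sigma_\infty^\ast)\dx$ (by \eqref{pf:medmin_05}), I can cancel these two pieces from both sides and deduce
\begin{equation*}
  \lim_{j\to\infty}\|\nabla\sigma_{k_j}^\ast\|_{L^2(\Omega)}^2=\|\nabla\sigma_\infty^\ast\|_{L^2(\Omega)}^2.
\end{equation*}

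Combining this gradient-norm convergence with the weak convergence $\nabla\sigma_{k_j}^\ast\rightharpoonup\nabla\sigma_\infty^\ast$ in $L^2(\Omega)$ (a consequence of the weak $H^1$ convergence in \eqref{pf:medmin_01}) yields, by the standard Hilbert-space fact that weak convergence plus norm convergence implies strong convergence, $\nabla\sigma_{k_j}^\ast\to\nabla\sigma_\infty^\ast$ in $L^2(\Omega)$. Together with the strong $L^2(\Omega)$ convergence $\sigma_{k_j}^\ast\to\sigma_\infty^\ast$ already furnished by \eqref{pf:medmin_01}, this gives the desired $\sigma_{k_j}^\ast\to\sigma_\infty^\ast$ in $H^1(\Omega)$, completing the proof.

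\medskip\noindent\textbf{Expected main obstacle.} Nearly all the analytical work has been done in Theorem \ref{thm:existmedmin}; the only substantive point is to exploit the equality case (squeeze) in the $\liminf$--$\limsup$ chain \eqref{eqn:lsc-med} to obtain convergence of the full Tikhonov value along the subsequence, and then to peel off the non-gradient contributions using Lemma \ref{lem:fm_cont} and Lebesgue's dominated convergence. I do not anticipate any further technical difficulty, since the Modica--Mortola gradient term enters $\J_{\eps,k_j}$ additively and its norm convergence, plus weak convergence in the Hilbert space $H^1(\Omega)$, suffices to upgrade to strong convergence.
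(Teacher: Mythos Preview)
Your proposal is correct and follows essentially the same route as the paper: you take $\sigma_\infty=\sigma_\infty^\ast$ in \eqref{eqn:lsc-med} to force equality of the functional values along the subsequence, then use \eqref{pf:medmin_04} and \eqref{pf:medmin_05} to peel off the fidelity and double-well contributions and obtain $\|\nabla\sigma_{k_j}^\ast\|_{L^2(\Omega)}\to\|\nabla\sigma_\infty^\ast\|_{L^2(\Omega)}$, which together with the weak $H^1$ convergence from \eqref{pf:medmin_01} yields strong $H^1$ convergence. This is exactly the paper's argument, and you have in fact spelled out the ``weak convergence $+$ norm convergence $\Rightarrow$ strong convergence'' step a bit more explicitly than the paper does.
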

\begin{proof}
The convergence of $(u_{k_j}^*,U_{k_j}^*)$ was already proved in Theorem \ref{thm:existmedmin}.
Taking $\sigma_\infty=\sigma_\infty^\ast$ in \eqref{eqn:lsc-med} gives $\lim_{j\to\infty}\J_{\eps,k_j}(\sigma_{k_j}^\ast)
=\J_{\eps,\infty}(\sigma^\ast_\infty)$. By \eqref{pf:medmin_04} and \eqref{pf:medmin_05}, we have $\|\nabla\sigma_{k_j}^\ast
\|^2_{L^2(\Om)}\to \|\nabla\sigma_{\infty}^\ast\|^2_{L^2(\Om)}$. Thus, the sequence $\{\sigma_{k_j}^\ast\}_{j\geq 0}$
converges to $\sigma_{\infty}^\ast$ in $H^1(\Om)$. 
\end{proof}

Next we consider the convergence of the sequence $\{(p_k^\ast,P_k^\ast)\}_{k\geq 0}$. With a minimizer
$(\sigma_\infty^\ast,(u^\ast_\infty, U^\ast_\infty))$ to problem \eqref{eqn:medopt}, we define a limiting adjoint problem:
find $(p^\ast_{\infty},P^\ast_{\infty})\in\mathbb{H}_\infty$ such that
\begin{equation}\label{eqn:medadj}
    a(\sigma_\infty^*,(p^*_\infty,P_\infty^*),(v,V)) = \langle U_\infty^*-U^\delta,V\rangle\quad \forall (v,V)\in\mathbb{H}_\infty.
\end{equation}
By Lemma \ref{lem:normequiv} and Lax-Milgram theorem, \eqref{eqn:medadj} is uniquely solvable. We have the following
convergence result for $(p_\infty^\ast, P_\infty^\ast)$. The proof is identical with \cite[Theorem 4.5]{JinXuZou:2016},
and hence omitted.

\begin{theorem}\label{thm:conv_medadj}
    Under the condition of Theorem \ref{thm:conv_medmin}, the subsequence of
adjoint solutions $\{(p_{k_{j}}^\ast,P_{k_{j}}^\ast)\}_{j\geq 0}$
generated by Algorithm \ref{alg_afem_eit} converges to the solution $(p_{\infty}^\ast,
P_{\infty}^\ast)$ of problem \eqref{eqn:medadj}:
\begin{equation*}
   \lim_{j\rightarrow\infty}\|(p_{k_j}^\ast-p_{\infty}^\ast,P_{k_j}^\ast-P_{\infty}^\ast)\|_{\mathbb{H}}=0.
\end{equation*}
\end{theorem}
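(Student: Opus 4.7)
The plan is to mimic, for the adjoint, the argument used for the state in Theorem \ref{thm:existmedmin}, exploiting the linearity of the bilinear form $a(\sigma,\cdot,\cdot)$ in both $\sigma$ and the second argument, and its uniform coercivity on $\mathbb{H}$ guaranteed by $c_0\le\sigma\le c_1$ and Lemma \ref{lem:normequiv}.

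First, I would introduce the Galerkin surrogate of \eqref{eqn:medadj} on the subspace $\mathbb{H}_{k_j}$: namely $(\tilde p_{k_j},\tilde P_{k_j})\in \mathbb{H}_{k_j}$ solving $a(\sigma_\infty^\ast,(\tilde p_{k_j},\tilde P_{k_j}),(v,V)) = \langle U_\infty^\ast-U^\delta,V\rangle$ for every $(v,V)\in \mathbb{H}_{k_j}$. Since $\mathbb{H}_\infty=\overline{\bigcup_{k}\mathbb{H}_k}$, Cea's lemma combined with Lemma \ref{lem:normequiv} and a density argument yields $\|(\tilde p_{k_j}-p_\infty^\ast,\tilde P_{k_j}-P_\infty^\ast)\|_{\mathbb{H}}\to 0$, and in particular $\|(\tilde p_{k_j},\tilde P_{k_j})\|_{\mathbb{H}}\le c$ uniformly in $j$.

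Second, I would control the gap between $(p_{k_j}^\ast,P_{k_j}^\ast)$ and the surrogate. Subtracting the two discrete equations and using the linearity of $a$ in $\sigma$ gives, for all $(v,V)\in \mathbb{H}_{k_j}$,
\begin{equation*}
  a(\sigma_{k_j}^\ast,(p_{k_j}^\ast-\tilde p_{k_j},P_{k_j}^\ast-\tilde P_{k_j}),(v,V))
   =\langle U_{k_j}^\ast-U_\infty^\ast,V\rangle + ((\sigma_\infty^\ast-\sigma_{k_j}^\ast)\nabla \tilde p_{k_j},\nabla v).
\end{equation*}
Testing with $(v,V)=(p_{k_j}^\ast-\tilde p_{k_j},P_{k_j}^\ast-\tilde P_{k_j})$, Cauchy--Schwarz, and the coercivity from Lemma \ref{lem:normequiv} reduce the task to showing $\|U_{k_j}^\ast-U_\infty^\ast\|\to 0$ and $\|(\sigma_\infty^\ast-\sigma_{k_j}^\ast)\nabla \tilde p_{k_j}\|_{L^2(\Omega)}\to 0$. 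The former is Theorem \ref{thm:conv_medmin}. For the latter I would split $\nabla \tilde p_{k_j} = \nabla p_\infty^\ast + \nabla(\tilde p_{k_j}-p_\infty^\ast)$: the second piece is handled by the uniform $L^\infty$ bound $|\sigma_{k_j}^\ast-\sigma_\infty^\ast|\le c_1-c_0$ together with the $H^1$-convergence from step one, and the first piece is handled by Lebesgue's dominated convergence using the a.e. convergence $\sigma_{k_j}^\ast\to\sigma_\infty^\ast$ from Theorem \ref{thm:conv_medmin}. A triangle inequality then delivers the claim.

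The main obstacle, as in the state case, is the variable-coefficient nature of the bilinear form: the energies $a(\sigma_{k_j}^\ast,\cdot,\cdot)$ depend on $j$, precluding a direct Cea estimate between $(p_{k_j}^\ast,P_{k_j}^\ast)$ and $(p_\infty^\ast,P_\infty^\ast)$. The surrogate $(\tilde p_{k_j},\tilde P_{k_j})$ absorbs the coefficient perturbation into the extra source $((\sigma_\infty^\ast-\sigma_{k_j}^\ast)\nabla \tilde p_{k_j},\nabla v)$, which is exactly where the strong $H^1$-convergence of $\sigma_{k_j}^\ast$ secured by Theorem \ref{thm:conv_medmin} is required. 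Because the coercivity constant is uniform in $\sigma\in\widetilde{\mathcal{A}}$, no $j$-dependence enters the energy estimate, and the argument parallels that of Theorem \ref{thm:existmedmin}, which justifies the author's decision to defer to \cite[Theorem 4.5]{JinXuZou:2016}.
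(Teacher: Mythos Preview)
Your proposal is correct and follows essentially the same approach as the paper, which omits the proof and refers to \cite[Theorem 4.5]{JinXuZou:2016}; the argument there is precisely the state-variable argument from the proof of Theorem \ref{thm:existmedmin} adapted to the adjoint, with the only new ingredient being the extra right-hand side term $\langle U_{k_j}^\ast-U_\infty^\ast,V\rangle$, which you handle correctly via Theorem \ref{thm:conv_medmin}.
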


\subsection{Proof of Theorem \ref{thm:conv_alg}}\label{subsect:conv2}

Theorem \ref{thm:conv_alg} follows directly by combining Theorems \ref{thm:conv_medmin}-\ref{thm:conv_medadj}
in Section \ref{subsect:conv1} and Theorems \ref{thm:vp_mc}-\ref{thm:gat_mc} below. The proof in this part
relies on the marking condition \eqref{eqn:marking}. First, we
show that the limit $(\sigma^\ast_{\infty},(u^\ast_{\infty},U^\ast_{\infty}),(p^\ast_{\infty},P^\ast_{\infty}))$
solves the variational equations in \eqref{eqn:cem-optsys}.
\begin{theorem}\label{thm:vp_mc}
The solutions $(\sigma_\infty^*,u_\infty^*,U_\infty^*)$ and $(p_\infty^*,P_\infty^*)$ to problems
\eqref{eqn:medopt}-\eqref{eqn:medpoly} and \eqref{eqn:medadj} satisfy
\begin{equation*}
  \begin{aligned}
   a(\sigma^*_\infty,(u^\ast_\infty,U_\infty^\ast),(v,V)) &= \langle I, V\rangle\quad \forall (v,V)\in\mathbb{H},\\
   a(\sigma^*_\infty,(p^\ast_\infty,P_\infty^\ast),(v,V)) &= \langle U_\infty^*-U^\delta, V\rangle\quad \forall (v,V)\in\mathbb{H}.\\
  \end{aligned}
\end{equation*}
\end{theorem}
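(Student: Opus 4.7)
The plan is to show that along the subsequence $\{k_j\}$ from Theorems \ref{thm:conv_medmin} and \ref{thm:conv_medadj}, the discrete state and adjoint residuals
\begin{equation*}
r_{k}^{\mathrm{st}}(v,V) := \langle I, V\rangle - a(\sigma_{k}^{\ast},(u_{k}^{\ast},U_{k}^{\ast}),(v,V)), \qquad
r_{k}^{\mathrm{adj}}(v,V) := \langle U_k^{\ast}-U^\delta, V\rangle - a(\sigma_{k}^{\ast},(p_{k}^{\ast},P_{k}^{\ast}),(v,V))
\end{equation*}
vanish as $k_j \to \infty$ for every $(v,V)\in\mathbb{H}$ (not merely $\mathbb{H}_\infty$). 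Together with the strong convergences already established and the trilinear structure of $a(\cdot,\cdot,\cdot)$, this will transfer the identities to the limit. The state and adjoint cases are entirely parallel, so I describe the state case in detail and then indicate the changes for the adjoint.

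Fix $(v,V)\in\mathbb{H}$ and let $\Pi_k v \in V_k$ be a Scott--Zhang quasi-interpolant of $v$; since $V$ already lies in the finite-dimensional $\mathbb{R}^L_\diamond$, the pair $(\Pi_k v, V)$ belongs to $\mathbb{H}_k$. The discrete state equation in \eqref{eqn:cem-discoptsys} gives $r_{k}^{\mathrm{st}}(\Pi_k v, V) = 0$, whence the Galerkin orthogonality
\begin{equation*}
r_{k}^{\mathrm{st}}(v,V) = r_{k}^{\mathrm{st}}(v-\Pi_k v, 0).
\end{equation*}
Elementwise integration by parts on the right-hand side, combining interior jumps across $F\in\mathcal{F}_{\cT_k}^i$ with the electrode Robin and outer Neumann contributions, reproduces exactly the element residuals $R_{T,1}$ and face residuals $J_{F,1}$ defining $\eta_{k,1}$. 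The standard Scott--Zhang approximation bounds $\|v-\Pi_k v\|_{L^2(T)} \leq c h_T \|v\|_{H^1(\omega_T)}$ and $\|v-\Pi_k v\|_{L^2(F)} \leq c h_F^{1/2} \|v\|_{H^1(\omega_F)}$, combined with Cauchy--Schwarz, yield
\begin{equation*}
|r_{k}^{\mathrm{st}}(v,V)| \leq c \Bigl(\sum_{T\in\cT_k} \eta_{k,1}^{2}(\sigma_{k}^{\ast},u_{k}^{\ast},U_{k}^{\ast},T)\Bigr)^{1/2}\|v\|_{H^{1}(\Omega)}.
\end{equation*}

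The main obstacle is to show the global estimator on the right-hand side vanishes along the subsequence. Adapting the Morin--Siebert--Veeser strategy used in \cite{JinXuZou:2016}, I partition $\cT_{k_j}$ into elements eventually unrefined (persistent) and those refined infinitely often (transient). On transient elements, bisection drives $h_T \to 0$; combined with the uniform bound \eqref{stab-discpolyadj}, the piecewise-linear structure (so that $R_{T,1} = \nabla\sigma_k^{\ast}\cdot\nabla u_k^{\ast}$ is constant on $T$), and inverse estimates, the $h_T^2\|R_{T,1}\|_{L^2(T)}^2$ and $h_F\|J_{F,1}\|_{L^2(F)}^2$ contributions decay in the limit. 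On persistent elements, the maximum-marking rule \eqref{eqn:marking} forbids any fixed element from being the maximum-estimator element in infinitely many iterations (otherwise it would be marked for refinement, contradicting persistence); using the strong convergences $\sigma_{k_j}^{\ast}\to\sigma_\infty^{\ast}$, $u_{k_j}^{\ast}\to u_\infty^{\ast}$, $U_{k_j}^{\ast}\to U_\infty^{\ast}$ in combination with the Galerkin identity $r_{k_j}^{\mathrm{st}}|_{\mathbb{H}_{k_j}}=0$, the residual contributions on persistent elements are likewise forced to zero.

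Passing to the limit using the strong convergences in Theorems \ref{thm:conv_medmin} and \ref{thm:conv_medadj} then delivers $a(\sigma_\infty^{\ast},(u_\infty^{\ast},U_\infty^{\ast}),(v,V)) = \langle I, V\rangle$ for all $(v,V) \in \mathbb{H}$. The adjoint assertion is obtained by repeating the argument verbatim with $(p_k^{\ast},P_k^{\ast})$, the indicator $\eta_{k,2}$, and right-hand side $\langle U_k^{\ast}-U^\delta, V\rangle$, whose limit $\langle U_\infty^{\ast}-U^\delta, V\rangle$ is guaranteed by $U_{k_j}^{\ast}\to U_\infty^{\ast}$ from Theorem \ref{thm:conv_medmin}. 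The decay of the global estimator established in the previous paragraph is the genuine AFEM-analytic ingredient and constitutes the technical heart of the argument.
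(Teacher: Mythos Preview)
Your argument contains a genuine gap at the step where you claim the global estimator $\bigl(\sum_{T\in\cT_{k_j}}\eta_{k_j,1}^{2}(T)\bigr)^{1/2}$ vanishes along the subsequence. This is not true in general, and the paper does not attempt to prove it. The stability estimate for the indicators (quoted in the paper's sketch from \cite[Lemma~3.5]{JinXuZou:2016}) gives only
\[
\sum_{T\in\cT_k\setminus\cT_l^+}\eta_{k,1}^2(T)\leq c\bigl(\|\nabla u_k^*\|_{L^2(\Omega)}^2+\textstyle\sum_l\|u_k^*-U_{k,l}^*\|_{L^2(e_l)}^2\bigr)\leq c,
\]
i.e., uniform \emph{boundedness} on the transient part, not decay. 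The fact that $h_T\to0$ there is irrelevant once you have already applied the global Cauchy--Schwarz step, since the number of transient elements grows and the indicator itself carries no extra positive power of $h_T$ after the stability bound. Your appeal to ``piecewise-linear structure and inverse estimates'' does not help: for instance $\sum_T\|R_{T,1}\|_{L^2(T)}^2=\|\nabla\sigma_k^*\!\cdot\!\nabla u_k^*\|_{L^2(\Omega)}^2$ is not even uniformly bounded (the integrand is only in $L^1$), so pulling out $\max h_T^2$ fails.

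The correct route, followed in the paper via \cite[Lemmas~4.7--4.8]{JinXuZou:2016} and mirrored explicitly in the proof of Theorem~\ref{thm:gat_mc}, is to \emph{refrain} from the global Cauchy--Schwarz step and instead first restrict to smooth test functions $v\in C^\infty(\overline\Omega)$ (or $H^2$). For such $v$, Lagrange/Scott--Zhang interpolation yields an extra power of $h_T$ in the local error, so that the residual is controlled by $\sum_T h_T\,\eta_{k,1}(T)\|v\|_{H^2(D_T)}$. On the transient part $\cT_k\setminus\cT_l^+$ this extra $h_T$ combines with the \emph{bounded} estimator sum and $\|h_l\chi_{\Omega_l^0}\|_{L^\infty}\to0$ to give decay; on the persistent part $\cT_l^+$ (finitely many elements for fixed $l$) the marking condition and Lemma~\ref{lem:estmarked}-type reasoning force the maximal indicator to zero. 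A final density argument extends the conclusion to all $(v,V)\in\mathbb{H}$. Your overall strategy (residual splitting, persistent/transient dichotomy, marking) is right, but the order of operations---Cauchy--Schwarz first, then try to kill the global estimator---cannot work.
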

\begin{proof}
The proof is identical with \cite[Lemma 4.8]{JinXuZou:2016}, using Theorems \ref{thm:conv_medmin}-\ref{thm:conv_medadj},
and hence we only give a brief sketch. By \cite[Lemma 3.5]{JinXuZou:2016}, for each $T\in\cT_k$ with its face $F$ (intersecting
with $e_l$), there hold
\begin{align*}
  \eta_{k,1}^2(\sigma_{k}^\ast,u^\ast_{k},U^\ast_{k},T) &\leq c(\|\nabla u^\ast_{k}\|^2_{L^2(D_{T})}+h_{F}\|u^\ast_{k}-U^{\ast}_{k,l}\|^2_{L^2(F\cap e_{l})}),\\
  \eta_{k,2}^2(\sigma_{k}^\ast,p^\ast_{k},P^\ast_{k},T)&\leq c(\|\nabla p^\ast_{k}\|^2_{L^2(D_{T})}+h_{F}\|p_k^\ast-P^{\ast}_{k,l}\|^2_{L^2(F\cap e_{l})}),
\end{align*}
where the notation $D_T$ is defined below. Then by the marking condition \eqref{eqn:marking}, \cite[Lemma 4.6]{JinXuZou:2016}
implies that for each convergent subsequence $\{\sigma^{\ast}_{k_j},(u^{\ast}_{k_j},U^{\ast}_{k_j}),(p^{\ast}_{k_j},
P^{\ast}_{k_j})\}_{j\geq0}$ from Theorems \ref{thm:conv_medmin} and \ref{thm:conv_medadj}, there hold
\begin{align*}
   \lim_{j\rightarrow\infty}\max_{T\in\mathcal{M}_{k_j}^1}\eta_{k_{j},1}(\sigma_{k_j}^*,u_{k_j}^*,U_{k_j}^*,T)=0 \quad\mbox{and}\quad
   \lim_{j\rightarrow\infty}\max_{T\in\mathcal{M}_{k_j}^2}\eta_{k_{j},2}(\sigma_{k_j}^*,p_{k_j}^*,P_{k_j}^*,T)=0.
\end{align*}
Last, by \cite[Lemma 4.7]{JinXuZou:2016} and Theorems \ref{thm:conv_medmin}-\ref{thm:conv_medadj}, the argument
of \cite[Lemma 4.8]{JinXuZou:2016} completes the proof.
\end{proof}

\begin{remark}\label{rem:residual_weakconv}
The argument of Theorem \ref{thm:vp_mc} dates back to \cite{Siebert:2011}, and the main tools
include the Galerkin orthogonality of the residual operator, the Lagrange and the Scott-Zhang interpolation
operators \cite{Ciarlet:2002,ScottZhang:1990}, the marking condition \eqref{eqn:marking} and a density
argument. Further, the error estimators $\eta_{k,1}(\sigma_k^*,u_k^*,U_k^*)$ and $\eta_{k,2}(\sigma_k^*,p_k^*,P_k^*)$ emerge
in the proof and are then employed in the module \emph{\texttt{ESTIMATE}} of Algorithm \ref{alg_afem_eit}.
\end{remark}

Next we prove that the limit $(\sigma^\ast_{\infty},(u^\ast_{\infty},U^\ast_{\infty}), (p^\ast_{\infty},
P^\ast_{\infty}))$ satisfies the variational inequality in \eqref{eqn:cem-optsys}. The proof relies
crucially on a constraint preserving interpolation operator. We denote by $D_{T}$ the union of elements 
in $\cT$ with a non-empty intersection with an element $T\in\cT$, and by $\omega_F$ the union of elements 
in $\cT$ sharing a common face/edge with $F\in\mathcal{F}_\cT$. Let
\begin{equation*}
    \cT_{k}^{+}:=\bigcap_{l\geq k}\cT_{l},\quad
    \cT_{k}^{0}:=\cT_{k}\setminus\cT_{k}^{+},\quad
    \Omega_{k}^{+}:=\bigcup_{T\in\cT^{+}_{k}}D_{T},\quad
    \Omega_{k}^{0}:=\bigcup_{T\in\cT^{0}_{k}}D_{T}.
\end{equation*}
The set $\cT_{k}^{+}$ consists of all elements not refined after the $k$-th iteration, and
all elements in $\cT_{k}^{0}$ are refined at least once after the $k$-th iteration. Clearly, $\cT_{l}^{+}
\subset\cT_{k}^{+}$ for $l<k$. We also define a mesh-size function $h_{k}:\overline{\Omega}\rightarrow
\mathbb{R}^{+}$ almost everywhere
\begin{equation*}
  h_k(x) = \left\{\begin{array}{ll}
     h_T, & \quad x\in T^i,\\
     h_F , & \quad x\in F^i,
  \end{array}\right.
\end{equation*}
where ${T}^i$ denotes the interior of an element $T\in\cT_{k}$, and
${F}^i$ the relative interior of an edge $F\in\mathcal{F}_{k}$. It has the following
property \cite[Corollary 3.3]{Siebert:2011}:
\begin{equation}\label{eqn:conv_zero_mesh}
    \lim_{k\rightarrow\infty}\|h_{k}\chi_{\Omega_k^0}\|_{L^\infty(\Omega)}=0.
\end{equation}

The next result gives the limiting behaviour of the maximal error indicator $\eta_{k,3}$.
\begin{lemma}\label{lem:estmarked}
Let $\{(\sigma^{\ast}_k,(u^{\ast}_k,U^{\ast}_k),(p^{\ast}_k,P^{\ast}_k))\}_{k\geq 0}$
be the sequence of discrete solutions generated by Algorithm
\ref{alg_afem_eit}. Then for each convergent subsequence
$\{\sigma^{\ast}_{k_j},(u^{\ast}_{k_j},U^{\ast}_{k_j}),(p^{\ast}_{k_j},P^{\ast}_{k_j})\}_{j\geq0}$, there holds
\begin{equation*}
   \lim_{j\rightarrow\infty}\max_{T\in\mathcal{M}_{k_j}^3}\eta_{k_{j},3}(\sigma_{k_j}^*,u_{k_j}^*,p_{k_j}^*,T)=0.
\end{equation*}
\end{lemma}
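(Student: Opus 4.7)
The plan is to show that, for $T \in \mathcal{M}_{k_j}^3$, each contribution to $\eta_{k_j,3}^q(T)$ is controlled by the diameter $h_T$ and by local $H^1$-norms of the discrete state, adjoint, and conductivity; then to combine the uniform mesh-size decay on $\cT_{k_j}^0$ with the $H^1$-convergence supplied by Theorems \ref{thm:conv_medmin}--\ref{thm:conv_medadj}.

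The first step is to observe that marked elements are immediately bisected, so $\mathcal{M}_{k_j}^3 \subset \cT_{k_j}^0$. Since $D_T \subset \Omega_{k_j}^0$ whenever $T \in \cT_{k_j}^0$, property \eqref{eqn:conv_zero_mesh} yields
\[
  \sup_{T \in \mathcal{M}_{k_j}^3} h_T + \sup_{T \in \mathcal{M}_{k_j}^3}\max_{T' \subset D_T} h_{T'} \longrightarrow 0 \quad \text{as } j \to \infty,
\]
so both $|T|$ and $|\omega_F|$ (for every face $F \subset \partial T$) vanish uniformly in $T \in \mathcal{M}_{k_j}^3$.

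The second step is an elementwise bound on $\eta_{k_j,3}^q(T)$. Since $\nabla u_{k_j}^\ast$, $\nabla p_{k_j}^\ast$ and $\nabla\sigma_{k_j}^\ast$ are piecewise constant, the identity $\|c\|_{L^\infty(T)} = |T|^{-1/2}\|c\|_{L^2(T)}$ for constants $c$ reduces local $L^\infty$-bounds to $L^2$-norms. Together with $|W'(\sigma_{k_j}^\ast)| \leq c$ (from $c_0 \leq \sigma_{k_j}^\ast \leq c_1$), this yields
\[
  h_T^q\|R_{T,2}\|_{L^q(T)}^q \leq c\,h_T^{q+d} + c\,h_T^q |T|^{1-q}\|\nabla u_{k_j}^\ast\|_{L^2(T)}^q\|\nabla p_{k_j}^\ast\|_{L^2(T)}^q,
\]
and, by a similar computation on each face,
\[
  h_F\|J_{F,2}\|_{L^q(F)}^q \leq c\,h_F\,|F|\,\max_{T'\subset\omega_F}|T'|^{-q/2}\,\|\nabla\sigma_{k_j}^\ast\|_{L^2(\omega_F)}^q.
\]
The critical algebraic point, which motivates the nonstandard choice $q = d/(d-1)$ in the definition of $\eta_{k,3}$, is that $h_T^q|T|^{1-q} \leq c$ and $h_F|F|\,|T'|^{-q/2} \leq c\,h_F^{d(1-q/2)}$ with $d(1-q/2) \geq 0$ for $d \geq 2$. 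Hence
\[
  \eta_{k_j,3}^q(T) \leq c\,h_T^{q+d} + c\|\nabla u_{k_j}^\ast\|_{L^2(T)}^q\|\nabla p_{k_j}^\ast\|_{L^2(T)}^q + c\sum_{F \subset \partial T}\|\nabla \sigma_{k_j}^\ast\|_{L^2(\omega_F)}^q.
\]

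The final step is to show that the three terms above vanish uniformly in $T \in \mathcal{M}_{k_j}^3$. Splitting $\nabla u_{k_j}^\ast = \nabla(u_{k_j}^\ast - u_\infty^\ast) + \nabla u_\infty^\ast$ (and similarly for $p_{k_j}^\ast, \sigma_{k_j}^\ast$), the difference pieces have $L^2(\Omega)$-norm vanishing by Theorems \ref{thm:conv_medmin}--\ref{thm:conv_medadj}, hence their $L^2(T)$- and $L^2(\omega_F)$-norms vanish uniformly in $T$. For the limit pieces, since $\nabla u_\infty^\ast, \nabla p_\infty^\ast, \nabla \sigma_\infty^\ast \in L^2(\Omega)$, absolute continuity of the Lebesgue integral gives $\|\nabla u_\infty^\ast\|_{L^2(E)} \to 0$ uniformly as $|E| \to 0$; applied with $E = T$ or $E = \omega_F$, whose measures vanish uniformly by the first step, this completes the argument. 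The main obstacle I expect is the careful tracking of exponents in the second step: it is precisely the choice $q = d/(d-1)$ that makes $h_T^q|T|^{1-q} = O(1)$, so that the $L^2$-smallness of $\nabla u_{k_j}^\ast, \nabla p_{k_j}^\ast$ on vanishing elements (rather than mesh-size decay alone) can drive the residual to zero. This gain is not available with the softer exponent used in \cite{JinXuZou:2016}.
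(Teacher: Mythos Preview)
Your proposal is correct and follows essentially the same route as the paper: an elementwise stability bound for $\eta_{k,3}^q(T)$ in terms of local $L^2$-norms of $\nabla u_k^\ast$, $\nabla p_k^\ast$, $\nabla\sigma_k^\ast$ (exploiting $q=d/(d-1)$ so that the mesh-size powers cancel), followed by the splitting into a globally vanishing difference plus a limit piece handled by absolute continuity on the shrinking patches $D_T\subset\Omega_{k_j}^0$. The only cosmetic differences are that the paper obtains the stability bound \eqref{eqn:stab-indicator} via an inverse estimate and the scaled trace theorem rather than by invoking piecewise constancy directly, keeps the $W'$ contribution as $h_T^q\|W'(\sigma_k^\ast)\|_{L^q(T)}^q$ (handled by dominated convergence) instead of your cruder $ch_T^{q+d}$ bound, and argues only for the single maximal element $\widetilde T_j^3$ rather than uniformly over $\mathcal M_{k_j}^3$; none of these changes the substance of the argument.
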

\begin{proof}
The inverse estimate and  scaled trace theorem imply that for each $T\in\cT_k$ (with its face $F$)
\begin{align*}
    h_{T}^q\|\tfrac{\tilde\alpha}{2\varepsilon}W'(\sigma_k^*) - \nabla u^*_k\cdot\nabla p^*_k\|^q_{L^q(T)} & \leq ch_T^q\|\nabla u^*_k\cdot\nabla p^*_k\|_{L^q(T)}^q+ ch_T^q\|W'(\sigma_k^*)\|_{L^q(T)}^q \\
    &\leq ch_T^{q}h_T^{d-dq}\|\nabla u^*_k\cdot\nabla p^*_k\|^q_{L^1(T)}+ch_T^q\|W'(\sigma_k^*)\|_{L^q(T)}^q,\\
    \sum_{F\subset\partial T}h_F\|J_{F,2}(\sigma_k^*)\|_{L^q(F)}^q&\leq c\sum_{F\subset\partial T}h_Fh_{F}^{-1}\|\nabla\sigma_{k}^*\|^q_{L^q(\omega_F)}.
\end{align*}
With the choice $q=d/(d-1)$, combining these two estimates gives
\begin{align}\label{eqn:stab-indicator}
 \eta_{k,3}^q(\sigma_{k}^\ast,u^\ast_{k},p^\ast_{k},T)&\leq
    c(\|\nabla u^*_k\cdot\nabla p^*_k\|^q_{L^1(T)}+h_T^q\|W'(\sigma_k^*)\|_{L^q(T)}^q+\|\nabla\sigma_{k}^\ast\|^q_{L^q(D_{T})}),
\end{align}
where $c$ depends on $\widetilde{\alpha}$ and $\eps$ in $\F_\eps$. Next,
for the subsequence $\{\sigma^{\ast}_{k_j},(u^{\ast}_{k_j},U^{\ast}_{k_j}),(p^{\ast}_{k_j},P^{\ast}_{k_j})\}_{j\geq0}$,
let $\widetilde{T}_{j}^3 \in \mathcal{M}_{k_{j}}^3$ be the element with the largest error indicator
$\eta_{k_j,3}(\sigma_{k_j}^*,u_{k_j}^*,p_{k_j}^*,T)$. Since $D_{\widetilde{T}_j^i}\subset\Omega_{k_j}^0$,
\eqref{eqn:conv_zero_mesh} implies
\begin{equation}\label{eqn:estmarked}
   |D_{\widetilde{T}^i_j}|\leq c\|h_{k_{j}}\|^d_{L^{\infty}(\Omega_{k_j}^0)}\rightarrow 0 \quad\mbox{as}~j\rightarrow\infty.
\end{equation}
By \eqref{eqn:stab-indicator}, Cauchy-Schwarz inequality and triangle inequality,
there holds
\begin{align*}
  \eta_{k_j,3}^q(\sigma^{\ast}_{k_j},&u^\ast_{k_j},p^\ast_{k_j},\widetilde{T}_{j}^3)
   \leq c(\|\nabla u^\ast_{k_j}\|_{L^2(\widetilde{T}_{j}^3)}^q\|\nabla p^\ast_{k_j}\|_{L^2(\widetilde{T}_{j}^3)}^q
   +h_{\widetilde{T}_{j}^3}^q\|W'(\sigma_k^*)\|_{L^q(\widetilde{T}_{j}^3)}^q+\|\nabla\sigma_{k_j}^\ast\|^q_{L^q(D_{\widetilde{T}_{j}^3})})\\
   &\leq c\big((\|\nabla (u^\ast_{k_j}-u^\ast_{\infty})\|_{L^2(\Om)}^q+\|\nabla u^\ast_{\infty}\|_{L^2(\widetilde{T}_{j}^3)}^q)(\|\nabla
   (p^\ast_{k_j}-p^\ast_{\infty})\|_{L^2(\Om)}^q+\|\nabla p^\ast_{\infty}\|_{L^2(\widetilde{T}_{j}^3)}^q)\\
   &\quad+h_{\widetilde{T}_j^3}^q(\|W'(\sigma_{k_j}^\ast)-W'(\sigma_\infty^\ast)\|^q_{L^q(\Om)}+\|W'(\sigma_\infty^\ast)\|^q_{L^q(\widetilde{T}_{j}^3)})\\
   &\quad +(\|\nabla(\sigma_{k_j}^\ast-\sigma_{\infty}^\ast)\|^q_{L^q(\Om)}
   +\|\nabla\sigma_{\infty}^\ast\|^q_{L^q(D_{\widetilde{T}_{j}^3})})\big).
\end{align*}
By Theorems \ref{thm:conv_medmin} and \ref{thm:conv_medadj},
Lebesgue's dominated convergence theorem, the choice $q=d/(d-1)\leq 2$ and H\"{o}lder inequality, we obtain
$\|W'(\sigma_{k_j}^\ast)-W'(\sigma_\infty^\ast)\|^q_{L^q(\Om)}\to 0$ and $\|\nabla(\sigma_{k_j}^\ast-
\sigma_{\infty}^\ast)\|^q_{L^q(\Om)}\to 0$. Then the absolute continuity of the norm $\|\cdot\|_{L^q(\Omega)}$
with respect to Lebesgue measure and \eqref{eqn:estmarked} complete the proof.
\end{proof}

Due to a lack of Galerkin orthogonality for variational inequalities, we employ a local $L^r$-stable interpolation
operator of Cl\'{e}ment/Chen-Nochetto type.
Let $ \mathcal{N}_k$ be the set of all interior nodes of $\cT_k$ and $\{\phi_{x}\}_{x\in\mathcal{N}_k} $ be the nodal basis
functions in $V_k$. For each $x\in\mathcal{N}_k$, the support of $\phi_x$ is denoted by $\omega_x$, i.e., the union of
all elements in $\cT_k$ with a non-empty intersection with $x$. Then we define $\Pi_k:L^1(\Om)\to V_k$ by
\begin{equation}\label{eqn:cn_int_def}
    \Pi_k v := \sum_{x\in\mathcal{N}_k} \frac{1}{|\omega_x|}\int_{\omega_x}v \dx \phi_x.
\end{equation}
Clearly, $\Pi_k v\in \Atilde_k$ if $c_0 \leq v \leq c_1$ a.e. $x\in\Om$. The definition is
adapted from \cite{ChenNochetto:2000} (for elliptic obstacle problems) by replacing the
maximal ball $\Delta_x\subset\omega_x$ centered at an interior node $x$ by $\omega_x$.
$\Pi_k v$ satisfies following properties; see Appendix \ref{app:int-oper} for a proof.
\begin{lemma}\label{lem:cn-int-oper}
For any $v\in W^{1,r}(\Omega)$, there hold for all $r\in[1, +\infty]$, any $T\in \cT_k$ and any $F\subset \partial T$,
\begin{align*}
   \|\Pi_k v\|_{L^r(T)} \leq c \|v\|_{L^r(D_T)}, \quad \|\nabla\Pi_k v\|_{L^r(T)} \leq c \|\nabla v\|_{L^r(D_T)},\\
  \| v - \Pi_k v \|_{L^r(T)} \leq c h_T \| \nabla v \|_{L^r(D_T)}, \quad \| v - \Pi_k v\|_{L^r(F)} \leq c h_F^{1-1/r} \|\nabla v\|_{L^r(D_T)}.
\end{align*}
\end{lemma}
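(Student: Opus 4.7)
The plan is to follow the classical template for Cl\'ement/Chen--Nochetto interpolation operators, exploiting three properties of $\Pi_k$: local $L^r$-stability on each vertex star $\omega_x$, constant-preservation (so that $\nabla \Pi_k v = \nabla \Pi_k(v-c)$ for any constant $c$), and the standard scaling laws of Lagrange nodal basis functions on shape-regular triangulations. The four estimates will then follow in a coupled way: stability first, interpolation error by subtracting a mean, and the face estimate by the scaled trace theorem.

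First I would establish the $L^r$-stability of $\Pi_k v$. For each $x\in\mathcal{N}_k$, H\"older's inequality gives
\[
  |(\Pi_k v)(x)| = \Big|\tfrac{1}{|\omega_x|}\int_{\omega_x}v\dx\Big| \leq |\omega_x|^{-1/r}\|v\|_{L^r(\omega_x)}.
\]
Using $\|\phi_x\|_{L^r(T)}\leq |T|^{1/r}$, the shape-regular bound $|T|\leq c|\omega_x|$ for $x\in\bar T$, and a uniformly bounded number of vertices per element, summing then yields $\|\Pi_k v\|_{L^r(T)}\leq c\|v\|_{L^r(D_T)}$. For the gradient stability I would use constant preservation (which holds since $\sum_{x\in\mathcal{N}_k}\phi_x = 1$ on patches not touching $\partial\Omega$) together with the inverse estimate $\|\nabla w\|_{L^r(T)}\leq ch_T^{-1}\|w\|_{L^r(T)}$ for $w\in V_k$ and the $L^r$-stability already shown, applied to $\Pi_k(v-\bar v_{D_T})$, where $\bar v_{D_T}$ denotes the mean of $v$ over $D_T$. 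Poincar\'e--Wirtinger then gives $\|v-\bar v_{D_T}\|_{L^r(D_T)}\leq ch_T\|\nabla v\|_{L^r(D_T)}$, producing the desired bound.

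For the $L^r$-interpolation estimate I would write $v-\Pi_k v = (v-\bar v_{D_T}) - \Pi_k(v-\bar v_{D_T})$, apply the triangle inequality, bound the first term by Poincar\'e--Wirtinger and the second by the $L^r$-stability; both contributions scale as $ch_T\|\nabla v\|_{L^r(D_T)}$. The face estimate then comes from applying the scaled trace inequality
\[
  \|w\|_{L^r(F)}^r\leq c\bigl(h_F^{-1}\|w\|_{L^r(T)}^r + h_F^{r-1}\|\nabla w\|_{L^r(T)}^r\bigr)
\]
to $w = v-\Pi_k v$, substituting the third inequality and $\|\nabla(v-\Pi_k v)\|_{L^r(T)}\leq c\|\nabla v\|_{L^r(D_T)}$ (which follows from the second inequality and the triangle inequality); the exponent $1-1/r$ on $h_F$ emerges after taking $r$-th roots. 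The case $r=+\infty$ is handled by passing to the limit or by a direct $L^\infty$-argument via the same steps.

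The main obstacle is verifying constant preservation on elements whose patches $D_T$ meet $\partial\Omega$, since $\mathcal{N}_k$ contains only interior nodes. This is a standard issue in Cl\'ement-type theory and is handled either by a boundary-preserving modification of $\Pi_k$ at $\partial\Omega$ (as in Chen--Nochetto) or by replacing Poincar\'e--Wirtinger by a Poincar\'e inequality on $D_T$ using the vanishing of the basis functions at adjacent boundary nodes; in either case the bounds remain uniform in $r\in[1,+\infty]$ and the structure of the proof is unchanged.
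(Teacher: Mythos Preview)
Your proposal is correct and follows essentially the same route as the paper's proof: H\"older on the nodal averages plus the scaling $\|\phi_x\|_{L^r(T)}\le ch_T^{d/r}$ for $L^r$-stability, then constant preservation combined with an inverse estimate and Poincar\'e--Wirtinger on $D_T$ for the gradient and interpolation bounds, and the scaled trace theorem for the face estimate. You are in fact more careful than the paper at one point: the paper simply asserts $\Pi_k\zeta=\zeta$ without addressing the fact that $\mathcal{N}_k$ consists of interior nodes only, whereas you flag the boundary-patch issue and indicate the standard fixes.
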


Last we show that the limit $(\sigma^\ast_{\infty},(u^\ast_{\infty},U^\ast_{\infty}),(p^\ast_{\infty},P^\ast_{\infty}))$
satisfies the variational inequality in \eqref{eqn:cem-optsys}.
\begin{theorem}\label{thm:gat_mc}
The solutions $(\sigma_\infty^*,u_\infty^*,U_\infty^*)$ and $(p_\infty^*,P_\infty^*)$ to problems \eqref{eqn:medopt}-\eqref{eqn:medpoly} and \eqref{eqn:medadj} satisfy
\begin{equation*}
  \widetilde{\alpha}\eps(\nabla\sigma^\ast_\infty,\nabla(\mu-\sigma^\ast_\infty)) +
  \tfrac{\widetilde{\alpha}}{2\eps}(W'(\sigma^\ast_\infty),\mu-\sigma^\ast_\infty) - (\nabla u_\infty^\ast,\nabla p_{\infty}^\ast(\mu-\sigma_{\infty}^\ast))\geq0\quad\forall\mu\in\Atilde.
\end{equation*}
\end{theorem}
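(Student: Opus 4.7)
The plan is to pass to the limit in the discrete variational inequality (third line of \eqref{eqn:cem-discoptsys}) along the convergent subsequence furnished by Theorems \ref{thm:conv_medmin}--\ref{thm:conv_medadj}, in the spirit of \cite{Siebert:2011,JinXuZou:2016}. The crucial novelty compared with \cite{JinXuZou:2016} is that the continuous test function $\mu$ ranges over the full set $\Atilde$, not just $\Atilde_\infty$, so the nesting $\Atilde_{k_j}\subset\Atilde_{k_{j+1}}$ no longer produces an admissible test function from a coarse level; the constraint-preserving interpolant $\Pi_{k_j}$ from \eqref{eqn:cn_int_def} is indispensable. Fix $\mu\in\Atilde$; because the nodal averages in \eqref{eqn:cn_int_def} stay in $[c_0,c_1]$ whenever $\mu$ does, $\Pi_{k_j}\mu\in\Atilde_{k_j}$ and is therefore admissible in the discrete variational inequality.

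Next, I would write $\Pi_{k_j}\mu-\sigma_{k_j}^*=(\mu-\sigma_{k_j}^*)-(\mu-\Pi_{k_j}\mu)$, which rearranges the discrete inequality to $\text{(target)}_{k_j}\geq \mathcal{R}_{k_j}$, where
\[
\text{(target)}_{k_j}=\widetilde{\alpha}\eps(\nabla\sigma_{k_j}^*,\nabla(\mu-\sigma_{k_j}^*))+\tfrac{\widetilde{\alpha}}{2\eps}(W'(\sigma_{k_j}^*),\mu-\sigma_{k_j}^*)-((\mu-\sigma_{k_j}^*)\nabla u_{k_j}^*,\nabla p_{k_j}^*)
\]
is the ``target'' and $\mathcal{R}_{k_j}$ is the ``interpolation remainder'' collecting the three terms evaluated against $\mu-\Pi_{k_j}\mu$. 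Since $\sigma_{k_j}^*$ is piecewise affine, $\Delta\sigma_{k_j}^*\equiv 0$ inside each element, so element-wise integration by parts in the gradient term of $\mathcal{R}_{k_j}$ produces only face jumps and yields
\[
\mathcal{R}_{k_j}=\sum_{T\in\cT_{k_j}}\int_T R_{T,2}(\sigma_{k_j}^*,u_{k_j}^*,p_{k_j}^*)(\mu-\Pi_{k_j}\mu)+\sum_{F\in\mathcal{F}_{k_j}}\int_F J_{F,2}(\sigma_{k_j}^*)(\mu-\Pi_{k_j}\mu).
\]
Applying H\"older with the dual pair $(q,q')=(d/(d-1),d)$ together with the element- and trace-approximation estimates in Lemma \ref{lem:cn-int-oper} gives
\[
|\mathcal{R}_{k_j}|\leq c\Bigl(\sum_{T\in\cT_{k_j}}\eta_{k_j,3}^q(\sigma_{k_j}^*,u_{k_j}^*,p_{k_j}^*,T)\Bigr)^{1/q}\|\nabla\mu\|_{L^{q'}(\Om)},
\]
which is exactly the calibration that dictates the otherwise surprising exponent $q=d/(d-1)$ in the definition of $\eta_{k,3}$ and constitutes the improvement over \cite{JinXuZou:2016}.

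Vanishing of $\mathcal{R}_{k_j}$ is the main obstacle. I would split $\cT_{k_j}=\cT_{k_j}^0\cup\cT_{k_j}^+$: on $\cT_{k_j}^0$ the mesh-size function vanishes in $L^\infty$ by \eqref{eqn:conv_zero_mesh}, and combined with the uniform-in-$j$ bound \eqref{eqn:stab-indicator} (valid since $\|(u_{k_j}^*,U_{k_j}^*)\|_{\mathbb{H}}+\|(p_{k_j}^*,P_{k_j}^*)\|_{\mathbb{H}}$ and $\|\sigma_{k_j}^*\|_{H^1(\Om)}$ are controlled by \eqref{stab-discpolyadj}), this forces the $\cT_{k_j}^0$-contribution to vanish. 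On $\cT_{k_j}^+$ I would adapt \cite[Lemma 4.7]{JinXuZou:2016}: starting from the marking condition \eqref{eqn:marking} and Lemma \ref{lem:estmarked}, which \emph{a priori} control only the maximal marked indicator, one exploits the nested-mesh structure and the strong $H^1$- and $\mathbb{H}$-convergences of Theorems \ref{thm:conv_medmin}--\ref{thm:conv_medadj} to upgrade max-vanishing to ``$\eta_{k_j,3}(T)\to 0$ for every $T\in\cT_{k_j}^+$'', with summability again supplied by \eqref{eqn:stab-indicator}. Passing to the limit in $\text{(target)}_{k_j}$ is then straightforward: Theorem \ref{thm:conv_medmin} controls the $\nabla\sigma_{k_j}^*$ term, Lebesgue's dominated convergence theorem handles $W'$ on the compact interval $[c_0,c_1]$, and the strong $\mathbb{H}$-convergences of state and adjoint (Theorems \ref{thm:conv_medmin}--\ref{thm:conv_medadj}) give $\nabla u_{k_j}^*\cdot\nabla p_{k_j}^*\to \nabla u_\infty^*\cdot \nabla p_\infty^*$ in $L^1(\Om)$, which pairs against the uniformly bounded factor $\mu-\sigma_{k_j}^*$ to complete the passage to the limit.
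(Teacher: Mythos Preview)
There is a genuine gap in the handling of the remainder $\mathcal{R}_{k_j}$ over the refined set $\cT_{k_j}^0$. Your bound
\[
|\mathcal{R}_{k_j}|\leq c\sum_{T\in\cT_{k_j}}\eta_{k_j,3}(T)\|\nabla\mu\|_{L^{q'}(D_T)}
\]
is correct, but the claim that ``on $\cT_{k_j}^0$ the mesh-size function vanishes in $L^\infty$ \ldots\ this forces the $\cT_{k_j}^0$-contribution to vanish'' does not follow. Look at the stability bound \eqref{eqn:stab-indicator}: the dominant contributions $\|\nabla u_k^\ast\cdot\nabla p_k^\ast\|_{L^1(T)}^q$ and $\|\nabla\sigma_k^\ast\|_{L^q(D_T)}^q$ carry \emph{no} factor of $h_T$. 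Summing over $\cT_{k_j}^0$ therefore gives only a uniform bound (this is exactly what \eqref{eqn:bdd-eta} states), and there is nothing in your factor $\|\nabla\mu\|_{L^{q'}(D_T)}$ to compensate, since for $\mu\in\Atilde\subset H^1(\Omega)$ this is merely bounded (and for $d=3$, $q'=3$, it need not even be finite). The residual $R_{T,2}$ is essentially the Lagrange multiplier of the box constraint and does not vanish in the limit, so no estimator decay can be extracted here.

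The paper closes this gap by first restricting to $\mu\in\Atilde\cap C^\infty(\overline{\Omega})$ and taking the Lagrange interpolant $I_k\mu\in\Atilde_k$ as the discrete test function. The point is that the residual pairing then acts on $\nu=\mu-I_k\mu$, and the \emph{second-order} Lagrange error estimate supplies the missing $h_T$ (e.g.\ $\|\nabla\nu\|_{L^{q'}(D_T)}\leq ch_T\|\mu\|_{W^{2,q'}(D_T)}$). Over $\cT_k\setminus\cT_l^+$ one then pairs the merely bounded estimator sum \eqref{eqn:bdd-eta} against $\|h_l\chi_{\Omega_l^0}\|_{L^\infty}$, and it is the latter that tends to zero by \eqref{eqn:conv_zero_mesh}. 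Over $\cT_l^+$ with $l$ \emph{fixed} (hence $|\cT_l^+|$ fixed) one invokes Lemma~\ref{lem:estmarked}; your own splitting along $\cT_{k_j}^+$ lets the cardinality grow with $j$ and prevents this step from closing. A final density argument extends the inequality from smooth $\mu$ to all of~$\Atilde$. The constraint-preserving operator $\Pi_k$ is used in the paper only on the \emph{error} $\nu=\mu-I_k\mu$, not on $\mu$ itself.
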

\begin{proof}
The proof is lengthy, and we break it into five steps.

\noindent{\bf Step i.} Derive a preliminary variational inequality. We relabel the subsequence $\{\sigma_{k_j}^\ast,(u_{k_j}^\ast,U_{k_j}^\ast),(p_{k_j}^\ast,P_{k_j}^\ast)\}_{j\geq0}$  in
Theorems \ref{thm:conv_medmin} and \ref{thm:conv_medadj} as $\{\sigma_{k}^\ast,(u_{k}^\ast,U_{k}^\ast),(p_{k}^\ast,P_{k}^\ast)\}_{k\geq0}$.
Let $I_k$ be the Lagrange interpolation operator on $V_k$, and let $\alpha' = \widetilde\alpha\varepsilon $ and
$\alpha''=\frac{\widetilde\alpha}{2\varepsilon}$. For any $\mu\in\widetilde{\mathcal{A}}\cap
C^\infty(\overline{\Omega})$, $I_{k}\mu\in\widetilde{\mathcal{A}}_{k}$ and let $\nu=\mu-I_k\mu$.
Direct computation gives
\begin{align}
\alpha'&(\nabla\sigma^\ast_k,\nabla(\mu-\sigma^\ast_k))
+\alpha''(W'(\sigma^\ast_k),\mu-\sigma^\ast_k)
    -((\mu-\sigma_{k}^\ast)\nabla u_k^\ast,\nabla p_{k}^\ast )\nonumber\\
   &=\alpha'(\nabla\sigma^\ast_k,\nabla(\mu-I_k\mu))
   +\alpha''(W'(\sigma^\ast_k),\mu-I_k\mu)
   -((\mu-I_{k}\mu)\nabla u_k^\ast,\nabla p_{k}^\ast)\nonumber\\
   &\qquad+\alpha'(\nabla\sigma^\ast_k,\nabla(I_k\mu-\sigma_k^\ast))
   +\alpha''(W'(\sigma^\ast_k),I_k\mu-\sigma^\ast_k)
   -((I_{k}\mu-\sigma_k^\ast)\nabla u_k^\ast,\nabla p_{k}^\ast)\nonumber\\
   & =\alpha'(\nabla\sigma^\ast_k,\nabla(\nu-\Pi_k\nu))
   +\alpha''(W'(\sigma^\ast_k),\nu-\Pi_k\nu)
   -((\nu-\Pi_{k}\nu)\nabla u_k^\ast,\nabla p_{k}^\ast)
   \nonumber\\
   &\qquad+\alpha'(\nabla\sigma^\ast_k,\nabla\Pi_{k}\nu)
   +\alpha''(W'(\sigma^\ast_k),\Pi_{k}\nu)
   -(\Pi_k\nu\nabla u_k^\ast,\nabla p_{k}^\ast)\nonumber\\
   &\qquad+\alpha'(\nabla\sigma^\ast_k,\nabla(I_k\mu-\sigma_k^\ast))
   +\alpha''(W'(\sigma^\ast_k),I_k\mu-\sigma^\ast_k)
   -((I_{k}\mu-\sigma_k^\ast)\nabla u_k^\ast,\nabla p_{k}^\ast)\nonumber\\
   &\geq\left[\alpha'(\nabla\sigma^\ast_k,\nabla(\nu-\Pi_k\nu))
   +\alpha''(W'(\sigma^\ast_k),\nu-\Pi_k\nu)
   -((\nu-\Pi_{k}\nu)\nabla u_k^\ast,\nabla p_{k}^\ast )\right]\nonumber\\
   &\quad+\left[\alpha'(\nabla\sigma^\ast_k,\nabla\Pi_{k}\nu)
   +\alpha''(W'(\sigma^\ast_k),\Pi_{k}\nu)
   -(\Pi_k\nu\nabla u_k^\ast,\nabla p_{k}^\ast)\right]:={\rm I}+ {\rm II},\label{eqn:gat_mc_01}
\end{align}
where the last inequality is due to the variational inequality in \eqref{eqn:cem-discoptsys} with $\mu_k=I_k\mu$.

\noindent\textbf{Step ii.} Bound the ${\rm I}$.
By elementwise integration by parts, H\"{o}lder inequality, the definition of the estimator $\eta_{k,3}$ and
Lemma \ref{lem:cn-int-oper} with $r=q'$ (with $q'$ being the conjugate exponent of $q$),
\begin{align*}
    |{\rm I}| &=  \left| \sum_{T \in \cT_k} \int_T R_{T,2}(\sigma_{k}^\ast, u_k^\ast, p_k^\ast)(\nu-\Pi_k\nu)\dx + \sum_{F\in\mathcal{F}_k}\int_F J_{F,2}(\sigma_k^\ast)(\nu-\Pi_k\nu) \mathrm{d}s \right|    \\
    &\leq    \sum_{T \in \cT_k}\Big(\|R_{T,2}(\sigma_{k}^\ast, u_k^\ast, p_k^\ast)\|_{L^q(T)}\|\nu-\Pi_k\nu\|_{L^{q'}(T)}+\sum_{F\subset\partial T}\| J_{F,2}(\sigma_k^\ast)\|_{L^{q}(F)}\|\nu-\Pi_k\nu\|_{L^{q'}(F)}\Big)\\
    &\leq    c\sum_{T \in \cT_k}\Big(h_T\|R_{T,2}(\sigma_{k}^\ast, u_k^\ast, p_k^\ast)\|_{L^q(T)}+\sum_{F\subset\partial T}h_F^{1/q}\| J_{F,2}(\sigma_k^\ast)\|_{L^{q}(F)}\Big)\|\nabla\nu\|_{L^{q'}(D_T)}\\
    &\leq c\sum_{T\in\cT_{k}}\eta_{k,3}(\sigma_k^\ast,u_k^\ast,p_k^\ast,T)\|\nabla\nu\|_{L^{q'}(D_T)}.
\end{align*}
Thus, for any $k>l$, by (discrete) H\"{o}lder's inequality and the finite overlapping property of the patches $D_T$, due 
to uniform shape regularity of the meshes $\mathcal{T}_k\in\mathbb{T}$, there holds
\begin{align*}
   |{\rm I}| &\leq c\big(\sum_{T\in\cT_{k}\setminus\cT_l^+}\eta_{k,3}(\sigma_k^\ast,u_k^\ast,p_k^\ast,T)\|\nabla\nu\|_{L^{q'}(D_T)}
    +\sum_{T\in\cT_l^+}\eta_{k,3}(\sigma_k^\ast,u_k^\ast,p_k^\ast,T)\|\nabla\nu\|_{L^{q'}(D_T)}\big)\\
    &\leq c\Big(\big(\sum_{T\in\cT_{k}\setminus\cT^+_l}\eta^q_{k,3}(\sigma_k^\ast,u_k^\ast,p_k^\ast,T)\big)^{1/q}\|\nabla(\mu-I_k\mu)\|_{L^{q'}(\Omega_l^0)}\\
    &\qquad    +\big(\sum_{T\in\cT_{l}^+}\eta^q_{k,3}(\sigma_k^\ast,u_k^\ast,p_k^\ast,T)\big)^{1/q}\|\nabla(\mu-I_k\mu)\|_{L^{q'}(\Omega_l^+)}\Big).
\end{align*}
Since $W'(s)\in C^1[c_0,c_1]$, by the pointwise convergence of $\{\sigma_{k}^\ast\}_{k\geq0}$ in Theorem \ref{thm:conv_medmin}
and Lebesgue's dominated convergence theorem, we deduce
\begin{equation}\label{eqn:gat_mc_aux}
    W'(\sigma_{k}^\ast)\to W'(\sigma_\infty^\ast)\quad\mbox{in}~L^2(\Om).
\end{equation}
Since $q=d/{(d-1)}\leq 2$, the sequence $\{W'(\sigma_k^\ast)\}_{k\geq0}$ is uniformly bounded in $L^q(\Omega)$.
By Theorems \ref{thm:conv_medmin} and \ref{thm:conv_medadj}, the sequences $\{\sigma_{k}^\ast\}_{k\geq0}$, $\{u_{k}^\ast\}_{k\geq0}$
and $\{p_{k}^\ast\}_{k\geq0}$ are uniformly bounded in $H^1(\Omega)$. Thus, \eqref{eqn:stab-indicator} and \eqref{eqn:conv_zero_mesh},
and H\"{o}lder inequality give
\begin{align}
     &\sum_{T\in\cT_{k}\setminus\cT^+_l}\eta^q_{k,3}(\sigma_k^\ast,u_k^\ast,p_k^\ast,T)\nonumber\\
     \leq&
     c\Big(\|\nabla u_k^\ast\cdot\nabla p_k^\ast\|_{L^1(\Omega)}^{q-1}\sum_{T\in\cT_{k}\setminus\cT^+_l}\|\nabla u_k^\ast\cdot\nabla p_k^\ast\|_{L^1(T)}
     +\|h_{l}\|^{q}_{L^\infty(\Omega^{0}_l)}\|W'(\sigma_k^\ast)\|^q_{L^q(\Om)}
     +\|\nabla\sigma_{k}^\ast\|^q_{L^q(\Omega)}\Big)\nonumber\\
    \leq & c\Big(\|\nabla u_{k}^\ast\|^q_{L^2(\Omega)}\|\nabla p_k^\ast\|^{q}_{L^2(\Omega)}+\|h_{l}\chi_{\Omega_l^0}\|^{q}_{L^\infty(\Omega)}\|W'(\sigma_k^\ast)\|^q_{L^q(\Om)}
    +\|\nabla\sigma_{k}^\ast\|^q_{L^2(\Omega)}\Big)\leq c.\label{eqn:bdd-eta}
\end{align}
Then by the error estimate of $I_k$ \cite{Ciarlet:2002},
\[
    |{\rm I}|\leq c\|h_{l}\chi_{\Omega^0_l}\|_{L^\infty(\Omega)}\|\mu\|_{W^{2,q'}(\Om)}+c\big(\sum_{T\in\cT_{l}^+}\eta^q_{k,3}(\sigma_k^\ast,u_k^\ast,p_k^\ast,T)\big)^{1/q}\|\mu\|_{W^{2,q'}(\Om)}.
\]
By \eqref{eqn:conv_zero_mesh}, $c\|h_{l}\chi_{\Omega^0_l}\|_{L^\infty(\Omega)}\|\mu\|_{W^{2,q'}(\Om)}\to 0$ as $l\to\infty$.
Since $\cT_l^+\subset\cT_{k}$ for $k>l$, \eqref{eqn:marking} implies
\begin{align*}
  \big(\sum_{T\in\cT^+_l}\eta^q_{k,3}(\sigma_k^\ast,u_k^\ast,p_k^\ast,T)\big)^{1/q}
    &\leq{|\cT_{l}^+|}^{1/q}\max_{T\in\cT_l^+}\eta_{k,3}(\sigma_{k}^\ast,u_{k}^\ast,p_{k}^\ast,T)\leq{|\cT^+_l|}^{1/q}\max_{T\in\mathcal{M}_k^3}\eta_{k,3}(\sigma_{k}^\ast,u_{k}^\ast,p_{k}^\ast,T).
\end{align*}
By Lemma \ref{lem:estmarked}, for any small $\varepsilon>0$, we can choose $k_1>l_1$ for some large
fixed $l_1$ such that whenever $k>k_{1}$,
\begin{equation*}
  c({\sum_{T\in\cT_{l}^+}\eta^q_{k,3}}(\sigma_k^\ast,u_k^\ast,p_k^\ast,T))^{1/q}\|\mu\|_{W^{2,q'}(\Omega)}<\varepsilon.
\end{equation*}
Consequently,
\begin{equation}\label{eqn:gat_mc_02}
   {\rm I} \rightarrow 0\quad\forall\mu\in\widetilde{\mathcal{A}}\cap C^\infty(\overline{\Om}).
\end{equation}

\noindent\textbf{Step iii.} Bound the term ${\rm II}$.
For the term $\rm II$, elementwise integration and H\"{o}lder inequality yield
    \begin{align*}
        |{\rm II}|&= \left|  \sum_{T \in \cT_k} \int_T R_{T,2}(\sigma_{k}^\ast, u_k^\ast, p_k^\ast)\Pi_k\nu\dx + \sum_{F\in\mathcal{F}_k}\int_F J_{F,2}(\sigma_k^\ast)\Pi_k\nu \mathrm{d}s \right|\\
        &\leq
    \sum_{T \in \cT_k}\Big(\|R_{T,2}(\sigma_{k}^\ast, u_k^\ast, p_k^\ast)\|_{L^q(T)}\|\Pi_k\nu\|_{L^{q'}(T)}+\sum_{F\subset\partial T}\| J_{F,2}(\sigma_k^\ast)\|_{L^{q}(F)}\|\Pi_k\nu\|_{L^{q'}(F)}\Big)
    \end{align*}
By the scaled trace theorem, local inverse estimate, $L^{q'}$-stability of $\Pi_k$ in Lemma \ref{lem:cn-int-oper}, local quasi-uniformity
and interpolation error estimate for $I_k$ \cite{Ciarlet:2002}, we deduce that for $k>l$
\begin{align*}
    |{\rm II}|& \leq c\sum_{T \in \cT_k}\Big(h_T\|R_{T,2}(\sigma_{k}^\ast, u_k^\ast, p_k^\ast)\|_{L^q(T)}h_T^{-1}\|\Pi_k\nu\|_{L^{q'}(T)}+\sum_{F\subset\partial T}h_F^{1/q}\| J_{F,2}(\sigma_k^\ast)\|_{L^{q}(F)}h_F^{-1/q-1/q'}\|\Pi_k\nu\|_{L^{q'}(T)}\Big) \\
    &\leq c\sum_{T \in \cT_k}\Big(h_T\|R_{T,2}(\sigma_{k}^\ast, u_k^\ast, p_k^\ast)\|_{L^q(T)}+\sum_{F\subset\partial T}h_F^{1/q}\| J_{F,2}(\sigma_k^\ast)\|_{L^{q}(F)}\Big)h_T^{-1}\|\nu\|_{L^{q'}(D_T)}\\
    &\leq c\sum_{T\in\cT_{k}}\eta_{k,3}(\sigma_k^\ast,u_k^\ast,p_k^\ast,T)h_T^{-1}\|\mu-I_k\mu\|_{L^{q'}(D_T)}\\
    &= c\big(\sum_{T\in\cT_{k}\setminus\cT_l^+}\eta_{k,3}(\sigma_k^\ast,u_k^\ast,p_k^\ast,T)h_T\|\mu\|_{W^{2,q'}(D_T)}+\sum_{T\in\cT_l^+}\eta_{k,3}(\sigma_k^\ast,u_k^\ast,p_k^\ast,T)h_T\|\mu\|_{W^{2,q'}(D_T)}\big)\\
    &\leq c\|h_{l}\chi_{\Omega_l^0}\|_{L^\infty(\Omega)}\big(\sum_{T\in\cT_k\setminus\cT_{l}^+}\eta^q_{k,3}(\sigma_k^\ast,u_k^\ast,p_k^\ast,T)\big)^{1/q}\|\mu\|_{W^{2,q'}(\Om)}
    +c\big(\sum_{T\in\cT_{l}^+}\eta^q_{k,3}(\sigma_k^\ast,u_k^\ast,p_k^\ast,T)\big)^{1/q}\|\mu\|_{W^{2,q'}(\Om)}.
\end{align*}
Since  $\big(\sum_{T\in\cT_k\setminus\cT_{l}^+}\eta^q_{k,3}(\sigma_k^\ast,u_k^\ast,p_k^\ast,T)\big)^{1/q}\leq c$, cf. \eqref{eqn:bdd-eta}, there holds
\[
    |{\rm II}|\leq c\|h_{l}\chi_{\Omega_l^0}\|_{L^\infty(\Omega)}\|\mu\|_{W^{2,q'}(\Om)}+c\big(\sum_{T\in\cT_{l}^+}\eta^q_{k,3}(\sigma_k^\ast,u_k^\ast,p_k^\ast,T)\big)^{1/q}\|\mu\|_{W^{2,q'}(\Om)}.
\]
Now by repeating the argument for the term $\rm I$, we obtain
\begin{equation}\label{eqn:gat_mc_aux2}
   {\rm II} \rightarrow 0\quad\forall\mu\in\widetilde{\mathcal{A}}\cap C^\infty(\overline{\Om}).
\end{equation}
\noindent\textbf{Step iv.} Take limit in preliminary variational inequality.
Using \eqref{eqn:gat_mc_aux} and the $H^1(\Omega)$-convergence of $\{\sigma_{k}^\ast\}_{k\geq0}$ in Theorem \ref{thm:conv_medmin}, we have for each $\mu\in\widetilde{\mathcal{A}}\cap C^\infty(\overline{\Om})$
\begin{equation}\label{eqn:gat_mc_03}
        \alpha'(\nabla\sigma^\ast_k,\nabla(\mu-\sigma^\ast_k))
        +\alpha''(W'(\sigma_{k}^\ast),\mu-\sigma^\ast_k)
        \rightarrow \alpha'(\nabla\sigma^\ast_\infty,\nabla(\mu-\sigma^\ast_\infty))
        +\alpha''(W'(\sigma_{\infty}^\ast),\mu-\sigma^\ast_\infty).
\end{equation}
Further, the uniform boundedness on $\{u_k^\ast\}_{k\geq0}$ in $H^1(\Omega)$ and the convergence of $\{p^\ast_{k}\}_{k\geq0}$ to $p_\infty^\ast$
in $H^1(\Omega)$ in Theorem \ref{thm:conv_medadj} yield
\begin{equation*}
   |(\mu\nabla u_k^\ast,\nabla (p_{k}^\ast-p_\infty^\ast))|\leq c\|\nabla(p_{k}^\ast-p_\infty^\ast)\|_{L^2(\Omega)}\rightarrow 0.
\end{equation*}
This and Theorem \ref{thm:conv_medmin} imply
\begin{equation}\label{eqn:gat_mc_04}
    (\mu\nabla u_k^\ast,\nabla p_{k}^\ast )=(\mu \nabla u_k^\ast,\nabla (p_{k}^\ast-p_\infty^\ast))+
    (\mu\nabla u_k^\ast,\nabla p_\infty^\ast)
    \rightarrow   (\mu\nabla u^\ast_\infty,\nabla p_\infty^\ast )\quad\forall\mu\in\widetilde{\mathcal{A}}\cap C^\infty(\overline{\Om}).
\end{equation}
In the splitting
\begin{align*}
   (\sigma_k^\ast\nabla u_{k}^\ast, \nabla p_k^\ast)-(\sigma_\infty^\ast\nabla u_{\infty}^\ast, \nabla p_\infty^\ast )&=
   (\sigma_k^\ast\nabla u_{k}^\ast, \nabla(p_k^\ast-p_\infty^\ast))
   +((\sigma_k^\ast-\sigma_{\infty}^\ast)\nabla u_{k}^\ast, \nabla p_\infty^\ast)\\
   &\quad+(\sigma_{\infty}^\ast\nabla(u_{k}^\ast-u_\infty^\ast),\nabla p_\infty^\ast),
\end{align*}
the arguments for \eqref{eqn:gat_mc_04} directly yields
\begin{equation*}
  |(\sigma_k^\ast\nabla u_{k}^\ast, \nabla(p_k^\ast-p_\infty^\ast) )|\rightarrow 0\quad\mbox{and}\quad
  |(\sigma_{\infty}^\ast\nabla(u_{k}^\ast-u_\infty^\ast),\nabla p_\infty^\ast)|\rightarrow 0.
\end{equation*}
The boundedness on $\{u_k^\ast\}_{k\geq0}$ in $H^1(\Omega)$, pointwise convergence of $\{\sigma_{k}^\ast\}_{k\geq0}$ of
Theorem \ref{thm:conv_medmin} and Lebesgue's dominated convergence theorem imply
\begin{equation*}
    |((\sigma_k^\ast-\sigma_{\infty}^\ast)\nabla u_{k}^\ast, \nabla p_\infty^\ast)|\leq c\|(\sigma_k^\ast-\sigma_{\infty}^\ast)\nabla p_\infty^\ast \|_{L^2(\Omega)}\rightarrow 0.
\end{equation*}
Hence, there holds
\begin{equation}\label{eqn:gat_mc_05}
   (\sigma_k^\ast\nabla u_{k}^\ast, \nabla p_k^\ast)
    \rightarrow (\sigma_\infty^\ast\nabla u_{\infty}^\ast, \nabla
    p_\infty^\ast).
\end{equation}
Now by passing both sides of \eqref{eqn:gat_mc_01} to the limit and combining the estimates \eqref{eqn:gat_mc_02}-\eqref{eqn:gat_mc_05}, we obtain
\begin{equation*}
    \alpha'(\nabla\sigma^\ast_\infty,\nabla(\mu-\sigma^\ast_\infty)) + \alpha''(W'(\sigma^\ast_\infty),\mu-\sigma^\ast_\infty)     -(\nabla u^\ast_\infty,\nabla p_\infty^\ast(\mu-\sigma_\infty^\ast))_{L^2(\Omega)}\geq 0\quad\forall\mu\in\widetilde{\mathcal{A}}\cap C^\infty(\overline{\Om}).
\end{equation*}
\noindent\textbf{Step v.} Density argument.
By the density of $C^\infty(\overline{\Omega})$ in $H^1(\Omega)$ and the construction via a
standard mollifier \cite{Evans:2015}, for any $\mu\in\widetilde{\mathcal{A}}$ there exists a sequence $\{\mu_n\}
\subset\widetilde{\mathcal{A}}\cap C^\infty(\overline{\Om})$ such that $\|\mu_n-\mu\|_{H^1(\Omega)}\rightarrow 0$ as $n\rightarrow\infty$.
Thus, $(\nabla\sigma^\ast_\infty,\nabla \mu_n )\rightarrow(\nabla\sigma^\ast_\infty,\nabla\mu )$,
$(W'(\sigma_\infty^\ast),\mu_n)\to (W'(\sigma_\infty^\ast),\mu)$, and
$(\mu_n\nabla u^\ast_\infty,\nabla p_\infty^\ast )\rightarrow(\mu\nabla u^\ast_\infty,\nabla p_\infty^\ast)$,
after possibly passing to a subsequence. The desired result follows from the preceding two estimates.
\end{proof}

\begin{remark}\label{rmk:gat_mc}
The computable quantity $\eta_{k,3}(\sigma_k^*,u_k^*,p_k^*,T)$ emerges naturally from the proof, i.e., the upper bounds on ${\rm I}$ and ${\rm II}$,
which motivates its use as the \textit{a posteriori} error estimator in Algorithm \ref{alg_afem_eit}.
\end{remark}

\section*{Acknowledgements}
The authors are grateful to an anonymous referee and the boarder member for the constructive comments, which have
significantly improved the presentation of the paper. The work
of Y. Xu was partially supported by National Natural Science Foundation of China (11201307), Ministry of
Education of China through Specialized Research Fund for the Doctoral Program of Higher Education
(20123127120001) and Natural Science Foundation of Shanghai (17ZR1420800).

\appendix

\section{The solution of the variational inequality}\label{app:newton}
Now we describe an iterative method for 
minimizing the energy functional
\begin{equation*}
  \frac{\tilde\alpha \varepsilon  }{2}\|\nabla \sigma\|_{L^2(\Omega)}^2 + \frac{\tilde\alpha}{2\varepsilon}\int_\Omega W(\sigma){\rm d}x + \frac12\|U(\sigma)-U^\delta\|^2.
\end{equation*}
Let $p(z)=(z-c_0)(z-c_1)$. Then one linearized approximation $p_L(z,z_k)$ reads (with $\delta z=z-z_k$)
\begin{align*}
  p_L(z,z_k) &= p(z_k) + p'(z_k)(z-z_k)\\
    &= (z_k^2-(c_0+c_1)z_k+c_0c_1) + (2z_k-c_0-c_1)\delta z.
\end{align*}
Upon substituting the approximation $p_L(z,z_k)$ for $p(z)$ and linearizing the forward map $U(\sigma)$, we obtain the
following surrogate energy functional (with $\delta\sigma=\sigma-\sigma_k$ being the increment and $\delta U= U^\delta-U(\sigma_k)$)
\begin{align}\label{eqn:surrogate}
  \tfrac{\tilde\alpha \varepsilon  }{2}\|\nabla(\sigma_k+\delta\sigma)\|_{L^2(\Omega)}^2 + \tfrac{\tilde\alpha}{2\varepsilon}\|p(\sigma_k)+p'(\sigma_k)\delta\sigma\|_{L^2(\Omega)}^2
    + \tfrac12\|U'(\sigma_k)\delta\sigma-\delta U\|^2.
\end{align}
The treatment of the double well potential term $\int_\Omega W(\sigma){\rm d}x$ is in the spirit of the classical
majorization-minimization algorithm in the following sense (see \cite{ZhangChenXu:2018} for a detailed derivation)
\begin{align*}
  \int_\Omega W(\sigma_k) {\rm d}x &= \int_\Omega p_L(\sigma_k,\sigma_k)^2{\rm d}x, \quad \nabla \int_\Omega W(\sigma_k) {\rm d}x = \nabla \int_\Omega p_L(\sigma_k,\sigma_k)^2{\rm d}x, \\
  \quad\mbox{and} &\quad \nabla^2 \int_\Omega W(\sigma_k) {\rm d}x \leq \nabla^2 \int_\Omega p_L(\sigma_k,\sigma_k)^2{\rm d}x.
\end{align*}
This algorithm is known to have excellent numerical stability. Upon ignoring the box
constraint on the conductivity $\sigma$, problem \eqref{eqn:surrogate} is to find $\delta\sigma \in H^1(\Omega)$ such that
\begin{align*}
  (U'(\sigma_k)^*U'(\sigma_k)\delta\sigma,\phi) + \tilde\alpha \varepsilon& (\nabla \delta\sigma,\nabla \phi) + \tfrac{\tilde\alpha}{\varepsilon}
  (p'(\sigma_k)^2\delta\sigma,\phi) \\ & = (U'(\sigma_k)^*\delta U, \phi)-\tfrac{\tilde\alpha}{\varepsilon}(p(\sigma_k)p'(\sigma_k),\phi)-\tilde\alpha\varepsilon(\nabla\sigma_k,\nabla\phi),\quad \forall \phi\in H^1(\Omega).
\end{align*}
This equation can be solved by an iterative method for the update $\delta\sigma$ (with the box constraint treated by
a projection step). Note that $U'(\sigma_k)$ and $U'(\sigma_k)^*$ can be implemented in matrix-free manner using the
standard adjoint technique. In our experiment, we employ the conjugate gradient method to solve the resulting
linear systems, preconditioned by the sparse matrix corresponding to $\tilde\alpha \varepsilon (\nabla
\delta\sigma,\nabla \phi) + \frac{\tilde\alpha}{\varepsilon}(p'(\sigma_k)^2\delta\sigma,\phi)$.

\section{Proof of Lemma \ref{lem:cn-int-oper}}\label{app:int-oper}
The proof follows that in \cite{ChenNochetto:2000,HildNicaise:2005}.
By H\"{o}lder inequality and $h_T^d \leq |\omega_x|$ for each node $x\in T$,
\[
  \left|\frac{1}{|\omega_x|}\int_{\omega_x}v\dx\right| \leq |\omega_x|^{-1/r}\|v\|_{L^r(\omega_x)}\leq h_T^{-d/r}\|v\|_{L^r(\omega_x)}.
\]
The desired $L^r$-stability follows from the estimate $\|\phi_x\|_{L^r(T)}\leq c h_T^{d/r}$, by the local quasi-uniformity of
the mesh. In view of the definition \eqref{eqn:cn_int_def}, $\Pi_k \zeta = \zeta$ for any $\zeta\in\mathbb{R}$. By local inverse
estimate, the $L^r$-stability of $\Pi_k$, standard interpolation error estimate \cite{Ciarlet:2002} and local quasi-uniformity,
\begin{align}
   \|\nabla \Pi_k v\|_{L^r(T)}&=\inf_{\zeta\in \mathbb{R}} \|\nabla \Pi_k (v - \zeta)\|_{L^r(T)} \leq c h_T^{-1}\inf_{\zeta\in \mathbb{R}} \| \Pi_k (v - \zeta) \|_{L^r(T)} \nonumber\\
   &\leq c h_T^{-1}\inf_{\zeta\in \mathbb{R}} \| v - \zeta \|_{L^r(D_T)} \leq c h_T^{-1} \| v - \frac{1}{|D_T|}\int_{D_T} v \dx \|_{L^r(D_T)} \leq c \|\nabla v \|_{L^r(D_T)}.\label{pf:int_err1}
\end{align}
Similarly,
\begin{equation}\label{pf:int_err2}
\begin{aligned}
  \|v - \Pi_k v\|_{L^r(T)}&= \|v - \zeta - \Pi_k ( v - \zeta )\|_{L^r(T)} \\
    &\leq c \inf_{\zeta \in \mathbb{R}} \| v - \zeta\|_{L^r(D_T)} \leq c h_T \|\nabla v \|_{L^r(D_T)}.
\end{aligned}
\end{equation}
By the scaled trace theorem, for any $F\subset \partial T$, there holds
    \[
        \|v - \Pi_k v \|_{L^r(F)} \leq c (h_F^{-1/r} \| v - \Pi_k v\|_{L^r(T)} + h_F^{1-1/r} \|\nabla ( v - \Pi_k v) \|_{L^r(T)}).
    \]
Then \eqref{pf:int_err1} and \eqref{pf:int_err2} complete the proof of the lemma.

\bibliographystyle{abbrv}
\bibliography{eit}
\end{document}